\DeclareMathAlphabet{\mathpzc}{OT1}{pzc}{m}{it}
\DeclareRobustCommand{\hvec}[1]{{\mathpalette\hvec@{#1}}}
\newcommand{\hvec@}[2]{
  \vbox{\offinterlineskip
    \ialign{
      \hfil##\hfil\cr
      $\m@th#1{}_{\rightharpoonup}$\kern-\scriptspace\cr
      $\m@th#1#2$\cr
    }
  }
}
\newlength\myheight
\newlength\mydepth
\settototalheight\myheight{Xygp}
\newcommand{\Ind}{\mathrm{Ind}}
\DeclareMathOperator{\Symp}{Symp}
\def\bC{\mathbb{C}}
\def\bD{\mathbb{D}}
\def\bE{\mathbb{E}}
\def\bF{\mathbb{F}}
\def\bH{\mathbb{H}}
\def\bP{\mathbb{P}}
\def\bR{\mathbb{R}}
\def\bS{\mathbb{S}}
\def\bT{\mathbb{T}}
\def\bU{\mathbb{U}}
\def\bV{\mathbb{V}}
\def\bZ{\mathbb{Z}}
\newcommand{\eps}{\varepsilon}
\newcommand{\cA}{\mathcal{A}}
\newcommand{\cB}{\mathcal{B}}
\newcommand{\cC}{\mathcal{C}}
\newcommand{\cE}{\mathcal{E}}
\newcommand{\cF}{\mathcal{F}}
\newcommand{\cG}{\mathcal{G}}
\newcommand{\cH}{\mathcal{H}}
\newcommand{\cI}{\mathcal{I}}
\newcommand{\cJ}{\mathcal{J}}
\newcommand{\cK}{\mathcal{K}}
\newcommand{\cL}{\mathcal{L}}
\newcommand{\cM}{\mathcal{M}}
\newcommand{\cO}{\mathcal{O}}
\newcommand{\cP}{\mathcal{P}}
\newcommand{\cQ}{\mathcal{Q}}
\newcommand{\cR}{\mathcal{R}}
\newcommand{\cS}{\mathcal{S}}
\newcommand{\cT}{\mathcal{T}}
\newcommand{\cV}{\mathcal{V}}
\newcommand{\cW}{\mathcal{W}}
\newcommand{\cX}{\mathcal{X}}
\newcommand{\cY}{\mathcal{Y}}
\newcommand{\cZ}{\mathcal{Z}}
\def\bZ{\mathbb{Z}}
\renewcommand{\cong}{\simeq}
\begin{document}

\title{Spectral Floer theory and tangential structures}

\author[]{Noah Porcelli, Ivan Smith}
\thanks{The Clynelish distillery.}
\date{September 2024. Unauthorised readers will be pestered by pufflings.}

\address{Noah Porcelli,  Imperial College Department of Mathematics, Huxley Building, 180 Queen's Gate, London SW7 2RH, U.K.}
\address{Ivan Smith, Centre for Mathematical Sciences, University of Cambridge, Wilberforce Road, Cambridge CB3 0WB, U.K.}

\begin{abstract} {\sc Abstract:} 
In \cite{PS}, for a stably framed Liouville manifold $X$ we defined a Donaldson-Fukaya category $\scrF(X;\bS)$  over the sphere spectrum, and developed an obstruction theory for lifting quasi-isomorphisms from $\scrF(X;\bZ)$ to $\scrF(X;\bS)$. Here, we define a spectral Donaldson-Fukaya category for any  `graded tangential pair' $\Theta \to \Phi$ of spaces living over $BO \to BU$, whose objects are Lagrangians $L\to X$ for which the classifying maps of their tangent bundles lift to $\Theta \to \Phi$. The previous case corresponded to $\Theta = \Phi = \{\mathrm{pt}\}$. We extend our obstruction theory to this setting. The flexibility to `tune' the choice of $\Theta$ and $\Phi$ increases the range of cases in which one can kill the obstructions, with applications to bordism classes of Lagrangian embeddings in the corresponding bordism theory $\Omega^{(\Theta,\Phi),\circ}_*$. We include a self-contained discussion of when (exact) spectral Floer theory over a ring spectrum $R$ should exist, which may be of independent interest.
%Despite their sophistication, advanced tool-wielding lemurs find themselves under threat.  Having been deployed to organise society, but chosen to hang around munching leaves, bark and the occasional small reptile, there is encroachment from cetaceans, ungulates  and cathartidae. 
\end{abstract}

\maketitle
\thispagestyle{empty}
\setlength{\cftbeforesubsecskip}{-2pt}
\tableofcontents

\section{Introduction}

\subsection{Setting}
Let $X$ be a Liouville manifold with $2c_1(X) = 0$. In \cite{PS},  assuming furthermore that $X$ admits a stable framing, we adapted work of Large \cite{Large} to the setting of flow modules to construct a `Donaldson-Fukaya category over the sphere spectrum' $\scrF(X;\bS)$. The  objects of this category were compact exact Lagrangian submanifolds whose stable Gauss maps $L \to U/O$ were nullhomotopic compatibly with the ambient stable framing. We developed an obstruction theory for lifting quasi-isomorphisms from $\scrF(X;\bZ)$ to $\scrF(X;\bS)$, and gave applications in settings where the obstructions vanished for degree reasons, or could be killed by passing to `Baas-Sullivan truncations', akin to working over quotients $\bS/I$ of the sphere spectrum by an ideal.

This paper has three goals:
\begin{enumerate}
    \item we give a high-level and sometimes conjectural description of when spectral Floer theory over a ring spectrum $R$ should exist for a given symplectic manifold $X$, in terms of \emph{tangential pairs}, including a number of `universal' cases. This is carried out in Section \ref{sec: flav}, and is intended to be readable independent of the rest of the paper;
    \item for a graded tangential pair $\Theta \to \Phi$ over $BO \to BU$, we build a spectral Donaldson-Fukaya category $\scrF(X;(\Theta, \Phi))$, whose objects are exact Lagrangians $L \to X$ for which the classifying maps of the tangent bundle lift compatibly to the given tangential pair;
    \item we extend the obstruction theory from \cite{PS} to this more general setting, giving a sufficient condition for a quasi-isomorphism in $\scrF(X;\bZ)$ to lift to $\scrF(X;(\Theta,\Phi))$, and explain the consequences for bordism classes in that case. 
\end{enumerate}

Special cases of the construction give a category $\scrF(X;\bS)$ over the sphere spectrum when $X$ admits a stable polarisation, i.e. when
\[
[TX] \in \mathrm{image}(KO(X) \stackrel{c}{\longrightarrow} KU(X))
\]
 lies in the image of the natural complexification map; and a category $\scrF(X;MU)$ for any Liouville $X$ with $2c_1(X)=0$.

 We work throughout with exact Lagrangians in a Liouville manifold which are either compact or geometrically bounded, in which case we impose `small wrapping' at infinity.

\subsection{Results}

In Section \ref{sec: Lag Floer conj} we consider the notion of a \emph{tangential pair}. This consists of a pair of spaces $(\Theta, \Phi)$ fitting into a commutative diagram:
\begin{equation}\label{eq: strong intro}
            \xymatrix{
                \Theta 
                \ar[r]
                \ar[d]
                &
                \Phi
                \ar[d]
                \\
                BO 
                \ar[r]
                &
                BU
            }
        \end{equation}
This provides a convenient and indeed universal language in which to phrase various types of condition on the stable tangent bundle of $X$, and on the Lagrangian Gauss map of $L$. 

An \emph{oriented} tangential pair is one where $\Theta$ lives over $B\operatorname{Pin}$ instead of $BO$, corresponding to tangential pairs where Floer theory exists away from characteristic two (in particular in characteristic zero). A \emph{graded} tangential one is one where $\Phi$ lives over $BS_\pm U$, the fibre of the map $BU \to K(\bZ, 2)$ classifying $2c_1$, corresponding to where $\bZ$-graded Floer theory exists.

We let $F$ be the homotopy fibre of the map $\Theta \to \Phi$. There is a natural map 
\[
\Omega F \to BO\times \bZ
\]
and a bordism theory\footnote{Following very established conventions, $\Omega$ denotes both based loops and a bordism theory; we hope no confusion will arise.} $\Omega^{(\Theta,\Phi)}_*$ associated to the resulting Thom spectrum of the pullback of the universal bundle. There is a related bordism theory $\Omega^{(\Theta,\Phi),\circ}_*$, which we call `capsule bordism', corresponding to cobordism of manifolds with stable tangent bundle pulled back from the \emph{universal index bundle} over the homotopy limit of:
\begin{equation}\label{eq: above}
    \xymatrix{
        &
        \Phi \ar[d]
        \\
        \cL \Theta \ar[r]
        &
        \cL \Phi
    }
\end{equation} 
where $\cL$ denotes the free loop space. This homotopy limit maps to the homotopy limit of (\ref{eq: above}) in the universal case $(\Theta,\Phi)=(BO,BU)$ which is $BO \times \Omega U/O$. Real Bott periodicity implies that $\Omega U/O \simeq BO \times \bZ$ and so it carries a canonical virtual vector bundle; the universal index bundle over $BO \times \Omega U/O$ is then the sum of the virtual vector bundles on each factor. 
\begin{rmk}
    We give a more explicit heuristic description. A point in this space consists roughly of a pair $(E, F)$, where $E \to D^2$ is a complex vector bundle and $F \to \partial D^2$ is a totally real subbundle, along with a $\Phi$-structure and $\Theta$-structure on $E$ and $F$ respectively. The data of $E$ and $F$ (along with some contractible choices, such as an $E$-valued 1-form, cf. Definition \ref{defn:CR data}) determines a \emph{Cauchy-Riemann operator}, a Fredholm linear map; the universal index bundle is roughly its kernel minus cokernel. Restricting to the fibre $F_1$ of $F$ over $1 \in \partial D^2$ determines a point in $BO$. The virtual bundle $E - (F_1 \otimes \bC)$ over $D^2$ is canonically trivial, and the virtual bundle $F-F_1$ over $\partial D^2$ determines a loop $\gamma$ in $\Omega U/O$. The index bundle splits into two terms, one corresponding to $F_1$, and one to the index of the Cauchy-Riemann operator corresponding to $\gamma$, cf. \cite[Section 3]{P}.
\end{rmk}

If the classifying map $X \to BU$ for $TX$ lifts to $\Phi$, we will say that $X$ admits a $\Phi$-structure. 
In that case, a $\Theta$-brane is an exact Lagrangian $L$ for which the classifying map $TL \to BO$ lift to $\Theta$ compatibly with the $\Phi$-structure on $X$.

\begin{thm}\label{thm:main}
    Let $X$ be a Liouville manifold of real dimension $2d$ with $2c_1(X)=0$ and let $(\Theta,\Phi)$ be an oriented graded tangential pair. Assume that $X$ admits a $\Phi$-structure $f: X \to \Phi$. There is a spectral Donaldson-Fukaya category $\scrF(X;(\Theta,\Phi))$ whose objects are geometrically bounded exact Lagrangian $\Theta$-branes.  If $L$ and $K$ are quasi-isomorphic $\Theta$-branes, then $[L] = [K] \in \Omega_d^{(\Theta,\Phi),\circ}(X, f)$. 
\end{thm}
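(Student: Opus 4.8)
The plan is to follow the strategy of \cite{PS}, building the category and the obstruction theory in stages and then extracting the bordism conclusion from a contractibility statement for a space of structures. First I would construct $\scrF(X;(\Theta,\Phi))$ as a Donaldson-Fukaya category enriched over the stable homotopy category, or over modules over the relevant Thom spectrum, by equipping the moduli spaces of pseudoholomorphic strips and triangles with coherent $(\Theta,\Phi)$-structures: the $\Phi$-structure $f$ on $X$ pulls back along the Lagrangians' tangent data, and a $\Theta$-brane structure on each $L$ gives compatible boundary data, so that linearised Cauchy-Riemann operators along the moduli spaces acquire index data classified by the universal index bundle over the homotopy limit of \eqref{eq: above}. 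One then uses the machinery adapting Large's work to flow modules (as in \cite{PS}) to promote these structured moduli spaces to a category with morphism spectra that are module spectra over $\Omega^{(\Theta,\Phi),\circ}_*$; the composition maps come from the standard gluing of structured moduli. The graded and oriented hypotheses are exactly what is needed for the $\bZ$-grading and for the structures to be defined (away from characteristic two obstructions) at the level of the relevant bordism theory.

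Next, given a quasi-isomorphism between $\Theta$-branes $L$ and $K$ in $\scrF(X;\bZ)$ — here one upgrades to a quasi-isomorphism in $\scrF(X;(\Theta,\Phi))$ under the theorem's hypotheses, as in the obstruction theory — I would run the argument that produces a cobordism. The quasi-isomorphism, together with its inverse and the homotopies witnessing that the compositions are the identities, assembles (via the continuation/flow-module formalism) into a one-parameter family of moduli problems interpolating between $L$ and $K$. Concretely this yields a compact manifold-with-boundary $W$ with $\partial W = L \sqcup (-K)$, arising as a moduli space (or zero-locus of a section of a structured bundle over such a moduli space), whose stable tangent bundle is pulled back from the universal index bundle — this is precisely the definition of a class in capsule bordism $\Omega^{(\Theta,\Phi),\circ}_*$. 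Keeping track of the map to $X$ and the $\Phi$-structure $f$ throughout shows $W$ represents a cobordism in $\Omega_d^{(\Theta,\Phi),\circ}(X,f)$, hence $[L] = [K]$ there.

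The point where the structured bundle enters is the key input: along the interpolating moduli space one must check that the index-theoretic data (kernel minus cokernel of the family of Cauchy-Riemann operators, twisted by the $\Theta$- and $\Phi$-structures) is classified, coherently in families, by a map to the homotopy limit of \eqref{eq: above}, so that the universal index bundle pulls back to the stable normal (equivalently stable tangent) data of $W$. This requires the gluing/linear-algebra comparison identifying the index bundle of a glued CR operator with the sum of the pieces, done compatibly with the tangential structures — the content sketched in the remark after \eqref{eq: above} and in \cite[Section 3]{P}.

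\textbf{Main obstacle.} I expect the hard part to be the coherence: verifying that the $(\Theta,\Phi)$-structures on all the moduli spaces (strips, triangles, and the higher homotopies needed to encode ``quasi-isomorphism'' and its inverse) are compatible under all gluings, so that one genuinely obtains a well-defined cobordism class rather than just a manifold, and that this is independent of the auxiliary choices (almost complex structures, perturbation data, the chosen lifts of tangential structures). This is where the bulk of \cite{PS}'s framework must be re-run with $\Theta,\Phi$ in place of a point; the geometric skeleton of the argument is unchanged, but the bookkeeping of structures — and checking that the relevant space of such structures is connected, so the resulting bordism class is well-defined — is the delicate step.
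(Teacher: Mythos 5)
Your proposal follows essentially the same route as the paper: coherent $(\Theta,\Phi)$-orientations on the Floer moduli spaces via the abstract-disc/index-bundle formalism, morphisms as bordism classes of oriented right flow modules with composition from triangles, and the bordism conclusion obtained by composing the quasi-isomorphism with the unit and counit modules and deforming the resulting capsule over $X$ to the constant-disc capsule representing $[L]$. Two small corrections: the hypothesis of this theorem is a quasi-isomorphism already in $\scrF(X;(\Theta,\Phi))$ (lifting one from $\scrF(X;\bZ)$ is the content of Theorem \ref{thm: fuk ob} and needs the vanishing of the obstruction classes), and the morphism groups are expected to be modules over $\Omega_*^{\Phi}$ rather than over the capsule bordism groups $\Omega_*^{(\Theta,\Phi),\circ}$, which are the $THH$-flavoured target of the trace/open-closed construction rather than a coefficient ring for morphisms. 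Beyond the gluing coherence you flag, the paper's proof also has to establish unitality of $e_L$ via the Whitehead-type truncation argument and properness of the evaluation map for geometrically bounded (non-compact) Lagrangians, both of which your sketch leaves implicit.
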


The precise description of the capsule bordism theory on a pair $(X,f)$ is described in Section \ref{sec: rel flow mod}, along with conditions under which ordinary bordism $\Omega^{(\Theta,\Phi)}_*$ splits off from it. (Section \ref{Sec:proper bordism} includes a brief discussion of proper bordism, for the case when the Lagrangians are non-compact.)  Whenever $\Phi = \{pt\}$ we will typically drop it from the notation, and just write $F = \Theta \to U/O$.  

\begin{rmk}
    For a commutative ring spectrum $R$, when $X$ is stably polarised an $R$-linear Fukaya category has been constructed in \cite{ADP}. The objects are Lagrangians with $\Theta$-structure (for appropriate $\Theta$ depending on $R$), but we caution that these categories are not isomorphic: we expect them to be related by base-change, similarly to (\ref{eq: base-change}) below.
\end{rmk}

\begin{rmk}
    We expect the morphism groups in $\scrF(X;(\Theta,\Phi))$ to be modules over the bordism group $\Omega_*^{\Phi}$, cf. Remark \ref{rmk:module action}, but have not spelled out the details.
\end{rmk}

\begin{rmk}
    The appearance of the theory $\Omega^{(\Theta,\Phi),\circ}$ reflects the fact that the bordism theory may be arising from a non-commutative ring spectrum. The fundamental class of a Lagrangian is defined by an open-closed map, which is naturally a trace, and so valued in a version of topological Hochschild homology, which is related to the free loop space by \cite{BCS}. (This issue does not arise in the setting of \cite{PS}, since $THH(\bS) \simeq \bS$.) 
\end{rmk}

\begin{example} 
If $\Theta = \{\ast\}$ then $\Omega^{\Theta} = \Omega^{fr}$ is framed bordism, with underlying spectrum $\bS$.
\end{example}

\begin{example} 
Let $(\widetilde{U/O})^{or}$ denote the universal cover of the homotopy fibre  of the map $U/O \to K(\bZ/2,2)$ which is the composition of the map $U/O \to BO$ classifying the underlying vector bundle of a totally real subbundle of $\bC^n$, followed by the map $BO \to K(\bZ/2,2)$ classifying the second Stiefel-Whitney class.  If $\Theta = (\widetilde{U/O})^{or}$ then $\Omega^{\Theta} = \Omega^{SO}$ is oriented bordism, with underlying spectrum $MSO$.
\end{example}

\begin{example} 
If $\Theta$ is the pullback of the universal covering $\widetilde{U/O} \to U/O$ under $U\to U/O$,  then $\Omega^{\Theta} = \Omega^{U}$ is complex cobordism, with underlying spectrum $MU$.
\end{example}

\begin{example}
    If $R$ is any $\bE_2$ ring spectrum, there is a map $BO \to BGL_1(\bS) \to BGL_1(R)$, where $GL_1(\cdot)$ denotes the group of units of a ring spectrum, the first map is a version of the $J$ homomorphism and the second is induced by the unit. This map is naturally an $\bE_1$-map (using the `real' version of work of B.~Harris \cite{Harris} to see that the Bott periodicity map is $\bE_1$), so one can deloop to get a map $U/O \to B^2GL_1(R)$, whose homotopy fibre provides a natural source of spaces over $U/O$. Taking $R=\bS$ itself gives rise to the bordism theory of manifolds with trivialisations of the stable spherical fibration associated to the tangent bundle. 
\end{example} 

To make this useful, one would like to be able to effectively detect quasi-isomorphism in $\scrF(X;(\Theta,\Phi))$. We extend the obstruction theory from \cite{PS} to the more general set-up, with the following consequence:

\begin{thm}\label{thm: fuk ob}
In the setting of Theorem \ref{thm:main}, suppose ($X$ has a $\Phi$-structure and) $L$ and $K$ are $\Theta$-branes which are quasi-isomorphic over $\bZ$. There is a series of obstructions $[\mathcal{O}_i] \in H^i(L;\Omega_{i-1}^{(\Theta,\Phi)})$ with the property that if all such vanish, then one can lift the quasi-isomorphism to $\scrF(X;(\Theta,\Phi))$.
\end{thm}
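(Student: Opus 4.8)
The plan is to adapt the obstruction-theoretic argument from \cite{PS} by replacing the sphere spectrum with the relevant Thom spectrum, working with flow modules over the spectrum underlying $\Omega^{(\Theta,\Phi)}_*$ rather than over $\bS$. First I would recall that a quasi-isomorphism over $\bZ$ between $L$ and $K$ is witnessed by a cycle in $CF(L,K)\otimes\bZ$ whose image under composition-with-some-representative realises the identity in cohomology; the task is to lift this to a morphism in the spectral category, i.e.\ to a map of flow modules (or a point in the appropriate spectral morphism complex) projecting to the given $\bZ$-class. The obstructions should be organised by a skeletal/Postnikov filtration: one attempts to build the lift cell-by-cell over a CW model of $L$ (or over the moduli spaces of the relevant pearly/flow trees), and at each stage the failure to extend over the $i$-cells is a cocycle valued in the coefficient group of the relevant homotopy fibre, which here is $\Omega^{(\Theta,\Phi)}_{i-1}$, the $(i-1)$st homotopy group of the Thom spectrum attached to $\Omega F \to BO\times\bZ$ via the universal index bundle. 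This yields classes $[\mathcal{O}_i]\in H^i(L;\Omega^{(\Theta,\Phi)}_{i-1})$, and the standard obstruction-theory mechanism gives that vanishing of all of them permits the inductive extension to go through, producing the desired lift to $\scrF(X;(\Theta,\Phi))$.

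The key steps, in order, are: (1) express the spectral morphism object $\mathrm{hom}_{\scrF(X;(\Theta,\Phi))}(L,K)$ as (the homotopy groups of) a spectrum built from the $\bZ$-linear Floer complex by smashing with the Thom spectra recording the $\Theta$-structures on the ends and the index bundle over the moduli of holomorphic strips — this is precisely the content of the construction underlying Theorem \ref{thm:main}, so I may assume it; (2) identify the homotopy fibre of the comparison map $\mathrm{hom}_{\scrF(X;(\Theta,\Phi))}(L,K)\to \mathrm{hom}_{\scrF(X;\bZ)}(L,K)$, showing its homotopy groups are governed by $\widetilde\Omega^{(\Theta,\Phi)}_*$ (the reduced coefficients), using that the Hurewicz/linearisation map $\Omega^{(\Theta,\Phi)}_*\to H_*(-;\bZ)$ is an isomorphism in degree $0$; (3) set up the obstruction tower for lifting a specific $\bZ$-class along this fibration — since we want to lift an element, not compute the whole homotopy type, the obstructions to successively lifting through the Postnikov tower of the fibre live in $H^{i}(L;\pi_{i-1}(\mathrm{fibre}))=H^i(L;\Omega^{(\Theta,\Phi)}_{i-1})$ (the degree shift coming from the usual relation between the cellular filtration of $L$, which controls where the Floer differential "sees" the topology of $L$, and the simplicial/cellular degree in the morphism complex); (4) check that the indeterminacy and naturality are as in \cite{PS}, so that vanishing of all $[\mathcal{O}_i]$ indeed suffices, and conclude. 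Wherever $\Phi=\{\mathrm{pt}\}$ this should specialise to the statement of \cite{PS} with $\Omega^{(\Theta,\Phi)}_*$ replaced by $\pi_*$ of the Thom spectrum of $F\to U/O$.

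The main obstacle will be Step (2)–(3): pinning down precisely which coefficient group appears and with which degree shift. In the $\bS$-linear case of \cite{PS} the fibre of $\mathrm{hom}_{\scrF(X;\bS)}\to\mathrm{hom}_{\scrF(X;\bZ)}$ is controlled by the fibre of $\bS\to H\bZ$, whose homotopy groups are the stable stems $\pi_*^s$ shifted; here the analogous role is played by the Thom spectrum $M$ with $\pi_*M=\Omega^{(\Theta,\Phi)}_*$ and the fibre of its linearisation $M\to H\bZ$. One must verify that the index-bundle twist and the $\Theta$-structures on $L$ and $K$ assemble so that this fibre's homotopy, convolved against the cell structure of $L$ through the open-closed/trace map, produces exactly $H^i(L;\Omega^{(\Theta,\Phi)}_{i-1})$ and not some more complicated hyper-(co)homology group — i.e.\ that no differentials in an Atiyah–Hirzebruch-type spectral sequence interfere at the level of \emph{the first nonvanishing} obstruction, and that the higher ones are genuinely indexed as claimed. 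I expect this to follow from the same naturality and connectivity estimates used in \cite{PS}, now applied to $M$ in place of $\bS$, together with the observation that $M$ is connective with $\pi_0 M=\bZ$ so that the obstruction to lifting a $0$-dimensional (pointwise) piece vanishes and the tower starts in the expected range.
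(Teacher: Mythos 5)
Your overall strategy --- an inductive obstruction theory governed by the Postnikov tower of the coefficient Thom spectrum, with obstructions in $H^i(L;\Omega^{(\Theta,\Phi)}_{i-1})$ --- matches the paper's in spirit, and you correctly identify the coefficient groups. But there is a genuine gap: you only ever discuss lifting the \emph{element}, never why the lifted element is again a quasi-isomorphism. In the paper this is not automatic and occupies a substantial part of the argument: one needs the Whitehead-type theorem (Proposition \ref{prop: flow white head}, surjectivity of composition with a truncated morphism on truncated module bordism groups) together with the lemmas showing that invertibility in $\tau_{\leq n+1}\scrF_0(L,K;(\Theta,\Phi))$ is detected by $\tau_{\leq 0}$, via a one-sided-inverse plus idempotent-unit argument. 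Without some version of this ``units lift to units along the truncation tower'' statement, vanishing of the obstructions only produces a morphism over $(\Theta,\Phi)$ covering the given $\bZ$-linear isomorphism, not an isomorphism in $\scrF(X;(\Theta,\Phi))$, so the theorem as stated is not yet proved.

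Two further points on implementation. First, the filtration in the paper is not cell-by-cell over a CW model of $L$: it is the truncation filtration $\tau_{\leq n}$ on flow modules (discarding moduli manifolds above a given dimension), which is the flow-categorical avatar of the Postnikov tower of $R$; the obstruction to extending a $\tau_{\leq n}$ right module over $\cM^{LK}$ to $\tau_{\leq n+1}$ is a class in $HM_{-n-2}(\cM^{LK};\Omega^{(\Theta,\Phi)}_{n+1})$ (Theorem \ref{thm: ob lift}), constructed geometrically as a count of bordism classes of manifolds cut out by framed functions on coequalisers of moduli spaces. Second, the identification of this group with $H^{n+2}(L;\Omega^{(\Theta,\Phi)}_{n+1})$ does not go through an Atiyah--Hirzebruch argument at all: Morse homology of the flow category with coefficients in an abelian group is by definition ordinary Floer homology with those (untwisted) coefficients, the $\bZ$-linear quasi-isomorphism identifies $HF(L,K)$ with $HF(L,L)\cong H^{-*}(L)$, and Poincar\'e duality (using the canonical orientation from the grading) converts homology to cohomology. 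So the worry you raise about spectral-sequence interference does not arise, but for a more elementary reason than the connectivity estimates you propose.
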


\subsection{Applications}

\noindent \textbf{Complex cobordism.} Consider for instance the case where (i) $(\Theta,\Phi)$ is the pullback of the pair  ($B\operatorname{Pin}, B\operatorname{Pin} \times BU)$ under the map $BS_\pm U \to BU$, (ii) the map $\Theta \to \Phi$ is $(\mathrm{identity}, \mathrm{complexification})$, and (iii) $\Phi \to BS_\pm U$ forgets the first factor. Then $\Omega^{(\Theta,\Phi)} = MU$ is complex cobordism, so $\Omega_*^{(\Theta,\Phi)}$ is concentrated in even degrees.  We obtain:

\begin{cor}\label{cor:1}
    Let $X$ be Liouville with $2c_1(X)=0$ and suppose  $L,K$ are $\Theta$-branes in $X$. More explicitly, this means there is a real vector bundle $E \to X$ of rank $k$ such that there are stable isomorphisms $E|_L \simeq TL$ and $E|_K \simeq TK$.
    
    Suppose $H^*(L;\bZ)$ is concentrated in even degrees.  If $K$ is quasi-isomorphic to $L$ over $\bZ$, then $[L]=[K] \in MU_{d-k}(\mathrm{Thom}(-E \to X))$; more concretely, there is a cobordism from $L$ to $K$ over $X$ equipped with a stable complex structure relative to $E$.
\end{cor}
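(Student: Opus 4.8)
The plan is to feed the pair $(L,K)$ into Theorems~\ref{thm: fuk ob} and~\ref{thm:main}, to observe that the obstruction groups all vanish for this particular choice of $(\Theta,\Phi)$, and then to unwind what the capsule bordism group is in this case. The first task is the translation. The stated $(\Theta,\Phi)$ is visibly an oriented graded tangential pair ($\Theta$ lies over $B\operatorname{Pin}$ and $\Phi$ over $BS_\pm U$), and $\Omega^{(\Theta,\Phi)}=MU$ as recorded above. Unwinding the definitions, $\Phi\simeq B\operatorname{Pin}\times BS_\pm U$ with the structure map to $BU$ being projection to $BS_\pm U$ followed by the inclusion; so a $\Phi$-structure $f$ on $X$ amounts to a grading (available since $2c_1(X)=0$) together with a stable real $\operatorname{Pin}$-bundle on $X$, for which one fixes a rank-$k$ model $E\to X$, and the condition that $L$ and $K$ be $\Theta$-branes compatibly with $f$ unwinds to stable isomorphisms $TL\simeq E|_L$ and $TK\simeq E|_K$ carrying the $\operatorname{Pin}$- and homotopy data along. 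This is the explicit reformulation appearing in the statement, and it places us in the setting of Theorems~\ref{thm:main}--\ref{thm: fuk ob}.

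Next I would run the obstruction theory. By Theorem~\ref{thm: fuk ob} the obstructions to lifting the $\bZ$-quasi-isomorphism lie in $H^i(L;\Omega^{(\Theta,\Phi)}_{i-1})=H^i(L;MU_{i-1})$. Each $MU_n$ is a finitely generated free abelian group, and $MU_n=0$ for $n$ odd, so $H^i(L;MU_{i-1})\cong H^i(L;\bZ)\otimes_{\bZ}MU_{i-1}$; this vanishes when $i$ is even because $MU_{i-1}=0$, and when $i$ is odd because $H^i(L;\bZ)=0$ by hypothesis. Hence every $[\mathcal{O}_i]$ vanishes, the $\bZ$-quasi-isomorphism lifts to a quasi-isomorphism in $\scrF(X;(\Theta,\Phi))$, and Theorem~\ref{thm:main} yields $[L]=[K]\in\Omega^{(\Theta,\Phi),\circ}_d(X,f)$.

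Finally I would identify the target with ordinary $MU$-homology. Because $\Omega^{(\Theta,\Phi)}_*=MU_*$ is concentrated in even degrees, the condition of Section~\ref{sec: rel flow mod} under which ordinary bordism splits off capsule bordism should be met (the loop-space, $THH$-type correction distinguishing the two is assembled from odd-degree classes), yielding a natural projection $\Omega^{(\Theta,\Phi),\circ}_d(X,f)\to\Omega^{(\Theta,\Phi)}_d(X,f)$ that sends the geometric classes of $L$ and $K$ to themselves. One then unwinds the relative bordism theory on $(X,f)$ with $f$ corresponding to $(E,\text{grading})$: a class is represented by a $d$-manifold $M$ over $X$ carrying a stable complex structure on $\tau_M-g^*E$, a virtual bundle of rank $d-k$, which is exactly a class in $MU_{d-k}(\mathrm{Thom}(-E\to X))$ --- the degree shift being $k=\operatorname{rk}E$ --- with the $\operatorname{Pin}$- and grading data furnishing, away from characteristic two, the promotion of this to a genuine $MU$-orientation of the virtual bundle in play. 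Transporting $[L]=[K]$ through this identification gives the claimed equality, realised geometrically by a cobordism $W$ from $L$ to $K$ over $X$ carrying a stable complex structure on $\tau_W-g^*E$ restricting to the chosen structures over the two ends. I expect the genuinely substantive point to be this last step: extracting from Section~\ref{sec: rel flow mod} the precise splitting criterion and confirming that $MU_*$ satisfies it, and then matching the $\operatorname{Pin}$-structure and grading bookkeeping against the bare statement ``$MU$ of the Thom spectrum of $-E$'', including pinning down the degree shift and the twist by $E$; the two applications of the main theorems and the obstruction computation are formal once the pair is set up.
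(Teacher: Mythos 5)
Your proposal follows the paper's own route: translate the tangential pair, kill the obstructions in $H^i(L;MU_{i-1})$ via the even/odd dichotomy ($MU_{i-1}=0$ for $i$ even, $H^i(L;\bZ)=0$ for $i$ odd), and then pass from capsule bordism $\Omega^{(\Theta,\Phi),\circ}_d(X,f)$ to $MU_{d-k}(\mathrm{Thom}(-E\to X))$, which is exactly Lemma \ref{lem:MU splitting}. The one place where your reasoning diverges from the actual mechanism is the justification of that last step: the splitting is \emph{not} obtained because the $THH$-type correction terms are "assembled from odd-degree classes" and hence vanish for parity reasons. Rather, Lemma \ref{lem:MU splitting} constructs the projection geometrically for the specific pair $(B\operatorname{Pin},B\operatorname{Pin}\times BS_\pm U)$: the real part of the boundary data is controlled by $E$, so (relative to $E$) the Cauchy-Riemann operators classifying the capsule's tangent bundle can be deformed to complex-linear ones, endowing every capsule with a natural stable complex structure; this is the geometric incarnation of the natural splitting $THH(MU)\to MU$, and it exists regardless of the parity of $\pi_*$. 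Since you explicitly defer to Section \ref{sec: rel flow mod} for the precise criterion and the required lemma is indeed proved there, the argument as a whole is sound.
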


In the setting of Corollary \ref{cor:1}, suppose $L$ and $K$ are exact oriented spin Lagrangian cobordant. Then they are quasi-isomorphic over $\bZ$, by \cite{Biran-Cornea}, so in fact they represent the same class in $MU_{d-k}(\mathrm{Thom}(-E \to X))$, even though the given Lagrangian cobordism is not assumed to have any tangential structure beyond being oriented and spin. 

\begin{example}
In general the natural map $MU_*(X) \otimes_{MU_*(pt)} \bZ \to H_d(X;\bZ)$ is neither injective nor surjective. For instance, 
    Totaro \cite{Totaro} observed that the image of $MU_*(X) \to H_*(X;\bZ)$ lies in the subset of classes annihilated by all odd-degree Bockstein homomorphisms, and consequently gave a number of examples of torsion homology classes on algebraic varieties which do not lift to $MU$.
\end{example}

\noindent \textbf{Immersed Lagrangian cobordism.}
As recalled above, exact Lagrangian submanifolds which are exact Lagrangian cobordant define quasi-isomorphic objects of the Fukaya category \cite{Biran-Cornea}.  Indeed, that result holds if the Lagrangians are related by an unobstructed but possibly immersed Lagrangian cobordism. It is natural to wonder to what extent the converse holds, i.e. when one can construct a geometric immersed cobordism from Fukaya-categorical hypotheses on quasi-isomorphism. 

Consider the infinite complex Lagrangian Grassmannian $Sp/U$. This admits a natural map to $U/O$ by viewing a complex Lagrangian subspace of $\bH^N$ as a real subspace. Since $Sp/U$ is simply connected, this canonically lifts to $\widetilde{U/O}$.

\begin{cor} \label{cor:immersed general}
    Suppose $X$ is stably framed and $L,K$ admit $\Theta$-brane structures for $\Theta = Sp/U \to \widetilde{U/O}$. If $L$ and $K$ are quasi-isomorphic in $\scrF(X;\Theta)$ then they are immersed oriented Lagrangian cobordant.
\end{cor}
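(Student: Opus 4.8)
\emph{Proof strategy.} The plan is to feed Theorem~\ref{thm:main} into the Gromov--Lees $h$-principle for Lagrangian immersions. First, I would check that we are in the setting of that theorem: since $Sp/U$ is simply connected it lifts canonically to $\widetilde{U/O}$ (as noted above), so $(\Theta,\Phi) = (Sp/U, \{\mathrm{pt}\})$ is an oriented graded tangential pair; and because $X$ is stably framed it carries the trivial $\Phi$-structure $f$ and satisfies $2c_1(X)=0$. Theorem~\ref{thm:main} then yields $[L] = [K] \in \Omega_d^{(\Theta,\Phi),\circ}(X,f)$, so choosing a bordism produces a compact $(d+1)$-manifold $W$ with $\partial W = L \sqcup \bar{K}$, a reference map $W \to X$ restricting to the inclusions of $L$ and $K$, and a stable tangential structure on $W$ pulled back from the universal index bundle over the relevant homotopy limit. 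For $\Phi = \{\mathrm{pt}\}$ that homotopy limit is simply the free loop space $\cL(Sp/U)$ carrying its Cauchy--Riemann index bundle; since $\Omega(Sp/U) \simeq U/O$, this is exactly the flavour of tangential data that appears in Audin's study of immersed Lagrangian bordism.

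The technical heart of the argument is to reinterpret this structure on $W$ as a \emph{formal} Lagrangian immersion of $W$ into $\bC \times X$ --- a bundle monomorphism $TW \to T(\bC \times X)$ covering a map $W \to \bC\times X$ which is fibrewise Lagrangian --- with cylindrical ends modelled on $L \times \bR$ and $K \times \bR$. The key point to establish is that when the loop of totally real subspaces underlying a point of $\cL(Sp/U)$ is a loop of \emph{complex} Lagrangian subspaces, the associated Cauchy--Riemann index bundle is canonically and stably the underlying real bundle, compatibly with orientations; granting this, a $Sp/U$-structure on $W$ compatible with the ones on $L$ and $K$ is precisely the data of such a formal Lagrangian immersion agreeing, near $\partial W$, with the formal data underlying the cylinders $L\times\bR$ and $K\times\bR \subset \bC\times X$. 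The stable framing of $X$ is what lets one pass between the index-bundle formulation and honest bundle data over $W$ (with the usual care in going between stable and unstable bundle data).

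With a formal Lagrangian immersion of $W$ in hand, rel its ends, I would invoke the relative and parametric $h$-principle for Lagrangian immersions (Gromov; Lees) to deform it --- keeping the cylindrical ends fixed --- to a genuine Lagrangian immersion $W \to \bC \times X$ that is cylindrical over $L$ at the negative end and over $K$ at the positive end. This is exactly an immersed Lagrangian cobordism from $L$ to $K$ in the sense of Biran--Cornea; note that we only need the geometric immersion, not any unobstructedness. Finally, $W$ is orientable because $H^1(Sp/U;\bZ/2) = 0$ forces $w_1(TW)=0$, and the $Sp/U$-structure pins down an orientation of $W$ extending the given ones on $L$ and $K$; so the cobordism is oriented, as claimed.

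The step I expect to be the main obstacle is the middle one: matching the `index bundle / free loop space' tangential structure that the capsule bordism theory $\Omega^{(\Theta,\Phi),\circ}$ records on $W$ with the classical formal-Lagrangian-immersion data required to run the $h$-principle, and doing so compatibly with the cylindrical ends and with orientations. A secondary difficulty is the relative $h$-principle with cylindrical ends at infinity, to make sure the output genuinely is a Lagrangian cobordism with the prescribed ends and not merely an immersion of $W$ into $\bC\times X$.
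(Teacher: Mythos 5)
Your overall strategy --- extract a capsule bordism between $L$ and $K$ from Theorem \ref{thm:main} and then convert it into an immersed Lagrangian cobordism via the Gromov--Lees $h$-principle --- is the same as the paper's. The paper, however, factors the conversion through two precise ingredients: Lemma \ref{lem:Sp/U splitting}, which constructs a natural map $\Omega^{Sp/U,\circ}_*(Z)\to\Omega^{\eta}_*(Z)$ from capsule bordism to $\eta$-bordism (where $\eta: U/O\to BO$ classifies the real part), and the theorem of Rathel-Fournier identifying $\eta$-type bordism with immersed Lagrangian cobordism classes. The gap in your proposal sits exactly where you predict, and your proposed ``key point'' does not fill it. A capsule lives over the \emph{free} loop space $\cL(Sp/U)$, not over $Sp/U$, and its index bundle has two components: the real part of the basepoint Lagrangian, and the Cauchy--Riemann index of the based loop $\gamma\in\Omega(Sp/U)\simeq U/O$. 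Your assertion that ``the index bundle is canonically and stably the underlying real bundle'' is only tautologically true for the first component; for the second, identifying the analytic index with the real part of a $U/O$-valued map is a genuine theorem: one must lift the classifying map through the highly connected map $F: U/O(N')\to\Omega(Sp/U)(N')$, $V\mapsto\bigl(t\mapsto e^{\pi tK}(V\oplus JV)\bigr)$, and then invoke de Silva's index theorem (this is exactly the proof of Lemma \ref{lem:Sp/U splitting}). Without this step the capsule data on $W$ is index-bundle data over a free loop space and cannot simply be read off as the Gauss-map data of a formal Lagrangian immersion into $\bC\times X$.

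A second, more clear-cut error: your orientability argument fails because $TW$ is classified over $\cL(Sp/U)$, not over $Sp/U$, and $H^1(\cL(Sp/U);\bZ/2)\cong H^1(\Omega(Sp/U);\bZ/2)\cong H^1(U/O;\bZ/2)\neq 0$, so vanishing of $H^1(Sp/U;\bZ/2)$ does not force $w_1(TW)=0$. Orientedness of the resulting cobordism comes instead from the structure of the splitting: the bordism theory governing \emph{oriented} immersed Lagrangian cobordism is the one built from $U/SO$ (resp.\ $\widetilde{U/SO}$ in the graded case), and the canonical lift $Sp/U\to\widetilde{U/O}$ factors through it because $Sp/U$ is simply connected. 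Finally, once the class lands in $\Omega^{\eta}_*(X)$, the production of an actual immersed cobordism with the prescribed cylindrical ends is precisely the content of \cite[Theorem 1.1]{R-F}; re-running the relative $h$-principle by hand as you suggest would amount to reproving that result.
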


The coefficients groups relevant to oriented immersed cobordism are rather complicated \cite{Audin}, so it is hard to infer vanishing of obstructions for degree reasons.  The coefficient groups for the bordism theory relevant to the case of not-necessarily-orientable cobordisms are much simpler. This yields:

\begin{cor}\label{cor:2}
    Suppose $X$ is stably framed and $L$ and $K$ are Lagrangian homotopy spheres of dimension $d \in \{2,4,6,7,8,12\}$.
    
    If $L$ and $K$ are quasi-isomorphic in $\scrF(X;\bZ)$ then they are unoriented Lagrangian cobordant. 
\end{cor}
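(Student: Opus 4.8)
The plan is to apply the obstruction theory of Theorem \ref{thm: fuk ob} with the tangential pair chosen so that the relevant bordism theory is unoriented immersed Lagrangian cobordism, and then observe that for the listed dimensions $d$ the obstruction groups $H^i(L;\Omega_{i-1}^{(\Theta,\Phi)})$ all vanish. First I would set up the tangential data: since $X$ is stably framed we take $\Phi = \{\mathrm{pt}\}$, so the pair reduces to a single space $F = \Theta \to U/O$, and for the \emph{unoriented} immersed cobordism problem the relevant $\Theta$ should be the one whose associated bordism theory $\Omega^\Theta_*$ is the coefficient ring for immersed (not necessarily orientable) Lagrangian cobordism; by the discussion preceding the corollary (and \cite{Audin}) this is a tractable theory, and crucially its homotopy groups in low degrees are small. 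I would quote or recall the explicit low-degree values of $\Omega^\Theta_*$ from the cited literature.

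Next I would invoke \cite{Biran-Cornea} in the reverse-free direction that is actually needed: here we are \emph{given} a quasi-isomorphism in $\scrF(X;\bZ)$ and want to \emph{produce} a cobordism, so the logical flow is: quasi-isomorphism over $\bZ$ $\Rightarrow$ (by Theorem \ref{thm: fuk ob}, once obstructions vanish) quasi-isomorphism in $\scrF(X;\Theta)$ $\Rightarrow$ (by Theorem \ref{thm:main}) equality of classes $[L]=[K]$ in the capsule bordism group $\Omega_d^{(\Theta,\Phi),\circ}(X,f)$ $\Rightarrow$ (unpacking what that group computes for stably framed $X$, via the splitting discussed in Section \ref{sec: rel flow mod}) an unoriented immersed Lagrangian cobordism from $L$ to $K$. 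So the corollary follows provided all obstruction classes $[\mathcal{O}_i] \in H^i(L;\Omega_{i-1}^\Theta)$ vanish.

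The heart of the argument is therefore the vanishing of these obstructions, and this is where the hypothesis that $L,K$ are \emph{homotopy spheres} of dimension $d\in\{2,4,6,7,8,12\}$ enters. For $L \simeq S^d$ the only potentially nonzero cohomology is $H^0(L;\Omega_{-1}^\Theta)$ and $H^d(L;\Omega_{d-1}^\Theta)$; the degree-$0$ term is automatically irrelevant (there is no $(-1)$-obstruction, or $\Omega_{-1}^\Theta = 0$), so the single obstruction to kill lives in $H^d(L;\Omega_{d-1}^\Theta) \cong \Omega_{d-1}^\Theta$. I would then check, dimension by dimension, that $\Omega_{d-1}^\Theta = 0$ for each $d$ in the list — i.e.\ that the unoriented-immersed-Lagrangian-cobordism coefficient group vanishes in degrees $1,3,5,6,7,11$. (The somewhat unusual-looking set $\{2,4,6,7,8,12\}$ is exactly the set of $d$ for which this holds, mirroring classical low-degree vanishing patterns for such Thom spectra.) One subtlety to be careful about: even when $L$ is a homotopy sphere one must confirm the obstruction cocycle is genuinely the only one, i.e.\ that there is no contribution from a differential or extension problem linking $H^0$ and $H^d$; but since the intermediate cohomology vanishes this is immediate.

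The main obstacle I anticipate is pinning down the correct $\Theta$ and computing $\Omega^\Theta_*$ in the needed range precisely enough — in particular being honest about the difference between \emph{embedded} and \emph{immersed} Lagrangian cobordism bordism theories, and making sure the capsule-bordism group $\Omega_d^{(\Theta,\Phi),\circ}(X,f)$ really does reduce, for stably framed $X$ and $\Theta \to U/O$ as chosen, to the immersed cobordism group over $X$ (as opposed to some twisted or non-split variant). Once the identification of coefficient groups is in hand, the vanishing in the six listed degrees is a finite check against the literature, and the rest is a formal consequence of Theorems \ref{thm:main} and \ref{thm: fuk ob}.
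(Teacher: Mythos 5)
Your overall strategy --- reduce to a single obstruction class for a homotopy sphere, kill it, and then convert the resulting quasi-isomorphism in $\scrF(X;\Theta)$ into a cobordism via the capsule-bordism splitting and the $h$-principle --- is the paper's, and your choice of $\Theta$ (the structure whose loop-space Thom spectrum is the unoriented immersed Lagrangian cobordism theory, i.e.\ $\Theta = Sp/U \to \widetilde{U/O}$) is the right one. But two essential steps are missing. First, before any obstruction theory can be run, $L$ and $K$ must actually be \emph{objects} of $\scrF(X;\Theta)$, i.e.\ the homotopy spheres must admit $Sp/U$-brane structures. This holds exactly in dimensions $8k+i$ with $i\notin\{1,3,5\}$ (Example \ref{ex:U/O branes}), and it is one of the \emph{two} conditions that carve out the set $\{2,4,6,7,8,12\}$; your proposal attributes the whole set to vanishing of obstruction groups, which is not how the list arises. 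Second, the conclusion of the corollary is an \emph{embedded} unoriented Lagrangian cobordism (contrast Corollary \ref{cor:immersed general}, which only claims an immersed one). The obstruction-theoretic argument together with the Gromov--Lees $h$-principle produces only an immersed non-orientable cobordism; the paper finishes by resolving the double points with Lagrange surgery, which is available precisely because orientability has been abandoned. Your proposal stops at the immersed cobordism and so does not prove the stated conclusion.

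A further point where your argument as written would fail: you propose to verify that the coefficient groups vanish in degrees $d-1 \in \{1,3,5,6,7,11\}$. The relevant ring is $\pi_*\mathrm{Thom}(\eta)$ for $\eta\colon U/O \to BO$, which (per Audin and Smith--Stong, as cited in the paper) is $2$-torsion with polynomial generators in each odd degree not of the form $2^i-1$; in particular the groups in degrees $5$ and $11$ are \emph{nonzero}, so the ``finite check against the literature'' as you describe it does not go through. The paper's numerical condition is instead that $d$ itself is not expressible as a sum of odd integers not of the form $2^j-1$ (equivalently $\pi_d = 0$), and $\{2,4,6,7,8,12\}$ is exactly the intersection of that condition with the brane-existence condition above. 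So you would need to pin down precisely which homotopy group of $\mathrm{Thom}(\eta)$ the surviving obstruction lands in before concluding; an off-by-one here changes the list of admissible dimensions.
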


Finding a bounding cochain on such a cobordism, to make it unobstructed, seems likely to need essentially different techniques.

In future work, we incorporate spectral local systems into our version of the Fukaya category, with further applications to the topology of Lagrangian submanifolds.

A quite different application of Floer homotopy theory on polarised manifolds to lower bounds on numbers of Lagrangian intersection points has recently been given by \cite{Blakey, Blakey-Bonciocat}.
\medskip

\noindent \textbf{\emph{Acknowledgements.}} N.P. is supported by EPSRC standard grant EP/W015889/1.  I.S. is partially supported by UKRI Frontier Research grant EP/X030660/1 (in lieu of an ERC Advanced grant). We are grateful to Mohammed Abouzaid, Ciprian Bonciocat, Alice Hedenlund, Jeff Hicks, Thomas Kragh, Tim Large and Alex Oancea for helpful discussions. We are grateful to the anonymous referee for helpful comments and corrections.

\section{Flavours of spectral Fukaya category}\label{sec: flav}

    In this section, we discuss existence of Fukaya categories over various ring spectra. We motivate and conjecture when these should exist, and explain how they relate to other constructions or conjectures in the literature. We work with the minimum possible commutativity assumptions on the ring spectra we work over: as well as providing generality, this is the natural setting for the ``universal'' cases discussed in Example \ref{ex: univ lag} and elsewhere. Furthermore working without extra commutativity data is convenient in practice in Section \ref{sec: Thet flow}.

    Notable omissions from this discussion include (but are not limited to) $A_\infty$ structures, the closed-open map, bubbling/curvature and local systems.

    Section \ref{sec: flav} is largely conjectural; we work under the following (imprecise) assumption:
    \begin{asmp}\label{asmp}
        Different models for Bott periodicity (including those of Section \ref{sec: abstr dis} and \cite{Harris}) agree and can be made to respect all multiplicative structure; in particular, the pairs of ring spectra (both called $(A,R)$) constructed in Sections \ref{sec: Thom} and \ref{sec: flav} are equivalent.
    \end{asmp}
    In later sections we will also sometimes refer to this, but it will \emph{not} be used in our topological applications.
    
    Throughout this paper, two spaces we encounter repeatedly are $BO \times \bZ$ and $BU \times \bZ$, the classifying spaces for real and complex virtual vector bundles. These are both infinite loop spaces in a natural way, essentially inherited from the symmetric monoidal structure on the category of vector spaces; this is also the infinite loop space structure one obtains from Bott periodicity. However, this infinite loop space structure does \emph{not} respect the product structure: it is not the one obtained from $BO$ (or $BU$) and $\bZ$ and taking the product. To avoid ambiguity, we declare:
    \begin{conv}\label{convention}
        Throughout this paper, $BO \times \bZ$ and $BU \times \bZ$ are equipped with the infinite loop structure coming from Bott periodicity (i.e. not the product one).
    \end{conv}
    
    \begin{rmk}
        $BO \times \bZ$ has the product $\bE_1$ but not $\bE_2$-structure. $BU \times \bZ$ has the product $\bE_2$-structure.
    \end{rmk}

\subsection{Flow categories and stable homotopy theory}\label{sec: 2.1}

    Recall from \cite{CJS} that a \emph{flow category} is a topologically enriched category with morphism spaces compact manifolds with corners, such that their boundary is covered by the images of the composition maps (see Section \ref{sec: back} for a precise definition).

    These can be assembled into a category $\operatorname{Flow}$ with objects flow categories. One can build a similar version by considering manifolds equipped with (compatible) stable framings or stable complex structures, as is carried out in \cite{AB2}. 

    Generalising this, given any map of loop spaces $f: \Omega F \to BO \times \bZ$ for a space $F$, one can construct a category of flow categories using manifolds whose stable tangent bundle admits a lift to $\Omega F$; we call such a lift an \emph{$\Omega F$-structure}. The case of framings is when $F$ is a point and the case of complex structures is when $\Omega F \simeq BU$. 
    
    Taking the Thom spectrum of the universal bundle pulled back along $f$ produces a spectrum $R := \operatorname{Thom}(\Omega F \to BO \times \bZ)$; since $f$ was a map of loop spaces, $R$ admits the structure of a unital ring spectrum \cite{Lewis-May-Steinberger-McClure}.

    \begin{conj}\label{conj: flow thom}
        There is a stable $\infty$-category $\operatorname{Flow^F}$ of flow categories built out of manifolds with $\Omega F$-structure. 
        
        This category is equivalent to the $\infty$-category of (left) $R$-modules.

        In the case where $R$ is associative but not commutative, there are analagous versions of $\operatorname{Flow}^F$ corresponding to the categories of right modules and bimodules over $R$.
    \end{conj}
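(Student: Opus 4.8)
The plan is to extend the Cohen--Jones--Segal framework of \cite{CJS}, and in particular the treatment of framed and stably complex flow categories in \cite{AB2}, from those two structure groups to an arbitrary $\Omega F$, and then to pin down the resulting stable $\infty$-category by a Morita-theoretic recognition principle.

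First I would define $\operatorname{Flow}^F$ by decorating each morphism manifold $\cM(x,y)$ of a flow category with an $\Omega F$-structure on its stable tangent bundle, subject to the requirement that under every composition map $\cM(x,y)\times\cM(y,z)\to\partial\cM(x,z)$ this structure restrict to the one determined by loop-concatenation of the structures on the two factors; morphisms would be $\Omega F$-decorated flow bimodules. Presenting $\operatorname{Flow}^F$ as the $\infty$-category underlying a relative category of such objects with flow equivalences inverted, exactly as in \cite{AB2}, one checks that it is presentable and stable: the $\Omega F$-decorations pull back along all the geometric operations that furnish the zero object, the regrading shift and the mapping-cylinder cofibre sequences, so stability is inherited from the framed case. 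It is also useful to record that $\operatorname{Flow}^F$ is compactly generated by the one-object flow category $\mathrm{pt}$, since its action filtration exhibits every finite flow category as an iterated cofibre of shifts of $\mathrm{pt}$, and a general object as a filtered colimit of finite ones.

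Next I would build the realisation. The Cohen--Jones--Segal realisation of a flow category is an iterated cone construction in which the subquotients of the action filtration are shifted sphere spectra, one per object, and the attaching maps are built from the framed-bordism data of the morphism manifolds. Using that for the loop map $f\colon\Omega F\to BO\times\bZ$ the spectrum $R=\operatorname{Thom}(f)$ is a unital ring spectrum \cite{Lewis-May-Steinberger-McClure} (associative since $\Omega F$ is grouplike, with whatever higher commutativity $f$ carries), the $\Omega F$-decorations turn each subquotient into a shift of $R$ and each attaching map into an $R$-linear map, so the filtered spectrum --- and hence its colimit --- realising an $\Omega F$-flow category is naturally an $R$-module; this yields an exact, colimit-preserving functor $\operatorname{Flow}^F\to\operatorname{Mod}_R$ sending $\mathrm{pt}$ to $R$. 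An $\Omega F$-bordism computation then identifies $\operatorname{End}_{\operatorname{Flow}^F}(\mathrm{pt})$ with $R$ as a ring spectrum --- its homotopy being the bordism ring of $\Omega F$-manifolds, and composition of endomorphism bimodules (product of manifolds, concatenation of the loop-structures) matching the product on $R$. Combining this with compact generation and the recognition theorem for presentable stable $\infty$-categories --- such a category with a compact generator $G$ is $\operatorname{Mod}_{\operatorname{End}(G)}$ --- shows the functor is an equivalence $\operatorname{Flow}^F\simeq\operatorname{Mod}_R$. For the non-commutative variants, the relevant asymmetry is the incoming/outgoing asymmetry of flow categories: the two orders of loop-concatenation available in the composition compatibility give a ``left'' and a ``right'' version of $\operatorname{Flow}^F$, realising to left and to right $R$-modules (equivalently $R^{\mathrm{op}}$-modules) respectively, while imposing a compatible pair of structures, one from each side, produces $R$-bimodules.

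I expect the main obstacle to be the first two steps in combination: simultaneously making the category of $\Omega F$-decorated flow categories genuinely stable and promoting its realisation to an $R$-\emph{linear} functor. This forces one to control at once the homotopy coherence of manifolds-with-corners, of the tangential structures along all composition faces, and of the merely-$\bE_1$ multiplication on $R$; concretely, one must know that the geometric model of Bott periodicity controlling the flow-category side agrees, as a ring spectrum, with the Thom-spectrum model defining $R$, and that the $J$-homomorphism $BO\times\bZ\to BGL_1(\bS)$ used to form $R$ is compatible with the clutching maps producing the $R$-module structure on a realisation. These are exactly the compatibilities bundled into Assumption~\ref{asmp}, which is why the statement is posed as a conjecture rather than a theorem.
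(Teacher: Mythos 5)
The statement you are proving is posed in the paper as a \emph{conjecture}, and the paper offers no proof of it: the only case established is the framed one ($F=\mathrm{pt}$, $R=\bS$), which is cited to \cite{AB2}, and the supporting evidence in the general case consists of the construction of the (possibly non-unital, ordinary) categories $\operatorname{Flow}^\Theta$ in Section \ref{sec: Thet flow} together with the one-object computations of Propositions \ref{prop: bord pt bimod}, \ref{prop: bord pt} and \ref{prop: flow pt pair case}, which identify morphism and module \emph{groups} with homotopy groups of $R\otimes R^{op}$ and $R$ via Pontrjagin--Thom. Your proposal is a reasonable sketch of the strategy one would expect (decorate the CJS framework, realise via an iterated cone whose subquotients become shifts of $R$, identify $\operatorname{End}(\mathrm{pt})$ with $R$, and conclude by Schwede--Shipley/Lurie recognition), and you correctly diagnose that the sticking point is the multiplicative comparison of the geometric Bott-periodicity model with the Thom-spectrum model of $R$ — this is precisely Assumption \ref{asmp}. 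But each of the load-bearing steps in your outline is itself an open problem rather than a checkable claim: the paper never constructs $\operatorname{Flow}^F$ as a stable $\infty$-category (only a homotopy 1-category, and possibly non-unital at that); the $R$-linearity of the realisation functor and the identification $\operatorname{End}(\mathrm{pt})\simeq R$ \emph{as ring spectra} (not merely on homotopy groups) are exactly what is missing; and "stability is inherited from the framed case" is asserted, not argued. So the proposal should be read as a programme, not a proof.

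One concrete point where your sketch diverges from the paper's actual setup: the paper's default category $\operatorname{Flow}^\Theta$, with morphisms $[\cF,\cG]^\Theta$, is conjectured (Conjecture \ref{conj: PT/CJS bimod}) to model $R$--$R$ \emph{bimodules}, not left modules — note Remark \ref{rmk:not a morphism group}, which stresses that for non-trivial tangential structures bordism classes of morphisms $[*[i],\cF]$ do not agree with right flow modules $\Omega_i^{\Theta}(\cF)$. The paper obtains the one-sided module categories not by choosing an order of loop concatenation, as you suggest, but by restricting to abstract discs whose totally real boundary condition and $\Theta$-lift are constant along one of the two boundary components ($\bU_{11}^L$ or $\bU_{11}^R$). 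If you pursue this, you will need to build your compact generator and the Morita argument inside the one-sided variant, since in the bimodule model the point object generates the wrong category.
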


    In the framed case, $R= \bS$ is the sphere spectrum, and this was proved in \cite[Section 8]{AB2}. In the general case, we construct (conjectural) flow-categorical models for left/right/bimodules over $R$ in Section \ref{sec: Thet flow}.

\subsection{Symplectic cohomology}\label{sec: conj SH}
    We begin by discussing when symplectic cohomology should be defined over what ring spectrum, before moving on to Lagrangians. Let $X$ be a Liouville domain. Its stable tangent bundle is classified by a map $TX: X \to BU$. 

    Let $H: X \times S^1 \to \bR$ be a generic Hamiltonian, linear at infinity (with slope not equal to the length of any Reeb chord). As shown in \cite{Large}, there is a flow category $\cM^H$ whose objects are Hamiltonian chords and morphisms are Floer trajectories $\cM^H_{xy}$. After making some choices, each $\cM^H_{xy}$ admits a map to the double loop space $\Omega^2 X$, such that the composition
    \begin{equation}\label{eq: class Ham}
        \cM^H_{xy} \to \Omega^2 X \xrightarrow{\Omega^2 TX} \Omega^2 BU \xrightarrow[\simeq]{\textrm{Bott periodicity}} BU \times \bZ
    \end{equation}
    is a classifying map for the stable tangent bundle for $\cM^H_{xy}$; in particular, $\cM^H_{xy}$ admits a natural stable complex structure (existence of stable complex structures was proved in \cite[Appendix B]{AB2}).

    Let $\Phi \to BU$ be some based space over $BU$. Assume $X$ is equipped with a \emph{$\Phi$-structure}, meaning the classifying map for $TX$ is given a lift to $\Phi$. Then by (\ref{eq: class Ham}), the classifying maps for each $T\cM^H_{xy}$ admit lifts to $\Omega^2 \Phi$.

    The upshot of this discussion is that the $\Phi$-structure on $X$ induces a $\Omega^2 \Phi$-structure on all moduli spaces $\cM^H_{xy}$, and so we expect the flow category $\cM^H$ to admit an induced $\Omega^2 \Phi$-structure in the sense of Section \ref{sec: 2.1}.
    
    \begin{rmk}
        Since $\Omega^2 \Phi$ is a double loop space, the ring spectrum $A := \operatorname{Thom}(\Omega^2 \Phi \to BU \times \bZ)$ admits the structure of an $\bE_2$ ring spectrum \cite{Ando-Blumberg-Gepner}.
    \end{rmk}

    Combined with Conjecture \ref{conj: flow thom}, we conclude:

    \begin{conj}\label{conj: SH}
        If the classifying map $X \to BU$ factors through $\Phi \to BU$, then Floer theory of a symplectomorphism $\phi$ preserving the $\Phi$-structure (including Hamiltonian Floer theory and symplectic cohomology) can be defined as a module over the ring spectrum $A := \operatorname{Thom}(\Omega^2 \Phi \to BU \times \bZ)$. We write $SH(\phi; A)$ or $SH(X; A)$ for this $A$-module.
    \end{conj}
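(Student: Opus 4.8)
The plan is to prove the conjecture modulo Conjecture~\ref{conj: flow thom} (and Assumption~\ref{asmp}), by promoting the flow category of Floer trajectories to an object of $\operatorname{Flow}^{\Omega\Phi}$ and then invoking the identification of that category with $A$-modules.

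First I would recall from \cite{Large} the flow category $\cM^H$: its objects are the generators of the Floer problem for $H$ (or, for a symplectomorphism $\phi$, of the $\phi$-twisted problem), and its morphism spaces $\cM^H_{xy}$ are compactified moduli of Floer trajectories, realised as compact manifolds with corners whose codimension-one boundary strata are the fibre products $\cM^H_{xz}\times\cM^H_{zy}$. Next, following \cite[Appendix B]{AB2} and the discussion around \eqref{eq: class Ham}, I would choose for each pair $(x,y)$ a map $\cM^H_{xy}\to\Omega^2 X$ --- recording the cappings of a Floer cylinder at $\pm\infty$ relative to fixed reference data --- so that the composite in \eqref{eq: class Ham} classifies $T\cM^H_{xy}$ (this is the index computation of \cite{AB2}), and so that these maps are compatible with the composition maps of $\cM^H$ up to coherent homotopy. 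Composing with $\Omega^2 f$, the chosen lift $f\colon X\to\Phi$ then yields compatible lifts of the classifying maps of the $T\cM^H_{xy}$ to $\Omega^2\Phi$, i.e.\ an $\Omega^2\Phi$-structure on $\cM^H$ (an $\Omega F$-structure for $F=\Omega\Phi$ in the sense of Section~\ref{sec: 2.1}); the hypothesis that $\phi$ preserves the $\Phi$-structure --- i.e.\ comes equipped with a homotopy $\phi^* f\simeq f$ over $BU$, automatic when $\phi$ is Hamiltonian --- is exactly what makes the twisted moduli spaces inherit these lifts. This promotes $\cM^H$ to an object of $\operatorname{Flow}^{\Omega\Phi}$.

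Then I would apply Conjecture~\ref{conj: flow thom} with $F=\Omega\Phi$. The map $\Omega^2\Phi\to\Omega^2 BU\simeq BU\times\bZ$ is a map of (double) loop spaces, being $\Omega^2$ of $\Phi\to BU$ with $BU\times\bZ$ given its Bott infinite loop structure as in Convention~\ref{convention}; and the underlying real Thom spectrum of $\Omega^2\Phi\to BU\times\bZ$ agrees with $\operatorname{Thom}(\Omega^2\Phi\to BO\times\bZ)$, so --- using Assumption~\ref{asmp} to compare the two Bott models and their multiplicative refinements --- the ring spectrum $R$ produced by Conjecture~\ref{conj: flow thom} is the associative ring spectrum underlying $A$. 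Hence $\operatorname{Flow}^{\Omega\Phi}$ is equivalent to the $\infty$-category of $A$-modules, and $SH(\phi;A)$ is defined to be the image of $\cM^H$; independence of $H$ and of the auxiliary choices then follows by realising continuation maps as $\Omega^2\Phi$-structured cobordisms of flow categories.

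The hard part is the coherence in the second step: one must organise the cappings and their implicit trivialisations so that the $\Omega^2\Phi$-structure is compatible with \emph{all} boundary strata of \emph{all} morphism spaces simultaneously, rather than merely objectwise --- this is the flow-module bookkeeping of \cite{Large} in the form used in \cite{PS}, and it is where Assumption~\ref{asmp} does its real work. A secondary caveat is that the conclusion is only as strong as Conjecture~\ref{conj: flow thom}, which is currently established only in the framed case \cite[Section~8]{AB2}; the (conjectural) flow-categorical model constructed in Section~\ref{sec: Thet flow} is the main evidence offered here.
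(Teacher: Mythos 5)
This statement is a conjecture in the paper, and the paper offers no proof of it: its own justification is precisely the route you take, namely that the $\Phi$-structure on $X$ induces $\Omega^2\Phi$-structures on the Floer moduli spaces via the Bott-periodicity composite \eqref{eq: class Ham}, after which Conjecture \ref{conj: flow thom} (with $\Omega F \simeq \Omega^2\Phi$) supplies the $A$-module structure. Your proposal matches this, and you correctly flag the two genuine gaps — the unestablished coherence of the $\Omega^2\Phi$-structure across all boundary strata, and the dependence on Conjecture \ref{conj: flow thom} and Assumption \ref{asmp} — so it is an accurate elaboration of the paper's motivating argument rather than a proof.
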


    \begin{example}
        If $TX$ is stably trivial, this corresponds to a $\Phi$-structure where $\Phi=\operatorname{pt}$. Then $A = \bS$ is equivalent to the sphere spectrum, and so we would obtain symplectic cohomology over the sphere spectrum.
    \end{example}

    \begin{example}[Universal general case] \label{ex: univ sh}
        The condition in Conjecture \ref{conj: SH} always holds when $\Phi=BU$ and $\Phi \to BU$ is the identity. In this case, $A$ is the \emph{periodic complex cobordism} ring spectrum $MUP$. This $A$ is the universal coefficient ring that symplectic cohomology should be defined over, independent of $X$ and without additional assumptions on $TX$.
    \end{example}

    \begin{example}[Gradings]
        Let $\Phi$ be the homotopy fibre of the map $BU \to K(\bZ;2)$ classifying two times the universal first Chern class. 
        
        A $\Phi$-structure on $X$ induces a \emph{grading} on $X$ in the sense of \cite{Seidel:graded}. In this case, $\Omega^2\Phi \simeq BU$ and $A=MU$ is the \emph{complex cobordism} ring spectrum.
    \end{example}

    \begin{example}[Polarisations]\label{ex: polar}
        A \emph{stable polarisation} of $X$ is a $\Phi$-structure where $\Phi=BO \to BU$ is the complexification map. 

        A choice of stable polarisation is essentially the same data as a section of the stable Lagrangian Grassmannian over $X$. In that case, if $L\subset X$ is a Lagrangian submanifold, then its stable Gauss map is naturally valued in $U/O$. 
        Stable polarisations exist when $X = T^*Q$ is a cotangent bundle, or is a positively arborealisble Weinstein domain \cite{Alvarez-Gavela-Eliashberg-Nadler}.

        The composition $\Omega^2\Phi \simeq \Omega O \to BU \times \bZ \to BO \times \bZ$ is nullhomotopic (as a map of infinite loop spaces, recalling Convention \ref{convention}), so the stable vector bundle classified by it is trivial; therefore the ring spectrum $A = \operatorname{Thom}(\Omega O \to BO \times \bZ)$ is a suspension spectrum: $A \simeq \Sigma^\infty_+ \Omega O$.

        Thus Conjecture \ref{conj: SH} says that symplectic cohomology for polarised $X$ should be defined over the ring spectrum $\Sigma^\infty_+ \Omega O$: this is $SH(X; \Sigma^\infty_+\Omega O)$.

        For stably polarised $X$, symplectic cohomology exists over the sphere spectrum $SH^*(X; \bS)$ (see \cite{KraghViterbo}; this can also be obtained from minor modifications to \cite{Large}). There is a map of ring spectra $\Sigma^\infty_+ \Omega O \to \bS$; we expect that the former determines the latter via change-of-rings:
        \begin{equation}\label{eq: chan ring}
            SH(X; \Sigma^\infty_+ \Omega O ) \otimes_{\Sigma^\infty_+ \Omega O } \bS \simeq SH(X; \bS)
        \end{equation}
        but \emph{not} the other way around. In particular, even if Floer theory over the sphere spectrum exists, it does not determine all other forms of Floer theory.
        \end{example}

        \begin{rmk}
             If $X$ is stably polarised, then $TX \oplus E_\bC \cong \bC^N$ for some real vector bundle $E \to \bC$, and the symplectic manifold $\operatorname{Tot}(E_\bC)$ is stably framed. A disc bundle neighbourhood of the zero-section in $\operatorname{Tot}(E_{\bC})$ admits a Liouville structure \cite{Avdek}, and so its symplectic cohomology is defined over $\bS$. Passing from $X$ to such a disc bundle in $\operatorname{Tot}(E_\bC)$ might be viewed as a geometric model for change-of-rings as in (\ref{eq: chan ring}).
        \end{rmk}

    \begin{example}[Universal specific case]\label{ex: univ spec SH}
        Fixing $X$, we may also set $\Phi=X$ and the map $\Phi \to BU$ the classifying map for $TX$. Then $X$ tautologically admits a $\Phi$-structure. In this case, $A = \operatorname{Thom(\Omega^2 X \to BU \times \bZ)}$. Conjecture \ref{conj: SH} says that symplectic cohomology should be defined over $A$. This $A$ is the universal coefficient ring that symplectic cohomology of $X$ should be defined over.
    \end{example}

    \begin{rmk}
        Throughout the rest of the paper, we work with Lagrangian rather than Hamiltonian Floer theory. However, much of the abstract set-up can be adapted to the closed-string setting, by replacing discs with boundary punctures considered in Section \ref{sec: abstr dis} by punctured rational curves, cf. Section \ref{sec: int pun}.
    \end{rmk}

\subsection{Lagrangian Floer theory}\label{sec: Lag Floer conj}
    In the previous section we discussed when symplectic cohomology should be defined over an $\bE_2$-ring spectrum $A$. In this section, we discuss when Lagrangian Floer homology can be defined over an $A$-algebra $R$.
    \begin{defn}
        A \emph{tangential pair} consists of a pair of based spaces $\Theta, \Phi$, along with maps that fit into a commutative diagram:
        \begin{equation}\label{eq: strong}
            \xymatrix{
                \Theta 
                \ar[r]
                \ar[d]
                &
                \Phi
                \ar[d]
                \\
                BO 
                \ar[r]
                &
                BU
            }
        \end{equation}
    \end{defn}

    Fix a tangential pair $(\Theta, \Phi)$. Associated to this tangential pair are two ring spectra. Firstly, we let $A = \operatorname{Thom}(\Omega^2 \Phi \to BU \times \bZ)$ be the $\bE_2$ ring spectrum considered in Section \ref{sec: conj SH}.

    Let $F$ be the homotopy fibre of $\Theta \to \Phi$. From (\ref{eq: strong}), $F$ maps naturally to the homotopy fibre of $BO \to BU$, which is $U/O$. Extending the two fibration sequences further to the left, we obtain a commutative diagram:
    \begin{equation}\label{eq: strong thom}
        \xymatrix{
            \Omega^2 \Phi
            \ar[r]
            \ar[d]
            &
            \Omega F
            \ar[d]
            \\
            BU \times \bZ \ar[r]
            &
            BO \times \bZ
        }
    \end{equation}
    where the bottom horizontal arrow is the map classifying realification of complex vector bundles. 

    Let $R$ be the Thom spectrum of the stable vector bundle over $\Omega F$ classified by the map (\ref{eq: strong thom}): $\Omega F \to BO \times \bZ$.

    Then $R$ is an associative ring spectrum, and by functoriality of the Thom spectrum construction is an algebra over the $\mathbb{E}_2$ ring spectrum $A$.

    Similar Bott periodicity considerations to those in Section \ref{sec: conj SH} motivate the following:

    \begin{conj}
        Assume $X$ admits a $\Phi$-structure. Then there exists a Fukaya category $\scrF(X; A,R)$, whose objects are exact Lagrangians $L$ equipped with a \emph{compatible $\Theta$-structure}, meaning the following diagram commutes:
        \begin{equation}
            \xymatrix{
                L 
                \ar[r]
                \ar[d]
                &
                \Theta
                \ar[r]
                \ar[d]
                &
                BO
                \ar[d]
                \\
                X
                \ar[r]
                &
                \Phi
                \ar[r]
                &
                BU
            }
        \end{equation}
        where the composition along the rows are classifying maps for $TL$ and $TX$ respectively. 
    \end{conj}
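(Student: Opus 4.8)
The plan is to follow the pattern of \cite{Large} and Section \ref{sec: conj SH}, upgrading the Floer-theoretic moduli spaces so that they carry coherent $\Omega F$-structures, and then to linearise them over $R$ by invoking Conjecture \ref{conj: flow thom}. First I would organise the Floer data: for a tuple of Lagrangian $\Theta$-branes $L_0,\dots,L_k$ and generators $x_0,\dots,x_k$ (intersection points, or Hamiltonian chords in the geometrically bounded setting with small wrapping at infinity), let $\cM(x_0;x_1,\dots,x_k)$ be the moduli space of pseudoholomorphic polygons with the corresponding Lagrangian boundary labels. After choosing framing data for the relevant families of Cauchy--Riemann operators as in \cite[Appendix B]{AB2} and \cite{Large}, each such $\cM$ maps to an iterated loop space built from the based path space of $X$ relative to $\bigsqcup_i L_i$, and composing with the looped classifying maps and Bott periodicity produces a classifying map for the stable tangent bundle of $\cM$; in particular each $\cM$ inherits the structure of a compact manifold with corners together with a stable tangential structure, and these fit together into a flow category in the sense of \cite{CJS}.

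Second --- and this is the crux --- I would show that the compatible $\Theta$-structures on the $L_i$ and the $\Phi$-structure on $X$ refine these classifying maps to lifts along $\Omega F \to BO\times\bZ$ of \eqref{eq: strong thom}, compatibly with all boundary strata, i.e. with the composition/gluing maps of the flow category. This is the open-string analogue of the observation in Section \ref{sec: conj SH} that a $\Phi$-structure on $X$ induces an $\Omega^2\Phi$-structure on the symplectic-cohomology flow category: the relative data $L \to \Theta$ lying over $X\to\Phi$ is precisely what reduces the structure group of the linearised operators along the Lagrangian boundary from the $U/O$ of the polarised case down to $F$, and hence, after delooping, gives the $\Omega F$-structure. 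To see that this is a structure on the \emph{whole} flow category rather than morphism-by-morphism, I would package it as a single map from the universal flow category of such labelled polygons to a space assembled from iterated loop spaces of $F$, in the spirit of the index-bundle description recalled after \eqref{eq: above} and in \cite[Section 3]{P}.

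Finally, granting Conjecture \ref{conj: flow thom} and Assumption \ref{asmp}, each morphism flow category with its $\Omega F$-structure determines an $R$-module $CF(L_i,L_j;R)$, the composition maps of the flow categories give the $A_\infty$ structure maps (or, more modestly, the Donaldson-category structure), and functoriality of the Thom spectrum construction makes everything $A$-linear since $R$ is an $A$-algebra; the resulting $\scrF(X;A,R)$ then has as objects exactly the exact Lagrangians with compatible $\Theta$-structure, as required. I expect the genuine difficulty to lie exactly where the paper flags it: making the flow-category formalism support tangential structures rigorously --- transversality and gluing for the moduli spaces together with their $\Omega F$-structures --- and proving Conjecture \ref{conj: flow thom} identifying $\operatorname{Flow}^F$ with $R$-modules, modulo the comparison of models of Bott periodicity subsumed in Assumption \ref{asmp}. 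The later sections of the paper circumvent this by working with flow \emph{modules} rather than full flow categories and with a Donaldson--Fukaya rather than an $A_\infty$ category.
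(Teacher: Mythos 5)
The statement you are addressing is stated in the paper as a conjecture: the paper does not prove it, but motivates it by Bott periodicity and then, in Sections 3--6, constructs a rigorous substitute (Theorem \ref{thm:main}) — a Donaldson--Fukaya category built from $(\Theta,\Phi)$-oriented flow categories and flow modules, with spaces of abstract discs serving as an ad hoc model for Bott periodicity precisely so that neither Conjecture \ref{conj: flow thom} nor Assumption \ref{asmp} is needed. Your outline matches the paper's motivation in broad strokes, and you correctly locate the genuine difficulties (coherent tangential structures on compactified moduli spaces, and the Pontrjagin--Thom identification of $\operatorname{Flow}^F$ with module categories).

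There is, however, one structural misidentification in your ``crux'' step. You assert that the strip/polygon moduli spaces carry $\Omega F$-structures and hence that each morphism flow category determines an $R$-module $CF(L_i,L_j;R)$. But a Floer strip has \emph{two} boundary components, each carrying its own $\Theta$-lift, constrained only by a single $\Phi$-trivialisation over the disc; the classifying space for the index bundle is therefore the homotopy fibre of $\Omega\Theta\times\Omega\Theta\to\Omega\Phi$ (Lemma \ref{lem: htpy type pairs}), which is $\Omega F\otimes_{\Omega^2\Phi}\Omega F$, not $\Omega F$. Correspondingly the flow category $\cM^{LK}$ is expected to model an $A$-linear $R$--$R$ \emph{bimodule} $M^{LK}$ (Proposition \ref{prop: flow pt pair case}, Conjecture \ref{conj: PT/CJS bimod}), and the morphism object of the Fukaya category is $M^{LK}\otimes_{R\otimes_A R^{op}}R_\Delta$, an $A$-module rather than an $R$-module. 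This distinction is not cosmetic: as Remark \ref{rmk:not a morphism group} and Section \ref{sec: conj struc} explain, the unit $e_L$ is represented by a right flow module (a half-plane with a single boundary component), and it lives in $\Omega_*^{(\Theta,\Phi)}(\cM^{LL})\cong\pi_*(M^{LL}\otimes_{R\otimes_A R^{op}}R_\Delta)$ but not, in general, in $\pi_*M^{LL}$; if you took morphisms to be $\pi_*$ of an $R$-module as you propose, the category would fail to be unital. This is exactly why the paper's actual construction takes morphisms to be bordism classes of right flow modules over $\cM^{LK}$, and why the open-closed map lands in a Hochschild-type (capsule bordism) theory rather than in $R$-homology.
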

    As usual in the Fukaya category, we must either assume $L$ is compact, or allow it to be cylindrical at infinity and make a choice of ``wrapping scheme'' at infinity; we choose to ignore this in this section since it is orthogonal to the study of tangential structures.

    \begin{rmk}\label{rmk:reverse bundle}
        The Pontrjagin-Thom construction identifies the homotopy groups of the Thom ring spectrum $R$ with the bordism groups of manifolds with stable \emph{normal} bundle classified by the pullback of the universal bundle $\xi_{\mathrm{taut}}$ under $\iota: \Omega F \to BO\times \bZ$. By precomposing with the map taking a loop to its inverse, this is isomorphic to the opposite of the Thom ring spectrum of the pullback of $-\xi_{\mathrm{taut}}$ under the same map $\iota$. Since a ring and its opposite have the same homotopy groups, this means that the homotopy groups of $R$ also describe bordism of manifolds with stable \emph{tangent} bundle classified by the pullback of the universal bundle. (See \cite[Section 3]{Galatius_etal}  and \cite{Nguyen} for a detailed  discussion of the corresponding relation between stable tangential and stable normal structures for the spectra $MO$ and $MU$, and in particular the indexing of the unstable versions of these bordism theories.) Passing between stable tangential and stable normal data accounts for the appearance of the minus sign in Corollary \ref{cor:1}.

        Put differently, if $\Ind_{\Omega}$ is the index bundle over $\Omega F$, and $\iota: \Omega F \to \Omega F$ reverses loops, then $\Ind_{\Omega} + \iota^*\Ind_{\Omega}$ is stably trivial, so $\iota^*\Ind_{\Omega}$ is stably equivalent to $-\Ind_{\Omega}$. But by reversing loops, the Thom spectra of $\iota^*\Ind_{\Omega}$ and $\Ind_{\Omega}$ also agree.
    \end{rmk}

    \begin{example}\label{ex: thet cont}
        If $\Theta$ is contractible, $\Omega \Phi \simeq F$ and so $R \simeq A$. Therefore the objects in the $A$-linear Fukaya category should be exact Lagrangians equipped with a framing, which is compatible with a trivialisation of the restriction of the $\Phi$-structure on $X$ to $L$.
    \end{example}
    There is a spectrum $MOP \simeq \vee_{k \in \bZ} \Sigma^k MO$, whose associated homology theory $MOP_i(X)$ consists of bordism classes of closed manifolds over $X$ of any dimension. This has a similarly-defined complex-oriented cousing $MUP \simeq \vee_{l \in \bZ} \Sigma^{2l} MU$. These can be constructed as the Thom spectra of the universal virtual vector bundles over $BO \times \bZ$ and $BU \times \bZ$ respectively. These can be constructed as $\bE_\infty$-ring spectra \cite{Ando-Blumberg-Gepner} (cf. Convention \ref{convention}).
    
    \begin{example}[Universal general case]
        Let $\Theta = BO$ and $\Phi = BU$. We have that $A = MUP$ (cf. Example \ref{ex: univ sh}) and $R$ is the Thom spectrum of a bundle over $\Omega U/O \simeq BO \times \bZ$; this is $R = MOP$, the periodic unoriented bordism ring spectrum. Any Lagrangian admits a canonical such $\Theta$-structure.
    \end{example}
    \begin{example}[Universal specific case] \label{ex: univ lag}
        Let $L \subseteq X$ be a closed exact Lagrangian. Leting $\Theta = L$ and $\Phi = X$, $L$ and $X$ then tautologically admit compatible $\Theta$ and $\Phi$ structures respectively.

        Conjecture \ref{sec: Lag Floer conj} says that $L$ is therefore an object over a Fukaya category defined over $R$, where $R$ is the Thom spectrum of a virtual vector bundle over $\Omega \operatorname{hofib}(L \to X)$.
    \end{example}
    \begin{example}[Contractible Lagrangian case]
        Let $\Theta, \Phi$ be a tangential pair, and assume $X$ admits a $\Phi$-structure. Then any contractible Lagrangian $L$ admits a $\Theta$-structure.
    \end{example}
    One can compute the set of $\Theta$-structures on a given Lagrangian:
    \begin{prop}\label{prop: class brane str}
        Let $(\Theta, \Phi)$ be a tangential pair and assume $X$ is equipped with a $\Phi$-structure. Let $L \subseteq X$ be a closed exact Lagrangian admitting a $\Theta$-structure.

        Let $P$ be the homotopy pullback of the following diagram:
        \begin{equation}\label{eq:bran}
            \xymatrix{
                &
                \Phi \ar[d]
                \\
                BO 
                \ar[r]
                &
                BU
            }
        \end{equation}
        $L$ and $\Theta$ both admit natural maps to $P$; let $Q$ be the homotopy fibre of $\Theta \to P$. 

        Then the set of homotopy classes of $\Theta$-structure bijects with the set of lifts of the map $L \to P$ to $\Theta$. If (\ref{eq: strong}) is a diagram of loop spaces, this is either empty or a torsor over the set of homotopy classes of map $[L,Q]$.
    \end{prop}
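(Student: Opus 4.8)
The plan is to reinterpret a compatible $\Theta$-structure on $L$ as a lift against the map $\Theta\to P$, and then to run the standard obstruction theory for sections of a principal fibration.

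First I would unwind the definitions. Write $P = BO\times^{h}_{BU}\Phi$ for the homotopy pullback of (\ref{eq:bran}). By definition the space of compatible $\Theta$-structures on $L$ is the homotopy limit of the cube appearing in the Conjecture of Section \ref{sec: Lag Floer conj}: unwinding, a point of it is a map $\tilde f\colon L\to\Theta$ together with homotopies exhibiting $L\xrightarrow{\tilde f}\Theta\to BO$ as $TL$ and $L\xrightarrow{\tilde f}\Theta\to\Phi$ as the restriction to $L$ of the given $\Phi$-structure, coherently with (\ref{eq: strong}) and with the canonical complex-linear isomorphism $TX|_L\cong (TL)_{\bC}$ (valid because $L$ is Lagrangian). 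By the universal property of $P$, the triple $\bigl(TL\colon L\to BO;\ L\hookrightarrow X\to\Phi;\ (TL)_\bC\cong TX|_L\bigr)$ is exactly the ``natural map'' $\ell\colon L\to P$ of the statement, and the square (\ref{eq: strong}) is exactly the map $\Theta\to P$; what remains of the $\Theta$-structure datum is then precisely a lift of $\ell$ along $\Theta\to P$, i.e.\ a point of the homotopy fibre $\operatorname{Map}_{/P}(L,\Theta)$ of $\operatorname{Map}(L,\Theta)\to\operatorname{Map}(L,P)$ over $\ell$. Taking $\pi_0$ yields the first claimed bijection.

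Next, assume (\ref{eq: strong}) is a diagram of loop spaces. Then $P$, a homotopy pullback of loop spaces along loop maps, is again a loop space and $p\colon\Theta\to P$ is a loop map. Delooping and setting $C:=\operatorname{hofib}(Bp\colon B\Theta\to BP)$, the fibre sequence $C\to B\Theta\xrightarrow{Bp}BP$ extends to the left (Puppe) as $Q\to\Theta\xrightarrow{p}P\xrightarrow{\delta}C$ with $Q\simeq\Omega C$, exhibiting $\Theta\simeq\operatorname{hofib}(\delta)$; equivalently $p$ is the pullback along $\delta$ of the (contractible) based path fibration over $C$, hence a principal $\Omega C$-fibration. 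Consequently a lift of $\ell$ along $p$ is the same as a nullhomotopy of $\delta\circ\ell\colon L\to C$, so $\operatorname{Map}_{/P}(L,\Theta)$ is the space of paths in $\operatorname{Map}(L,C)$ from the constant map to $\delta\ell$; its $\pi_0$ is empty if $\delta\ell$ is not nullhomotopic and otherwise, by concatenation, a torsor over $\pi_1(\operatorname{Map}(L,C),\mathrm{const})=\pi_0\operatorname{Map}(L,\Omega C)=[L,\Omega C]=[L,Q]$. Combined with the first step, this shows the set of homotopy classes of $\Theta$-structures on $L$ is empty or a $[L,Q]$-torsor; it is nonempty by hypothesis, hence a torsor.

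The main obstacle is the first step: matching the homotopy-coherent ``commutative cube of classifying maps'' with a lift into the homotopy pullback $P$ requires reading the definition of a compatible $\Theta$-structure as being compatible with the canonical isomorphism $TX|_L\cong(TL)_\bC$ — i.e.\ the factorisation of $TX|_L$ through $BO$ is pinned down rather than free data — after which the identification is formal from the universal property of homotopy pullbacks (applied after $\operatorname{Map}(L,-)$). The remainder is routine: the loop-space hypothesis is exactly what extends the homotopy-fibre sequence of $\Theta\to P$ to the left, making that fibration principal, which is precisely what upgrades ``$[L,Q]$ acts'' to ``$[L,Q]$-torsor'' — the point being that there is then no holonomy obstruction to transitivity or freeness.
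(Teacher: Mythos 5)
The paper states Proposition \ref{prop: class brane str} without proof (it sits in the largely motivational Section \ref{sec: Lag Floer conj}), so there is no written argument to compare against; your proof supplies the standard one that the authors evidently have in mind, and it is correct. The two steps are exactly right: first, the universal property of the homotopy pullback $P$ identifies the data of a compatible $\Theta$-structure (a map $L\to\Theta$ together with the coherence homotopies over $BO$, $\Phi$ and $BU$) with a lift of the natural map $\ell\colon L\to P$ along $\Theta\to P$ --- and you correctly isolate the one genuinely delicate point, namely that the homotopy filling the outer rectangle is pinned down by the canonical isomorphism $TX|_L\cong (TL)_\bC$ rather than being free data, which is what makes $\ell$ well defined. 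Second, under the loop-space hypothesis the fibre sequence $Q\to\Theta\to P$ extends one step to the right to $\Theta\to P\xrightarrow{\delta}C$ with $Q\simeq\Omega C$, so $\Theta\to P$ is principal and vertical homotopy classes of lifts are homotopy classes of nullhomotopies of $\delta\ell$, hence empty or a torsor over $\pi_1(\operatorname{Map}(L,C),\mathrm{const})\cong[L,Q]$; this is precisely the role the loop-space hypothesis plays in the statement. The only caveat worth recording is interpretive: ``homotopy classes of lifts'' must be read as $\pi_0$ of the homotopy fibre of $\operatorname{Map}(L,\Theta)\to\operatorname{Map}(L,P)$ over $\ell$ (vertical homotopy), which is the reading your argument uses and the one consistent with the torsor claim.
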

    
    In the next section we lay out our expectations for what the structure of this category is.
    
\subsection{Structure of spectral Fukaya category}\label{sec: conj struc}

    Let $\Theta, \Phi$ be a tangential pair. Recall our standing notation:

\begin{align}\label{eq: FAR}
\nonumber    F &= \operatorname{hofib}(\Theta \to \Phi) \\
A &= \mathrm{Thom}(\Omega^2\Phi \to BU \times \bZ) \\ 
\nonumber R &= \mathrm{Thom}(\Omega F \to BO \times \bZ)
\end{align}

    Assume $X$ has a $\Phi$-structure, and $L, K$ and $P$ are exact Lagrangians in $X$ with compatible $\Theta$-structures.

    In Section \ref{sec: Floer tang}, we explain how to construct a $(\Theta,\Phi)$-oriented Floer flow category $\cM^{LK}$ (see Section \ref{sec: Thet flow} for a definition) to each such pair of Lagrangians. We work under additional technical assumptions (see Section \ref{sec: tan}) that amount to saying $A$ and $R$ are connective and satisfy $\pi_0 \cong \bZ$.
    
    These flow categories are the objects of a category $\operatorname{Flow}^{\Theta,\Phi}$ which we conjecture is equivalent to the category of $A$-linear $R-R$ bimodules (i.e. $R \otimes_A R^{op}$-modules) (see Conjecture \ref{conj: PT/CJS bimod}, motivated by Proposition \ref{prop: flow pt pair case} which essentially verifies this for flow categories with one object (under Assumption \ref{asmp})). We write $M^{LK}$ for the $R-R$ bimodule corresponding to the flow category $\cM^{LK}$.

    Composition in this Fukaya category is a bilinear map of flow categories $\cM^{LKP}: \cM^{LK} \times \cM^{KP} \to \cM^{LP}$ built from moduli spaces of holomorphic triangles; under Conjecture \ref{conj: PT/CJS bimod} this corresponds to a map of $R-R$ bimodules $M^{LK} \otimes_R M^{KP} \to M^{LP}$.

    The set of morphisms from $L$ to $K$ in the Donaldson-Fukaya category $\scrF_*(L,K;A,R)$ is then obtained by taking the set of bordism classes of right flow modules over $\cM^{LK}$. Conjecture \ref{conj: flow mod PT} (motivated by Proposition \ref{prop: bord pt bimod}) says that this set of morphisms is obtained from $M$ and the diagonal bimodule $R_\Delta$ via
    \begin{equation}\label{eq: fuk flow}
        \scrF_*(L,K;A,R) = \pi_*\left(M^{LK} \otimes_{R-R} R_{\Delta}\right)
    \end{equation}
    $M^{LK} \otimes_{R-R} R_{\Delta}$ is an $A$-module, so we find that the Fukaya category is enriched over the monoidal category of $A$-modules (cf. \cite{Mandell:Smash} for the fact that the (homotopy) category of $A$-modules is monoidal). However, if $L \cap K$ is a single point, the corresponding module is a copy of $R$, viewed as an $A$-module, instead of a copy of $A$.
    \begin{rmk}\label{rmk:module action}
        (\ref{eq: fuk flow}) implies that the Floer groups $\scrF_*(L,K;A,R)$ are modules over $\pi_*(A)$; though we do not carry it out, a flow-categorical model for this can be constructed using methods in Section \ref{sec: int pun}.
    \end{rmk}
    \begin{rmk}
        Extending Remark \ref{rmk:module action}, (\ref{eq: fuk flow}) further implies that the Floer groups $\scrF_*(L,K;A,R)$ are modules over the ring of $R-R$ bilinear endomorphisms of $R_\Delta$: this is $\pi_*THC_A(R)$, where $THC_A$ denotes topological hochschild \emph{co}homology (relative to $A$).
    \end{rmk}
    
    A reason that we take morphisms to be bordism classes of right flow modules over $\cM^{LK}$, rather than bordism classes of maps from a fixed flow category with one object to $\cM^{LK}$ (which would correspond to $\pi_*M^{LK}$), is that the unit morphism in the Fukaya category, constructed using holomorphic curves in Section \ref{sec: unit OC}, lies in the former rather than the latter.

\subsection{Open-closed map}

     Assume for now we restrict to working with closed Lagrangian submanifolds. In this case, when working in the graded case over $\bZ$, the open-closed map is a map $\cO\cC: HH_*(\scrF(X;\bZ)) \to H_{*+n}(X)$. Composing with the Dennis trace, this gives a map $K_0(\scrF(X; \bZ)) \to H_n(X)$, sending the class of a Lagrangian brane to its homology class \cite[Section 3]{PS}.

     \begin{rmk}
         Recall that the \emph{topological Hochschild homology} of an associative ring spectrum $R$ is the spectrum defined by $THH(R) := R_{\Delta} \otimes_{R \otimes R^{op}} R_{\Delta}$ where $R_\Delta$ is the diagonal bimodule, and $\otimes$ denotes (derived) tensor product in the category of spectra.

         If $R$ is instead an $A$-algebra for some $\bE_2$ ring spectrum $A$, \emph{relative} topological Hochschild homology is defined by taking derived tensor product over $A$: $THH_A(R) := R_{\Delta} \otimes_{R \otimes_A R^{op}} R_{\Delta}$.
     \end{rmk}

     In the case where $\Phi$ is contractible, so $A \simeq \bS$, we expect to find a version of the open-closed map landing in homology of $X$ with topological hochschild homology coefficients:
     \begin{equation*}
         THH(\scrF(X; A,R)) \to \Sigma^{-n}X \wedge THH(R)
     \end{equation*}
     In Section \ref{sec: unit OC} we construct a map from the set of objects of $\scrF(X; A,R)$ to $\pi_0$ of the right hand side; see Section \ref{sec: rel flow mod}.

     In the general case, the target of the open-closed map is more complicated; motivated by the computations of Section \ref{sec: rel flow mod}, we expect it to be a map 
     \begin{equation*}
         THH_A(\scrF(X;A,R)) \to \Sigma^{-n} THH_B(R)
     \end{equation*}
    where $B$ is the Thom spectrum of the virtual vector bundle over $\Omega^2 X$ classified by the composition
    \begin{equation*}
        \Omega^2 X \xrightarrow{\Omega^2 TX} \Omega^2 BU \simeq BU \times \bZ
    \end{equation*}

    We discuss the target of the open-closed map in more detail in Section \ref{sec: rel flow mod}, in particular Corollary \ref{cor: aaa}.

\subsection{Relation to sheaf quantisation}
    Let $C$ be an $\bE_\infty$ ring spectrum. Nadler-Shende \cite{Nadler-Shende} construct a $C$-linear category $\mu sh(X;C)$ associated to a Liouville domain $X$, when the \emph{$C$-valued Maslov obstruction} of $X$, is nullhomotopic. This obstruction is defined to be the composition:
    \begin{equation}\label{eq: maslov comp}
        X \xrightarrow{TX} BU \to B(U/O) \xrightarrow{B^2 J_C} B^2 \operatorname{Pic}(C)
    \end{equation}
    where $\operatorname{Pic}(C)$ is the Picard space of $C$, a nonconnected delooping of the group of units $GL_1(C)$, and $B^2 J_C$ is a two-fold delooping of the $C$-valued $J$-homomorphism, $J_C: BO \to Pic(C)$.

    A nullhomotopy of the composition (\ref{eq: maslov comp}) induces a lift of $TX$ through $G \to BU$, where $G$ is the homotopy fibre of the composition $BU \to B^2 \operatorname{Pic}(C)$.
    
    Applying Conjecture \ref{sec: Lag Floer conj} with $\Theta$ contractible predicts that if the $C$-valued Maslov obstruction is nullhomotopic, there is a spectral Fukaya category defined over $A := \operatorname{Thom}(\Omega^2 G \to BU \times \bZ)$ (note if $\Theta$ is contractible then $A \simeq R$). The objects in this category are described in Example \ref{ex: thet cont}.

    There is a similar story for more general tangential pairs, with the sheaf quantisation side explored in \cite{Nadler-Shende} and \cite{Jin-Treumann}. 
    \begin{rmk}
        Recall a \emph{$C$-orientation} on an $n$-manifold $M$ consists of a $C$-valued Thom class, $\tau_M \in H^n(M; C)$. Note that if $C=\bS$ is the sphere spectrum, the Pontrjagin-Thom theorem implies that $\pi_* C$ bijects with the framed bordism groups, but these are \emph{not} isomorphic to the $C$-oriented bordism groups.

        However, any $C$-oriented closed $n$-manifold determines a class in $\pi_nC$, called its \emph{$C$-oriented fundamental class}, cf. \cite[Chapter V]{Rud98}.
    \end{rmk}

    \begin{rmk}
        The ring spectrum $A=\operatorname{Thom}(\Omega^2 G \to BU \times \bZ)$ is not in general equivalent to the original ring $C$, but by results of \cite{Antolin-Camarena-Barthel} there is a map of rings $f:A \to C$. The homotopy groups of $A$ are the bordism groups of manifolds equipped with both a stable complex structure and a $C$-orientation, and $f_*$ sends such a manifold to its $C$-fundamental class.
    \end{rmk}
    
    We expect that in this case the $A$-linear Fukaya category $\scrF(X; A)$ recovers $\mu sh(X; C)$ via change-of-rings along $f$:
    \begin{equation}\label{eq: base-change}
        \mu sh(X; C) \simeq \scrF(X; A) \otimes_A C
    \end{equation}
    but not the converse: in general, there is no splitting $C \to A$, and even in the case where there is a splitting the converse still may not hold. From this point of view, Fukaya categories seem to pick up more information. 

    We sketch an example where this phenomenon occurs. Let $C = \bS$. In this case there is a map of rings $\iota: C \to A$ splitting $f$, but we argue that the Fukaya category over $A$ cannot be obtained from the sheaf quantisation category over $C$ by base-change along $\iota$. 

    In this example, we assume both Assumption \ref{asmp} and that the $A$-linear spectral Fukaya category $\scrF(X; A)$ has morphisms as described in Section \ref{sec: conj struc}.

    \begin{ex}
        
        Let $C=\bS$, and $\Phi$ be the homotopy fibre of the map $BU \to B^2 \operatorname{Pic}(\bS)$, as above. Let $\Theta$ be contractible; this implies that $A \simeq R$ (where $A,R$ are as defined in (\ref{eq: FAR})).

        We can compute the homotopy pullback $P$ of (\ref{eq:bran}) appearing in Proposition \ref{prop: class brane str} explicitly. Since (\ref{eq:bran}) is a diagram of infinite loop spaces, $P$ is the homotopy fibre of the composite $BO \to BU \to B^2\operatorname{Pic}(\bS)$. But this factors through $B(U/O)$ and the composition $BO \to BU \to B(U/O)$ is nullhomotopic, so $BO \to B^2\operatorname{Pic}(\bS)$ also is. It follows that the fibration sequence for $P$ splits, so $P \simeq BO \times B\operatorname{Pic}(\bS)$.

        Now $A$ is the Thom spectrum whose homotopy groups compute the bordism groups of manifolds equipped with an $\bS$-orientation as well as a stable complex structure, and the map $f_*: \pi_* A \to \pi_* \bS$ sends such a manifold to its $\bS$-fundamental class. 

        Let $X= T^*S^d$; this admits a $\Phi$-structure coming from the standard stable framing on $S^d$. Let $L$ be the zero-section equipped with the $\Theta$-structure coming from the same stable framing.

        Then using Proposition \ref{prop: class brane str} and the isomorphism $\pi_j \operatorname{Pic}(\bS) \cong \pi_{j-1} \bS$ (for $j \geq 2$), we find that the set of $\Theta$-structures on the zero-section is a torsor over $\pi_d O \oplus \pi_{d-1} \bS$. Let $(\alpha, \beta) \in \pi_d O \oplus \pi_{d-1}\bS$, and let $K$ be the zero-section, with the $\Theta$-structure obtained from the one on $L$ by applying $(\alpha, \beta)$. 

        By picking an appropriate Morse function on $S^d$ with exactly 2 critical points $x$ and $y$, we may choose perturbation data such that the flow category $\cM := \cM^{LK}$ associated to the Lagrangians $L$ and $K$ has exactly 2 objects $x$ and $y$ in degrees $d$ and $0$ respectively, with the only nonempty moduli space $\cM_{xy}$ being diffeomorphic to $S^{d-1}$.

        This moduli space $\cM_{xy}$ admits a stable complex structure and $\bS$-orientation. Let $[\cM_{xy}] \in \pi_{d-1}A$ be its bordism class. Its image under the map $f_*:\pi_*A \to \pi_*\bS$ is exactly $\beta$.

        Now assume $d$ is 3 mod 4. Then $\pi_d O \cong \bZ$; we may choose $\alpha$ to be nonzero. Appealing to Assumption \ref{asmp}, the stable tangent bundle of $\cM_{xy} \simeq S^{d-1}$, with its complex structure, is classified by the image of $\alpha$ under the composition
        \begin{equation*}
            \pi_d O \cong \pi_{d+1} BO \to \pi_{d+1} BU \cong \pi_{d-1} BU \cong [\cM_{xy}, BU]
        \end{equation*}
        This map is injective, so for nonzero $\alpha$, we find that $\cM_{xy}$ has nontrivial stable tangent bundle (as a stably complex bundle). In particular, $\cM_{xy}$ has nonzero Chern character and so does not admit a complex nullbordism. This implies that its class in $\pi_{d-1}A$ is nonzero, and similarly is not in the image of the map $\iota_*: \pi_{d-1}\bS \to \pi_{d-1}A$.

        Write $CF(L,K;A)$ for the $A$-module corresponding to the $(\Theta,\Phi)$-oriented flow category $\cM$ under Conjecture \ref{conj: PT/CJS bimod}. Assuming Conjecture \ref{conj: PT/CJS bimod} behaves as expected with respect to taking cones, we find that $HF(L, K; A) \simeq \operatorname{Cone}([\cM_{xy}]: \Sigma^{d-1} A \to A)$ as $A$-modules. The equivalence class of this $A$-module determines $[\cM_{xy}]$ up to sign. 
        
        If $HF(L,K;A) \simeq E \otimes_\bS A$ for some (bounded below) spectrum $E$, $E$ would also have to be of the form $E \simeq \operatorname{Cone}(\gamma: \Sigma^{d-1}\bS \to \bS)$ for some $\gamma \in \pi_{d-1} \bS$, so we would have that $[\cM_{xy}] = \pm \iota_* \gamma$, which we have argued cannot hold.

        In particular, the Fukaya category $\scrF(X; A)$ is not equivalent to $\mu sh(X; \bS) \otimes_\bS A$, since $HF(L,K;A)$ is not obtained from any spectrum by base-change along $\bS \to A$.
    \end{ex}

\section{Spaces of abstract discs} \label{sec: abstr dis}
    In this section we construct \emph{spaces of abstract discs with tangential structure}: this provides an ad-hoc model for Bott periodicity that is close enough to almost-complex geometry that we can use them in Floer theory, whilst also being flexible enough to incorporate into the abstract set-up of flow categories. 

    Roughly, we construct virtual vector bundles over (the universal cover of) $U/O$ or $U/O(N)$, along with a suitable multiplicative structure: this allows us to avoid fixing any specific model for $BO$. 
\subsection{Tangential structures}\label{sec: tan}

    For any $1 \leq N < \infty$, the rank $N$ Lagrangian Grassmannian $U/O(N)$ is path-connected, and has fundamental group $\bZ$. We consider its universal cover $\widetilde{U/O}(N) \to U/O(N)$, which we equip with some basepoint living over the natural basepoint $\bR^N \subseteq \bC^N$ in $U/O(N)$. Maps into, and points in, $\widetilde{U/O}(N)$ will always be denoted $\tilde \cdot$, and we will drop the $\tilde{\,}$ to denote its projection down to $U/O(N)$.

    There are natural \emph{stabilisation} maps $U/O(N) \to U/O(N+1)$, which send a totally real subspace $V \subseteq \bC^N$ to $\bR \oplus V \subseteq \bC \oplus \bC^{N}$. We define $U/O$ to be the colimit as $N \to \infty$; since the stabilisation maps are cofibrant this is also the homotopy colimit. For convenience, we will often consider this to be the rank $N$ case, for $N=\infty$: $U/O = U/O(\infty)$.
    \begin{defn}\label{def: tan}
        For $1 \leq N \leq \infty$, a \emph{tangential structure of rank $N$} is a based space over $\widetilde{U/O}(N)$. 
        An \emph{oriented tangential structure of rank $N$} is a 1-connected based space over $\widetilde{U/O}^{or}(N)$, where $\widetilde{U/O}^{or}(N)$ is defined to be the homotopy fibre of the map $\widetilde{U/O}(N) \to K(\bZ/2, 2)$ classifying the second Stiefel-Whitney class.
    \end{defn}
    Note that all our tangential structures are \emph{graded}, meaning they live over $\widetilde{U/O}(N)$ rather than over $U/O(N)$.
    \begin{ex}
        We write $fr$ for the tangential structure given by taking the inclusion of the basepoint $\mathrm{pt} \hookrightarrow \widetilde{U/O}(N)$. 
    \end{ex}

\subsection{Abstract discs and boundary data}\label{subsec:abstract discs}
    Let $\Theta$ be a tangential structure of some finite rank $N$.
    \begin{defn}
        Let $i \geq 0$ be a non-negative integer and $j\in \{0,1\}$. A \emph{domain with $i$ inputs and $j$ outputs}, $D_{i,j}$, is a compact Riemann surface $\overline D_{i,j}$ biholomorphic to a disc, with $(i+j)$ ordered boundary marked points and, if $j=0$, a marked point $p$ in the interior along with a nonzero tangent vector $v$ in $T_pD_{i,j}$. If $i>0$, we require $v$ to point towards the boundary component between the first and final boundary punctures. $i$ of the boundary marked points are labelled \emph{incoming} and the others are labelled \emph{outgoing}. 
        We call the incoming marked points and, if $j=1$, the outgoing one, \emph{punctures}. If $j=0$, we call the vector $v$ the \emph{asymptotic marker}.
        
        We order the incoming marked points (non-cyclically) in an anticlockwise manner, such that the output (if present) is anticlockwise of the final input and clockwise of the initial input (if there are any inputs). 
          \end{defn}
          
          We write $D_{i,j}$ for the complement of the punctures; this is an open Riemann surface with boundary. We may sometimes write $D_{i,0/1}$ when  simultaneously discussing the cases in which there is or is not an output puncture.

  \begin{rmk}
      Discs with one output arise when considering Floer differentials, holomorphic triangles, etc. Discs with no output arise when considering the open-closed map.
  \end{rmk}

\begin{figure}[ht]\label{Domain with 3 inputs}
\begin{center}
\begin{tikzpicture}[scale=0.5] 
\draw (0,0) circle (3);
\draw[gray] (3,0) circle (0.1);
\draw[fill,gray] (-3,0) circle (0.1);
\draw[fill,gray] (-3/2, 2.6) circle (0.1);
\draw[fill,gray] (-3/2, -2.6) circle (0.1);

\draw[->] (3.2,0) -- (3.7,0);
\draw[->] (-3.7,0) -- (-3.2,0);
\draw[->] (-1.95, 3.38) -- (-1.65, 2.86);
\draw[->] (-1.95, -3.38) -- (-1.65, -2.86);
\end{tikzpicture}
\caption{Domain with 3 inputs and 1 output}

\end{center}
\end{figure}
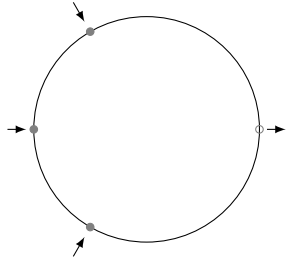

    For fixed $i,j$, the space of domains with the data of these marked points is contractible; when $j=0$ this requires the condition on the direction of the asymptotic marker.

    \begin{defn}\label{def: boun dat}
        We define \emph{boundary data of rank $N$} for a domain $D_{i,0/1}$ to be a pair $(\eps, \tilde\Lambda)$ such that
        \begin{itemize}
            \item $\eps$ is a collection of disjoint \emph{strip-like ends} for each boundary puncture: this consists of holomorphic boundary-respecting embeddings $\eps_x: (-\infty, 0] \times [0,1] \hookrightarrow D_{i,0/1}$ onto a neighbourhood of each incoming puncture $x$ and a similar embedding $\eps_y: [0, +\infty) \times [0,1] \hookrightarrow D_{i,0/1}$ onto a neighbourhood of the outgoing puncture $y$, if there is one.\par 
           
            \item $\tilde \Lambda$ is a set of  \emph{boundary conditions} $\tilde \Lambda: \partial D_{i,0/1} \to \widetilde{U/O}(N)$ which are constant near the punctures (e.g. over each strip-like end). 
                    \end{itemize}

            We call the image of all the $\eps$ the \emph{strip-like ends}. We will sometimes use the phrase `shrunken strip-like ends' to mean a neighbourhood of infinity in the strip-like ends (precise such neighbourhoods are prescribed in Definition \ref{defn:CR data}). 
            
            We write the projection of $\tilde \Lambda$  to $U/O(N)$ as $\Lambda$, which we view as a totally real subbundle of the trivial complex vector bundle $\bC^N$ over $\partial D_{i,0/1}$. 
            We assume $\tilde \Lambda$ is constant over each strip-like end. We may sometimes write $\Lambda_b$ or $\tilde \Lambda_b$ for the restrictions of these maps to a component $b$ of the boundary $\partial D_{i,1}$.
            
\begin{figure}[ht]\label{Strip-like ends}
\begin{center}
\begin{tikzpicture}[scale=0.5] 
\draw (0,0) circle (3);
\draw[gray] (3,0) circle (0.1);
\draw[fill,gray] (-3,0) circle (0.1);
\draw[fill,gray] (-3/2, 2.6) circle (0.1);
\draw[fill,gray] (-3/2, -2.6) circle (0.1);

\draw[semithick, dotted] (-0.5,2.8) arc (25:-145:1);
\draw[semithick, red] (-1.02,2.72) arc (30:-145:0.5);

\draw[semithick, dotted] (-0.5,-2.8) arc (-25:145:1);
\draw[semithick, red] (-1.02,-2.72) arc (-30:145:0.5);

\draw[semithick, red] (-2.9,0.5) arc (85:-85:0.5);
\draw[semithick,dotted] (-2.7,0.95) arc (80:-80:1);

\draw[semithick,red] (2.9,0.5) arc (95:265:0.5);
\draw[semithick,dotted] (2.7,0.95) arc (100:260:1);
\draw[->] (3.2,0) -- (3.7,0);
\draw[->] (-3.7,0) -- (-3.2,0);
\draw[->] (-1.95, 3.38) -- (-1.65, 2.86);
\draw[->] (-1.95, -3.38) -- (-1.65, -2.86);

\end{tikzpicture}
\caption{Strip-like ends (dotted boundary) and shrunken strip-like ends (red boundary)}

\end{center}
\end{figure}
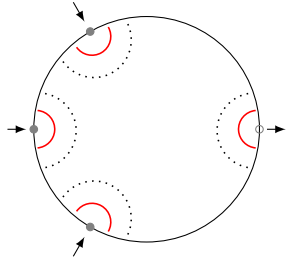

    For each puncture $x$, we write $\tilde u_x \in \widetilde{U/O}(N)$ for the restriction of $\tilde \Lambda$ to the $\{+1\}$-boundary of the strip-like end in a neighbourhood of the puncture where it is constant; similarly we write $\tilde v_x \in \widetilde{U/O}(N)$ for the restriction of $\tilde \Lambda$ to the corresponding $\{0\}$-boundary. $u_x, v_x$ refer to the corresponding elements of $U/O(N)$. 
    
    We assume that for each puncture $x$, $u_x$ and $v_x$ are transverse. We call the $\tilde u$ and $\tilde v$ the \emph{boundary conditions at the punctures}.

    \end{defn}
  
    \begin{defn}\label{def: hom lift}
        Let $\Theta$ be a tangential structure of rank $N$. A \emph{$\Theta$-orientation} on a domain with boundary data of rank $N$, $(D_{i,j}, \eps, \tilde\Lambda)$, consists of a pair $(H, \Gamma)$, where:
        \begin{itemize}
            \item For each boundary component $b$ of $\partial D_{i,j}$, $\Gamma_b$ is a totally real subbundle of the trivial complex vector bundle $\bC^N$ over $b \times [0,1]$ (equivalently, a map $b \times [0,1] \to \widetilde{U/O}(N)$), which agrees with $\Lambda$ over $b \times \{0\}$, and is constant in the $b$-direction (i.e. only depends on the $[0,1]$-direction) over each intersection of $b$ with the shrunken strip-like ends. 
            \item For each boundary component of $\partial D_{i,j}$, $H_b: b \times \{1\} \to \Theta$ is a lift of $\Lambda|_{b \times \{1\}}$, meaning the following diagram commutes:
            \begin{equation}
                \xymatrix{
                    b \times \{1\} \ar[r]_{H_b} \ar[d] &
                    \Theta \ar[d] \\
                    b \times [0,1] \ar[r]_{\Lambda_b} &
                    \widetilde{U/O}(N)
                }
            \end{equation}
            again such that $H$ is constant over each intersection of $b$ with the shrunken strip-like ends. 
        \end{itemize}
        We call $(H, \Gamma)$ a \emph{homotopy lift} of $\tilde \Lambda_b: b \to \widetilde{U/O}(N)$ to $\Theta$ for each $b$.\par 
        \end{defn}

\begin{figure}[ht]\label{Theta lift}
\begin{center}
\begin{tikzpicture}[scale=0.8] 
\draw[thick] (0,0) circle (3);
\draw[gray] (3,0) circle (0.1);
\draw[fill,gray] (-3/2, 2.6) circle (0.1);
\draw[fill,gray] (-3/2, -2.6) circle (0.1);

\draw[semithick,red] (3,0) -- (3.85,0.65);
\draw[semithick,red] (3,0) -- (3.85,-0.65);
\draw[semithick,red] (-1.5,2.6) -- ({3.9*cos(115)}, {3.9*sin(115)});
\draw[semithick,red] (-1.5,2.6) -- ({3.9*cos(125)}, {3.9*sin(125)});
\draw[semithick,red] (-1.5,-2.6) -- ({3.9*cos(-115)}, {3.9*sin(-115)});
\draw[semithick,red] (-1.5,-2.6) -- ({3.9*cos(-125)}, {3.9*sin(-125)});

\draw[semithick,gray, domain = 125:235] plot ({3.9*cos(\x)}, {3.9*sin(\x)});
\draw[semithick,gray, domain = 10:115] plot ({3.9*cos(\x)}, {3.9*sin(\x)});
\draw[semithick,gray, domain = -10:-115] plot ({3.9*cos(\x)}, {3.9*sin(\x)});

\draw (0,0) node {$D_{2,1}$};
\draw (0.25,2.5) node {$\Lambda$};
\draw (0.25,-2.5) node {$\Lambda$};
\draw (-2.5,0) node {$\Lambda$};

\node (a) at (-2.3,0) {};
\node (b) at (-2.6,1.5) {};
\draw[dashed,semithick,->] (a) to [out=60,in=-20, looseness=2] (b);

\draw  (1.5,3) node {$\Gamma$};
\draw  (1.5,-3) node {$\Gamma$};
\draw  (-3.5,0) node {$\Gamma$};

\draw[dashed,semithick,->] ({4.1*cos(45)}, {4.1*sin(45)}) -- ({5.1*cos(45)}, {5.1*sin(45)});
\draw ({4.3*cos(45)+0.5}, {4.3*sin(45)-0.25}) node {$H$};
\draw ({5.6*cos(45)}, {5.6*sin(45)}) node {$\Theta$};

\draw[dashed,semithick,->] ({4.1*cos(-45)}, {4.1*sin(-45)}) -- ({5.1*cos(-45)}, {5.1*sin(-45)});
\draw ({4.3*cos(-45)+0.45}, {4.3*sin(-45)+0.15}) node {$H$};
\draw ({5.6*cos(-45)}, {5.6*sin(-45)}) node {$\Theta$};

\draw[dashed,semithick,->] ({4.1*cos(180)}, {4.1*sin(180)}) -- ({5.1*cos(180)}, {5.1*sin(180)});
\draw ({4.3*cos(180)}, {4.3*sin(180)+0.45}) node {$H$};
\draw ({5.6*cos(180)}, {5.6*sin(180)}) node {$\Theta$};

\draw[thick,red, domain=110:130] plot ({3*cos(\x)},{3*sin(\x});
\draw[red] ({2.6*cos(105)},{2.6*sin(105)}) node {$u_{x_1}$};
\draw[red] ({2.6*cos(125)},{2.6*sin(125)}) node {$v_{x_1}$};

\draw[thick,red, domain=-110:-130] plot ({3*cos(\x)},{3*sin(\x});
\draw[red] ({2.6*cos(-110)},{2.6*sin(-110)}) node {$v_{x_2}$};
\draw[red] ({2.6*cos(-130)},{2.6*sin(-130)}) node {$u_{x_2}$};

\draw[thick,red, domain=-10:10] plot ({3*cos(\x)},{3*sin(\x});
\draw[red] ({2.6*cos(10)},{2.6*sin(10)}) node {$u_{y}$};
\draw[red] ({2.6*cos(-10)},{2.6*sin(-10)}) node {$v_{y}$};

\node (c) at (3.4,0.35) {};
\node (d) at (4.5,1) {};
\node (e) at (3.4,-0.35) {};
\node (f) at (4.5,-1) {};
\draw[dashed,semithick,<-] (c) to [out=-30,in=90, looseness=2] (d);
\draw[dashed,semithick,<-] (e) to [out=30,in=-90, looseness=2] (f);
\draw[red] (4.5,1.75) node {$\gamma_{u_y}$};
\draw[red] (4.5,-1.75) node {$\gamma_{v_y}$};

\node (g) at ({3.3*cos(115)-0.15},{3.3*sin(115)}) {};
\node (h) at ({4.5*cos(120)-0.25},{4.5*sin(120)}) {};
\draw[dashed,semithick,<-] (g) to [out=140,in=0, looseness=2] (h);
\draw[red] ({4.5*cos(120)-0.5},{4.5*sin(120)}) node {$\gamma_{u_{x_1}}$};

\end{tikzpicture}
\caption{Boundary conditions for the $\Theta$-lift}
\end{center}
\end{figure}
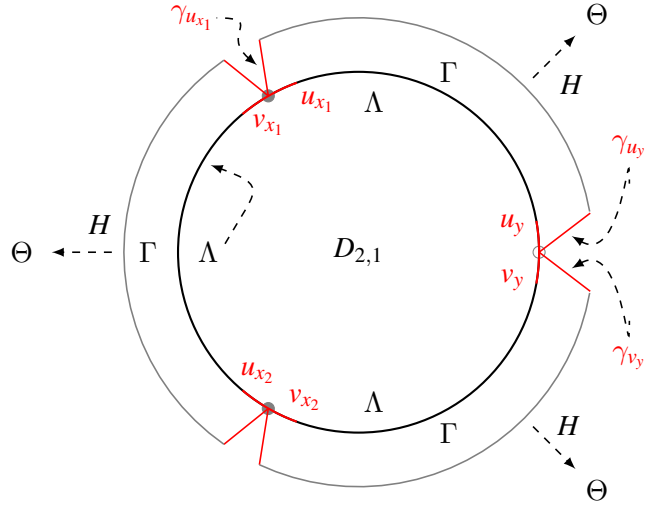

        \begin{rmk}\label{rmk: pnct}
            For each puncture $x$ and component $b$ touching $x$, restricting $\Gamma_b$ and $H_b$ to $\{z\} \times [0,1]$ and $\{z\} \times \{1\}$ for some $z$ in the intersection of $b$ with the shrunken strip-like end near $x$ gives us paths $\gamma: [0,1] \to \widetilde{U/O}(N)$ such that $\gamma(0) = u_x$ or $v_x$, and $h$ a lift of $\gamma(1)$ to $\Theta$. Together, these define such paths and lifts $\gamma_{u_x}, \gamma_{v_x}$ and $h_{u_x}, h_{v_x}$ for each $u_x$ and each $v_x$. Note that by assumption, these does not depend on the choice of $z$.
        \end{rmk}

    \begin{defn}
        A set of \emph{puncture data of rank $N$} (with $i$ inputs and $1$ respectively 0 outputs) is a tuple 
        \[
        (\tilde u, \tilde v) = ((\tilde u_{x_1}, \tilde v_{x_1}), \ldots, (\tilde u_{x_i}, \tilde v_{x_i}); (\tilde u_y, \tilde v_y))
        \]
        respectively
        \[
        (\tilde u, \tilde v) = ((\tilde u_{x_1}, \tilde v_{x_1}), \ldots, (\tilde u_{x_i}, \tilde v_{x_i}))
        \]
        of pairs in $\widetilde{U/O}(N)$ whose projections to $U/O(N)$ are transverse totally real subspaces of $\bC^N$ (note that despite the suggestive name and notation, there is no Riemann surface as part of this data). 
        
        A set of \emph{$\Theta$-oriented} puncture data $(\tilde u, \tilde v)$ also consists of paths $\gamma_{u_x}, \gamma_{v_x}: [0,1] \to \widetilde{U/O}(N)$, sending 0 to $u_x$ and $v_x$ respectively for each $x$, and lifts $h_{u_x}, h_{v_x}$ of each $\gamma_{u_x}(1), \gamma_{v_x}(1)$ to $\Theta$. 
    \end{defn}

        By Remark \ref{rmk: pnct}, any ($\Theta$-oriented) domain $\bD$ determines a set of ($\Theta$-oriented) puncture data $\bE(\bD)$.
        
        We think of puncture data as (locally constant) boundary data for some domain $D_{i,j}$, but only defined in some small neighbourhood of the punctures. We write $\bE = (\tilde u, \tilde v, \gamma, h)$ for this tuple, $\bE_x = (\tilde u_x,\tilde v_x, \gamma_{u_x}, \gamma_{v_x}, h_{u_x}, h_{v_x})$ its restriction to the data near a puncture $x$, and $\bE_{u_x}$ and $\bE_{v_x}$ for its restriction to each side of the puncture. If we do not want to label the inputs and outputs, we will sometimes use the chosen ordering on the inputs, and write this as $\bE = (\bE_1, \ldots, \bE_i; \bE_{out})$ (or omit the final term if there is no output).

    \subsection{Cauchy-Riemann data on abstract discs}\label{subsec: CR data}
    
    \begin{defn}\label{defn:CR data}
        \emph{Cauchy-Riemann data} on a domain with boundary data $(D_{i,j}, \eps, \tilde\Lambda)$ consists of a tuple $(L, J, Y, g)$, where
        \begin{itemize}
         \item The \emph{lengths} $L$ consist of a nonnegative number $L_x \geq 0$ for each puncture $x$.
            \item $J$ is a complex structure on the trivial bundle $\bC^N \to D_{i,j}$, compatible with the standard symplectic structure on $\bC^N$, and agreeing with the standard complex structure over $\partial D_{i,j}$ and over all the shrunken strip-like ends.
            \item $Y \in \Omega^{0,1}\left(D_{i,j}, \bC^N\right)$ is a $(0,1)$-form on the bundle $\bC^N \to D_{i,j}$ (with respect to the complex structure $J$), agreeing with $dt$ (where $t$ is the coordinate on $[0,1]$) on all the shrunken strip-like ends.
            \item $g$ is a Riemannian metric on $D_{i,j}$, agreeing with the standard metric on all the strip-like ends.
        \end{itemize}
    \end{defn}
    
     Here we use the term `shrunken strip-like ends' for the image of all the $\eps((\pm \infty, \pm L] \times [0,1])$.

     We now fix once and for all a Sobolev weight $\kappa > 2$. 
     
    \begin{defn}\label{def: CR data}
        Let $(D_{i,j}; L, \eps, \tilde\Lambda; J, Y, g)$ be a domain with boundary and Cauchy-Riemann data. We write $W^{2,\kappa}(D_{i,j},\bC,\Lambda)$ for the space of $W^{2,\kappa}$-sections of the trivial bundle $\bC^N \to D_{i,j}$, restricting to sections of $\Lambda$ over $\partial D_{i,j}$.

        Then the associated \emph{Cauchy-Riemann operator} is the linear map:

        $$D^{CR} := \bar \partial_J + Y: W^{2,\kappa}\left(D_{i,j}, \bC^N, \Lambda\right) \to \Omega^{0,1}_{W^{2,\kappa-1}}\left(D_{i,j}, \bC^N\right)$$
        \end{defn}
        
        We may sometimes add subscripts to the notation $D^{CR}$ to indicate the choice of data it depends upon.\par 
        It is standard (cf. \cite{McDuff-Salamon}) that $D^{CR}$ is always a Fredholm operator.
        
        \begin{defn} \label{defn:perturbation}
        A \emph{perturbation datum} for the above data consists of a pair $(V, f)$, where
        \begin{itemize}
            \item $V$ is a finite-dimensional real inner product space. We call $V$ the \emph{domain} of the perturbation.
            \item $f: V \to \Omega^{0,1}_{W^{2,\kappa-1}}(D_+, \bC^N, \Lambda)$ is a linear map, such that the \emph{perturbed Cauchy-Riemann operator} 
        \begin{equation} \label{eq: D plus f}D^{CR} + f: W^{2,\kappa}(D_{i,j}, \bC^N, \Lambda) \oplus V\to \Omega^{0,1}_{W^{2,\kappa-1}}(D_{i,j}, \bC^N)
        \end{equation}
        is surjective, and for all $v \in V$, $f(v)$ is supported away from the strip-like ends. 
    \end{itemize}
\end{defn}

    \begin{defn}
        For brevity, we say an \emph{abstract disc (with $i$ inputs and $j$ outputs, of rank $N$)} consists of a domain $D_{i,j}$, boundary data of rank $N$, as well as a choice of perturbation datum. We may write such an undecuple as $\bD_{i,j} = (D_{i,j}; L,\eps,\tilde\Lambda; H,\Gamma;J,Y,g; V, f)$.
        \end{defn}
       
    Given an abstract disc with both puncture data and Cauchy-Riemann data, there is a well-defined vector space given by 
    \begin{equation} \label{eq: ind}
        \ker(D^{CR}+f)
    \end{equation}
    
    Associated to a domain with boundary data $(D_{i,j}, L, \eps, \tilde \Lambda)$ is an integer $\mu \in \bZ$ called the \emph{Maslov index}, as in \cite[Section 11h]{Seidel:book} and \cite{Robbin-Salamon:Maslov}. The rank of \eqref{eq: ind} is $\mu(\bD_{i,j}) + \dim(V)$. 
    
    This depends only on the boundary data at the punctures, so we may sometimes write this as $\mu=\mu(\bE)$. Each puncture data $\bE_x$ has a Maslov index, and the Maslov index of the boundary data at the punctures is determined by these: 
   \begin{equation} 
       \mu(\bE) = \left(\sum_i \mu(\bE_i) \right) - \mu(\bE_{out})
   \end{equation}

\subsection{Gluing of abstract discs}\label{sec: glue}
Let $i,j \geq 0$. Assume we are given sets of $\Theta$-oriented puncture data $\bE$ with $i$ inputs and $1$ output, and $\bE'$ with $j$ inputs and either zero or one output, and both of rank $N$. We assume that $\bE_{out} = \bE'_k$ with $1\leq k \leq j$. We may define puncture data with $(i+j-1)$ inputs, and as many outputs as for $\bE'$:
\begin{equation}
    \bE \#_k \bE' := \left(\bE_1', \ldots, \bE'_{k-1}, \bE_1, \ldots, \bE_i, \bE'_{k+1}, \ldots, \bE'_j; [\bE'_{out}]\right)
\end{equation}
(where the $[\cdots]$ for $[\bE'_{out}]$ indicate that this may or may not be present). 

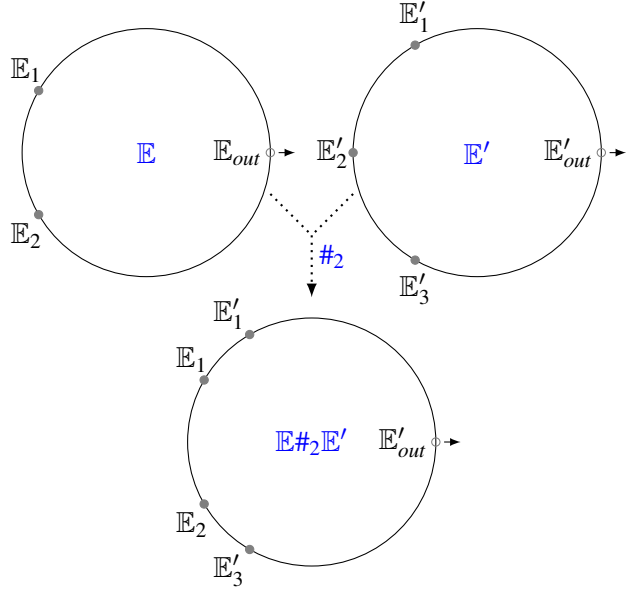
\begin{figure}[ht]\label{Gluing at second input}
\begin{center}
\begin{tikzpicture}[scale=0.55] 

\draw (4,0) circle (3);
\draw[gray] (7,0) circle (0.1);
\draw[->] (7.2,0) -- (7.6,0);
\draw[fill,gray] (1,0) circle (0.1);
\draw[fill,gray] (5/2, 2.6) circle (0.1);
\draw[fill,gray] (5/2, -2.6) circle (0.1);

\draw (6.2,0) node {$\bE'_{out}$};
\draw (0.5,0) node {$\bE'_2$};
\draw (5/2,3.3) node {$\bE'_1$};
\draw (5/2,-3.3) node {$\bE'_3$};

\draw[blue] (4,0) node {$\bE'$};
\draw[blue] (-4,0) node {$\bE$};
\draw[blue] (0,-7) node {$\bE \#_2 \bE'$};

\draw (-4,0) circle (3);
\draw[gray] (-1,0) circle (0.1);
\draw[->] (-0.8,0) -- (-0.4,0);

\draw (-1.8,0) node {$\bE_{out}$};
\draw (-2.9-4,2) node {$\bE_1$};
\draw (-2.9-4,-2) node {$\bE_2$};
\draw[fill,gray] (-2.6-4,3/2) circle (0.1);
\draw[fill,gray] (-2.6-4,-3/2) circle (0.1);

\draw (0,-7) circle (3);
\draw[gray] (3,-7) circle (0.1);
\draw (2.2,-7) node {$\bE'_{out}$};
\draw[->] (3.2,-7) -- (3.6,-7);
\draw[fill,gray] (-2.6,3/2-7) circle (0.1);
\draw (-2.9,-5) node {$\bE_1$};
\draw[fill,gray] (-2.6, -3/2-7) circle (0.1);
\draw (-2.9,-9) node {$\bE_2$};
\draw[fill,gray] (-3/2, -2.6-7) circle (0.1);
\draw (-2,-2.6-7.5) node {$\bE_3'$};
\draw[fill,gray] (-3/2, 2.6-7) circle (0.1);
\draw (-2,2.6-6.5) node {$\bE'_1$};

\draw[thick,dotted]  (1,-1) -- (0,-2);
\draw[thick,dotted]  (-1,-1) -- (0,-2);
\draw[thick,dotted, ->]  (0,-2) -- (0,-3.5);
\draw[blue] (0.5,-2.5) node {$\#_2$};

\end{tikzpicture}
\caption{Gluing at the $k$-th input, $k=2$}
\end{center}
\end{figure}

Let $\bD_{i,1} = (D_{i,1}; L,\eps,\tilde\Lambda; H,\Gamma; J,Y,g;V, f)$  and $\bD'_{j,0/1} = (D'_{j,0/1}; L',\eps',\tilde\Lambda'; H',\Gamma';J',Y',g'; V', f')$ be $\Theta$-oriented abstract discs (as usual, $\bE$ and $\bE'$ are implicit here). For $T, T' \geq 0$  consider the domains $T \cdot \bD_{i,1} := (\ldots, L+T, \ldots)$ and $T' \cdot \bD'_{j,0/1}$; later we will choose $T$ and $T'$ sufficiently large for the conclusion of Lemma \ref{lem: T big ok} to hold.\par 

Now we construct $\bD''_{i+j-1,0/1} = \rho_k(\bD_{i,1}, \bD'_{j,0/1})$ as follows. The domain $D''_{i+j-1,0/1}$ is constructed by removing (the interiors of) the shrunken strip-like ends at the output of $T\cdot D_{i,1}$ and the $k^{\textrm{th}}$ input of $T' \cdot D'_{j,0/1}$, and gluing them together over the new boundary face:
\begin{equation}
    D''_{i+j-1,0/1} := \left(D_{i,1}\setminus \eps_{out}((L+T, \infty) \times [0,1])\right) \bigcup_{[0,1]}
    \left( D'_{j,0/1} \setminus \eps'_k((-\infty, -L-T') \times [0,1])\right)
\end{equation}
We call the union $\eps_{out}([L,L+T] \times [0,1]) \cup \eps'_k([-L-T',-L] \times [0,1])$ the \emph{neck}; this is canonically diffeomorphic to $[-T, T'] \times [0,1]$.\par 
Since each puncture of $D''_{i+j-1,0/1}$ is a puncture of either $D_{i,1}$ or $D'_{j,0/1}$, we equip $D''_{i+j-1,0/1}$ with lengths $L''$ and strip-like ends $\eps''$ by taking them to be either $L$ or $L'$ and $\eps$ and $\eps'$, depending on which of the two original domains the puncture lies in.\par 
The boundary conditions $\tilde\Lambda''$ can be taken to be either $\tilde\Lambda$ or $\tilde\Lambda'$ on each boundary component, except for on the two boundary components of $D''_{i+j-1,0/1}$ touching the neck. On these, we may concatenate $\tilde\Lambda$ and $\tilde\Lambda'$; this is well-defined by the assumption that $\bE_{out} = \bE'_k$. We define the homotopy lifts to $\Theta$, $H''$ and $\Gamma''$, in the same way.\par 
The Cauchy-Riemann data $(J'', Y'', g'')$ we may take to be the Cauchy-Riemann data $(J,Y,g)$ over $D_{i,1}$ and $(J',Y',g')$ over $D'_{j,0/1}$; these glue by the assumption that the Cauchy-Riemann data must be standard over the shrunken strip-like ends. Similarly if $D'_{j,0/1}$ contains an interior marked point and asymptotic marker, they determines ones on the glued surface too.\par 
We take $V'' = V \oplus V'$, and take $f'' = f + f'$; note that this makes sense since we assumed for any $v \in V$ and $v' \in V'$, $f(v)$ and $f(v')$ are supported away from the shrunken strip-like ends.

\begin{lem}\label{lem: T big ok}
    If $T, T' \geq 0$ are sufficiently large, then 
    the perturbed Cauchy-Riemann operator $D^{CR} + f''$ for $\bD''_{i+j-1,0/1}$ is surjective. In particular $f'' = f+f'$ does define a valid perturbation datum for the glued operator.
\end{lem}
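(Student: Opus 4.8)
The plan is to run a standard Floer-theoretic gluing/stretching-the-neck argument, using the fact that surjectivity of Cauchy--Riemann operators is an open condition that propagates along a degeneration with long necks. First I would set up the linear gluing map: given $T, T' \geq 0$, the glued domain $D''_{i+j-1,0/1}$ contains a long neck $[-T,T'] \times [0,1]$, and over the complement of this neck it is canonically identified with (the truncations of) $D_{i,1}$ and $D'_{j,0/1}$. Using cutoff functions supported away from the strip-like ends — which is precisely why Definition \ref{defn:perturbation} requires each $f(v)$ and $f'(v')$ to be supported away from the strip-like ends — one builds a pregluing map $W^{2,\kappa}(D_{i,1},\bC^N,\Lambda) \oplus V \ \widehat\times\ W^{2,\kappa}(D'_{j,0/1},\bC^N,\Lambda') \oplus V' \to W^{2,\kappa}(D''_{i+j-1,0/1},\bC^N,\Lambda'')\oplus V''$ that intertwines, up to an error decaying in $T,T'$, the domain operators with $D^{CR}+f''$. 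The key point is that over the neck the boundary conditions $\tilde\Lambda''$ are constant (by Definition \ref{def: boun dat}, $\tilde\Lambda$ is constant over strip-like ends) and $u_x, v_x$ are transverse, so the model operator on the infinite strip with those constant transverse Lagrangian boundary conditions has no kernel and no cokernel; this exponential decay along the neck is the analytic engine.

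The key steps, in order, would be: (1) recall the uniform elliptic estimate for $D^{CR}+f$ on each piece, together with the exponential decay estimate on the infinite strip with constant transverse totally real boundary conditions (standard, cf.\ \cite{McDuff-Salamon}); (2) construct an approximate right inverse for $D^{CR}+f''$ by patching together right inverses for $D^{CR}+f$ on $D_{i,1}$ and for $D^{CR}+f'$ on $D'_{j,0/1}$ via the cutoffs, noting these right inverses exist because both $D^{CR}+f$ and $D^{CR}+f'$ are assumed surjective; (3) estimate the error term: applying $D^{CR}+f''$ to the patched-together preimage of a given element $\eta \in \Omega^{0,1}_{W^{2,\kappa-1}}(D''_{i+j-1,0/1},\bC^N)$ differs from $\eta$ by a term whose norm is bounded by $C e^{-\delta \min(T,T')}\|\eta\|$, the decay coming from the fact that the cutoffs are supported in the neck where the kernel/cokernel of the model operator vanish; (4) for $T,T'$ large enough so that $C e^{-\delta\min(T,T')} < 1$, invert the resulting operator $\mathrm{Id} + (\text{small})$ by a Neumann series, producing a genuine right inverse and hence surjectivity of $D^{CR}+f''$.

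The main obstacle I expect is bookkeeping around the gluing of the $(0,1)$-forms and the Sobolev spaces: one must check that the patched element lies in $W^{2,\kappa}$ with boundary values in $\Lambda''$ (the concatenated boundary condition), that the cutoff functions do not interfere with the strip-like-end normal forms ($Y'' = dt$ there, $J''$ standard there), and that the norms are uniform in $T,T'$ — which requires choosing the cutoffs to have derivative bounded independently of $T,T'$, possible since the neck is long. A secondary subtlety is that $f'' = f + f'$ only makes sense because the supports of $f$ and $f'$ are disjoint from the neck and hence from each other's glued region; this is exactly the hypothesis built into Definition \ref{defn:perturbation}, so no new input is needed, but it should be stated explicitly. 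Once the uniform estimates and the disjoint-support observation are in place, the Neumann-series step is routine and yields the quantitative "$T, T'$ sufficiently large" in the statement.
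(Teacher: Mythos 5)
Your proposal is correct and follows essentially the same route as the paper's (sketched) proof: both construct an approximate right inverse for $D^{CR}+f''$ by splitting a form over the two pieces, applying the right inverses $R,R'$ that exist by surjectivity of $D^{CR}+f$ and $D^{CR}+f'$, splicing back with cutoffs supported in the neck (where the disjoint-support hypothesis on $f,f'$ makes $f''=f+f'$ well defined), and then correcting the resulting $\mathrm{Id}+(\text{small})$ operator to a genuine right inverse. The only cosmetic difference is the source of the smallness of the error: you invoke exponential decay along the neck for the model operator with constant transverse boundary conditions, whereas the paper uses slowly varying splicing cutoffs with $|d\alpha|=\mathcal{O}(1/T)$; both mechanisms are standard and give the same conclusion.
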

\begin{proof}
    This is a standard consequence of linear gluing; we just sketch the argument. For simplicity suppose $T=T'$ and the neck is canonically identified with $[-T,T] \times [0,1]$.  We define an approximate inverse $\widetilde{R}_T$  to the operator $D^{CR} + f''$ as follows:
    
    {\small{
    \[
        \centerline{\xymatrix{
            W^{\kappa,p}(D_{i,1},\bR^{2N}, \Lambda) \oplus V \oplus W^{\kappa,p}(D'_{j,0/1}, \bR^{2N}, \Lambda') \oplus V' 
            \ar[d]_{\mathrm{splice_L}} 
            & 
            & 
            \Omega^{0,1}_{W^{\kappa-1,p}}(D_{i,1},\bR^{2N}) \oplus V \oplus \Omega^{0,1}_{W^{\kappa-1,p}}(D'_{j,0/1}, \bR^{2N}) \oplus V' 
            \ar[ll]_{R \times R'} 
            \\
            W^{\kappa,p}(D''_{i+j-1,0/1},\bR^{2N}, \Lambda'') \oplus V'' 
            &
            & 
            \Omega^{0,1}_{W^{\kappa-1,p}}(D''_{i+j-1,0/1},\bR^{2N}) 
            \ar[u]_{\mathrm{split}_L} \ar@{-->}[ll] 
        }}
    \]
    }}
    where the splitting map cuts off vector fields in the neck with a cut-off function $\beta$ supported in a strip of width $1$ centred in the neck and with $|d\beta| = \mathcal{O}(1)$,  but the splicing map superimposes cut-off vector fields which are cut off more slowly, using a cut-off $\alpha$ supported in $[-T/2,T/2] \times [0,1]$ with $|d\alpha| = \mathcal{O}(1/T)$. Then 
    \begin{equation} \label{eqn: split and splice}
    \widetilde{R}_T = (\mathrm{splice}_T) \circ (R \times R') \circ (\mathrm{split}_T)
    \end{equation}
    with $R$ and $R'$ the right inverses to the stabilised operators on $D_{i,1}$ and $D_{j,0/1}$. 
We claim that $\|(D^{CR}+f'')\widetilde{R}_T (\xi) - \xi\| \to 0$ as $T \to \infty$.  In particular, we obtain a right inverse $R_T$ for the glued operator $D^{CR} +f''$
 by setting $R_T := \widetilde{R}_T \circ ((D^{CR}+f'')\widetilde{R}_T)^{-1}$.   To show the claim, one notes that $(D^{CR}+f'')\widetilde{R}_T (\xi)$ agrees with $\xi$ except in the neck, where the terms are controlled by the decay in $|d\alpha|$, cf. \cite[Appendix A]{Barraud-Cornea}, \cite{Bourgeois-Mohnke} for instance. Note that the approximate right inverses are uniformly bounded in $T$ since that holds for all of the maps in \eqref{eqn: split and splice}.  It follows that the glued operator is surjective.
 \end{proof}

 Let $\chi: \bR \to [0,1]$ be a smooth bump function such that $\chi(t) = 0$ if $t \leq 0$ and $\chi(t) = 1$ if $t \geq 1$. For a domain with boundary data, we let $\chi_L: D_{i,1} \to [0,1]$ be the bump function which is 1 away from the strip-like ends, and on $\eps([0,\pm\infty)_s \times [0,1]_t)$, is given by $\chi(\pm (L+1-s))$ (implicitly in this, and similar, formulae we use the value $L_x$ of $L$ at the strip-like end near some puncture $x$).

Let $\xi$ be an element of the kernel of any of the stabilised Cauchy-Riemann operators introduced above, for instance $\ker(D^{CR}+f)$. Exponential decay of $\xi$ shows that the cut-off vector field $\chi_T\xi$, which is  supported in a width one strip $[T,T+1] \times [0,1]$, is close to the actual kernel element $\xi$ itself, so if the cut-off  vector fields span a subspace $\widetilde{\ker}(D^{CR}+f) \subset W^{\kappa,p}(D_{i,1},\bR^{2N})$ then $L^2$-projection is an isomorphism between the actual kernel $\ker(D^{CR}+f)$ and the space spanned by cut-off fields; similarly for kernel vector fields on $D'_{j,0/1}$. There is an inclusion 
\begin{equation} \label{eqn:L2b}
\widetilde{\ker}(D^{CR}+f) \oplus \widetilde{\ker}(D^{CR}+f') \hookrightarrow W^{\kappa,p}(D''_{i+j-1,0/1},\bR^{2N},\Lambda) + V + V'
\end{equation}
which we compose with $L^2$-orthogonal projection to $\ker(D^{CR}+f'') + V''$ (by the identity on the second factor).
\begin{lem}\label{linear gluing}
    For gluing $T \gg0 $ the map 
    \begin{equation} \label{eqn:L2}
    \ker(D^{CR}+f) \oplus \ker(D^{CR}+f') \to \ker(D^{CR}+f'')
    \end{equation}
    given by splicing cut-off versions of the domain elements and then taking $L^2$-orthogonal projection is an isomorphism. 
\end{lem}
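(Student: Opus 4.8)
The plan is to run the standard linear-gluing argument for Cauchy--Riemann operators (cf.\ \cite{McDuff-Salamon}, \cite[Appendix A]{Barraud-Cornea}, \cite{Bourgeois-Mohnke}): match dimensions, then prove injectivity for $T \gg 0$.

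\emph{Step 1: the dimensions agree.} All three perturbed operators are surjective --- $D^{CR}+f$ and $D^{CR}+f'$ by hypothesis (Definition \ref{defn:perturbation}) and $D^{CR}+f''$ by Lemma \ref{lem: T big ok} --- so each kernel has dimension equal to the corresponding Fredholm index, which by the discussion after Definition \ref{def: CR data} is $\mu(\bE)+\dim V$, $\mu(\bE')+\dim V'$, and $\mu(\bE\#_k\bE')+\dim V''$ respectively. Since $V''=V\oplus V'$, and since the Maslov-index formula $\mu(\bE)=\sum_m \mu(\bE_m)-\mu(\bE_{out})$ together with $\bE_{out}=\bE'_k$ gives $\mu(\bE\#_k\bE')=\mu(\bE)+\mu(\bE')$ (the contribution of the glued puncture cancels), one obtains
\[
\dim\ker(D^{CR}+f'') = \dim\ker(D^{CR}+f) + \dim\ker(D^{CR}+f').
\]
Hence it suffices to show the map \eqref{eqn:L2} is injective once $T$ is large.

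\emph{Step 2: exponential decay and a pregluing estimate.} Kernel elements of a stabilised Cauchy--Riemann operator decay exponentially along the strip-like ends, uniformly in the choices, by the usual asymptotic analysis on the model strip with transverse totally real boundary conditions. Thus for $\xi\in\ker(D^{CR}+f)$, $\xi'\in\ker(D^{CR}+f')$, the spliced section $\xi\#_T\xi'$ obtained by cutting off with $\chi_T$ (supported in a width-one strip in the neck, with $|d\chi_T|=\mathcal O(1)$) satisfies
\[
\bigl\|(D^{CR}+f'')(\xi\#_T\xi')\bigr\| \;\le\; C e^{-\delta T}\bigl(\|\xi\|+\|\xi'\|\bigr),\qquad \delta>0,
\]
because $(D^{CR}+f'')(\xi\#_T\xi')$ is supported where $d\chi_T\neq 0$, where $\xi$ and $\xi'$ are of size $\mathcal O(e^{-\delta T})$. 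Moreover the supports of $\chi_T\xi$ and $\chi_T\xi'$ overlap only in that strip, so $\|\xi\#_T\xi'\|^2 \ge (1-Ce^{-\delta T})(\|\xi\|^2+\|\xi'\|^2)$.

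\emph{Step 3: injectivity, hence isomorphism.} By Lemma \ref{lem: T big ok}, $D^{CR}+f''$ has a right inverse $R_T$ bounded uniformly in $T$; consequently the $L^2$-orthogonal projection $\Pi$ onto $\ker(D^{CR}+f'')$ obeys $\|\zeta-\Pi\zeta\|\le C'\|(D^{CR}+f'')\zeta\|$ for all $\zeta$. Applying this to $\zeta=\xi\#_T\xi'$ and combining with Step 2 yields $\|\xi\#_T\xi'-\Pi(\xi\#_T\xi')\|\le C''e^{-\delta T}(\|\xi\|+\|\xi'\|)$, while $\|\xi\#_T\xi'\|$ is bounded below by a definite multiple of $(\|\xi\|^2+\|\xi'\|^2)^{1/2}$. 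Hence $\Pi(\xi\#_T\xi')\neq 0$ whenever $(\xi,\xi')\neq 0$, for $T$ large; so \eqref{eqn:L2} is injective, and by Step 1 it is an isomorphism.

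\emph{Main obstacle.} The routine parts are the Maslov bookkeeping and the Hilbert-space estimates; the substance is the uniform-in-$T$ control. The decisive input is the uniform boundedness of the right inverse $R_T$ --- this is exactly what Lemma \ref{lem: T big ok} supplies, via the uniform bounds on the maps in \eqref{eqn: split and splice} --- together with uniform exponential decay of kernel elements across the (compact, contractible) parameter space of abstract discs. One also has to keep track of the fact that the projection is taken on the $W^{2,\kappa}(D''_{i+j-1,0/1},\bC^N,\Lambda'')\oplus V''$ side and compare the relevant norm with the $L^2$-norm on the exponentially small neck contributions, but this comparison is harmless.
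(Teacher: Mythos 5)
Your proposal is correct and follows the same route as the paper: both reduce to injectivity by noting that surjectivity of all three stabilised operators makes kernel ranks equal to indices, which add under gluing. The paper then simply cites \cite[Lemma A.17]{ES2} for injectivity, and your Steps 2--3 (exponential decay of kernel elements plus the uniformly bounded right inverse from Lemma \ref{lem: T big ok}) are precisely the standard argument that citation supplies.
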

\begin{proof}
    Surjectivity of $(D^{CR}+f'')$ shows that its index agrees with the rank of the kernel, and so additivity of the index then shows that the spaces on the two sides of \eqref{eqn:L2} or \eqref{eqn:L2b} have the same rank, so it suffices to see that the map is injective.  This is proved in \cite[Lemma A.17]{ES2}.
\end{proof}

The linear gluing isomorphisms \eqref{eqn:L2} are nearly associative for gluing longer sequences of discs, cf \cite[Sections 8.3 \& 8.4]{Large}.  The essential case is the following. Suppose we have three domains $D_{i,1}$, $D_{j,1}$ and $D_{k,0/1}$ with associated stabilised Cauchy-Riemann data $D_1^{CR}+f$, $D_2^{CR}+f'$ and $D_3^{CR}+f''$.  Since by hypothesis the stabilisation maps $f,f',f''$ are supported away from all strip-like ends and the one-forms $Y, Y', Y''$ agree exactly with $dt$ in all strip-like ends, for given length parameters $T,T'$ there is a unique glued operator
\[
D_{T,T'}^{CR} + (f+f'+f'').
\]
We now have two isomorphisms
    \[
    \ker(D_1+f) \oplus \ker(D_2+f') \oplus \ker(D_3+f'') \longrightarrow \ker(D_{T,T'} + f+f'+f'')
    \]
The first $\rho(\mathrm{pre}(1,2,3))$ is     obtained by cutting off elements of all three domain kernels and taking the $L^2$-orthogonal projection to the range. The second $\rho(\mathrm{pre}(\mathrm{pre}(1,2),3))$ is obtained by composing the isomorphisms of Lemma \ref{linear gluing} for gluing $D_1+f$ and $D_2+f'$ with length $T$ and then gluing the resulting operator $D_{12}+f+f'$ to $D_3+f''$ with length $T'$ (so this involves two consecutive orthogonal projections). Let $F = f+f'+f''$.

\begin{lem}\label{lem:associative linear gluing}
    For $T$ and $T'$ sufficiently large, the isomorphisms 
    \[
    \ker(D_1+f) \oplus \ker(D_2+f') \oplus \ker(D_3+f'') \longrightarrow \ker(D_{T,T'} + F)
    \]
    given by $\rho(\mathrm{pre}(1,2,3))$ and $\rho(\mathrm{pre}(\mathrm{pre}(1,2),3))$ are isotopic through isomorphisms canonically up to contractible choice.
\end{lem}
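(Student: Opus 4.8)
The plan is to reduce the statement to the standard ``associativity up to homotopy'' of linear gluing for Cauchy--Riemann operators, following the blueprint of \cite[Sections 8.3 \& 8.4]{Large}, and to keep careful track of the fact that we want an \emph{isotopy through isomorphisms}, canonical up to contractible choice, rather than merely an equality of homotopy classes. First I would set up a common interpolation. For parameters $(T, T')$ both large, consider the two-parameter family of glued operators $D^{CR}_{T,T'} + F$ obtained from the three domains $D_1, D_2, D_3$; by Lemma \ref{lem: T big ok} and Lemma \ref{linear gluing} (applied to each pairwise gluing, and then to the result) both $\rho(\mathrm{pre}(1,2,3))$ and $\rho(\mathrm{pre}(\mathrm{pre}(1,2),3))$ are defined and are isomorphisms onto $\ker(D^{CR}_{T,T'}+F)$ for $T,T'\gg 0$. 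The key point is that the target is literally the \emph{same} vector space for both constructions, since by hypothesis $f,f',f''$ are supported away from all strip-like ends and $Y,Y',Y''$ equal $dt$ on all strip-like ends, so there is a unique glued operator and no ambiguity in how the necks are formed.

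The main step is to build the isotopy. I would introduce an auxiliary ``long-neck'' interpolation: write the composite gluing $\rho(\mathrm{pre}(\mathrm{pre}(1,2),3))$ as first gluing $D_1$ to $D_2$ with neck length $T$, then gluing on $D_3$ with neck length $T'$, and observe that the resulting neck between (the images of) $D_1$ and $D_2$ inside $D_{12}$, and then the neck to $D_3$, can be simultaneously lengthened. Concretely, consider the three-parameter region of gluing data where the first neck has length $s$ and the second has length $s'$, with $(s,s')$ ranging over a large quadrant, and compare: (a) the one-step splice-and-project map $\rho(\mathrm{pre}(1,2,3))$, which cuts off all three kernels at once and projects once; and (b) the two-step map, which cuts off, projects, cuts off again, projects again. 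Both maps differ from the ``naive'' inclusion of spliced cut-off vector fields by $L^2$-orthogonal projections onto the same finite-dimensional kernel, and by exponential decay of kernel elements (used in the discussion preceding Lemma \ref{linear gluing}) the spliced vector fields converge to genuine kernel elements as the neck lengths go to infinity; so the discrepancy between (a) and (b) is $O(1/T) + O(1/T')$ in operator norm uniformly. Hence for $T, T'$ large both maps are within a small neighbourhood of the same limiting isometry $\ker D_1 \oplus \ker D_2 \oplus \ker D_3 \to \ker D_{T,T'}$, and the straight-line path (through invertible maps, since invertibility is an open condition and the endpoints are close) gives the desired isotopy. The ``canonically up to contractible choice'' clause then follows because all the ingredients -- the cut-off functions $\chi_L$, $\alpha$, $\beta$, the choices of right inverses $R, R', R''$, and the threshold for ``$T$ large'' -- vary in contractible spaces, and the space of such straight-line-type isotopies between two maps lying in a common convex neighbourhood of $\mathrm{GL}$ is itself contractible.

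I would then record the compatibility with the $\Theta$-oriented structure: since the homotopy lifts $(H,\Gamma)$, $(H',\Gamma')$, $(H'',\Gamma'')$ are concatenated on the neck components exactly as in Section \ref{sec: glue} and are constant on the shrunken strip-like ends, the two gluings induce the same $\Theta$-oriented boundary data on $D_{T,T'}$, so there is nothing extra to check on the tangential side -- the isotopy is compatible with these structures automatically.

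The hard part will be making the estimate ``discrepancy between the one-step and two-step projections is $O(1/T)+O(1/T')$'' genuinely uniform and genuinely canonical, i.e. controlling the second orthogonal projection in the two-step construction: after the first gluing one has a slightly perturbed kernel (spliced vector fields, not exact kernel elements), and one must check that feeding these into the second splice-and-project does not accumulate error that fails to decay. This is precisely the content of the quantitative linear gluing estimates in \cite[Appendix A]{Barraud-Cornea}, \cite{Bourgeois-Mohnke} and \cite[Lemma A.17]{ES2}, combined with the uniform boundedness of the approximate inverses noted in the proof of Lemma \ref{lem: T big ok}; the remaining work is bookkeeping to see that iterating two such gluings keeps the constants uniform, which is exactly the ``nearly associative'' assertion of \cite[Sections 8.3 \& 8.4]{Large} that we are invoking.
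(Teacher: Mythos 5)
Your proposal is correct and follows essentially the same route as the paper: both arguments compare the one-step and two-step gluing isomorphisms to the common reference embedding given by splicing cut-off kernel elements (the map $\mathrm{pre}$), observe that for $T,T'\gg 0$ both lie in a small ball around it in operator norm by exponential decay in the necks, and conclude via convexity (hence contractibility) of that ball that they are isotopic through isomorphisms canonically up to contractible choice. The only difference is presentational — the paper states the convexity argument directly and cites \cite[Section 9.4]{Hutchings-Taubes} for the quantitative gluing estimates rather than re-deriving the error bounds.
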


\begin{proof}
    For each of the operators we have a vector space $\widetilde{\ker}(D+f)$ spanned by the vector fields cut-off in all strip-like ends. There is an   inclusion
    \begin{equation} \label{pre}
    \mathrm{pre}: \widetilde{\ker}(D_1+f) \oplus \widetilde{\ker}(D_2+f') \oplus \widetilde{\ker}(D_3+f'') \hookrightarrow W^{\kappa,p}(D_{i+j+k-1,1},\bR^{2N})
    \end{equation}
    which is close to an isometry to its image when $T,T' \gg0$ because of the exponential decay of vector fields in the kernels.  Both the given isomorphisms lie in a small ball (in the space of linear embeddings of the spaces in \eqref{pre}) of size controlled by $T,T'$ around $\mathrm{pre}$ in operator norm. That is a convex and hence contractible condition. 
    Compare to \cite[Section 9.4]{Hutchings-Taubes} which gives a detailed argument for gluing cylinders in the presence of non-trivial obstruction bundles (i.e. without stabilising the operators). 
\end{proof}

\subsection{A simplicial set of pre-stable discs}

All the definitions of Section \ref{subsec:abstract discs} make sense if the domain is a pre-stable disc (i.e. a tree of discs meeting at nodes): thus we may talk about \emph{abstract pre-stable discs}, meaning a pre-stable disc such that each component is equipped with the additional data required to make it into an abstract disc (requiring the boundary conditions and homotopy lifts to $\Theta$ to agree at either side of each node), and their ($\Theta$-oriented) puncture data.

A smooth function on a nodal curve is a continuous function whose pullback to the normalisation is smooth. With this convention, one also has well-defined notions of smooth fibre bundle, smooth section, etc over a nodal curve. Sobolev spaces $W^{k,\kappa}$ of functions on a nodal curve are well-defined provided $k\kappa > 2$ so the underlying maps are continuous, by the Sobolev embedding theorem. The Cauchy-Riemann operator on a pre-stable disc $D$ is then given by the direct sum of the Cauchy-Riemann operators for the components, defined as a map of  Sobolev spaces $W^{2,\kappa}(D) \to \Omega^{0,1}_{W^{2,\kappa-1}}(D)$, where by definition of the Sobolev spaces all maps and sections of bundles are continuous across nodes.  In this way, we can also extend the definitions from Section \ref{subsec: CR data} to prestable discs.

Equip each standard simplex $\Delta^k$ with collar neighbourhoods over all boundary faces, compatibly with the chosen collars on lower-dimensional standard simplices. Fix a tangential structure $\Theta \to \widetilde{U/O}(N)$ of rank $N$.

\begin{defn} \label{def: spac dis}
    For $i\geq 0$ and $j\in \{0,1\}$, $V$ a finite-dimensional real inner product space, and $\bE$ a set of graded and $\Theta$-oriented puncture data, there is 
    a simplicial set $\bU_{i,j}^{V,\Theta}(\bE)_{\bullet}$ whose $k$-simplices consist of continuously varying families of $\Theta$-oriented abstract pre-stable discs $\scrD \to \Delta^k$ (with all the extra data), with Cauchy-Riemann data having perturbation domain $V$, and for which the restriction of the family to a collar neighbourhood of a boundary facet of the simplex agrees with the family obtained by pullback under  gluing as in Section \ref{sec: glue}.
\end{defn}

Details of the construction of such a simplicial set are carried out in \cite[Section 10]{Abouzaid:axiomatics} and \cite[After (2.14)]{AGV}.

 \begin{defn}
     For a simplicial set $\mathcal{S}_\bullet$ we will write $\mathcal{S}$ for the topological space given by its geometric realisation.
 \end{defn}

\begin{lem}\label{lem:upshot}
    There is a `index' vector bundle over the geometric realisation
    \begin{equation}
        \bV_{ij}^{V, \Theta}(\bE) \to \bU_{ij}^{V, \Theta}(\bE)
    \end{equation}
    with fibre \eqref{eq: ind} over non-nodal discs. There are then maps of spaces, and isomorphisms of vector bundles covering them:
    \begin{itemize}
        \item Associated to isometric embeddings $V \to W$ with orthogonal complement $(V \subset W)^\perp$:
        \begin{itemize}
            \item[-] Maps of spaces $\bU^V \to \bU^W$, suitably associative with respect to composable pairs of isometric embeddings $V \to W \to Z$.
            \item[-] Isomorphisms of vector bundles $\bV^V \oplus (V\subset W)^\perp \to \bV^W$ over $\bU^V$
            such that, for inclusions $V \to W \to Z$ the following diagram commutes:
            \begin{equation}
                \xymatrix{
                    \bV^V \oplus (V\subset W)^\perp \oplus (W\subset Z)^\perp
                    \ar[r]
                    \ar[d]
                    &
                    \bV^V \oplus (V\subset Z)^\perp 
                    \ar[d]
                    \\
                    \bV^W \oplus (W\subset Z)^\perp
                    \ar[r]
                    &
                    \bV^Z
                }
            \end{equation}
            where we use the natural identification $(V\subset W)^\perp \oplus (W\subset Z)^\perp \cong (V\subset Z)^\perp$. 
            
        \end{itemize}
        \item Associated to concatenation of Riemann surfaces, i.e. gluing of tuples of puncture data. Precisely, there are maps
        \[
         \rho_k: \bU_{i,1}^{V,\Theta}(\bE) \times \bU_{j,0/1}^{W,\Theta}(\bE') \to \bU^{V+W,\Theta}_{i+j-1,0/1}(\bE\#_k \bE')
    \]
        \end{itemize}
    which are covered by isomorphisms of index bundles
     \[
    st_k: \bV_{i,1}^{V,\Theta} \oplus \bV_{j,0/1}^{W,\Theta} \to \rho_k^* \bV_{i+j-1,0/1}^{V+W,\Theta}
    \]
    and which are associative. Both the $\rho_k$ and $st_k$ commute with the maps associated to isometric embeddings $V \to V'$ or $W \to W'$.

\end{lem}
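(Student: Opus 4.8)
The plan is to assemble the index bundle and the claimed structure maps from the local linear-algebra data of Sections \ref{subsec: CR data}--\ref{sec: glue}, using the simplicial set $\bU^{V,\Theta}_{ij}(\bE)_\bullet$ of Definition \ref{def: spac dis} as bookkeeping. First I would construct the bundle $\bV^{V,\Theta}_{ij}(\bE)$ simplex-by-simplex: over a $k$-simplex, i.e. a family $\scrD \to \Delta^k$ of $\Theta$-oriented abstract pre-stable discs with perturbation domain $V$, the perturbed Cauchy-Riemann operators $D^{CR}+f$ are a continuously varying family of surjective Fredholm operators, so their kernels form a vector bundle over $\Delta^k$ of rank $\mu(\bE)+\dim V$ (using additivity of the index on nodal curves, so the rank is locally constant even across the nodal locus). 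One must check these glue: over a collar neighbourhood of a boundary facet the family is, by definition, pulled back via the gluing maps $\rho_k$ of Section \ref{sec: glue}, and Lemma \ref{linear gluing} identifies the kernel there with the direct sum of the kernels of the pieces, compatibly with the identification already made on the lower simplex; Lemma \ref{lem:associative linear gluing} guarantees this identification is coherent (canonically up to contractible choice) for iterated gluings, which is exactly what is needed for the clutching data to descend to a well-defined bundle on the geometric realisation. The fibre over a non-nodal disc is \eqref{eq: ind} by construction.

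Next I would produce the stabilisation maps for isometric embeddings $V\hookrightarrow W$. On the level of a single abstract disc, enlarging the perturbation domain from $V$ to $W$ is the tautological operation $f \mapsto f\circ(\text{projection }W\to V)$ composed with the zero map on $(V\subset W)^\perp$; this is functorial in composable pairs $V\to W\to Z$ and continuous in families, giving the maps of spaces $\bU^V\to\bU^W$ and their associativity. The perturbed operator for $W$ is $(D^{CR}+f)\oplus \mathrm{id}_{(V\subset W)^\perp}$ up to the obvious reshuffling, whence $\ker = \ker(D^{CR}+f)\oplus (V\subset W)^\perp$; this furnishes the bundle isomorphism $\bV^V\oplus(V\subset W)^\perp \xrightarrow{\sim}\bV^W$ over $\bU^V$, and the stated square commutes because it does so fibrewise via the canonical identification $(V\subset W)^\perp\oplus(W\subset Z)^\perp\cong(V\subset Z)^\perp$. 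Then I would produce the concatenation maps: $\rho_k$ on spaces is just the fibrewise gluing construction $\rho_k(\bD,\bD')$ of Section \ref{sec: glue}, which makes sense in families once one has fixed the collars on the simplices and chosen the neck length $T$ large (large enough for Lemma \ref{lem: T big ok}); one checks this is well-defined independent of $T$ up to the canonical homotopy, and that $V''=V\oplus W$ as required, so the target is indeed $\bU^{V+W,\Theta}_{i+j-1,0/1}(\bE\#_k\bE')$. The covering isomorphism $st_k$ of index bundles is precisely the linear gluing isomorphism \eqref{eqn:L2} of Lemma \ref{linear gluing}, now varying continuously over the base; associativity of the $st_k$ is Lemma \ref{lem:associative linear gluing}, and compatibility of $\rho_k,st_k$ with the isometric-embedding maps is immediate since the perturbation forms $f,f'$ are supported away from the neck and so play no role in the gluing of domains.

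The main obstacle I expect is not any single analytic estimate — those are all imported from Lemmas \ref{lem: T big ok}, \ref{linear gluing}, \ref{lem:associative linear gluing} — but the bookkeeping needed to make the bundle and all its structure maps genuinely well-defined on the \emph{geometric realisation} rather than just on each simplex: one must verify that the kernel bundles constructed on adjacent simplices agree on shared faces, that the clutching/gluing identifications satisfy the cocycle-type coherence forced by the collar compatibility in Definition \ref{def: spac dis}, and that the near-isometry statement of Lemma \ref{lem:associative linear gluing} (``canonically up to contractible choice'') can be promoted to actual, not-merely-up-to-homotopy, compatible trivialisations by a standard contractibility-of-choices argument. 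This is the kind of simplicial bundle-theory argument carried out in \cite[Section 10]{Abouzaid:axiomatics} and \cite[After (2.14)]{AGV}, and I would appeal to those constructions for the formal framework, checking only that the $\Theta$-oriented, perturbation-stabilised setup here satisfies their hypotheses — in particular that all the required decay and uniform-boundedness-in-$T$ properties (noted in the proofs of Lemmas \ref{lem: T big ok} and \ref{linear gluing}) hold uniformly in families over a compact simplex.
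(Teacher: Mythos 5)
Your construction of the index bundle itself, and of the stabilisation maps for isometric embeddings $V\hookrightarrow W$, matches the paper's proof in substance: the paper also defines $\bV$ fibrewise via \eqref{eq: ind}, handles the nodal degenerations by invoking the contractibility of choices in Lemma \ref{lem:associative linear gluing} (organised as an induction on the number of nodes over the strata $\bU^{V,\Theta,[l,l']}_{ij}(\bE)$ rather than simplex-by-simplex, but this is the same idea), and takes the map $(\ldots,V,f)\mapsto(\ldots,W,f+0)$ for stabilisation, for which compatibility of index bundles is immediate.

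Where you genuinely diverge is in the definition of the concatenation maps $\rho_k$, and here your route has a real defect. You define $\rho_k$ as the finite-neck gluing of Section \ref{sec: glue}, producing a smooth disc with neck length $T\gg 0$; the paper instead defines $\rho_k$ by concatenating the two discs into a \emph{nodal} pre-stable disc (the target space $\bU^{V+W,\Theta}_{i+j-1,0/1}(\bE\#_k\bE')$ contains such nodal configurations by Definition \ref{def: spac dis}), with the Cauchy-Riemann data simply the disjoint union of the given data on the components. This choice is not cosmetic: nodal concatenation is strictly associative, involves no choice of $T$, and over the nodal strata the isomorphism $st_k$ is literally the identity (since $\bV$ over a nodal disc is \emph{defined} as the direct sum of \eqref{eq: ind} over components). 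The analytic input — Lemmas \ref{lem: T big ok}, \ref{linear gluing}, \ref{lem:associative linear gluing} — is then needed only to extend $\bV$ and $st_k$ from the nodal strata into the interior, using that concatenation strictly increases the number of nodes. Your version, by contrast, makes $\rho_k$ depend on $T$ and be associative only up to homotopy, and makes $st_k$ the $L^2$-projection isomorphism rather than the identity. Since Lemma \ref{lem:upshot} is later used to endow the Thom spectra $\bT^{\Theta,\Phi}_{i,1}$ with the structure of a (strictly associative) coloured operad, and to formulate the strict coherence diagrams in Definition \ref{def: Theta or}, the homotopy-associative statement you would obtain is not sufficient as stated. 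To repair your argument you would either need to pass to the nodal-concatenation model, or supply the (substantial) additional coherence data needed to work with an only-homotopy-associative gluing.
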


\begin{proof}
    Concatenation of Riemann surfaces takes two Riemann surfaces and concatenates them into a nodal Riemann surface. If both Riemann surfaces are additionally equipped with Cauchy-Riemann data, the concatenated Riemann surface inherits Cauchy-Riemann data by taking it to be given by the original Cauchy-Riemann data on each component; this defines the maps $\rho_k$.

    (\ref{eq: ind}) determines the fibre of the index bundle at each point. Existence of the structure of a vector bundle is largely the output of the discussion of gluing. Over the subspaces $\bU^{V,\Theta,l}_{ij}(\bE)$ whose Riemann surfaces have a fixed number $l$ of nodes, over each point $\bV$ is defined to be the direct sum of (\ref{eq: ind}) over each component of the corresponding domain. These subspaces are preserved under concatenation and over these subspaces the isomorphisms $st_k$ are given by the identity.
    
    Consider the subspaces $\bU^{V,\Theta,[l,l']}_{ij}(\bE)$ consisting of abstract domains with number of nodes lying in the interval $[l, l']$. We extend the index bundles and $st_k$ over these $\bU^{V, \Theta,[l,l']}_{ij}(\bE)$ by inducting on $l'-l$ and using the contractibility of the spaces of choices from Lemma \ref{lem:associative linear gluing} at each stage, noting that concatenation strictly increases the number of nodes.

     Let $V \to W$ be an isometric embedding of finite-dimensional real inner product spaces. Then taking $(V\subset W)^{\perp}$ to be the orthogonal complement of $V$ in $W$, there are maps of simplicial sets $(\bU_{i,j}^{V,\Theta})_{\bullet} \to (\bU^{W,\Theta}_{i,j})_{\bullet}$ sending the tuple $(\ldots, V, f)$ to $(\ldots, W, f+0)$, where
    $$f+0: W = V \oplus (V\subset W)^{\perp} \to \Omega^{0,1}_{W^{2,\kappa-1}} \left(D_{i,j}, \bC^N\right)$$

    It is immediate that this respects index bundles.

\end{proof}

\begin{rmk}\label{rmk: func}
    All of this structure can be made to be functorial under morphisms of tangential structures, i.e. morphisms of based spaces over $\widetilde{U/O}(N)$, by first carrying out the construction for $\Theta=\widetilde{U/O}(N)$ and then pulling back.

\end{rmk}

\subsection{Stabilisation}\label{sec: Nstab}
    We now assume that we are given a sequence $\{\Theta(N)\}_{N \geq N_0}$ of tangential structures of each rank $N \geq N_0$ (for some fixed $N_0$), with maps between them, fitting into commutative diagrams of the following form:
    \begin{equation}\label{eq: com}
        \xymatrix{
            \Theta(N) 
            \ar[r]
            \ar[d]
            &
            \Theta(N+1)
            \ar[r]
            \ar[d]
            &
            \ldots
            \\
            \widetilde{U/O}(N)
            \ar[r]
            &
            \widetilde{U/O}(N+1)
            \ar[r]
            &
            \ldots
        }
    \end{equation}
    
    We construct stabilisation maps $\bU^{V, \Theta(N)}_{ij}(\bE) \to \bU^{V, \Theta(N+1)}_{ij}(\bE)$, compatible with all the structure from Lemma \ref{lem:upshot}. We first require the following lemma:

    \begin{lem}\label{lem: N=1 disc CR}
        Let $\bD$ be a punctured disc with real boundary conditions of rank 1 and Cauchy-Riemann data. Let $D^{CR}$ be the corresponding (unperturbed) Cauchy-Riemann operator.

        Assume the index of $D^{CR}$ is 0. Then $D^{CR}$ is bijective.
     
    \end{lem}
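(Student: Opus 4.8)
The plan is to reduce the statement to a purely complex-analytic fact about Cauchy–Riemann operators on the disc with totally real boundary conditions in $\bC$ (rank $1$). Since $D^{CR}$ is Fredholm of index $0$, bijectivity is equivalent to injectivity (or to surjectivity), so it suffices to show $\ker D^{CR} = 0$. Thus I would first fix an element $\xi \in \ker(D^{CR}) = \ker(\bar\partial_J + Y)$ lying in $W^{2,\kappa}(D_{i,0/1}, \bC, \Lambda)$, and aim to prove $\xi \equiv 0$.

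The first step is to reduce to the \emph{holomorphic} case $Y = 0$. The one-form $Y$ is supported away from the strip-like ends, and in any case this is a compact-support lower-order perturbation; however, the cleanest route is the standard trick of writing $\bar\partial_J \xi = -Y\xi$ as a $\bar\partial$-equation with a zeroth-order term, so $\xi$ satisfies a first-order elliptic PDE with totally real boundary values in a line bundle of Maslov index equal to the index of $D^{CR}$, which is $0$. By the similarity principle (Carleman's theorem; cf. \cite{McDuff-Salamon}), $\xi$ either vanishes identically or has only isolated zeros, around each of which it looks like $z^k$ times a nonvanishing function with $k \geq 1$; the same holds on the boundary with the appropriate boundary version. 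The second, and main, step is then a winding-number / degree computation: a nonzero section of a rank-$1$ totally real bundle pair $(\bC, \Lambda)$ over the disc with all the prescribed transversality and constancy conditions at the punctures contributes, via the argument principle adapted to totally real boundary conditions, a nonnegative count of interior and boundary zeros whose total equals $\tfrac{1}{2}\mu$ (or $\mu$, depending on normalisation) — but $\mu = \mathrm{index}(D^{CR}) = 0$ in the rank-$1$ case, forcing $\xi$ to have no zeros at all, while simultaneously the boundary conditions being \emph{transverse and nonconstant} at the punctures force $\xi$ to vanish at or decay into the punctures (because of the exponential decay built into $W^{2,\kappa}$ and the transversality assumption $u_x \pitchfork v_x$, an element of the kernel on a strip must decay, hence cannot be a nonvanishing parallel section). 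Contradiction, so $\xi \equiv 0$; the index $0$ and injectivity then give bijectivity.

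I would organise the boundary-zero bookkeeping exactly as in \cite[Section 11h]{Seidel:book} and \cite{Robbin-Salamon:Maslov}, where the Maslov index of a bundle pair over the disc is identified with twice the expected count of zeros of a generic section with totally real boundary data (interior zeros counted with multiplicity, boundary zeros with half-multiplicity), once the punctures are capped off using the transverse boundary conditions $u_x, v_x$ and the strip-like-end normalisation. The point specific to rank $1$ is that the Maslov index of \emph{any} such bundle pair equals the index of the associated Cauchy–Riemann operator (there is no rank-versus-index discrepancy, since $\dim_{\bR} = 1$), so the hypothesis $\mathrm{index}(D^{CR}) = 0$ translates directly into $\mu = 0$, i.e. the "capped" bundle pair is trivial; a nonzero holomorphic section of a trivial pair over a disc is a nonvanishing function with real (i.e. constant-angle) boundary values, which is impossible unless the boundary condition is literally the constant $\bR \subset \bC$ on each component — and transversality at the punctures rules that out (the two sides $u_x, v_x$ at a puncture cannot both be the same line).

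The main obstacle I anticipate is making the argument-principle / zero-count on a disc \emph{with boundary punctures and strip-like ends} fully rigorous: one must check that exponentially decaying kernel elements on semi-infinite strips with transverse Lagrangian boundary conditions genuinely contribute no spurious winding, and that capping off the punctures does not change the index — this is where I would lean most heavily on the cited normalisations in \cite{Seidel:book} and on the exponential-decay estimates for $W^{2,\kappa}$-solutions of the asymptotic operator $\partial_t - A_x$, whose invertibility is exactly the transversality hypothesis $u_x \pitchfork v_x$. Everything else (the similarity principle, Fredholmness, index $0 \Rightarrow$ (injective $\Leftrightarrow$ bijective)) is standard.
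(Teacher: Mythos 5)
Your overall strategy --- similarity principle plus a positivity-of-zeros / winding-number count for rank-one Cauchy--Riemann operators, i.e. the ``automatic transversality'' argument for line bundles --- is a legitimate route and genuinely different from the paper's, but as written the decisive step has a gap, and the index bookkeeping it rests on is off by one. First, for an \emph{unpunctured} disc with a rank-one bundle pair of classical Maslov number $m$, the index of the operator is $m+1$, not $m$; so ``index $0$'' corresponds to a capped pair of Maslov number $-1$, not to a trivial pair. This matters: a trivial pair ($m=0$) has a one-dimensional kernel spanned by a nowhere-vanishing section, so your claim that a nonvanishing section with totally real boundary values ``is impossible unless the boundary condition is literally the constant $\bR$'' is false, and no contradiction would follow from triviality. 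Second, and more seriously, the passage from ``$\xi$ has no zeros'' to ``$\xi\equiv 0$'' is not justified: a nonvanishing holomorphic section can perfectly well decay along a strip-like end with transverse real boundary conditions (e.g. $e^{-\pi z/2}$ on $[0,\infty)\times[0,1]$ takes values in $\bR$ on one side and $i\bR$ on the other, is nonvanishing, and decays), so decay does not contradict nonvanishing, nor does it force $\xi$ to be ``parallel''. What is actually needed is the asymptotic eigenfunction analysis at each puncture: a decaying kernel element is asymptotic to $e^{-\lambda s}e_\lambda(t)$ for a positive eigenvalue $\lambda$ of the asymptotic operator (invertible precisely by the transversality $u_x\pitchfork v_x$), and the resulting asymptotic winding contributes strictly positively to the weighted zero count, so that a nonzero $\xi$ would force $\mathrm{ind}(D^{CR})>0$. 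You flag this as ``the main obstacle'' but then bypass it with an argument that does not close; the lemma \emph{is} that obstacle.

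For comparison, the paper sidesteps the asymptotic winding analysis entirely: it caps each puncture with a once-punctured rank-one disc carrying the zero section, choosing the caps so that the glued, unpunctured operator still has index $0$; linear gluing (Lemma \ref{linear gluing}) then transports a hypothetical nonzero kernel element of $D^{CR}$ to a nonzero kernel element of the glued operator, contradicting the unpunctured case, which is quoted directly from \cite[Theorem C.1.10(iii)]{McDuff-Salamon}. If you want to keep your direct approach you must supply the puncture asymptotics (Hofer--Lizan--Sikorav/Wendl-style) and redo the zero-count with the asymptotic contributions included; otherwise the gluing reduction is the shorter and cleaner path.
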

    \begin{proof}
        If $\bD$ has no boundary punctures, this is a consequence of \cite[Theorem C.1.10(iii)]{McDuff-Salamon}. 
        
        Now suppose $\bD$ has boundary punctures, and suppose $\xi$ is a nonzero element of the kernel of $D^{CR}$. For each puncture $p$ of $\bD$, choose a disc $\bD^p$ with real boundary conditions with one puncture, equipped with the puncture data of $p$, labelled as an output if $p$ is an input and vice versa. Letting $\bD'$ be the disc formed from gluing all of these discs together, we may choose the $\bD^p$ such that the Maslov index of $\bD'$ is 0.

        Choose Cauchy-Riemann data for each $\bD^p$. Let $0^p$ be the zero section in each $W^{2,\kappa}(\bD^p,\bC)$. Since $\xi$ decays to 0 near each puncture, we may glue $\xi$ to all the $0^p$ to obtain an element $\xi'$ of the kernel of the glued Cauchy-Riemann operator over $\bD'$. For sufficiently large gluing parameters, since $\xi$ is nonzero, $\xi'$ is too. But this contradicts the case with no punctures, which we have already addressed.
    \end{proof}

    We choose a framed puncture datum (meaning puncture datum oriented with respect to the tangential structure given by a point) $\bF = (\tilde u^\bF, \tilde v^\bF)$ of rank 1 and Maslov index 0.

    Let $\bP^{\Theta(N)}$ be the set of $\Theta(N)$-oriented puncture data.
    
    We define a function 
    \begin{equation}
        \bF \oplus \cdot: \bP^{\Theta(N)} \to \bP^{\Theta(N+1)}
    \end{equation}
    preserving Maslov indices, by sending puncture data $\bE$ to $\bF \oplus \bE$ equipped with the $\Theta(N+1)$-orientation given by applying the map $\Theta(N) \to \Theta(N+1)$; the fact that $\bF$ is framed along with commutativity of (\ref{eq: com}) guarantee that this does indeed define $\Theta(N+1)$-oriented puncture data.

    \begin{prop}\label{prop: Nstab}
        There are (cofibrant) maps of spaces
        \begin{equation*}
            \Xi: \bU^{V,\Theta(N)}_{ij}(\bE) \to \bU^{V,\Theta(N+1)}_{ij}(\bF \oplus\bE)
        \end{equation*}
        and isomorphisms of vector bundles covering them
        \begin{equation*}
            \bV^{V,\Theta(N)}_{ij}(\bE) \to \bV^{V,\Theta(N+1)}_{ij}(\bF \oplus\bE)
        \end{equation*}
        which are compatible with all the structure in Lemma \ref{lem:upshot}, meaning the following diagrams of spaces commute:

        \begin{equation}\label{eq:1}
            \xymatrix{
                \bU^{V,\Theta(N)} 
                \ar[r]
                \ar[d]
                &
                \bU^{V,\Theta(N+1)}
                \ar[d]
                \\
                \bU^{W,\Theta(N)}
                \ar[r]
                &
                \bU^{W,\Theta(N+1)}
            }
        \end{equation}
        
        where $V \to W$ is an isometric embedding, and
        \begin{equation}\label{eq:2}
            \xymatrix{
                \bU^{V,\Theta(N)}_{i1}(\bE) \times \bU^{V',\Theta(N)}_{j,0/1}(\bE')
                \ar[r]
                \ar[d]
                &
                \bU^{V,\Theta(N+1)}_{i1}(\bF \oplus\bE) \times \bU^{V',\Theta(N+1)}_{j,0/1}(\bF \oplus\bE')
                \ar[d]
                \\
                \bU^{V\oplus V', \Theta(N)}_{i+j-1}(\bE \#_k \bE') 
                \ar[r]
                &
                \bU^{V\oplus V', \Theta(N+1)}_{i+j-1}(\bF \oplus(\bE \#_k \bE')) 
            }
        \end{equation}
        as well as the corresponding diagram of vector bundles covering them.
        
    \end{prop}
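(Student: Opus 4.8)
The plan is to construct the stabilisation map $\Xi$ at the level of simplicial sets and then pass to geometric realisation, exactly as the other structure maps in Lemma~\ref{lem:upshot} were built. Concretely, given a $k$-simplex of $\bU^{V,\Theta(N)}_{ij}(\bE)$, i.e.\ a family $\scrD \to \Delta^k$ of $\Theta(N)$-oriented abstract pre-stable discs of rank $N$, I would produce a family of rank-$(N+1)$ discs by replacing the trivial bundle $\bC^N \to D_{i,j}$ with $\bC^{N+1} = \bC \oplus \bC^N$, taking the boundary condition $\tilde\Lambda'$ over each boundary component to be the concatenation $\tilde u^{\bF} \# \tilde\Lambda$ in the first $\bC$-factor (using that $\bF$ is framed, constant, and of Maslov index $0$, so this is compatible with the stabilisation map $\widetilde{U/O}(N) \to \widetilde{U/O}(N+1)$), extending the homotopy lifts $(H,\Gamma)$ via the maps $\Theta(N) \to \Theta(N+1)$ of (\ref{eq: com}), taking the new complex structure $J'$ to be $i \oplus J$, the new one-form $Y'$ to be $0 \oplus Y$, and keeping the same metric $g$ and the same perturbation datum $(V,f)$ regarded as valued in $\Omega^{0,1}_{W^{2,\kappa-1}}(D_{i,j},\bC^{N+1})$ via the inclusion $\bC^N \hookrightarrow \bC^{N+1}$. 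One checks that all the normalisation conditions near strip-like ends and nodes are preserved, so this is again an abstract pre-stable disc; doing this fibrewise over $\Delta^k$ and compatibly with the chosen collars gives a map of simplicial sets, hence after realisation the map $\Xi$.

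For the index bundle, the point is that over each component $D$ of a domain the perturbed Cauchy--Riemann operator splits as a direct sum $(D^{CR}_{\bC} + 0) \oplus (D^{CR}_{\bC^N} + f)$ corresponding to $\bC^{N+1} = \bC \oplus \bC^N$, where $D^{CR}_{\bC}$ is the rank-$1$ operator with boundary condition $\tilde u^{\bF} \# \tilde\Lambda$ and no perturbation. By construction $\bF$ has Maslov index $0$; concatenating with the (transverse, locally constant) data along a boundary component does not change the Maslov index contribution since $\bF$ is framed and constant, so $D^{CR}_{\bC}$ has index $0$, and Lemma~\ref{lem: N=1 disc CR} (in the nodal case, applied componentwise, using the gluing argument in its proof) shows $D^{CR}_{\bC}$ is bijective. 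Hence $\ker(D^{CR}+f)$ for the stabilised disc is canonically isomorphic to $\ker(D^{CR}+f)$ for the original disc, fibrewise; these isomorphisms are continuous in the family and one-node-at-a-time compatible with the $st_k$ and with the inductive extension over strata with varying numbers of nodes, so they assemble into the asserted isomorphism of vector bundles $\bV^{V,\Theta(N)}_{ij}(\bE) \to \bV^{V,\Theta(N+1)}_{ij}(\bF\oplus\bE)$.

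Compatibility with the structure of Lemma~\ref{lem:upshot} is then a matter of unwinding definitions. For (\ref{eq:1}) (isometric embeddings $V \to W$): the stabilisation $\bF\oplus\cdot$ only touches the target bundle $\bC^N$ and the boundary conditions, while the map $\bU^V \to \bU^W$ only changes the perturbation domain, so the two operations act on disjoint pieces of the data and commute strictly; the induced isomorphisms of index bundles commute because the $\bC$-summand contributes a zero operator that is untouched by enlarging $V$. For (\ref{eq:2}) (concatenation $\rho_k$): concatenating two discs and then stabilising versus stabilising each and then concatenating yield the same rank-$(N+1)$ nodal disc, because concatenation of the boundary conditions $\Lambda\#\Lambda'$ in the $\bC^N$-factor is compatible with prepending the fixed $\bF$ in the $\bC$-factor (here one uses that $\bF$ is the \emph{same} framed datum prepended to $\bE$, $\bE'$ and $\bE\#_k\bE'$, and that concatenating $\tilde u^{\bF}$ with itself at a puncture recovers $\tilde u^{\bF}$ since it is constant), and the one-forms, complex structures and perturbations glue as before; the $st_k$ isomorphisms and the stabilisation isomorphisms commute because on the $\bC$-summand both are induced by the canonical bijectivity of zero-index rank-$1$ operators, which is compatible with linear gluing by the proof of Lemma~\ref{linear gluing}. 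Functoriality in $\Theta$ (Remark~\ref{rmk: func}) lets one reduce, if desired, to $\Theta(N) = \widetilde{U/O}(N)$ and pull back.

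The main obstacle I expect is not any single step but the bookkeeping needed to make the stabilisation isomorphism of index bundles genuinely compatible with the \emph{inductive} construction of $\bV$ over the strata $\bU^{V,\Theta,[l,l']}_{ij}$ with varying node count: one must check that the contractible-choice extensions from Lemma~\ref{lem:associative linear gluing} used to define $\bV$ on the $\Theta(N)$ side and on the $\Theta(N+1)$ side can be chosen compatibly under $\Xi$. This follows because the stabilisation acts as the identity on the $\bC^N$-summand and by the canonical (hence choice-free) isomorphism on the $\bC$-summand, so it carries the relevant small balls of linear embeddings to small balls, and the induction goes through verbatim; but stating this carefully, uniformly in the gluing parameters $T, T'$, is the part that requires genuine care rather than routine verification.
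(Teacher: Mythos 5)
Your overall strategy matches the paper's: build $\Xi$ simplex-by-simplex by direct-summing a rank-one framed piece with puncture data $\bF$, use Lemma \ref{lem: N=1 disc CR} to take zero perturbation on that piece and to see it contributes nothing to the index bundle, and handle compatibility with gluing by induction on the number of nodes. However, there is a genuine error in the step where you try to make the construction canonical. You take the rank-one boundary condition along each boundary component to be the \emph{constant} $\tilde u^{\bF}$ (prepended to $\tilde\Lambda$). This is not a valid boundary datum: the puncture data $\bF = (\tilde u^{\bF}, \tilde v^{\bF})$ consists by definition of a pair of \emph{transverse}, hence distinct, totally real lines in $\bC$, and at each puncture the two adjacent boundary arcs must limit to $\tilde u^{\bF}$ and $\tilde v^{\bF}$ respectively. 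Already for $D_{2,1}$ the boundary component running from the second input to the output carries $u$-data at one end and $v$-data at the other, so the rank-one boundary condition along it must be a genuinely non-constant path in $\widetilde{U/O}(1)$ from $u^{\bF}$ to $v^{\bF}$ (constant only near the strip-like ends). A constant condition would moreover produce the non-transverse puncture datum $(u^{\bF},u^{\bF})$, which is disallowed.

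This is not merely a notational slip, because your argument for the commutativity of \eqref{eq:1} and \eqref{eq:2} and for the coherence of the index-bundle isomorphisms rests on the stabilisation being ``canonical (hence choice-free)'' on the $\bC$-summand. Once one accepts that the rank-one boundary conditions, their $\Theta(N)$-lifts, and the rank-one Cauchy--Riemann data must actually be \emph{chosen} (from contractible spaces of choices), the correct resolution is the one the paper gives: make these choices inductively over the skeleta of the simplicial sets, and for fixed $k$ inductively over the maximal number of nodes, extending previously made choices over boundary facets and over deeper strata so that \eqref{eq:2} commutes by construction. Your closing paragraph correctly identifies this induction as the delicate point, but your proposed reason it goes through (canonicity of the rank-one summand) is exactly what fails; what actually makes it go through is contractibility of the space of choices at each stage, plus the bijectivity from Lemma \ref{lem: N=1 disc CR} ensuring the summand is invisible to the index bundle regardless of which choice is made. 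You should also record that the resulting simplicial map is levelwise injective, which is what yields cofibrancy of $\Xi$ after realisation.
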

    \begin{proof}
        We first construct the maps of spaces, by constructing maps on the level of the underlying simplicial sets. Let $k \geq 0$ and fix a $k$-simplex $\sigma$ in the set of $k$-simplices, $\bU^{V,\Theta(N)}_{ij}(\bE)_k$. 

        We choose framed real boundary conditions of rank 1 over the corresponding family of domains, with puncture data $\bF$ at each puncture. We also choose Cauchy-Riemann data over this family; we may make all these choices since the space of choices is  contractible (and in particular, if it has already been chosen over some sub-simplicial complex of $\partial \Delta^k$, we can extend that choice). By Lemma \ref{lem: N=1 disc CR} we may take the perturbation data to be 0. 

        Now direct summing all of this data with the data specified by $\sigma$ defines a simplex in $\bU^{V,\Theta(N+1)}_{ij}(\bF \oplus \bE)_k$. This is compatible with (\ref{eq:1}).

        Since the space of choices at each stage is contractible, we may perform this construction for each (nondegenerate) $k$-simplex, inductively in $k$. By performing each step for fixed $k$ inductively on the maximum number of nodes of any domain in $\sigma$ (similarly to the argument in Lemma \ref{lem:upshot}), we may guarantee that (\ref{eq:2}) commutes.

        The map on simplicial sets we have constructed is levelwise injective, and so the induced map on realisations is cofibrant.

        By Lemma \ref{lem: N=1 disc CR}, the disc of rank 1 that we direct sum with does not contribute to the index bundle, so we may take the map $\bV^{\Theta(N)} \to \bV^{\Theta(N+1)}$ to send $s$ to $0 \oplus s$; this is compatible with all the relevant structure.
 \end{proof}
 
    We write $\bU^{V, \Theta(\infty)}_{ij}(\bE)$ for the colimit of $\bU^{V, \Theta(N)}_{ij}(\bE)$ as $N$ goes to $\infty$. By cofibrancy of the stabilisation maps, this computes the homotopy colimit. 
    \begin{defn}\label{rmk: N inf fin}
        Let $\Theta \to \widetilde{U/O}$ be a tangential structure of rank $\infty$. Assume the map is a fibration; otherwise, we first choose a fibrant replacement. Then we may form a sequence $\{\Theta(N)\}_N$ by setting $\Theta(N)$ to be the pullback of this fibration along the inclusion $U/O(N) \to U/O$. We define $\bU^{V, \Theta}_{ij}(\bE)$ to be $\bU^{V, \Theta(\infty)}_{ij}(\bE)$ build from this sequence.
    \end{defn}

    The index bundles for each $N<\infty$ together assemble to form an index bundle $\bV^{V, \Theta(\infty)}_{ij}(\bE)$. The compatibility of the stabilisation maps $\Xi$ from Proposition \ref{prop: Nstab} shows that:
    \begin{lem}
        The statement of Lemma \ref{lem:upshot} holds for tangential structures $\Theta$ of infinite rank.
    \end{lem}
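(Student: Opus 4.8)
The plan is to obtain every item in the statement of Lemma~\ref{lem:upshot} by passing the corresponding finite-rank structure to the colimit along the stabilisation maps $\Xi$ of Proposition~\ref{prop: Nstab}. The whole point is that those maps are cofibrant and are compatible, on the nose, with all of the structure in Lemma~\ref{lem:upshot} --- this is exactly what the diagrams \eqref{eq:1}, \eqref{eq:2} and their bundle analogues say --- so there is essentially nothing to prove beyond a couple of standard point-set facts.

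First I would treat the index bundle. For each finite $N \ge N_0$ Lemma~\ref{lem:upshot} gives $\bV^{V,\Theta(N)}_{ij}(\bE) \to \bU^{V,\Theta(N)}_{ij}(\bE)$, and Proposition~\ref{prop: Nstab} supplies an isomorphism $\bV^{V,\Theta(N)}_{ij}(\bE) \xrightarrow{\sim} \Xi^{*}\bV^{V,\Theta(N+1)}_{ij}(\bF\oplus\bE)$ covering $\Xi$, given concretely by $s \mapsto 0 \oplus s$ because the rank-$1$ disc adjoined in the stabilisation has bijective Cauchy--Riemann operator (Lemma~\ref{lem: N=1 disc CR}) and hence contributes nothing to the index. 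One then sets $\bV^{V,\Theta(\infty)}_{ij}(\bE) := \operatorname{colim}_N \bV^{V,\Theta(N)}_{ij}(\bE)$, the colimit being taken along these isomorphisms. Since each $\Xi$ is a closed cofibration of CW-type spaces and each bundle map above is a genuine isomorphism onto a pullback, the colimit is a vector bundle over $\bU^{V,\Theta(\infty)}_{ij}(\bE)$ whose restriction to each finite stage is canonically identified with $\bV^{V,\Theta(N)}$, and whose fibre over a non-nodal disc is still the space \eqref{eq: ind}, as that does not change along the system.

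Next I would carry across the two families of maps. For an isometric embedding $V \hookrightarrow W$, the finite-rank maps $\bU^{V,\Theta(N)} \to \bU^{W,\Theta(N)}$ and the covering isomorphisms $\bV^{V,\Theta(N)} \oplus (V \subset W)^{\perp} \xrightarrow{\sim} \bV^{W,\Theta(N)}$ commute with $\Xi$ by \eqref{eq:1} and its bundle version, so they assemble to maps of the colimits; associativity for $V \hookrightarrow W \hookrightarrow Z$ and the compatibility square of index bundles are simply colimits of the corresponding finite-rank statements. For the concatenation maps, the $\rho_k$ and the covering isomorphisms $st_k$ commute with $\Xi$ by \eqref{eq:2} and its bundle version; here I would invoke that a sequential colimit along closed cofibrations of CW-type spaces commutes with finite products (this is needed because the source of $\rho_k$ is a product) to obtain $\rho_k \colon \bU^{V,\Theta(\infty)}_{i,1}(\bE) \times \bU^{W,\Theta(\infty)}_{j,0/1}(\bE') \to \bU^{V+W,\Theta(\infty)}_{i+j-1,0/1}(\bE \#_k \bE')$ together with the covering $st_k$. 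Associativity of the $\rho_k$, associativity of the $st_k$, and the compatibility of both families with isometric embeddings again follow by taking colimits of the already-established finite-rank identities.

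The only genuinely non-formal ingredients are the two point-set facts just used: that the colimit of the index bundles is again a vector bundle, and that the sequential colimit commutes with the finite products occurring in the sources of the $\rho_k$. I expect these to be the ``hard part'' only in a bookkeeping sense; both hold because the construction has been arranged so that the $\Xi$ are closed (cofibrant) inclusions and all spaces in sight are of CW type --- cf.\ Definition~\ref{rmk: N inf fin} and the cofibrancy assertions in Proposition~\ref{prop: Nstab} --- so once this is noted there is nothing further to verify.
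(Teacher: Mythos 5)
Your proposal is correct and matches the paper's argument: the paper dispenses with this lemma in a single sentence, asserting that the index bundles assemble in the colimit and that the compatibility diagrams of Proposition \ref{prop: Nstab} (together with cofibrancy of the maps $\Xi$) carry all the structure of Lemma \ref{lem:upshot} across to the infinite-rank case. You have simply spelled out the same colimit argument, including the two routine point-set checks the paper leaves implicit.
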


\subsection{Homotopy type of spaces of abstract discs}

Fix $\Theta$ a tangential structure of rank $1 \leq N \leq \infty$.

\begin{lem}\label{lem: V conn est}
    Let $V \to W$ be an isometric embedding of vector spaces. Then the connectivity of the map $\bU^{V,\Theta}_{ij}(\bE) \to \bU^{W,\Theta}_{ij}(\bE)$ goes to $\infty$ as the dimension of $V$ does.
\end{lem}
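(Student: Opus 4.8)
The plan is to exploit the fact that the map $\bU^{V,\Theta}_{ij}(\bE)\to\bU^{W,\Theta}_{ij}(\bE)$ enlarges only the perturbation domain, leaving the underlying Riemann surfaces and their Cauchy--Riemann data untouched (it sends $(\dots,V,f)$ to $(\dots,W,f\oplus 0)$). So the whole issue is homotopy-theoretic: for a fixed compact family of domains with boundary and Cauchy--Riemann data, the space of \emph{valid} perturbation data with domain $V$ --- those making $D^{CR}+f$ surjective fibrewise --- is highly connected, with connectivity growing with $\dim V$. I will deduce the claim from this.

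To bound the connectivity of the map by $n$ it suffices (using that both sides are realisations of simplicial sets, that a map out of an $n$-complex factors through a finite subcomplex, and the standard lifting criterion for $n$-connected maps) to solve the following: given a $k$-simplex $\tau\colon\Delta^k\to\bU^{W,\Theta}_{ij}(\bE)$ with $k\le n+1$ whose restriction to $\partial\Delta^k$ lies in the image of $\bU^{V,\Theta}_{ij}(\bE)$, homotope $\tau$ rel $\partial\Delta^k$ into that image. Concretely $\tau$ is a family $\scrD\to\Delta^k$ of $\Theta$-oriented abstract pre-stable discs together with a family of perturbation maps $\hat f\colon W\to\Omega^{0,1}_{W^{2,\kappa-1}}(\scrD,\bC^N)$ with $D^{CR}+\hat f$ surjective over every $p\in\Delta^k$, and with $\hat f$ killing $(V\subset W)^\perp$ over $\partial\Delta^k$. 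The key point is to keep $\scrD$ fixed and move only $\hat f$; then the family of unperturbed operators $\{D^{CR}_p\}_{p\in\Delta^k}$ is fixed, and $C:=\max_{p\in\Delta^k}\dim\operatorname{coker}(D^{CR}_p)$ is finite by upper semicontinuity of the cokernel dimension of a continuous family of Fredholm operators over a compact base.

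There are then two steps: (i) extend the valid domain-$V$ perturbation $\hat f|_V$ from $\partial\Delta^k$ (where it is valid, since there $\hat f=\hat f|_V\oplus 0$) over all of $\Delta^k$; (ii) join the resulting $\hat g\oplus 0$ to $\hat f$ through valid domain-$W$ perturbations, rel $\partial\Delta^k$. Both are controlled by the same codimension estimate: for a fixed Fredholm operator $D$ with $\dim\operatorname{coker}D=c$, a perturbation $f\colon\bR^m\to\Omega^{0,1}$ fails to make $D+f$ surjective exactly when the induced map $\bR^m\to\operatorname{coker}D$ has rank $<c$, and the locus of such linear maps has codimension $m-c+1$ inside $\operatorname{Hom}(\bR^m,\bR^c)$; hence the ``bad'' locus inside the linear bundle of admissible perturbation maps over $\Delta^k$, resp.\ over $\Delta^k\times[0,1]$, has codimension (at every point) $\ge\dim V-C+1$, resp.\ $\ge\dim W-C+1\ge\dim V-C+1$. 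A general-position argument then pushes any family parametrised by a complex of dimension $<\dim V-C$ off this locus rel boundary, so both steps succeed once $\dim V\gtrsim n+C$. This shows that the connectivity of $\bU^{V,\Theta}_{ij}(\bE)\to\bU^{W,\Theta}_{ij}(\bE)$, restricted to any fixed finite subcomplex, grows with $\dim V$; equivalently each $\pi_i$ stabilises as $\dim V\to\infty$, which is the content used in the sequel (e.g.\ for identifying the colimit $\bU^{\infty,\Theta}$ with its homotopy colimit).

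The step I expect to be the main obstacle is making this family-wise general-position argument precise: one must push a parametrised family of perturbation maps off the stratified bad locus while (a) fixing its value over $\partial\Delta^k$ and over the shrunken strip-like ends (where $f$ is constrained to vanish, cf.\ Definition~\ref{defn:perturbation}), (b) remaining within the admissible class, and (c) accommodating the jumps of $\dim\operatorname{coker}(D^{CR}_p)$ along the family --- which is precisely where upper semicontinuity and the rank stratification of $\operatorname{Hom}(\bR^m,\bR^c)$ enter. The analytic inputs --- Fredholmness of $D^{CR}$, the description of its cokernel as an obstruction to surjectivity, and compatibility with the gluing maps --- are already available from Definitions~\ref{def: CR data}--\ref{defn:perturbation} and Section~\ref{sec: glue}, so the remaining work is homotopy-theoretic bookkeeping once the codimension estimate is in hand.
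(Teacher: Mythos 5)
Your overall strategy is the same as the paper's: reduce to a relative lifting problem for finite-dimensional families, and use the fact that the space of surjections from $V$ onto a cokernel of bounded rank becomes highly connected as $\dim V\to\infty$. But there is a genuine gap in how you bound the cokernel. You set $C:=\max_{p\in\Delta^k}\dim\operatorname{coker}(D^{CR}_p)$ for the \emph{given} simplex and conclude once $\dim V\gtrsim n+C$. This constant is finite for each fixed family by upper semicontinuity, but it is not uniform over the simplices of $\bU^{W,\Theta}_{ij}(\bE)$: the only a priori bound is $\dim\operatorname{coker}(D^{CR}_p)\le\dim W$ (since $f$ surjects onto the cokernel), and $\dim W\ge\dim V$, so your codimension estimate $\dim V-C+1$ need not be positive. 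Consequently your argument shows only that the lifting problem for a given $\tau$ becomes solvable after \emph{further} enlarging $V$ by an amount depending on $\tau$ — i.e.\ a statement about fixed finite subcomplexes, or about stabilisation in the colimit — and this is not equivalent to the assertion that the map $\bU^{V,\Theta}_{ij}(\bE)\to\bU^{W,\Theta}_{ij}(\bE)$ itself is $n$-connected for all $\dim V$ beyond a threshold depending only on $n$. You flag this yourself ("restricted to any fixed finite subcomplex") but then assert equivalence with the lemma, which does not follow.

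The missing ingredient, which is the substantive point of the paper's proof, is a \emph{uniform} bound on the cokernel: for a $k$-parameter family of Cauchy--Riemann operators of fixed index $p$ (and $p$ is determined by the puncture data $\bE$), a generic perturbation of the inhomogeneous term $Y$ — which lives in a contractible, affine space of admissible choices, so perturbing it is a homotopy inside $\bU^{W,\Theta}_{ij}(\bE)$ that can be performed rel $\partial\Delta^k$ and followed by a small adjustment of $f$ to preserve surjectivity — reduces the cokernel rank to at most $k-p$. This follows from the argument of Wang's stability theorem, which reduces it to the finite-dimensional jet-transversality count. With this bound, which depends only on $k\le n+1$ and $p$ and not on the particular family, your surjection-space/general-position step closes uniformly and yields the lemma. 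So the fix is not more careful bookkeeping of the rank stratification (your anticipated obstacle), but an extra genericity step applied to the Cauchy--Riemann data itself before the perturbation-domain argument begins.
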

\begin{proof}
    For a fixed vector space $B$, the connectivity of the space of surjective linear maps $A \to B$ goes to $\infty$ as $\operatorname{Dim}(A)$ does, since this is homotopy equivalent to the space of $\operatorname{Dim}(B)$-frames in $A$.

    For a $k$-parameter family of Cauchy-Riemann operators of fixed index $p$, after a generic perturbation of the 1-form the cokernel has rank at most $k-p$: this follows from the same argument as \cite[Theorem 3]{Wang:Stability}, which reduces this to the finite-dimensional case. 

    Then for fixed $k$ and $V$ sufficiently high-dimensional, there is always a (highly-connected family of) surjections from $V$ to the cokernel of the Cauchy-Riemann operator. The lemma then follows.
    
\end{proof}

The space $\bU_{i,j}^{V,\Theta}(\bE)$ involves a fixed choice of finite-dimensional vector space $V$. Using Lemma \ref{lem:upshot}, there is a topological space $\bU_{i,j}(\Theta)$ in which one passes to the homotopy colimit over finite-dimensional inner product spaces $V$ ordered by isometric embeddings. Lemma \ref{lem: V conn est} and \cite[Lemma 2.5.1]{Brun} together imply that the connectivity of the map $\bU_{i,j}^{V, \Theta} \to \bU_{i,j}(\Theta)$ goes to $\infty$ as $\dim V \to \infty$.

\begin{lem}\label{lem:htpy type}

    When $i+j>0$, there is a homotopy equivalence
    \begin{equation*}
        \bU_{i,j}(\Theta) \to (\Omega \Theta)^{i+j}
    \end{equation*}

There is also a homotopy equivalence
\[
\bU_{0,0}(\Theta) \to \mathcal{L}\Theta
\]
where $\Omega \Theta$ and $\mathcal{L}\Theta$ denote the based and free loop spaces respectively.
\end{lem}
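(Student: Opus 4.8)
The plan is to identify the homotopy type of $\bU_{i,j}(\Theta)$ by constructing an explicit comparison map to a space of \emph{loops of boundary conditions} and showing it is a homotopy equivalence, using the contractibility results already established for spaces of Cauchy--Riemann and perturbation data. The key observation is that, after passing to the homotopy colimit over the perturbation space $V$, the index bundle becomes irrelevant to the homotopy type of the base: all that survives of an abstract disc is its \emph{domain with boundary data}, i.e. a family of totally real boundary conditions $\tilde\Lambda \colon \partial D_{i,j} \to \widetilde{U/O}(N)$ together with the $\Theta$-lifts $(H,\Gamma)$ along the boundary. Concretely, I would first observe that the forgetful map $\bU_{i,j}^{V,\Theta}(\bE) \to \{\text{domains with } \Theta\text{-oriented boundary data}\}$ has contractible fibres: the fibre over a fixed domain-with-boundary-data is the space of choices of lengths $L$, strip-like ends $\eps$, complex structures $J$, one-forms $Y$, metrics $g$, and (after stabilising $V$ enough, by Lemma \ref{lem: V conn est} together with the connectivity statement in Lemma \ref{lem: N=1 disc CR}'s stabilisation) perturbation data $(V,f)$ — each of these spaces of choices is convex or otherwise contractible, and they fibre over one another compatibly. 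Hence $\bU_{i,j}(\Theta)$ is homotopy equivalent to the (geometric realisation of the simplicial) space of domains $D_{i,j}$ equipped with $\Theta$-oriented boundary data.

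Next I would contract away the moduli of the \emph{domain} itself. For fixed $i,j$ the space of domains $D_{i,j}$ (with its ordered marked points, and the asymptotic marker when $j=0$) is contractible, as recorded in the text just after Figure \ref{Strip-like ends} (``for fixed $i,j$, the space of domains \ldots is contractible''). Trivialising over this contractible space, a domain with $\Theta$-oriented boundary data is the same as: a map $\tilde\Lambda$ from the boundary circle (when $j=0$, or the boundary with $i$ punctures removed more carefully when $i+j>0$) to $\widetilde{U/O}(N)$, constant near the punctures, together with a boundary homotopy $\Gamma$ to a second totally real condition along $b\times[0,1]$ and a lift $H$ of that second condition to $\Theta$ along $b\times\{1\}$. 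The pair $(\Gamma,H)$ is exactly the data of a homotopy lifting $\tilde\Lambda$ through the fibration $\Theta \to \widetilde{U/O}(N)$: choosing $\Gamma$ freely (the space of such $\Gamma$ starting at a given $\tilde\Lambda$, constant near punctures, is contractible by straight-line homotopy in the space of maps into the space of totally real subspaces) reduces $(\Gamma,H)$ up to contractible choice to a genuine lift of the endpoint condition to $\Theta$. Thus, up to homotopy, the space of domains with $\Theta$-oriented boundary data is the space of maps from the relevant $1$-complex (a circle, or a disjoint union of $i+j$ intervals once we account for the punctures and the fact that $\tilde\Lambda$ is pinned at each puncture to a transverse pair with a chosen path) into $\Theta$.

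Finally I would do the bookkeeping that turns this mapping space into $(\Omega\Theta)^{i+j}$ when $i+j>0$ and into $\cL\Theta$ when $i+j=0$. When there are no punctures ($i=j=0$), the boundary is a single circle, there are no pinning conditions, and the space of $\Theta$-valued maps on it together with the contractible interior data is exactly the free loop space $\cL\Theta$; the interior marked point and asymptotic marker contribute only contractible choices (they pick out a basepoint on the circle, but the space of such choices is contractible and the identification with $\cL\Theta$ is the standard one). When $i+j>0$, the punctures cut the boundary circle into $i+j$ arcs; over each arc the $\Theta$-oriented boundary data, pinned at the two endpoints by the $\Theta$-oriented puncture data $\bE$ (in particular by the chosen lifts $h_{u_x},h_{v_x}$ to $\Theta$ near each puncture), is a path in $\Theta$ with prescribed endpoints, i.e. a point in a based loop space $\Omega\Theta$ after translating the prescribed endpoints to the basepoint; taking the product over the $i+j$ arcs gives $(\Omega\Theta)^{i+j}$. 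I expect the main obstacle to be the careful treatment of the puncture data and the ``constant near the punctures'' conditions: one must check that the pinning data $\bE$ does not change the homotopy type (it contributes a contractible space of paths $\gamma$ and lifts $h$, and in the homotopy colimit these are absorbed), and that the comparison map is compatible with the gluing maps $\rho_k$ of Lemma \ref{lem:upshot} so that it is genuinely natural — this compatibility is what pins down the precise factor $(\Omega\Theta)^{i+j}$ rather than merely a space of the right homotopy type. Away from this bookkeeping, every step is an application of contractibility of the auxiliary choices and of the standard identification of path/loop spaces with mapping spaces out of intervals and circles.
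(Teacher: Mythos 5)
Your proposal is correct and follows essentially the same route as the paper: the paper's proof evaluates the homotopy lift $H$ along the $i+j$ boundary components to get a map to a product of path spaces $\cP_{ab}\Theta \simeq \Omega\Theta$ (respectively $\cL\Theta$ when $i=j=0$), and shows this is an equivalence using contractibility of the spaces of Cauchy--Riemann data together with the connectivity estimate for surjective linear maps as $\dim V \to \infty$ — exactly your contractible-fibre argument. The only cosmetic difference is that you factor the reduction through an intermediate space of domains with boundary data, whereas the paper writes down the evaluation map directly.
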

\begin{proof}
    First assume $N < \infty$. Assume $i+j>0$. Evaluating the homotopy lift $H$ along each of the $i+j$ components of the boundary gives a map
    \begin{equation}\label{eq: ev}
        \bU_{i,j}(\Theta) \to \cP_{h_{u_{out}},h_{u_1}} \Theta \times \ldots \times \cP_{h_{u_i},h_{v_{out}}} \Theta
    \end{equation}
    where $\cP_{ab}\Theta$ is the space of paths from $a$ to $b$ in $\Theta$. Picking paths from $a$ and $b$ to the basepoint induces an equivalence from $\cP_{ab}\Theta$ and $\Omega \Theta$. The connectivity of the composition of (\ref{eq: ev}) with the inclusion $\bU^{V,\Theta}_{i,j} \to \bU_{i,j}(\Theta)$ goes to $\infty$ as $\dim V \to \infty$, by the same connectivity estimate on spaces of surjective linear maps as above, as well as contractibility of spaces of Cauchy-Riemann data; it follows that (\ref{eq: ev}) is an equivalence.
    
    The case $i=j=0$ is similar.

    If $N=\infty$, the result follows from the finite rank case since the above equivalences are compatible with stabilisation in $N$ and $\Theta$ is the (homotopy) colimit of $\Theta(N)$ as $N \to \infty$.
\end{proof}

We have dropped the puncture data from the notation, to reflect that it does not affect the homotopy type.

    \begin{cor}\label{cor: fr cont}
        Each $\bU^{fr}_{i,1}$ is contractible.
    \end{cor}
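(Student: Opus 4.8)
The plan is to read this off immediately from Lemma~\ref{lem:htpy type}. For the framed tangential structure $fr$ one has, by definition, $\Theta = \mathrm{pt} \hookrightarrow \widetilde{U/O}(N)$, so $\Omega\Theta$ is the based loop space of a one-point space, hence again a single point. Since $j = 1$, the inequality $i+j>0$ holds for every $i \ge 0$, so the first part of Lemma~\ref{lem:htpy type} applies and yields a homotopy equivalence
\[
\bU^{fr}_{i,1} \;\simeq\; (\Omega\,\mathrm{pt})^{\,i+1} \;=\; \mathrm{pt},
\]
so $\bU^{fr}_{i,1}$ is contractible. That is the whole argument.

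I do not expect any genuine obstacle here; the only points to attend to are bookkeeping. First, that $\bU^{fr}_{i,1}$ is to be read as the homotopy-colimit space $\bU_{i,1}(fr)$ of the preceding subsection, with the (irrelevant) puncture data suppressed from the notation as discussed there. Second, that Lemma~\ref{lem:htpy type} is available in whatever rank $1 \le N \le \infty$ one works in — which it is, since its proof treats the finite- and infinite-rank cases uniformly, the relevant equivalences being compatible with stabilisation in $N$.

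If one preferred not to route through the homotopy-type computation, one could argue directly: when $\Theta = \mathrm{pt}$ the homotopy-lift data $(H,\Gamma)$ reduces to a nullhomotopy of each boundary condition $\Lambda$ along each boundary arc, and the space consisting of a boundary condition together with such a nullhomotopy (with the prescribed behaviour near the punctures) is contractible, being a mapping-space condition of the form $\mathrm{Map}\big((I^2, A),(\,\cdot\,)\big)$ for $A \hookrightarrow I^2$ a deformation-retract cofibration; all of the remaining choices (domain, strip-like ends, Cauchy--Riemann data) are already contractible, and passing to the homotopy colimit over the perturbation domains $V$ (cf.\ Lemma~\ref{lem: V conn est}) removes the residual connectivity contributed by the perturbation datum. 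But the one-line deduction from Lemma~\ref{lem:htpy type} is cleaner and is what I would write.
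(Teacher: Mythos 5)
Your proposal is correct and is exactly the paper's intended argument: the corollary is stated as an immediate consequence of Lemma \ref{lem:htpy type}, since for $fr$ one has $\Theta=\mathrm{pt}$ and hence $(\Omega\Theta)^{i+1}\simeq\mathrm{pt}$. The bookkeeping points you flag (reading $\bU^{fr}_{i,1}$ as the colimit $\bU_{i,1}(fr)$, and the lemma holding for all ranks) are the right ones, and the alternative direct argument, while unnecessary, is also sound.
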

    
   Consider the tangential structure of infinite rank $\Theta=\widetilde{U/O}$. For $i=0$ and $j=0$, composing the classifying map for the index bundle $\bV_{0,1}^\Theta$ with an inverse to the equivalence from Lemma \ref{lem:htpy type} gives a well-defined map in the homotopy category $\Omega \Theta \to BO \times \bZ$.
    \begin{lem}\label{lem: conn est}
        The classifying map for $ \bV^{\widetilde{U/O}}$:
        $$\bU_{01}(\widetilde{U/O}) \to BO$$
        is an equivalence.  
    \end{lem}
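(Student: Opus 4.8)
The plan is to identify the map $\bU_{01}(\widetilde{U/O}) \to BO$ with a model for Bott periodicity, and check that that model realises the standard homotopy equivalence $\Omega(U/O) \simeq BO \times \bZ$, restricted appropriately. By Lemma \ref{lem:htpy type} we have a homotopy equivalence $\bU_{0,1}(\widetilde{U/O}) \simeq \Omega \widetilde{U/O} \simeq \Omega(U/O)_0$, the component of the constant loop (equivalently, since $U/O$ has $\pi_1 = \bZ$ and $\widetilde{U/O}$ is its universal cover, $\Omega\widetilde{U/O}$ picks out a single component of $\Omega(U/O)$, namely the one of Maslov index $0$). Under real Bott periodicity $\Omega(U/O) \simeq BO \times \bZ$, so this component is $BO \times \{0\} \simeq BO$, which matches the target. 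So the content of the lemma is purely that the \emph{specific} map built from the index bundle $\bV^{\widetilde{U/O}}_{0,1}$ agrees, up to homotopy, with the Bott map on this component.

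The key steps, in order, are as follows. First, recall that a point of $\bU_{0,1}(\widetilde{U/O})$ corresponds (after passing to the homotopy colimit over $V$ and using the evaluation map of Lemma \ref{lem:htpy type}) to a based loop in $\widetilde{U/O}(N)$, i.e. a path $\Lambda: [0,1] \to U/O(N)$ of totally real subspaces of $\bC^N$ from $\bR^N$ back to $\bR^N$ (together with a chosen lift to the universal cover fixing the Maslov index), and over such a loop the fibre of $\bV^{\widetilde{U/O}}_{0,1}$ is the kernel-minus-cokernel of the associated Cauchy-Riemann operator on the disc with this boundary condition. Second, invoke the standard comparison (this is exactly the ``index bundle over the loop space'' model for Bott periodicity, cf. \cite[Section 3]{P} and the heuristic discussion following \eqref{eq: above}): the assignment sending a loop of totally real subspaces to the index of its Cauchy-Riemann operator is a model for the Bott map $\Omega(U/O) \to BO \times \bZ$. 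Third, on the component of Maslov index $0$, this index bundle has virtual rank $0$ and so is represented by an honest (stable) vector bundle, and the resulting map $\bU_{0,1}(\widetilde{U/O}) \to BO$ is a homotopy equivalence because the Bott map is. Concretely, one can reduce to the rank-$N$ case and let $N \to \infty$, using that all the constructions are compatible with stabilisation (Proposition \ref{prop: Nstab}) and that $\widetilde{U/O} = \operatorname{colim}_N \widetilde{U/O}(N)$.

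The main obstacle is the second step: pinning down that the index-of-Cauchy-Riemann-operator construction used here genuinely represents the classical Bott equivalence, rather than some other self-map of $BO$. This is where one needs the comparison between the ad-hoc model for Bott periodicity built in Section \ref{sec: abstr dis} and a known one --- precisely the kind of statement packaged into Assumption \ref{asmp} (and worked out in \cite{Harris, P}). In practice I would argue it directly: the linear gluing isomorphisms of Lemmas \ref{linear gluing} and \ref{lem:associative linear gluing} make the index bundle over $\bU_{0,1}$ multiplicative under concatenation of loops, compatibly with the loop-sum on $\Omega(U/O)$, so the map $\bU_{0,1}(\widetilde{U/O}) \to BO$ is an $H$-map; and one computes it on $\pi_*$ by evaluating on the generators --- e.g. the generator of $\pi_1 BO = \bZ/2$ and of $\pi_4 BO = \bZ$ --- via the explicit Cauchy-Riemann index computations for the corresponding loops of totally real subspaces (the ``clutching'' loops), checking these match the known Bott periodicity isomorphism $\pi_{k}(U/O) \cong \pi_{k-1}(BO \times \bZ)$. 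Since a self-$H$-map of $BO$ inducing an isomorphism on homotopy groups is a homotopy equivalence, this finishes the argument. A subtlety to be careful about: keeping track of the $\bZ$ of components, i.e. making sure we really land in $BO = BO \times \{0\}$ and not a shifted copy --- this is exactly why the grading (the lift to $\widetilde{U/O}$) is imposed, since it fixes the Maslov index and hence the virtual rank to be $0$.
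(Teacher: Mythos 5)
Your main route is essentially the paper's proof: identify $\bU_{01}(\widetilde{U/O})\simeq \Omega\widetilde{U/O}$ via Lemma \ref{lem:htpy type}, reduce to finite rank $N$ and stabilise, and appeal to the known fact that the Cauchy--Riemann index bundle over loops of totally real subspaces realises the Bott equivalence --- the paper does exactly this by citing \cite[Theorem A]{deSilva} (together with the $(N-3)$-connectivity of $U/O(N)\to U/O$) and \cite[Proposition 3.15]{P}, the same reference you point to. One caveat on your fallback ``direct'' argument: verifying the $H$-map on the generators of $\pi_1 BO$ and $\pi_4 BO$ alone does not establish an isomorphism on all of $\pi_*BO$ (which is nonzero in infinitely many degrees mod $8$); you would need compatibility with the periodicity structure, which is precisely the content of de Silva's theorem, so the citation is not really avoidable by that shortcut.
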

    \begin{proof}
        Using the fact that the inclusion $U/O(N) \to U/O$ is $(N-3)$-connected and taking $N \to \infty$, this follows from \cite[Theorem A]{deSilva}; see also (the proof of) \cite[Proposition 3.15]{P}.
    \end{proof}

\subsection{Adaptation to tangential pairs}\label{sec: 2tan}
    In this section we explain how to adapt the rest of Section \ref{sec: abstr dis} to incorporate tangential pairs as in Section \ref{sec: Lag Floer conj}. As in Definition \ref{def: tan}, we will generally work with tangential pairs satisfying an additional orientation and gradedness condition.
    \begin{rmk}
        Recall from \cite[Section 11(i)]{Seidel:book} that there are two Lie groups $\operatorname{Pin}^\pm(n)$ which are double covers of $O(n)$. We write $\operatorname{Pin}(n)$ to mean either $\operatorname{Pin}^+(n)$ or $\operatorname{Pin}^-(n)$.

        They are always diffeomorphic, but not isomorphic as groups; for example $\operatorname{Pin}^+(1) \cong C_2 \times C_2$ but $\operatorname{Pin}^-(1) \cong C_4$. 
    \end{rmk}
    \begin{defn}
        Let $1 \leq N \leq \infty$. Let $S_\pm U(N)$ be the group of unitary matrices of determinant $\pm 1$. Its classifying space $BS_\pm U(N)$ is the homotopy fibre of the map $BU(N) \to K(\bZ,2)$ classifying twice the universal first chern class. Note that since $2c_1(F \otimes \bC)=0$ for any real vector bundle $F$, the map $BO(N) \to BU(N)$ admits a natural lift to a map $BO(N) \to BS_\pm U(N)$.

        A \emph{(graded) tangential pair (of rank $N$)} consists a pair of based spaces ($\Theta, \Phi)$, along with maps fitting into a commutative diagram:
        \begin{equation*}
            \xymatrix{
                \Theta 
                \ar[r]
                \ar[d]
                &
                \Phi
                \ar[d]
                \\
                BO(N) 
                \ar[r]
                &
                BS_\pm U(N)
            }
        \end{equation*}
        We assume that the map $\Theta \to \Phi$ is a fibration. As with tangential structures, we work with \emph{graded} tangential pairs, meaning $\Phi$ lives over $BS_\pm U(N)$ instead of $BU(N)$.

        An \emph{oriented} tangential pair (of rank $N$) consists of a pair of spaces $\Theta, \Phi$ fitting into a commutative diagram:
        \begin{equation*}
            \xymatrix{
                \Theta 
                \ar[r]
                \ar[d]
                &
                \Phi
                \ar[d]
                \\
                B\operatorname{Pin}(N) 
                \ar[r]
                &
                BS_\pm U(N)
            }
        \end{equation*}
        such that the map $\Theta \to \Phi$ is 1-connected.
    \end{defn}
    For concreteness, for $N<\infty$ we may use the specific model for $BU(N)$ given by the space of $N$-dimensional complex subspaces of $\bC^\infty$, and $BO(N)$ to be the space of $N$-dimensional real subspaces of $\bR^\infty$. These each carry a tautological vector bundle. There are natural stabilisation maps $BU(N) \to BU(N+1)$ sending $V$ to $\bC \oplus V$, and similarly for $BO(N) \to BO(N+1)$.

    \begin{rmk}
        The homotopy fibre of $BO(N) \to BS_\pm U(N)$ is exactly $\widetilde{U/O}(N)$, and the homotopy fibre of $B\operatorname{Pin}(N) \to BS_\pm U(N)$ is exactly $\widetilde{U/O}^{or}(N)$.

        In particular, from a(n oriented) tangential pair $(\Theta, \Phi)$ where $\Phi$ is a single point, we obtain a(n oriented) tangential structure in the sense of Definition \ref{def: tan}.
    \end{rmk}

    \begin{defn}
        Let $(\Theta, \Phi)$ be a tangential pair of rank $N<\infty$. A \emph{($\Theta,\Phi)$-orientation} on a domain $D_{i,j}$ consists of 
        \begin{itemize}
            \item Strip-like ends near each boundary puncture, as in Definition \ref{def: boun dat}.
            \item A complex vector bundle $E$ of rank $N$ over $D_{i,j}$, classified by a map $D_{i,j} \to BU(N)$.
            \item Complex vector spaces $E_x \in BU(N)$ for each puncture, and trivialisations $E \cong E_x$ over the strip-like end near each puncture $x$.
            \item A real vector bundle $F$ of rank $N$ over $\partial D_{i,j}$, classified by a map $\partial D_{i,j} \to BO(N)$ and locally constant in the strip-like ends, along with an isomorphism $F \otimes \bC \cong E$ over $\partial D_{i,j}$, compatible with the classifying maps.
            \item Compatible homotopy lifts (defined as in Definition \ref{def: hom lift}) of the classifying spaces of $F$ and $E$ to $\Theta$ and $\Phi$ respectively.
        \end{itemize}
        Cauchy-Riemann data may be defined similarly to Section \ref{subsec: CR data}: this consists of choosing a complex structure $J$ on $E$, a Hermitian connection on $E$ (so one can take $\overline \partial_J$ to be the $(0,1)$-part of covariant differentiation), a 1-form $Y \in \Omega^{0,1}(D_{i,j},E)$, and a metric $g$ as before. This data gives rise to an associated Cauchy-Riemann operator, which is the linear map:
        \begin{equation}\label{eq: CR pair}
            D^{CR} := \overline \partial_J +Y: W^{2,\kappa}(D_{i,j},E,F) \to \Omega^{0,1}_{W^{2, \kappa-1}}(D_{i,j}, E)
        \end{equation}
        Perturbation data is similarly defined as a linear map $f$ from a finite-dimensional vector space $V$ to the codomain of (\ref{eq: CR pair}) such that $D^{CR}+f$ is surjective.

    \end{defn}
    One can adapt the definition of puncture data to incorporate tangential pairs similarly.

    Using these ingredients, for $N<\infty$ one may define spaces of abstract discs with $(\Theta,\Phi)$-orientations and Cauchy-Riemann data $\bU^{V,\Theta,\Phi}_{ij}(\bE)$ similarly to Definition \ref{def: spac dis}; these spaces admit index bundles $\bV^{V,\Theta,\Phi}_{ij}(\bE) \to \bU^{V,\Theta,\Phi}_{ij}(\bE)$, as well as all the same maps as in Lemma \ref{lem:upshot} (and by the same proof). 

    Similarly, these spaces admit the same stabilisation maps $\Xi$ as in Proposition \ref{prop: Nstab} (again by the same proof), so we may incorporate tangential pairs of rank $\infty$ the same way as in Section \ref{sec: Nstab}.

    Letting $\bU_{i,j}(\Theta,\Phi) = \operatorname{hocolim}_V \bU_{i,j}^{V,\Theta,\Phi}$ (for fixed puncture data $\bE$), the analogue of Lemma \ref{lem:htpy type} is:

    \begin{lem}\label{lem: htpy type pairs}
        When $i+j > 0$, $\bU_{ij}(\Theta,\Phi)$ is the homotopy fibre of the map:
        \begin{equation*}
            (\Omega \Theta)^{i+j} \to \Omega \Phi
        \end{equation*}
        $\bU_{00}(\Theta, \Phi)$ is the homotopy pullback of the following diagram:
        \begin{equation}\label{eq: pair pull}
            \xymatrix{
                &
                \Phi 
                \ar[d]
                \\
                \cL \Theta
                \ar[r]
                &
                \cL \Phi
            }
        \end{equation}
    \end{lem}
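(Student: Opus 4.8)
The plan is to bootstrap from the already-established case of plain tangential structures (Lemma \ref{lem:htpy type}), treating a tangential pair $(\Theta,\Phi)$ essentially as ``relative tangential data'' over $\Phi$. Concretely: the construction of $\bU^{V,\Theta,\Phi}_{ij}(\bE)$ splits the data into a $\Phi$-part (the complex bundle $E\to D_{i,j}$ with its lift to $\Phi$) and a $\Theta$-part (the totally real boundary bundle $F\to\partial D_{i,j}$ with its lift to $\Theta$, together with the identification $F\otimes\bC\cong E|_{\partial}$, which is the compatibility constraint). Since $\Theta\to\Phi$ is assumed to be a fibration, this relative structure is what produces a homotopy fibre rather than a product in the answer.

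\smallskip

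First I would reduce to the case $N=\infty$ exactly as in Lemma \ref{lem:htpy type}: the constructions are compatible with the stabilisation maps $\Xi$ of Proposition \ref{prop: Nstab} (adapted to pairs as indicated just before this lemma), and $\Theta$, $\Phi$ are the homotopy colimits of $\Theta(N)$, $\Phi(N)$, so it suffices to treat each finite rank and pass to the colimit. Second, fix a $k$-simplex and observe that, as in the proof of Lemma \ref{lem:htpy type}, after passing to the homotopy colimit over the perturbation space $V$ and using that spaces of Cauchy--Riemann data are contractible (and the connectivity estimate on spaces of surjective linear maps from Lemma \ref{lem: V conn est}), the space $\bU_{ij}(\Theta,\Phi)$ is equivalent to the space of the purely homotopy-theoretic data: a complex vector bundle over $D_{i,j}$ with a lift of its classifying map to $\Phi$, a totally real boundary bundle over $\partial D_{i,j}$ with a lift to $\Theta$, and the compatibility isomorphism $F\otimes\bC\cong E|_{\partial D_{i,j}}$ with its lift of the square \eqref{eq: strong}. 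Third, I would contract the Riemann-surface dependence: $D_{i,j}$ is contractible and $\partial D_{i,j}$ (for $i+j>0$) is a disjoint union of $i+j$ contractible arcs, each running between two punctures, while for $i=j=0$ the boundary $\partial D_{0,0}$ is a single circle. Evaluating the $\Phi$-lift over $D_{i,j}$ and the $\Theta$-lifts over the boundary arcs, and contracting, identifies $\bU_{ij}(\Theta,\Phi)$ with the homotopy limit of: $(i+j)$ copies of (a path space in) $\Theta$, mapping to (a path space in) $\Phi$ via the single ambient $\Phi$-lift, the boundary conditions at punctures being pinned down up to contractible choice by the transversality assumptions. When $i+j>0$ this homotopy limit is precisely the homotopy fibre of $(\Omega\Theta)^{i+j}\to\Omega\Phi$ (pick paths from the puncture boundary conditions to the basepoint to rigidify path spaces to loop spaces, exactly as in Lemma \ref{lem:htpy type}); when $i=j=0$, the single boundary circle must map to $\Theta$ and the interior disc to $\Phi$, compatibly over $\cL\Phi$, which is exactly the homotopy pullback \eqref{eq: pair pull}.

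\smallskip

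The main obstacle I anticipate is bookkeeping the compatibility constraint $F\otimes\bC\cong E|_{\partial D_{i,j}}$ carefully enough to see that it degenerates precisely to the map ``$\cL\Theta\to\cL\Phi$'' (respectively ``$(\Omega\Theta)^{i+j}\to\Omega\Phi$''), rather than to something involving $\widetilde{U/O}$ or $BO$, $BU$ separately. The point is that in the absolute case (Lemma \ref{lem:htpy type}) the boundary condition $\tilde\Lambda$ valued in $\widetilde{U/O}(N)$ is the data, whereas in the relative case $\widetilde{U/O}$ appears only as the homotopy fibre of $BO\to BS_\pm U$ and the ``actual'' data upstairs is the pair of lifts to $(\Theta,\Phi)$; one must check that the homotopy-fibre square relating $\widetilde{U/O}$, $BO$, $BS_\pm U$ pulls back correctly so that the relative Gauss-map data over $D_{i,j}\cup\partial D_{i,j}$ is equivalent, after contracting the domain, to just the lift data. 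Once that square is in hand, the rest is the same contraction-and-evaluation argument as in Lemma \ref{lem:htpy type}, carried out over the base $\Phi$, so I would write it as ``the proof of Lemma \ref{lem:htpy type} applies verbatim relative to $\Phi$'', spelling out only the identification of the resulting homotopy limit with the stated fibre/pullback.
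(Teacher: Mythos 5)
Your proposal is correct and follows essentially the same route as the paper: the paper's proof also evaluates the $\Theta$-lifts along the boundary components to get an $(i+j)$-tuple of paths in $\Theta$ together with a contraction (coming from the $\Phi$-lift over the contractible domain) of the concatenated loop pushed to $\Phi$, observes that this is a model for the stated homotopy fibre (respectively pullback, when $i=j=0$), and invokes the connectivity argument of Lemma \ref{lem:htpy type} to see the evaluation map is an equivalence. Your write-up just spells out the stabilisation in $N$ and the identification of the compatibility data in more detail than the paper does.
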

    \begin{proof}
        For $i+j>0$, the evaluation map from $\bU_{ij}(\Theta,\Phi)$ lands in the space of $(i+j)$-tuples of paths in $\Theta$ (with appropriate fixed endpoints), along with a contraction of the loop obtained by concatenating these paths and applying the map $\Theta \to \Phi$. This space is a model for the homotopy fibre as in the statement of the lemma. This map is an equivalence by the same argument as in Lemma \ref{lem:htpy type}.

        The case $i=j=0$ is similar.
    \end{proof}

\subsection{Interior punctures}\label{sec: int pun}

    There is a similar story involving interior punctures, which we sketch here. Fix a tangential pair $(\Theta, \Phi)$ of rank $N < \infty$.

    \begin{defn}
        An \emph{abstract disc/rational curve with interior punctures} consists of a disc or rational curve $\Sigma$, with a two finite ordered lists of disjoint input punctures $p_1, \ldots, p_i \in \partial \Sigma$ and $q_1, \ldots, q_j \in \Sigma^\circ$, along with either an output puncture $r$ (on the interior or boundary) or marked point with asymptotic marker $(r,v)$.

        \emph{Boundary data}, \emph{$(\Theta,\Phi$)-orientations} and \emph{perturbation data} (for a fixed vector space $V$) on $\Sigma$ are defined just as in the disc case, with strip-like ends over interior punctures as well as boundary punctures. 
        
    \end{defn}
    Now the Cauchy-Riemann operator is a map
    \begin{equation*}
        D^{CR}: W^{2,\kappa}(\Sigma; E,F) \to \Omega^{0,1}_{W^{2,\kappa-1}}(\Sigma, E)
    \end{equation*}
    where $E$ and $\Lambda$ are the real and complex part of the boundary conditions respectively. Gluing at interior punctures works essentially the same as with boundary punctures, and there is an analogous space of such Riemann surfaces with interior punctures and possibly interior nodes, with an analogous version of Lemma \ref{lem:upshot}. One can compute the homotopy type of these spaces analogously to Lemma \ref{lem: htpy type pairs}; for example, the space of $(\Theta, \Phi)$-oriented rational curves with 0 input punctures and 1 output puncture is homotopy equivalent to $\Omega^2\Phi$.

\subsection{Thom ring spectra}\label{sec: Thom}
    
    Let $(\Theta,\Phi)$ be a tangential pair of rank $1 \leq N \leq \infty$. Roughly, we may define spectra $\bT^{\Theta,\Phi}_{ij}(\bE)$ to be the Thom spectrum of the index bundle over $\bU_{ij}^{V, \Theta,\Phi}(\bE)$ as $V \to \infty$.
    
    More precisely, we may define it to be the orthogonal spectrum with $V^{th}$ space given by the Thom space: 
    \begin{equation*} 
        \operatorname{Thom}\left(\bV^{V, \Theta,\Phi}_{ij}(\bE) \to \bU^{V, \Theta,\Phi}_{ij}(\bE)\right)
    \end{equation*}
    which may be equipped with natural $O(V)$-action and structure maps.

    By Lemma \ref{lem:upshot}, these admit suitably associative maps 
    \begin{equation}
        \rho_k:\bT_{i,1}^{\Theta,\Phi}(\bE) \wedge \bT_{j, 0/1}^{\Theta,\Phi}(\bE') \to \bT^{\Theta,\Phi}_{i+j-1,0/1}(\bE \#_k \bE')
    \end{equation}

    In particular, the collection $\left\{\bT_{i1}^{\Theta,\Phi}(\cdot)\right\}_{i \geq 1}$ forms a nonsymmetric coloured operad in spectra (with colours given by the set of puncture data), which $\{\bT_{01}^{\Theta,\Phi}(\cdot)\}$ is a category over.

    In particular, by Corollary \ref{cor: fr cont}, when $(\Theta,\Phi)=fr$ corresponds to framings, this operad is an $A_\infty$-operad. Choosing some fixed puncture data $\bE_*$, we obtain a spectrum $R := \bT_{01}^{\Theta,\Phi}(\bE_*)$ for each $(\Theta,\Phi)$; by functoriality under the tangential structure as in Remark \ref{rmk: func}, $R$ is a module over the operad $\{\bT^{fr}_{i1}(\cdot)\}_{i \geq 1}$ and so is an $A_\infty$ ring spectrum, independent of the choice of puncture data up to equivalence. 

    \begin{rmk}
        Using methods of Section \ref{sec: int pun}, one can form the Thom spectrum $A$ of the index bundle over the space of $\Phi$-oriented rational curves with no inputs and one output puncture, and using gluing and functoriality as above, equip it with the structure of a $\bE_2$ ring spectrum (for some model of the $\bE_2$ operad).

        It is expected that under Conjecture \ref{conj: SH}, $SH(X; A)$ can be given the structure of an $\bE_2$-ring spectrum over the $\bE_2$-algebra $A$: for symplectic cohomology over a Novikov ring, this $\bE_2$-structure has been constructed in detail in \cite{AGV}.
    \end{rmk}

\section{Background on flow categories}\label{sec: back}

\subsection{Manifolds with faces}

In this section we recap some definitions regarding manifolds with corners from \cite[Section 3]{PS}. Let $M$ be a manifold with corners.
\begin{defn}
    A \emph{codimension $k$ face} of $M$ is a disjoint union of closures of components of the set of codimension $k$ points in $\partial M$. A \emph{boundary face} is a codimension 1 face.
    
    A \emph{system of boundary faces} for $M$ consists of a collection of boundary faces $\{B_i\}_i$, whose union covers $\partial M$ but which have disjoint interiors.
    
    $M$ is a \emph{manifold with faces} if it admits a system of boundary faces. This is equivalent to any point $p \in \partial M$ of codimension $k$ touching exactly $k$ boundary faces. We define the \emph{carapace} of $M$ to be the union of its compact boundary faces.
\end{defn}
Any face of a manifold with faces is itself naturally a manifold with faces.\par 
Let $M$ be a manifold with faces, and choose a Riemannian metric on $M$ which makes all boundary faces intersect orthogonally. Over each boundary face $F \subseteq M$, there is now a canonical inward-pointing normal vector $\nu^F$ along $F$, which is tangent to $\partial M$ along $\partial F$.
\begin{defn}
    A \emph{framed function} on a manifold with faces $M$ is a pair $(f, s)$, where $f: M \to \bR$ is a smooth map and $s: \bR \to TM|_{f^{-1}\{1\}}$ is a map of vector bundles over $f^{-1}\{1\}$, such that
    \begin{itemize}
        \item 1 is a regular value of $f$
        \item $f$ is negative on the carapace of $M$.
        \item $f^{-1}(-\infty, 2]$ is compact.
        \item $df$ vanishes along $\nu^F$ for each boundary face $F \subseteq M$.
        \item $s|_F$ factors through $TF \subseteq TM$ over each boundary face $F\subseteq M$.
        \item The composition $\bR \xrightarrow{s} TM|_{f^{-1}\{1\}} \xrightarrow{df} \bR$ is the identity.
    \end{itemize}
\end{defn}
By \cite[Proposition 3.29]{PS}, framed functions exist on any manifold with faces $M$, and if framed functions are already specified over some of the faces of $M$, there exists a framed function on $M$ extending the given ones. The restriction of a framed function to a face is also a framed function.

If $M$ is a (possibly non-compact) manifold with faces and $(f, s)$ is a framed function, then $f^{-1}\{1\}$ is a compact manifold with faces, and there is a canonical isomorphism of vector bundles over $f^{-1}\{1\}$
$$S = di + s: Tf^{-1}\{1\} \oplus \bR \to TM$$
where $i: f^{-1}\{1\} \hookrightarrow M$ is the inclusion map. These are compatible on faces.

A \emph{collar neighbourhood} of a codimension $k$ face $F \subseteq M$ is an embedding $\cC: F \times [0, \eps)^k \hookrightarrow M$ onto a neighbourhood of $F$; we will always assume that the inwards derivative in each of the $k$ directions is given by the inwards-pointing normals $\nu^{G_i}$ where $G_1, \ldots, G_k$ are the boundary faces touching $F$. By \cite[Lemma 3.19]{PS}, there always exist collar neighbourhoods of all faces $F \subseteq M$, compatible on overlaps; similarly to before if some of them are already specified the rest can be chosen extending this choice, and these collar neighbourhoods are unique up to isotopy. Given all of this, we will often implicitly choose such a system of collar neighbourhoods.

\subsection{Unoriented flow categories}\label{sec: unor flow cat}

In this section, we briefly recall some notions from \cite[Sections 3, 4 \& 5]{PS}; see \textit{loc. cit.} for more details.
\begin{defn}
    A \emph{flow category} $\cF$ consists of a finite set (which we also denote by $\cF$), a function $|\cdot|: \cF \to \bZ$, and compact manifolds with faces $\cF_{xy}$ for each $x, y \in \cF$ of dimension $|x|-|y|-1$, along with maps 
    $$c: \cF_{xy} \times \cF_{yz} \to \cF_{xz}$$
    satisfying a suitable associativity condition, such that each map $c$ is the inclusion of a (codimension 1) boundary face of $\cF_{xz}$, and that varying these over all $y \in \cF$ provides a system of boundary faces of $\cF_{xz}$.
\end{defn}
In particular, a flow category defines a topologically enriched category\footnote{\href{https://www.math.stonybrook.edu/~jpardon/joke.pdf}{https://www.math.stonybrook.edu/$\sim$jpardon/joke.pdf}}.

\begin{defn}
    There is a \emph{shift} operation on flow categories. For a flow category $\cF$, we write $\cF[i]$ for the flow category which is the same as $\cF$, except all the gradings are increased by $i$, i.e. $|\cdot|^{\cF[i]} = |\cdot|^\cF + i$.
\end{defn}
\begin{defn}
    A \emph{morphism} $\cW$ between two flow categories $\cF$ and $\cG$ consists of compact manifolds with faces $\cW_{xy}$ for each $x \in \cF$ and $y \in \cG$ of dimension $|x|-|y|$, along with maps
            $$c = c^\cW: \cF_{xz} \times \cW_{zy} \rightarrow \cW_{xy}$$
            and
            $$c = c^\cW: \cW_{xz} \times \cG_{zy} \rightarrow \cW_{xy}$$
            which are suitably associative and compatible with the $c^\cF$ and $c^\cG$, and whose images together form systems of boundary faces for each $\cW_{xy}$.
\end{defn}
\begin{rmk}
    For brevity, we will often write $\cF_{x_1 \ldots x_i}$ for $\cF_{x_1 x_2} \times \ldots \times \cF_{x_{i-1} x_i}$ and $\cW_{x_1\ldots x_i;y_1 \ldots y_j}$ for $\cF_{x_1 \ldots x_i} \times \cW_{x_i y_1} \times \cG_{y_1 \ldots y_j}$, and similarly for other structures which we will encounter later, such as bordisms or bilinear maps.
\end{rmk}
\begin{defn}
     A \emph{bordism} $\mathcal{R}$ between two such morphisms $\mathcal{W}$ and $\mathcal{V}$ consists of compact smooth manifolds with faces $\mathcal{R}_{xy}$ (of dimension $|x|-|y|$ + 1) for $x$ in $\mathcal{F}$ and $y$ in $\mathcal{G}$, along with maps
            $$c=c^\cR: \cW_{xy}, \cV_{xy} \rightarrow \cR_{xy}$$
            and 
            $$c = c^\cR: \cF_{xz} \times \cR_{zy} \rightarrow \cR_{xy}$$
            and
            $$c = c^\cR: \cR_{xz} \times \cG_{zy} \rightarrow \cR_{xy}$$
            compatible with $c^\cF$, $c^\cG$, $c^\cW$, $c^\cV$, and together forming systems of boundary faces for each $\cR_{xy}$.

            We write $[\cF, \cG]$ for the set of morphisms $\cF \to \cG$ up to the equivalence relation generated by bordism. This is naturally an abelian group under disjoint union.
\end{defn}

\subsection{Composition}\label{sec:comp}

Let $\cF,\cG,\cH$ be flow categories, and $\cW: \cF \to \cG$ and $\cV: \cG \to \cH$ morphisms. We define their composition as follows.\par 
First, choose collar neighbourhoods $\cC: \cF_{xx'} \times \cF_{x'x''} \times [0,\eps) \hookrightarrow \cF_{xx''}$ onto a neighbourhood of each boundary face; do the same for $\cG, \cH, \cW$ and $\cV$. We require that these are suitably compatible, as in \cite[Section 3.2]{PS}.\par 
For each $x \in \cF$ and $z \in \cH$, define $\tilde{\cQ}_{xz}$ to be the glued-together manifold:
\begin{equation}\label{eq: cQ}
    \tilde{\cQ}_{xz} := \operatorname{colim}\left(\bigsqcup_{y,y' \in \cG} \cW_{xy} \times \cG_{yy'} \times \cV_{yz} \times [0, \varepsilon)^2 \rightrightarrows \bigsqcup_{y \in \cG} \cW_{xy} \times \cV_{yz} \times [0, \varepsilon)\right)
\end{equation}
where the maps in the diagram send $(a,b,c,u,v)$ to $(a, \cC(b, c, u), v)$ and $(\cC(a,b,v),c,u)$ respectively.

Since $\tilde \cQ_{xz}$ is obtained from manifolds with corners by gluing along open subsets, we find that (after checking Hausforffness, as in \cite[Lemma 3.21]{PS}) $\tilde \cQ_{xz}$ is a smooth manifold with corners. It is non-compact, but if we compatibly choose framed functions $(f_{xz}, s_{xz})$ on each $\tilde \cQ_{xz}$, setting $\cP_{xz} := f_{xz}^{-1}\{1\}$ defines a morphism of flow categories $\cP: \cF \to \cH$.

This is well-defined up to bordism, and defines the composition operation in a (non-unital) category $(\Flow, [\cdot,\cdot])$ of (unoriented) flow categories. In Section \ref{sec: Thet flow} we define categories $\Flow^{\Theta}$ of flow categories equipped with tangential structures, generalising the example of $\Flow^{fr}$ studied in \cite{PS}.

The following criteria for identifying representatives of composition maps will be useful later.

\begin{lem}[{\cite[Lemma 4.20]{PS}}]
    Suppose $\cW, \cV, \cT$ are morphisms of flow categories as follows:
    \begin{equation}\label{eq: comp recog diag}
        \xymatrix{
            \cF \ar[rr]^\cT \ar[dr]_{\cW} &&
            \cH \ar[dl]^\cV \\
            & \cV &
        }
    \end{equation}
    Suppose there are compact manifolds with faces $\cR_{xz}$ for each $x \in \cF, z \in \cH$ of dimension $|x|-|z|$, along with embeddings of boundary faces 
    $$\cW_{xy} \times \cV_{yz}, \cT_{xz}, \cR_{xx';z}, \cR_{x;z'z} \hookrightarrow \cR_{xz}$$
    for $y\in \cG, x' \in \cF, z' \in \cH$, only overlapping in the evident ways, which together form a system of boundary faces for $\cR_{xz}$.\par 
    Then (\ref{eq: comp recog diag}) commutes in $\Flow$. Equivalently, $\cT$ is a representative of $\cV \circ \cW$.
\end{lem}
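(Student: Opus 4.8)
The plan is to exhibit an explicit bordism of morphisms $\cF \to \cH$ between $\cT$ and the representative of $\cV \circ \cW$ produced by the construction of Section \ref{sec:comp}; this suffices, since the diagram (\ref{eq: comp recog diag}) commutes in $\Flow$ precisely when $[\cT] = \cV \circ \cW$ in $[\cF,\cH]$.

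Recall that $\cV \circ \cW$ is represented by the morphism $\cP$ with $\cP_{xz} = f_{xz}^{-1}\{1\}$, where $(f_{xz},s_{xz})$ are compatibly chosen framed functions on the glued non-compact manifolds $\tilde\cQ_{xz}$ of (\ref{eq: cQ}). Write $\cQ^0$ for the morphism whose pieces $\cQ^0_{xz}$ are obtained by gluing the products $\cW_{xy} \times \cV_{yz}$ along the structure maps of $\cG$ — equivalently, $\cQ^0_{xz}$ is the carapace of $\tilde\cQ_{xz}$, on which $f_{xz}$ is negative; a direct inspection of boundary faces shows $\cQ^0$ is again a morphism $\cF \to \cH$, with the same boundary-face structure as $\cP$. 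Since $1$ is a regular value of $f_{xz}$ and $f_{xz}^{-1}(-\infty,2]$ is compact, $\cS^\circ_{xz} := f_{xz}^{-1}(-\infty,1]$ is a compact manifold with faces whose boundary faces are $\cP_{xz}$, $\cQ^0_{xz}$, and the flow-compatibility faces $\cF_{xx'} \times \cS^\circ_{x'z}$ and $\cS^\circ_{xz'} \times \cH_{z'z}$ (using compatibility of the framed functions). Hence the $\cS^\circ_{xz}$ assemble into a bordism $\cS^\circ: \cP \rightsquigarrow \cQ^0$.

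The hypothesis furnishes the complementary bordism $\cQ^0 \rightsquigarrow \cT$. After choosing collar neighbourhoods on the $\cR_{xz}$ compatible with those already fixed on $\cF,\cG,\cH,\cW,\cV$ (which exist, compatibly on overlaps, by \cite[Lemma 3.19]{PS}), smooth the corners of $\cR_{xz}$ along which the faces $\cW_{xy} \times \cV_{yz}$ meet one another, by exactly the procedure used to build $\tilde\cQ$ in Section \ref{sec:comp}. The result $\hat\cR_{xz}$ is a compact manifold with faces whose boundary faces are $\cT_{xz}$, $\cQ^0_{xz}$, and $\cF_{xx'} \times \hat\cR_{x'z}$, $\hat\cR_{xz'} \times \cH_{z'z}$; here the assumption that the given faces overlap only in the evident ways guarantees that $\cT_{xz}$ is disjoint from the $\cW \times \cV$ faces and so survives the smoothing as a single boundary face. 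Thus $\hat\cR: \cQ^0 \rightsquigarrow \cT$. Gluing $\cS^\circ$ and $\hat\cR$ along their common boundary face $\cQ^0$ — the standard operation making the bordism relation on morphisms transitive — produces a bordism $\cP \rightsquigarrow \cT$, whence $[\cT] = [\cP] = \cV \circ \cW$, so (\ref{eq: comp recog diag}) commutes in $\Flow$.

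The main obstacle is the corner bookkeeping in the second step: verifying that smoothing the corners of $\cR_{xz}$ genuinely produces a manifold with faces in which $\cQ^0_{xz}$ appears as one boundary face, that the construction is Hausdorff (as in \cite[Lemma 3.21]{PS}) and compatible with all of the structure maps of $\cF,\cG,\cH,\cW,\cV$ and with the chosen collars, and that the subsequent gluing to $\cS^\circ$ along $\cQ^0$ is legitimate. All of this is routine within the framework of Section \ref{sec: back} and \cite[Section 3]{PS} (framed functions exist and restrict well to faces by \cite[Proposition 3.29]{PS}), but it is precisely where care is needed, and is the reason the hypothesis insists that the listed faces overlap only in the evident ways.
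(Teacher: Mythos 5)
Your overall strategy---connecting $\cT$ to the representative $\cP_{xz} = f_{xz}^{-1}\{1\}$ of $\cV\circ\cW$ by a bordism assembled from the region of $\tilde\cQ_{xz}$ below the level set together with the given $\cR_{xz}$---is the right one, and is close in spirit to how \cite[Lemma 4.20]{PS} actually argues (compare Lemma \ref{lem: ob other way} of this paper, which is the same move for right modules). But the factorisation through the intermediate ``morphism'' $\cQ^0$ contains a genuine gap. The carapace of $\tilde\cQ_{xz}$ is the union of the codimension-one faces $\cW_{xy}\times\cV_{yz}\times\{0\}$, and for distinct $y,y'$ these meet along the codimension-two corner loci $\cW_{xy}\times\cG_{yy'}\times\cV_{y'z}$. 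As a subspace of $\tilde\cQ_{xz}$, or of $\cS^\circ_{xz}=f_{xz}^{-1}(-\infty,1]$, the carapace is therefore not a single smooth boundary face: near a seam it is locally the boundary of a quadrant, a union of two faces of the system meeting along a corner. So $\cQ^0$ is not a morphism of flow categories without a corner-smoothing step (and attendant checks that the smoothing is well defined up to bordism), and $\cS^\circ$ is not literally a bordism in the sense of Section \ref{sec: unor flow cat}, whose definition requires each end to appear as a single unbroken boundary face. The same seam problem recurs in $\hat\cR$: the procedure used to build $\tilde\cQ$ attaches open collars to produce a non-compact thickening, it does not round the corners of a given compact manifold, so it does not hand you the face $\cQ^0_{xz}\subset\hat\cR_{xz}$ that your second bordism needs. (A minor point in the other direction: since $[\cF,\cH]$ is defined via the equivalence relation \emph{generated} by bordism, you would not need to glue the two bordisms even if both existed.)

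The repair---and the actual proof---avoids $\cQ^0$ entirely. By \cite[Lemma 3.19]{PS}, compatible collars give an embedding of $\tilde\cQ_{xz}$ (for $\eps$ small) onto a neighbourhood in $\cR_{xz}$ of the union of the faces $\cW_{xy}\times\cV_{yz}$, compatibly with the $\cF_{xx'}\times(\cdot)$ and $(\cdot)\times\cH_{z'z}$ faces; this is exactly where the hypothesis that the listed faces overlap only in the evident ways is used. Transporting the framed functions $(f_{xz},s_{xz})$ through this embedding, the excised manifold $\cR_{xz}\setminus f_{xz}^{-1}(-\infty,1)$ is compact with a system of boundary faces given precisely by $\cP_{xz}=f_{xz}^{-1}\{1\}$, $\cT_{xz}$, $\cF_{xx'}\times(\cdot)$ and $(\cdot)\times\cH_{z'z}$: the carapace and all of its internal seams have been cut away, and the new face $f_{xz}^{-1}\{1\}$ is smooth because $1$ is a regular value. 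This exhibits a single bordism from $\cP$ to $\cT$, whence $[\cT]=[\cP]=\cV\circ\cW$ and the diagram commutes in $\operatorname{Flow}$. Your $\cS^\circ_{xz}$ is exactly the piece being excised; keeping it as a standalone bordism is what forces the confrontation with the carapace that the level-set formalism is designed to avoid.
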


\subsection{Index bundles}\label{sec: ind bun}
    In order to make sense of tangential structures on flow categories and related structures, we first need to package the data of their tangent bundles in an appropriate way. We recap some material from \cite[Section 4.2]{PS}; see \textit{loc. cit.} for more detail.
    \begin{defn}
        Let $\cM_{xy}$ be a manifold with faces.

        Its \emph{abstract index bundle} is the vector bundle:
        $$I^{\cM}_{xy} := T\cM_{xy} \oplus \bR \tau_y^{\cM} $$
        over $\cM_{xy}$ where $\tau^\cM_y$ is a formal generator of the line $\bR \tau^\cM_y$. 
    \end{defn}
    \begin{rmk} 
        For the purposes of this subsection, subscripts of the form $\cdot_{xy}$ are purely decorative; we choose to include them here since all manifolds who we consider an index bundle of are indexed by similar subscripts.
    \end{rmk}
    All boundary faces $F \subseteq \cM_{xz}$ we encountered in Section \ref{sec: unor flow cat} are of one of two forms: $F$ is either equipped with a product decomposition $F = \cA_{xy} \times \cB_{yz}$ for some other manifolds $\cA_{xy}$ and $\cB_{yz}$, in which case we label $F$ as \emph{broken}, or it is not, in which case we label $F$ as \emph{unbroken}. Unbroken boundary faces are always disjoint from each other.\par
    
    In the first case, there is an isomorphism of vector bundles $\psi: I^\cA_{xy} \oplus I^\cB_{yz} \to I^\cM_{xz}$ over $F$. This is given by $di$ (where $i: F \to \cM_{xz}$ is the inclusion) on $T\cA_{xy} \oplus T\cB_{yz}$, sends $\tau_z$ to $\tau_z$ and sends $\tau_y$ to the inwards-pointing normal $\nu^F$ along $F$.\par 
    If $F$ is an unbroken face, there is an isomorphism of vector bundles $\psi: I^F \oplus \bR \sigma \to I^\cM_{xz}$ over $F$ (where $\sigma$ is a generator of the abstract line $\bR \sigma$). This is given by $di$ on $TF$, sends $\tau_z$ to $\tau_z$ and sends $\sigma$ to $\pm \nu^F$; the sign depends on whether $F$ is an \emph{incoming} or \emph{outgoing} unbroken boundary face, as in \cite[Section 4.2]{PS}. All boundary faces considered in Section \ref{sec: unor flow cat} are broken, except for the two ends $\cW_{xy}$ and $\cW'_{xy}$ of $\cR_{xy}$ where $\cR$ is a bordism between two morphisms $\cW$ and $\cW'$; then these are defined to be unbroken, with the former incoming and the latter outgoing.\par 
    These isomorphisms are compatible with each other in the sense that the natural diagrams one can construct with them over higher-codimensional faces of $\cM_{xz}$ all commute, cf \cite[Proposition 4.27]{PS}.

\section{Tangential structures on flow categories}\label{sec: Thet flow}
    Framed flow categories provide a model for the category of spectra \cite[Proposition 1.10]{AB2}, well-suited to Floer-theoretic applications. In this section, we give a flow-categorical model for (the homotopy category of) the category of bimodules over more general Thom ring spectra.

    Until Section \ref{sec: flow adapt}, we fix $\Theta$ some tangential structure of any rank $1 \leq N \leq \infty$.

\subsection{$\Theta$-orientations}\label{sec: flow or}

    \begin{defn}\label{def: Theta or}
        Let $\cF$ be a flow category. A \emph{$\Theta$-orientation} on $\cF$ consists of:
        \begin{itemize}
            \item $\Theta$-oriented puncture data $\bE_x = \bE^\cF_x$ of Maslov index $|x|$.
            \item Vector spaces with inner product $V_{xx'}$.
            \item Isometric embeddings
            \begin{equation*}
                \iota=\iota_{xx'x''}: V_{xx'} \oplus V_{x'x''} \hookrightarrow V_{xx''}
            \end{equation*}
            \item Maps of spaces 
            \begin{equation*}
                \rho=\rho_{xx'}: \cF_{xx'} \to \bU_{xx'}
            \end{equation*}
            \item Isomorphisms of vector bundles over $\cF_{xy}$
            \begin{equation*}
                st=st_{xx'}: I^\cF_{xx'} \oplus V_{xx'} \to \rho_{xy}^*\bV_{xx'}
            \end{equation*}
            In future, we omit the subscripts and $\rho^*$ when unambiguous.
        \end{itemize}
        for each $x, x', x'' \in \cF$. We write $\bU_{xx'}$ as shorthand for $\bU^{\Theta,V_{xx'}}_{11}(\bE_x,\bE_{x'})$ (and similarly for $\bV_{xx'}$); note that the maps $\iota$ along with concatenation of abstract discs together induce maps $\bU_{xx'} \times \bU_{x'x''} \to \bU_{xx''}$.
        
        We require that:
        \begin{itemize}
            \item The $\iota_{xx'}$ are suitable associative in the sense that the following diagram of isometric embeddings commutes:
            \begin{equation*}
                \xymatrix{
                    V_{xx'} \oplus V_{x'x''} \oplus V_{x''x'''}
                    \ar[r]
                    \ar[d]
                    &
                    V_{xx'} \oplus V_{x'x'''}
                    \ar[d]
                    \\
                    V_{xx''} \oplus V_{x''x'''}
                    \ar[r]
                    &
                    V_{xx'''}
                }
            \end{equation*} 
            \item The $\rho_{xx'}$ are suitably associative in the sense that the following diagram commutes:
            \begin{equation*}
                \xymatrix{
                    \cF_{xx'} \times \cF_{x'x''} 
                    \ar[rr]_{\rho_{xx'} \times \rho_{x'x''}}
                    \ar[d]
                    &&
                    \bU_{xx'} \times \bU_{x'x''}
                    \ar[d]
                    \\
                    \cF_{xx''}
                    \ar[rr]_{\rho_{xx''}}
                    &&
                    \bU_{xx''}
                }
            \end{equation*}
            \item The $st_{xx'}$ are suitably associative in the sense that the following diagram of isomorphisms of vector bundles over $\cF_{xx'} \times \cF_{x'x''}$ commutes:
            \begin{equation*}
                \xymatrix{
                    I^\cF_{xx'} \oplus V_{xx'} \oplus I^\cF_{x'x''} \oplus V_{x'x''} \oplus V_{xx'x''}^\perp
                    \ar[rr]_{st \oplus st}
                    \ar[d]
                    &&
                    \bV_{xx'} \oplus \bV_{x'x''} \oplus V_{xx'x''}^\perp
                    \ar[d]
                    \\
                    I^\cF_{xx''} \oplus V_{xx''}
                    \ar[rr]_{st}
                    &&
                    \bV_{xx''}
                }
            \end{equation*}
            Here $V_{xx'x''}^\perp$ is the orthogonal complement of $V_{xx'} \oplus V_{x'x''}$ in $V_{xx''}$. 
        \end{itemize}
        We will often use a superscript $\cdot^\cF$ to disambiguate between the same data associated to another $\Theta$-oriented flow category.
    \end{defn}
    
    \begin{defn}\label{def: Theta or mor}
        Let $\cF, \cG$ be $\Theta$-oriented flow categories. A \emph{$\Theta$-orientation} on a morphism $\cW: \cF \to \cG$ consists of, for each $x, x' \in \cF$ and $y, y' \in \cG$:
        \begin{itemize}
            \item Vector spaces with inner product $V_{xy}$.
            \item Isometric embeddings
            \begin{equation}\label{eq: isom emb}
                \xymatrix{
                    \iota=\iota_{xx'y}: V_{xx'} \oplus V_{x'y} \to V_{xy} &\,\,\,\, \iota = \iota_{xy'y}: V_{xy'} \oplus V_{y'y} \to V_{xy}
                }
            \end{equation}
            
            \item Maps of spaces
            \begin{equation}
                \rho=\rho_{xy}: \cW_{xy} \to \bU_{xy}
            \end{equation}
            \item Isomorphisms of vector bundles over $\cW_{xy}$
            \begin{equation}
                st=st_{xy}: I^\cW_{xy} \oplus V_{xy} \to \bR \oplus \bV_{xy}
            \end{equation}
        \end{itemize}
        satisfying similar coherence conditions to Definition \ref{def: Theta or} and  \cite[Definition 4.32(1)]{PS}.

        A \emph{$\Theta$-orientation} on a bordism $\cR$ between two $\Theta$-oriented morphisms $\cW$ and $\cW'$ consists of vector spaces with inner product $V^\cR_{xy}$, isometric embeddings $V^{\cW}_{xy}, V^{\cW'}_{xy} \to V^\cR_{xy}$ as well as ones similar to (\ref{eq: isom emb}), maps of spaces $\rho: \cR_{xy} \to \bU_{xy}$ and isomorphisms of vector bundles $st: I^\cR_{xy} \oplus V^\cR_{xy} \to \bR^2 \oplus \bV_{xy}$ over $\cR_{xy}$. These are required to satisfy appropriate coherence conditions similar to Definition \ref{def: Theta or} and \cite[Definition 4.32(2)]{PS}. A \emph{$\Theta$-orientation} on $\tilde \cQ$ (defined as in Section \ref{sec: unor flow cat}) is defined similarly. 

    \end{defn}

    \begin{example}\label{ex: stab Theta or}
        Let $\cW$ be a $\Theta$-oriented morphism. Given vector spaces $E_{xy}$ with inner product for each $x \in \cF$ and $y \in \cG$, along with suitably associative isometric embeddings $E_{x'y} \to E_{xy}$ and $E_{xy'} \to E_{xy}$ whenever $\cF_{xx'}$ or $\cG_{y'y}$ are nonempty, we may replace each $V_{xy}$ with $V_{xy} \oplus E_{xy}$, compose $\rho_{xy}$ with the maps $\bU_{11}^{V_{xy}} \to \bU_{11}^{V_{xy} \oplus E_{xy}}$ associated to the inclusion $V_{xy} \to V_{xy} \oplus E_{xy}$ and similarly compose for the $st_{xy}$, to obtain a new $\Theta$-oriented flow category $\cW^E$, which we call the \emph{stabilisation} of $\cW$ by $E$; similarly to \cite[Example 4.38]{PS}. $\cW$ and $\cW^E$ are always $\Theta$-oriented bordant.

        An example of this is where all $E_{xy}=E$ are given by some fixed vector space with inner product $E$.

    \end{example}
    \begin{example}
        Let $\cW: \cF \to \cG$ be a $\Theta$-oriented morphism. Let $\cW^\bR$ be the $\Theta$-oriented flow category given by stabilising $\cW$ as in Example \ref{ex: stab Theta or} by the constant vector space $\bR$.
        
        We define $\overline{\cW}$ for the $\Theta$-oriented morphism obtained from $\cW^\bR$ by precomposing with the map $-1: \bR \to \bR$. More precisely, we define $st^{\overline \cW}_{xy}$ to be the composition:
        \begin{equation*}
            st^{\overline \cW}_{xy} : I^{\overline \cW}_{xy} \oplus V^{\overline \cW}_{xy} 
            =
            I^\cW_{xy} \oplus V^\cW_{xy} \oplus \bR_1 
            \xrightarrow{Id \oplus (-1)}
            I^\cW_{xy} \oplus V^\cW_{xy} \oplus \bR_1 
            \xrightarrow{st^\cW_{xy} \oplus Id}
            \bR \oplus \bV^\cW_{xy} \oplus \bR_1 
            =
            \bR \oplus \bV^{\overline \cW}_{xy}
        \end{equation*}
        where $\bR_1$ is the copy of $\bR$ that we are stabilising by.
    \end{example}
    \begin{defn}
        We write $[\cF, \cG]^\Theta$ for the set of bordism classes of $\Theta$-oriented morphisms $\cF \to \cG$. 
    \end{defn}
    $[\cF, \cG]^\Theta$ is an abelian group under disjoint union: the unit given by the empty morphism, and $\overline \cW$ is inverse to $\cW$. A nullbordism of $\cW \sqcup \overline \cW$ is constructed similarly to \cite[Example 4.36]{PS}. 

\subsection{Composition}\label{sec: comp}
    Let $\cW: \cF \to \cG$ and $\cV: \cG \to \cH$ be $\Theta$-oriented flow categories. In this section we construct a $\Theta$-orientation on their composition.\par 
    
    Let $\cC$, $\tilde{\cQ}_{xz}$, $(f_{xz}, s_{xz})$ and $\cP_{xz}$ be as in Section \ref{sec:comp}. We first construct a $\Theta$-orientation on $\tilde Q$, similarly to \cite[Lemma 4.41]{PS}.

    Inductively in $|x|-|z|$, choose vector spaces with inner product $V^{\tilde \cQ}_{xz}$, along with embeddings
    \begin{equation}
        \xymatrix{
            \iota: V_{xy}^\cW \oplus V_{yz}^\cV \to V_{xz}^{\tilde \cQ}
            &
            \iota: V_{xx'}^\cF \oplus V_{x'z}^{\tilde \cQ} \to V_{xz}^{\tilde \cQ}
            &
            \iota:  V_{xz'}^{\tilde \cQ} \oplus V_{z'z}^\cH \to V_{xz}^{\tilde \cQ}
        }
    \end{equation}
    which are suitably associative. This choice is unique up to stabilisation and isomorphism.

    Over each $\cW_{xy} \times \cV_{yz} \times [0,\eps)_s \subseteq \tilde \cQ_{xz}$, we define $\rho^{\tilde \cQ}_{xz}$ to be the composition:
    \begin{equation}
        \rho_{xz}^{\tilde \cQ}: \cW_{xy} \times \cV_{yz} \times [0, \eps)_s \xrightarrow{\rho_{xy}^\cW \times \rho_{yz}^\cV} 
        \bU_{xy} \times \bU_{yz} \to 
        \bU_{xz}
    \end{equation}
    These agree on overlaps of the form $\cW_{xy} \times \cG_{yy'} \times \cV_{y'z} \times [0, \eps)^2_{u,v}$ by associativity of the $\rho^\cW, \rho^\cG, \rho^\cV$, and so glue together to define $\rho_{xz}^{\tilde \cQ}$ over all of $\tilde \cQ_{xz}$.

    Similarly we define isomorphisms of vector bundles $st^{\tilde \cQ}_{xz}$ over each $\cW_{xy} \times \cV_{yz} \times [0, \eps)^2 \subseteq \tilde \cQ_{xz}$ to be the composition:
    \begin{equation}
        st^{\tilde \cQ}_{xz}: I_{xz}^{\tilde \cQ} \oplus V_{xz}^{\tilde \cQ} 
        \xrightarrow{\phi^{-1}}
        I_{xy}^\cW \oplus I_{yz}^\cV \oplus V_{xz}^{\tilde \cQ}
        \cong 
        I_{xy}^{\cW} \oplus I_{yz}^\cV \oplus V_{xy}^\cW \oplus V_{yz}^\cV \oplus V_{xyz}^\perp 
        \xrightarrow{st \oplus st}
        \bV_{xy} \oplus \bV_{yz} \oplus V_{xyz}^\perp
        \to\bV_{xz}
    \end{equation}
    where $V_{xyz}^\perp$ is the orthogonal complement to the embedding $V^\cW_{xy} \oplus V^\cV_{yz} \to V^{\tilde \cQ}_{xz}$. These glue on overlaps by the same argument as in \cite[Lemma 4.41]{PS} to define isomorphisms of vector bundles $st^{\tilde\cQ}_{xz}$ over all of $\tilde \cQ_{xz}$, which again are suitably associative.\par 

    Using the isomorphism $I_{xz}^\cP \oplus \bR \cong I_{xz}^{\tilde \cQ}$ determined by the framed function then gives a $\Theta$-orientation on $\cP$, similarly to \cite[Lemma 4.42]{PS}. This $\Theta$-orientation is canonical up to stabilisation and isomorphism; as in \cite[Lemma 4.42]{PS} this implies the $\Theta$-oriented bordism class of $\cP$ is independent of choices.

    Composition thus defines well-defined maps 
    \begin{equation}
        [\cF, \cG]^\Theta \otimes [\cG, \cH]^\Theta \to [\cF, \cH]^\Theta
    \end{equation}
    and we arrive at a (potentially non-unital) category $\operatorname{Flow}^\Theta$, whose objects are $\Theta$-oriented flow categories and whose morphisms are given by $[\cdot, \cdot]^\Theta$.

    \begin{rmk}
        When $\Theta$ is contractible, this category is equivalent to the category of framed flow categories considered in \cite{PS}.
    \end{rmk}

Let $\cF$ be a $\Theta$-oriented flow category with puncture data $\{\bE_x\}_{x\in\cF}$ , and let $\{\bE'_x\}_{x\in \cF}$  be another collection of puncture data with $\mu(\bE'_x) = |x|$ for every $x$.  Then we may define a (canonical up to homotopy) $\Theta$-oriented flow category $\cF'$, with underlying unoriented flow category $\cF$ but puncture data $\{\bE'_x\}_{x\in \cF}$ as follows. 
We choose $\bD_{1,1}^x \in \bU_{1,1}^{\Theta}(\bE_x,\bE'_x)$ for each $x\in\cF$. Gluing of abstract discs together with the given $\Theta$-orientation maps $\cF_{xy} \to \bU(\bE_x,\bE_y)$ then yields maps $\cF_{xy} \to \bU(\bE'_x,\bE'_y)$;  these maps may be homotoped (inductively in $|x| - |y|$) so that the required compatibilities of Definition \ref{def: Theta or} hold. Performing a similar construction to morphisms (and bordisms) induces isomorphisms
\begin{equation*}
    [\cF,\cG]^{\Theta} \simeq [\cF',\cG]^{\Theta}
\end{equation*}
and similarly for morphisms to $\cF$ respectively $\cF'$, so that: 
\begin{lem}\label{lem:choice of puncture data}
    $\cF$ and $\cF'$ are equivalent in $\mathrm{Flow}^{\Theta}$.
\end{lem}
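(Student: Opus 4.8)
The plan is to turn the re-marking construction sketched just before the statement into a pair of mutually inverse morphisms, by establishing a natural isomorphism between the functors (co)represented by $\cF$ and by $\cF'$ on $\mathrm{Flow}^\Theta$. The basic input is, for each $x\in\cF$, a choice of abstract disc $\bD^x\in\bU_{1,1}^{V^x,\Theta}(\bE_x,\bE'_x)$ together with a disc $\bar\bD^x\in\bU_{1,1}^{\bar V^x,\Theta}(\bE'_x,\bE_x)$ in the opposite direction, for auxiliary inner product spaces $V^x,\bar V^x$; since concatenation of such discs corresponds, under the equivalence of Lemma~\ref{lem:htpy type}, to loop composition on $(\Omega\Theta)^2$, we may moreover choose $\bar\bD^x$ to represent the inverse homotopy class of $\bD^x$. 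Such discs exist because, by Lemma~\ref{lem:htpy type}, $\bU_{1,1}(\Theta)\simeq(\Omega\Theta)^2$ is nonempty, and by Lemma~\ref{lem: V conn est} the finite-rank spaces $\bU_{1,1}^{V,\Theta}$ approximate it as $\dim V\to\infty$; the hypothesis $\mu(\bE'_x)=|x|=\mu(\bE_x)$ guarantees that the associated Cauchy--Riemann operators have index $\dim V^x$ (resp. $\dim\bar V^x$), so that concatenating such a disc shifts no Maslov indices and the morphisms built from them land in the correct dimensions.

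For an arbitrary $\Theta$-oriented flow category $\cG$ I would define $\Psi_\cG\colon[\cF,\cG]^\Theta\to[\cF',\cG]^\Theta$ as follows. Given a $\Theta$-oriented morphism $\cW\colon\cF\to\cG$ with data $(V^\cW_{xy},\ \rho^\cW_{xy}\colon\cW_{xy}\to\bU(\bE_x,\bE^\cG_y),\ st^\cW_{xy})$, keep the underlying unoriented morphism but enlarge each $V^\cW_{xy}$ to $V^\cW_{xy}\oplus\bar V^x$, replace $\rho^\cW_{xy}$ by the composite of $\rho^\cW_{xy}$ with $\rho_1(\bar\bD^x,\,\cdot\,)\colon\bU(\bE_x,\bE^\cG_y)\to\bU(\bE'_x,\bE^\cG_y)$ from Lemma~\ref{lem:upshot}, and replace $st^\cW_{xy}$ by its composite with the index isomorphism $st_1$ of Lemma~\ref{lem:upshot} (the new summand $\bar V^x$ absorbing the index contributed by $\bar\bD^x$). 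One then verifies the coherence conditions of Definition~\ref{def: Theta or mor}. The associativity square for composition with $\cG$ commutes on the nose, by associativity of the $\rho_k$ and $st_k$ (Lemma~\ref{lem:upshot}); the square for composition with $\cF'$ requires $\rho_1(\bar\bD^x,\rho^\cF_{xx'}(\,\cdot\,))$ to match $\rho_1(\rho^{\cF'}_{xx'}(\,\cdot\,),\bar\bD^{x'})$ as maps $\cF_{xx'}\to\bU(\bE'_x,\bE_{x'})$, and because $\cF'$ was by definition obtained by re-marking $\cF$ through the discs $\{\bD^x\}$, this holds up to a homotopy installed inductively in $|x|-|x'|$ --- together with the analogous higher coherences and those on $st$ --- using contractibility of spaces of Cauchy--Riemann data and Lemma~\ref{lem:associative linear gluing}. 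Carrying out the same re-marking on bordisms shows $\Psi_\cG$ is well defined on bordism classes, and the symmetric ``mapping into $\cF$'' construction yields $\Psi'_\cG\colon[\cG,\cF]^\Theta\to[\cG,\cF']^\Theta$.

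Next I would check that $\Psi_\cG$ is a bijection, with inverse the re-marking through $\{\bD^x\}$. The composite of the two re-markings sends $\rho^\cW_{xy}$ to $\rho_1(\bar\bD^x,\rho_1(\bD^x,\rho^\cW_{xy}(\,\cdot\,)))$; since $\bar\bD^x$ represents the inverse class of $\bD^x$ in $(\Omega\Theta)^2$, their concatenation is nullhomotopic, so by Lemma~\ref{lem:associative linear gluing} the composite re-marking is, canonically up to contractible choice, a stabilisation in the sense of Example~\ref{ex: stab Theta or}, hence $\Theta$-oriented bordant to the original morphism. The re-marking discs are attached near the punctures, away from the gluing and framed-function regions of the composition construction of Section~\ref{sec: comp}, so $\{\Psi_\cG\}$ and $\{\Psi'_\cG\}$ are compatible with composition in $\mathrm{Flow}^\Theta$; thus they assemble into a natural isomorphism between the functors corepresented and represented by $\cF$ and by $\cF'$, i.e. an isomorphism $\cF\simeq\cF'$ in $\mathrm{Flow}^\Theta$ (concretely implemented by re-marking representatives of the relevant identity morphisms).

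The main obstacle is the coherence bookkeeping of the second step: arranging that, after inserting the discs $\{\bar\bD^x\}$ and enlarging the $V^\cW_{xy}$, \emph{all} of the associativity diagrams of Definition~\ref{def: Theta or mor} (for the isometric embeddings $\iota$, for $\rho$, and for $st$) can be made to commute simultaneously, via a single induction on $|x|-|y|$ whose inductive step consumes the contractibility afforded by Lemma~\ref{lem:associative linear gluing}, while keeping the choices of auxiliary inner product spaces consistent with those already fixed in the definition of $\cF'$. This is entirely parallel to the arguments of \cite[Section~4]{PS} (e.g. the independence-of-choices statement in \cite[Lemma~4.42]{PS}), so I do not expect conceptual difficulties beyond careful organisation.
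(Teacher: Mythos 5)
Your proposal follows essentially the same route as the paper: the paper's (very terse) argument is exactly the re-marking construction via chosen discs $\bD^x_{1,1}\in\bU_{1,1}^{\Theta}(\bE_x,\bE'_x)$, with the compatibilities installed by an induction on $|x|-|y|$ and the same construction applied to morphisms and bordisms to produce the isomorphisms $[\cF,\cG]^{\Theta}\simeq[\cF',\cG]^{\Theta}$. Your additional care with the reverse discs $\bar\bD^x$ and the verification that the composite re-marking is a stabilisation (hence a bordism-level inverse) fills in details the paper leaves implicit, but does not change the approach.
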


    \begin{rmk}
        Let $\{\Theta(N)\}_N$ be a sequence of tangential structures as in Section \ref{sec: Nstab}, and let $\cF$ be a $\Theta(N)$-oriented flow category.

        Composing the maps $\rho^\cF$ with the stabilisation maps $\Xi$ from Proposition \ref{prop: Nstab}, we obtain a $\Theta(N+1)$-oriented flow category $\Xi(\cF)$. The same construction may be applied to morphisms and bordisms, and so we obtain functors
        \begin{equation*}
            \Xi: \operatorname{Flow}^{\Theta(N)} \to \operatorname{Flow}^{\Theta(N+1)}
        \end{equation*}
        These are compatible with the additional structures of flow modules and bilinear maps, cf. Sections \ref{sec: flow mod} and \ref{sec: bilin}.
    \end{rmk}
    Let $R$ be the spectrum defined as in Section \ref{sec: Thom}. Recalling Remark \ref{rmk:reverse bundle}:

    \begin{prop}\label{prop: bord pt bimod}
        If $\cF = *[i]$ is a $\Theta$-oriented flow category with one object $\ast$ in degree $i$ and $\cG = +[j]$ is a $\Theta$-oriented flow category with one object $+$ in degree $j$, then $[\cF,\cG]^{\Theta} = \pi_{i-j}(R \otimes R^{op}) \cong \Omega_{i-j}^{\Theta \times \Theta}$ is isomorphic to the bordism groups of manifolds with stable tangent bundle classified by the index bundle over $\Omega \Theta \times \Omega \Theta$.

    \end{prop}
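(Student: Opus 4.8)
The plan is to identify $[\cF, \cG]^{\Theta}$ for $\cF = *[i]$ and $\cG = +[j]$ with the homotopy groups of $R \otimes R^{op}$ by directly unwinding the definition of a $\Theta$-oriented morphism between one-object flow categories. A $\Theta$-oriented morphism $\cW: *[i] \to +[j]$ consists of a single compact manifold with faces $\cW_{*+}$ of dimension $i - j$, together with a map $\rho: \cW_{*+} \to \bU_{11}^{V,\Theta}(\bE_*, \bE_+)$ and an isomorphism of vector bundles $st: I^\cW_{*+} \oplus V \to \bR \oplus \bV^{V,\Theta}_{11}$ over $\cW_{*+}$, where $I^\cW_{*+} = T\cW_{*+} \oplus \bR\tau_+$; there are no composition constraints since neither flow category has any non-identity morphisms. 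First I would observe that, by Lemma \ref{lem:htpy type} and Lemma \ref{lem: conn est}, the space $\bU_{11}^{V,\Theta}$ (after passing to the homotopy colimit over $V$) has the homotopy type of $\Omega\Theta$, and its index bundle is classified by the composite $\Omega\Theta \to \bU_{01}(\widetilde{U/O}) \to BO$ (using that the rank-1 disc we stabilise by contributes nothing, cf. the proof of Proposition \ref{prop: Nstab}). Since our abstract disc $D_{1,1}$ has one input and one output, the concatenation structure splits $\bU_{11} \simeq \bU_{01} \times \bU_{01}$ up to the relevant homotopies — equivalently, $\Omega\Theta \simeq \Omega\Theta \times \Omega\Theta$ is not what happens; rather, a disc with one input and one output is glued from two "half-discs" each with one puncture, so the index bundle over $\bU_{11}$ is the external sum of the two index bundles over the two copies of $\bU_{01}$, classified by the map $\Omega\Theta \to BO \times \bZ$ appearing in the statement and its reverse-loops twin.

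The key step is then a Pontryagin–Thom argument. A $\Theta$-oriented morphism $\cW$ is, by the discussion above, exactly the data of a closed $(i-j)$-manifold $\cW_{*+}$ together with a stable trivialisation of $I^\cW_{*+} \ominus \rho^*\bV_{11}$, i.e. a stable framing of $T\cW_{*+}$ relative to the bundle pulled back from $\bU_{11} \simeq \Omega\Theta \times \Omega\Theta$ under $\rho$. By Remark \ref{rmk:reverse bundle}, the Thom spectrum of $-\bV_{01}$ over $\Omega\Theta$ (with loop-reversal) is $R^{op}$, while the Thom spectrum of $\bV_{01}$ over $\Omega\Theta$ is $R$; hence the Thom spectrum of the external sum over $\bU_{11} \simeq \Omega\Theta \times \Omega\Theta$ is $R \otimes R^{op}$. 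The Pontryagin–Thom construction then gives a bijection between $\Theta$-oriented bordism classes of such manifolds-with-structure and $\pi_{i-j}(R \otimes R^{op})$: a morphism $\cW$ maps to the composite $S^{i-j} \to \mathrm{Thom}(\text{normal bundle of an embedding } \cW_{*+} \hookrightarrow S^{i-j+k}) \to R \otimes R^{op}$, using the $st$ isomorphism to produce the collapse map, and a $\Theta$-oriented bordism $\cR$ (a compact $(i-j+1)$-manifold with boundary $\cW \sqcup \cW'$ carrying the analogous relative structure on $I^\cR \oplus V^\cR \to \bR^2 \oplus \bV$) furnishes precisely a homotopy between the two collapse maps. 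I would note that this is the one-object specialisation of the general conjectural correspondence $\operatorname{Flow}^\Theta \simeq R\text{-bimodules}$, and for one object it is unconditional, being a classical transversality/Pontryagin–Thom statement with no flow-categorical subtleties (no composition maps, no gluing coherence beyond what Lemma \ref{lem:upshot} already provides).

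Finally the identification $\pi_{i-j}(R \otimes R^{op}) \cong \Omega^{\Theta \times \Theta}_{i-j}$ with bordism of manifolds whose stable tangent bundle is classified by the index bundle over $\Omega\Theta \times \Omega\Theta$ is exactly the content of Remark \ref{rmk:reverse bundle} applied to each factor: passing between the stable normal and stable tangential pictures via loop reversal turns $R \otimes R^{op}$ into the Thom spectrum of $\bV_{01} \boxplus \bV_{01}$ over $\Omega\Theta \times \Omega\Theta$, whose homotopy groups are the claimed tangential bordism groups by ordinary Pontryagin–Thom. \textbf{The main obstacle} I anticipate is purely bookkeeping: correctly tracking the formal line $\bR\tau_+$ and the auxiliary $\bR$ in the target of $st$ (which account for the degree shift and the distinction between $I^\cW$ and $T\cW$), and checking that the identification of $\bU_{11}$'s index bundle with the external sum $\bV_{01} \boxplus \bV_{01}$ is compatible with the loop-reversal convention of Remark \ref{rmk:reverse bundle} so that one genuinely gets $R \otimes R^{op}$ and not $R \otimes R$; once the conventions are pinned down the argument is a routine application of Pontryagin–Thom together with Lemmas \ref{lem:htpy type}, \ref{lem: conn est} and \ref{lem:upshot}.
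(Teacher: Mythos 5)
Your proposal is correct and follows essentially the same route as the paper: unwind the definition of a morphism between one-object flow categories to a single closed $(i-j)$-manifold with stable tangent bundle classified by a map to $\bU_{11}\simeq \Omega\Theta\times\Omega\Theta$, then apply the standard Pontrjagin--Thom argument, with Remark \ref{rmk:reverse bundle} handling the passage between normal and tangential structures (and the $R\otimes R^{op}$ versus $R\otimes R$ bookkeeping). The paper's proof is a three-sentence sketch of exactly this; your version just fills in more of the convention-tracking.
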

\begin{proof}
    The data of a morphism $\cW$ is a single closed $(i-j)$-manifold $\cW_{*+}$, whose tangent bundle is stably classified by a map to $\bU_{11}^\Theta \simeq \Omega \Theta \times \Omega \Theta$. This associates to a morphism a class in $\Omega_{i-j}^{\Theta \times \Theta}$, which depends only on the bordism class of $\cW$. That this association is onto and yields a homomorphism follows from a standard Pontrjagin-Thom argument.
\end{proof}
    This computation motivates:
    \begin{conj}\label{conj: PT/CJS bimod}
        Let $R$ be the ring spectrum described in Section \ref{sec: Thom}. Then $\operatorname{Flow}^\Theta$ is equivalent to (the homotopy category of) the category of perfect $R-R$ bimodules, $\operatorname{Perf}(_R\operatorname{mod}_R)$.
    \end{conj}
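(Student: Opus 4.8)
The plan is to adapt the Cohen--Jones--Segal realization of flow categories, carried out in the framed case in \cite[Section 8]{AB2} (where it identifies $\operatorname{Flow}^{fr}$ with finite spectra), to the present $\Theta$-oriented setting, with the non-symmetric coloured operad $\{\bT^\Theta_{i,1}\}$ of Section \ref{sec: Thom} playing the role of the $A_\infty$-operad. The basic point is that its arity-one part $\bT^\Theta_{1,1}$ is itself a ring spectrum, the gluing map $\bT^\Theta_{1,1}\wedge\bT^\Theta_{1,1}\to\bT^\Theta_{1,1}$ being an associative product; since $\bU^\Theta_{1,1}\simeq\Omega\Theta\times\Omega\Theta$ with the two loop factors contributing $R$ and $R^{op}$ respectively (the reversal coming from the opposite orientations with which the two boundary arcs are traversed under concatenation), Assumption \ref{asmp} identifies $\bT^\Theta_{1,1}$ with $R\otimes R^{op}$, so that modules over it are exactly $R-R$ bimodules.

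First I would construct a functor $\Psi\colon\operatorname{Flow}^\Theta\to\operatorname{Perf}({}_R\operatorname{mod}_R)$. Given a $\Theta$-oriented flow category $\cF$, the maps $\rho_{xy}\colon\cF_{xy}\to\bU_{xy}$ together with the index-bundle isomorphisms $st_{xy}$ present the stable Thom spectra of the abstract index bundles $I^\cF_{xy}$ as pullbacks of the universal Thom spectra over $\bU^\Theta_{1,1}$, hence as $\bT^\Theta_{1,1}$-module data; the composition maps $c\colon\cF_{xy}\times\cF_{yz}\to\cF_{xz}$, which on index bundles are governed by the operad structure of Lemma \ref{lem:upshot}, make the resulting diagram into a finite twisted complex over the category of $R-R$ bimodules, with one generator $\Sigma^{|x|}(R\otimes R^{op})$ for each object $x$. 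Its realization --- the CJS construction with coefficients twisted by the index bundles, which is perfect because $\cF$ has finitely many objects --- is a perfect $R-R$ bimodule $\Psi(\cF)=M^\cF$; applying the same recipe to morphisms and bordisms (using Section \ref{sec: comp}) shows $\Psi$ descends to $\Theta$-oriented bordism classes and is functorial. Note that, since morphisms in $\operatorname{Flow}^\Theta$ are already bordism classes and the conjecture only asks for an equivalence with the homotopy category of bimodules, $\Psi$ need only be a functor of $1$-categories, so no higher coherence of the realization is required.

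It then remains to show $\Psi$ is an equivalence. For essential surjectivity, every object of $\operatorname{Perf}({}_R\operatorname{mod}_R)$ is a finite iterated cone of shifts of the free rank-one bimodule $R\otimes R^{op}$; one realizes such a tower by building a flow category object-by-object, where the attaching map $\Sigma^{i-1}(R\otimes R^{op})\to\Sigma^{j}(R\otimes R^{op})$ is represented, by the surjectivity half of Proposition \ref{prop: bord pt bimod} via Pontrjagin--Thom, by a closed $(i-j-1)$-manifold carrying the relevant $\Theta\times\Theta$-tangential structure --- exactly the data of a moduli space $\cF_{xy}$ --- and where the higher strata $\cF_{xz}$ with $|x|-|z|\geq 2$, together with their prescribed systems of boundary faces, are supplied by the usual obstruction-theoretic induction on $|x|-|z|$ using contractibility of the relevant spaces of choices (compare the proofs of Lemma \ref{lem:upshot} and Proposition \ref{prop: Nstab}). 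For full faithfulness, $\Psi$ induces the map $[\cF,\cG]^\Theta\to\pi_0\operatorname{Hom}_{{}_R\operatorname{mod}_R}(\Psi\cF,\Psi\cG)$, which by Proposition \ref{prop: bord pt bimod} is a bijection when $\cF$ and $\cG$ each have a single object; since deleting the top object of a flow category (resp.\ the top cell of a twisted complex) produces cofibre sequences that both $[-,\cG]^\Theta$ and $\pi_0\operatorname{Hom}({-},\Psi\cG)$ carry to long exact sequences, in each variable, the five lemma and an induction on the numbers of objects of $\cF$ and of $\cG$ reduce the general case to the one just settled.

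The main obstacle is the first step --- making the realization functor $\Psi$ rigorous --- because it amounts to redoing the geometric homotopy theory of \cite[Section 8]{AB2} in the twisted, non-framed setting: coherently extracting the $R\otimes R^{op}$-module structure from the coloured operad $\{\bT^\Theta_{i,1}\}$, showing the CJS realization is independent of the choices and sends $\Theta$-oriented bordant morphisms to homotopic bimodule maps, and checking that the composition construction of Section \ref{sec: comp} realizes to composition of bimodule maps. This is where the full strength of the linear gluing results of Section \ref{sec: glue} and of Assumption \ref{asmp} is needed. Once $\Psi$ is in place, essential surjectivity and full faithfulness follow formally from Proposition \ref{prop: bord pt bimod} by the cone-filtration arguments above --- which is why that proposition is flagged as the computation that ``motivates'' the conjecture.
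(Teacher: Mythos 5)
The statement you are addressing is stated in the paper as a \emph{conjecture}, and the paper offers no proof of it: the only thing the authors establish is Proposition \ref{prop: bord pt bimod} (and its tangential-pair analogue, Proposition \ref{prop: flow pt pair case}), namely the computation $[\ast[i],+[j]]^{\Theta}\cong\pi_{i-j}(R\otimes R^{op})$ for one-object flow categories, which they explicitly describe as merely \emph{motivating} the conjecture. So there is no proof in the paper to compare against, and your proposal has to be judged as an attempted proof of an open statement. As such it is a reasonable strategy outline, and its skeleton (realize via a twisted Cohen--Jones--Segal construction, then reduce to the one-object case by cell filtrations) is surely what the authors have in mind; but it is not a proof, and you in effect concede this: the construction of $\Psi$ --- well-definedness of the twisted CJS realization, its invariance under $\Theta$-oriented bordism, and compatibility with the composition of Section \ref{sec: comp} --- is precisely the missing content, which in the framed case already occupies all of \cite[Section 8]{AB2}.

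Beyond that acknowledged obstacle, two steps are genuinely gapped rather than merely deferred. First, in the essential-surjectivity argument you claim the higher strata $\cF_{xz}$, with their prescribed systems of boundary faces, are ``supplied by the usual obstruction-theoretic induction\dots using contractibility of the relevant spaces of choices''; but the contractibility statements in Lemma \ref{lem:upshot} and Proposition \ref{prop: Nstab} concern choices of Cauchy--Riemann and gluing data on manifolds that are already given --- they do not produce compact manifolds with faces realizing a prescribed attaching map. That existence statement is a geometric realization (inverse Pontrjagin--Thom) problem for manifolds with corners with prescribed boundary, and is the other half of what a CJS-type equivalence must establish; it does not follow from anything in the paper. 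Second, your five-lemma argument for full faithfulness requires that deleting the top object of $\cF$ induce a long exact sequence in $[-,\cG]^{\Theta}$; the paper's obstruction theory (Theorem \ref{thm: ob lift}) yields exact sequences governing truncations of flow modules, not this Puppe sequence for the morphism groups $[\cF,\cG]^{\Theta}$, so it too must be proved in the $\Theta$-oriented setting. Finally, flow categories realize to \emph{finite cell} bimodules, whereas $\operatorname{Perf}$ is closed under retracts, so either the target category should be read as finite cell bimodules or an idempotent-completion step is needed for essential surjectivity.
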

    \begin{rmk}
        To get a similar model for the category of left (or right) $R$-modules, one can impose an additional condition, that all maps $\cM \to \bU_{11}$ land in the subspace $\bU_{11}^L$ (or $\bU_{11}^R$) of abstract discs where the totally real subspace and lift to $\Theta$ are constant along the left (or right) boundary component of the abstract disc.
    \end{rmk}
\subsection{Flow modules}\label{sec: flow mod}

    \begin{defn}
        A \emph{$\Theta$-oriented right flow module} $\cR$ \emph{of degree $i$}
        over a $\Theta$-oriented flow category $\cF$ consists of 
        \begin{itemize}
            \item A morphism of unoriented flow categories $*[i] \to \cF$.
            \item Vector spaces with inner product $V_{*x}$.
            \item Isometric embeddings $\iota=\iota_{*xx'}: V_{*x} \oplus V_{xx'} \to V_{*x'}$.
            \item Maps of spaces $\rho=\rho_{*x}: \cR_{*x} \to \bU_x$, where we set $\bU_x := \bU_{01}^{\Theta,V_{*x}}(\bE_x)$.
            \item Isomorphisms of vector bundles over $\cR_{*x}$
            \begin{equation}
                st=st_{*x}: I_{*x}^{\cR} \oplus V_{*x} \to \bR \oplus \bV_x \oplus \bR^i
            \end{equation}
        \end{itemize}
        for each $x, x' \in \cF$. These are required to satisfy similar coherence conditions to Definition \ref{def: Theta or mor}.

        We define a \emph{$\Theta$-oriented bordism} between two right flow modules $\cR$ and $\cR'$ of degree $i$ to be a bordism $\cB$ similarly to Definition \ref{def: Theta or mor}: this consists of an unoriented bordism between $\cR$ and $\cR'$, along with vector spaces and isometric embeddings as in Definition \ref{def: Theta or mor}, and maps of spaces $\cB_{*x}: \cB_{*x} \to \bU_x$ and isomorphisms of vector bundles $I^\cB_{*x} \oplus V_{*x} \to \bR^2 \oplus \bV_{*x} \oplus \bR^i$, satisfying similar coherence conditions to Definition \ref{def: Theta or mor}.

        A $\Theta$-oriented \emph{left} flow module of degree $i$ over $\cF$, $\cL$, consists of a morphism of unoriented flow categories $\cF \to *[-i]$, equipped with data similar to the right-hand case, except with index bundles $I^\cL_{x*}$ classified by maps $\cL_{x*} \to \bU_{10}^{\bV_{x*}}(\bE_x)$. Bordisms of $\Theta$-oriented left modules are defined similarly.

        We write $\Omega_i^\Theta(\cF)$ and $\Omega^i_\Theta(\cF)$ for the set of bordism classes of $\Theta$-oriented right and left modules over $\cF$ of degree $i$, respectively. These form abelian groups under disjoint union.
    \end{defn}

    Composition as in Section \ref{sec: comp} gives well-defined maps:
    \begin{equation*}
        \xymatrix{
            \Omega_*^\Theta(\cF) \otimes [\cF, \cG]^\Theta \to \Omega_*^\Theta(\cG)
            &&
            [\cF, \cG]^\Theta \otimes \Omega^*_\Theta(\cG) \to \Omega^*_\Theta(\cF)
        }
    \end{equation*}
    These are compatible with composition in $\operatorname{Flow}^\Theta$, and in particular define functors $\Omega_*^\Theta(\cdot)$ and $\Omega^*_\Theta(\cdot)$. 
    
    Let $R$ be defined as in Section \ref{sec: Thom}. The same proof as Proposition \ref{prop: bord pt bimod} shows:
    \begin{prop}\label{prop: bord pt}
        If $\cF = *[j]$ is a $\Theta$-oriented flow category with one object $*$ in degree $j$, then there are isomorphisms:
        \begin{equation*}
            \xymatrix{
                \Omega_i^\Theta(\cF) \cong \pi_{i-j}(R) 
                &&
                \Omega^i_\Theta(\cF) \cong \pi_{i+j}(R)
            }
        \end{equation*}
    \end{prop}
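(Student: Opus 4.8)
The plan is to mirror the Pontrjagin--Thom argument of Proposition \ref{prop: bord pt bimod}, specialised to the case where both the flow category and the flow modules have a single object. First I would unpack what a $\Theta$-oriented right flow module $\cR$ of degree $i$ over $\cF = *[j]$ actually is. The source flow category $*[i]$ and the target $\cF$ each have one object and no internal morphisms (the manifolds $(*[i])_{**}$ and $\cF_{**}$ would have dimension $-1$, hence are empty), so the underlying unoriented flow module is just a single closed manifold $\cR_{**}$ of dimension $i-j$ with no boundary faces, and every coherence condition of Definition \ref{def: Theta or mor} involving the $\iota$, $\rho$, $st$ on composite faces is vacuous. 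A $\Theta$-orientation is therefore exactly the data of a finite-dimensional inner product space $V$, a map $\rho\colon \cR_{**}\to \bU_{01}^{\Theta,V}(\bE_*)$, and an isomorphism $st\colon T\cR_{**}\oplus\bR\tau\oplus V \xrightarrow{\sim} \bR\oplus \bV_{01}^{\Theta,V}(\bE_*)\oplus \bR^i$; cancelling trivial summands this is precisely a stable isomorphism between $T\cR_{**}$ and $\rho^*\bV_{01}^{\Theta}$ (the dimensions balance because $\bV_{01}^{\Theta,V}(\bE_*)$ has rank $\dim V - j$, using $\mu(\bE_*) = |*| = j$). A $\Theta$-oriented bordism of degree $i$ is the same structure on a compact $(i-j+1)$-manifold with the prescribed behaviour on the two boundary ends.

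Next I would pass to the limit over $V$. By Lemma \ref{lem: V conn est} and the connectivity estimate following it, the maps $\bU_{01}^{\Theta,V}(\bE_*)\to \bU_{01}(\Theta)$ become arbitrarily highly connected as $\dim V\to\infty$; since $\cR_{**}$ and any compact bordism have bounded dimension, after stabilising $V$ as in Example \ref{ex: stab Theta or} every module and every bordism is represented by a map into a fixed large $\bU_{01}^{\Theta,V}(\bE_*)$, and homotopies between such maps are realised by $\Theta$-oriented bordisms. By Lemma \ref{lem:htpy type}, $\bU_{01}(\Theta)\simeq \Omega\Theta$, and by construction (Lemma \ref{lem:upshot}, Section \ref{sec: Thom}) the colimit index bundle $\bV_{01}^{\Theta}$ over $\bU_{01}(\Theta)$ is classified by the map $\Omega\Theta\to BO\times\bZ$ whose Thom spectrum is $R$.

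The Pontrjagin--Thom construction then identifies bordism classes of closed $(i-j)$-manifolds $M$ equipped with a map $f\colon M\to\Omega\Theta$ and a stable isomorphism $TM\cong f^*\bV_{01}^{\Theta}$ with $\pi_{i-j}(R)$, the translation between this stable tangential data and the stable normal data that Pontrjagin--Thom literally produces being the loop-reversal argument of Remark \ref{rmk:reverse bundle}. This bijection is a homomorphism under disjoint union, so $\Omega_i^{\Theta}(\cF)\cong \pi_{i-j}(R)$. The left-module statement is proved identically: a $\Theta$-oriented left flow module of degree $i$ over $\cF = *[j]$ is a single closed $(i+j)$-manifold $\cL_{**}$ with a map to $\bU_{10}^{\Theta,V}(\bE_*)$ and compatible $st$, and since $\bU_{10}(\Theta)\simeq \Omega\Theta$ (Lemma \ref{lem:htpy type}) with index bundle whose Thom spectrum agrees with that of $\bV_{01}^{\Theta}$ up to passing to $R^{op}$, which has the same underlying spectrum (Remark \ref{rmk:reverse bundle}), one obtains $\Omega^i_{\Theta}(\cF)\cong \pi_{i+j}(R)$.

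The only point needing care --- and it is the same bookkeeping already implicit in Proposition \ref{prop: bord pt bimod} --- is matching the formal generator $\tau$ and the degree shift $\bR^i$ appearing in $st$ against the indexing conventions of the Thom spectrum (so that degree $i$ of the flow module corresponds to $\pi_{i-j}$, resp.\ $\pi_{i+j}$), and checking that the bordism relation on $\Theta$-oriented flow modules is literally the geometric bordism relation on the associated stably-tangentially-structured manifolds. The homotopy-theoretic inputs (Lemmas \ref{lem:htpy type}, \ref{lem: V conn est}, \ref{lem: conn est}) are all already available, so I do not anticipate any substantive obstacle beyond this.
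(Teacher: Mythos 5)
Your proposal is correct and follows essentially the same route as the paper, which simply invokes the Pontrjagin--Thom argument of Proposition \ref{prop: bord pt bimod}: a right (resp.\ left) module over $*[j]$ is a single closed $(i-j)$- (resp.\ $(i+j)$-) manifold with stable tangent bundle classified by a map to $\bU_{01}^{\Theta}\simeq\Omega\Theta$ (resp.\ $\bU_{10}^{\Theta}$), and bordism classes of such are the homotopy groups of $R$. Your unpacking of the degree bookkeeping and the stabilisation in $V$ is a more detailed version of what the paper leaves implicit.
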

    \begin{rmk}\label{rmk:not a morphism group}
        In the case of framed flow categories, there is a natural bijection between bordism classes of framed morphisms $[*[i], \cF]$ and right flow modules $\Omega^{fr}_i(\cF)$. However, as we see from Propositions \ref{prop: bord pt bimod} and \ref{prop: bord pt}, this does not hold for other tangential structures.
    \end{rmk}
    The following definition will be important when constructing the open-closed map in Section \ref{sec: unit OC}.
    \begin{defn}
        A \emph{$\Theta$-oriented capsule} $\cM$ of degree $i$ consists of a closed manifold $\cM_{**}$, a vector space with inner product $V_{**}$, along with a map $\rho: \cM_{**} \to \bU_{**}:=\bU^{\Theta,V_{**}}_{00}$, and an isomorphism of vector bundles $st: I^\cM_{**} \to \bR \oplus \bV_{**} \oplus \bR^i$ covering $\rho$. 

        We write $\Omega_i^{\Theta, \circ}$ for the set of equivalence classes $\Theta$-oriented capsules under the natural notion of bordism of such. 
        
    \end{defn}
    The same proof as Proposition \ref{prop: bord pt} shows:
    \begin{prop}\label{prop: capsule comp}
        Let $\tilde R$ be the Thom spectrum of the index bundle over the free loop space $\cL \Theta \simeq \bU_{0,0}(\Theta)$. Then there is an isomorphism $\Omega^{\Theta, \circ}_i \cong \pi_i \tilde R$.
    \end{prop}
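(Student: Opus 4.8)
The plan is to run the standard tangential Pontrjagin--Thom argument, exactly as in the proofs of Propositions \ref{prop: bord pt bimod} and \ref{prop: bord pt}, with the bimodule/module data replaced by that of a single closed manifold mapping to $\bU_{0,0}$.

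First I would unwind the definition of a $\Theta$-oriented capsule $\cM$ of degree $i$ and pass to the colimit over the auxiliary inner product spaces $V_{**}$. By Lemma \ref{lem: V conn est} (together with \cite[Lemma 2.5.1]{Brun}) the maps $\bU_{00}^{V,\Theta} \to \bU_{00}(\Theta)$ become arbitrarily highly connected as $\dim V_{**} \to \infty$, and the index bundles $\bV_{00}^{V,\Theta}$ stabilise to the (virtual) index bundle over $\bU_{00}(\Theta) \simeq \cL\Theta$ (Lemma \ref{lem:htpy type}) whose Thom spectrum is $\tilde R$. Unwinding the isomorphism $st: I^\cM_{**} \oplus V_{**} \to \bR \oplus \bV_{**} \oplus \bR^i$ and recalling that $I^\cM_{**} = T\cM_{**} \oplus \bR\tau$, the data of $\cM$ becomes precisely that of a closed manifold $\cM_{**}$, a map $\rho: \cM_{**} \to \cL\Theta$, and a stable isomorphism identifying $T\cM_{**}$, shifted by $\bR^i$, with $\rho^*$ of the index bundle; that is, an ``index-bundle structure of degree $i$'' on $\cM_{**}$ in the sense of tangential bordism. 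A $\Theta$-oriented bordism of capsules is then exactly a bordism of such structured manifolds.

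Next I would invoke the Pontrjagin--Thom correspondence in the tangential form recalled in Remark \ref{rmk:reverse bundle} (cf. \cite[Section 3]{Galatius_etal}): bordism classes of closed manifolds whose stable tangent bundle is pulled back from a virtual bundle $\xi \to B$, with an $i$-fold degree shift, are computed by $\pi_i \operatorname{Thom}(\xi \to B)$. Applied with $B = \cL\Theta$ and $\xi$ the index bundle, this gives the desired isomorphism $\Omega_i^{\Theta,\circ} \cong \pi_i \tilde R$: the map sends a capsule $\cM$ to the collapse-map class $[\cM_{**}] \in \pi_i \tilde R$, and well-definedness on bordism classes, additivity under disjoint union, and bijectivity all follow from transversality, exactly as in the proof of Proposition \ref{prop: bord pt bimod}.

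The only point requiring care is the index and degree bookkeeping: keeping track of the roles of the formal generator $\tau$, the extra $\bR$, and the $\bR^i$ summand in $st$, so that a capsule of degree $i$ corresponds to a class in $\pi_i$ (and not some shift of it), and checking that the virtual rank of the index bundle over $\cL\Theta$ is consistent with this normalisation. I expect this to be the main --- though still routine --- obstacle, to be handled in the same way as the analogous bookkeeping for $\bU_{0,1}$ and $\bU_{1,1}$ in Propositions \ref{prop: bord pt} and \ref{prop: bord pt bimod}.
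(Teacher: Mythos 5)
Your proposal is correct and follows essentially the same route as the paper: the paper's proof is literally "the same proof as Proposition \ref{prop: bord pt}," which in turn reduces to the proof of Proposition \ref{prop: bord pt bimod} — identify the capsule data with a single closed manifold whose stable tangent bundle is classified by the index bundle over $\bU_{00}(\Theta)\simeq\cL\Theta$, and apply the standard Pontrjagin--Thom argument. Your additional care about the tangential-versus-normal convention and the degree bookkeeping matches the paper's own discussion in Remarks \ref{rmk:reverse bundle} and \ref{rmk:reverse bundle 2}.
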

    Assuming Assumption \ref{asmp}, \cite{BCS} implies that this is exactly $\pi_* THH(R)$.

\begin{rmk}\label{rmk:reverse bundle 2}
    For any ring spectrum $R$, there is a spectral sequence $HH_*(\pi_*(R)) \Rightarrow \pi_*(THH(R))$. The graded ring $\pi_*(R)$ is commutative, and $HH_*(\pi_*(R^{op})) \cong HH_*(\pi_*(R))$.  Following on from Remark \ref{rmk:reverse bundle}, there is a natural isomorphism $THH(R^{op}) \to THH(R)^{op}$ arising from reversing the order of all terms in the cyclic bar complex. Thus, the homotopy groups of $THH(R)$ and $THH(R^{op})$ agree, and one can define the capsule bordism groups starting either from the Thom spectrum $\tilde{R}$ of the index bundle or from the Thom spectrum of minus the index bundle. 

    More geometrically, if $\Ind_{\scrL}$ is the index bundle over $\scrL U/O$ and we write $\scrL U/O = U/O \times \Omega U/O$, then $\Ind_{\scrL}(x,\gamma) \sim \Ind_{\Omega}(\gamma) \oplus Re(x)$, and then again $\Ind_{\scrL} + \iota^*\Ind_{\scrL}$ is stably trivial, using Remark \ref{rmk:reverse bundle} and the fact that $Re: U/O \to BO$ is 2-torsion.
\end{rmk}
    
    For any $\Theta$-oriented flow category $\cF$, composition gives a map
    \begin{equation}\label{eq: capsule concat}
        \Omega_i^\Theta(\cF) \otimes \Omega_\Theta^j(\cF) \to \Omega_{i+j}^{\Theta,\circ}
    \end{equation}

    Motivated by the computations in Propositions \ref{prop: bord pt} and \ref{prop: capsule comp} which essentially verify this for flow categories with one object, we conjecture:
    \begin{conj}\label{conj: flow mod PT}
        Under the equivalence of categories $\operatorname{Flow}^\Theta \simeq \operatorname{Perf}(_R\operatorname{mod}_R)$ from Conjecture \ref{conj: PT/CJS bimod}, if $\cF \in \operatorname{Flow}^\Theta$ corresponds to a bimodule $M$, then there are functorial isomorphisms:
        \begin{equation*}
            \xymatrix{
                \Omega_*^\Theta(\cF) \cong \pi_*(M \otimes_{R\otimes R^{op}} R_\Delta)
                &&
                \Omega^*_\Theta(\cF) \cong \operatorname{Hom}_{R\otimes R^{op}}(M, R_\Delta)
            }
        \end{equation*}
        where $R_\Delta$ is the diagonal bimodule and $\operatorname{Hom}_{R \otimes R^{op}}$ is the set of homotopy classes of maps of bimodules.

        Under these isomorphisms along with Proposition \ref{prop: capsule comp}, concatenation as in (\ref{eq: capsule concat}) corresponds to the composition
        \begin{multline*}
            \pi_*(M \otimes_{R \otimes R^{op}}R_\Delta) \otimes Hom_{R \otimes R^{op}}(M, R_\Delta)
            \xrightarrow{Id \otimes (\cdot \otimes_{R \otimes R^{op}}R_\Delta)}
            \pi_*(M \otimes_{R \otimes R^{op}} R_\Delta)
            \otimes \operatorname{Hom}_\bS(M \otimes_{R \otimes R^{op}}R_\Delta, R_\Delta \otimes_{R \otimes R^{op}} R_\Delta)
            \\
            \xrightarrow{\textrm{compose}}
            \pi_*(R_\Delta \otimes_{R \otimes R^{op}} R_\Delta) 
            =: \pi_* THH(R)
        \end{multline*}
    \end{conj}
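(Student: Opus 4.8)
The plan is to deduce Conjecture~\ref{conj: flow mod PT} from Conjecture~\ref{conj: PT/CJS bimod} together with Propositions~\ref{prop: bord pt} and~\ref{prop: capsule comp}, by a Pontryagin--Thom construction followed by a reduction to one-object flow categories. First I would construct the comparison maps directly. Given a $\Theta$-oriented flow category $\cF$ with associated bimodule $M$, a $\Theta$-oriented right flow module $\cR$ of degree $i$ over $\cF$ determines, by gluing the manifolds $\cR_{*x}$ along the $\cF$-action as in the construction (\ref{eq: cQ}) of $\tilde\cQ$ and passing to the colimit in $V$, a ``total Thom spectrum''; because the orientation data of a right flow module is valued in $\bU_{01}$ rather than $\bU_{11}$ (one boundary arc having been closed up), this spectrum is naturally a model for $M \otimes_{R \otimes R^{op}} R_\Delta$. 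A framed-function Pontryagin--Thom argument, exactly as in the proof of Proposition~\ref{prop: bord pt bimod}, associates to $\cR$ a class in $\pi_i$ of this spectrum, and a $\Theta$-oriented bordism produces a homotopy, so one obtains a well-defined homomorphism $\Omega_i^\Theta(\cF) \to \pi_i(M \otimes_{R \otimes R^{op}} R_\Delta)$. The dual construction, applied to left flow modules (whose orientation data is valued in $\bU_{10}$), gives $\Omega^*_\Theta(\cF) \to \operatorname{Hom}_{R \otimes R^{op}}(M, R_\Delta)$. Functoriality in $\cF$ is automatic: the composition maps $\Omega_*^\Theta(\cF) \otimes [\cF,\cG]^\Theta \to \Omega_*^\Theta(\cG)$ are built from the same gluing, which under Conjecture~\ref{conj: PT/CJS bimod} corresponds to tensor product of bimodules, and the total-Thom-spectrum assignment is symmetric monoidal.

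Next I would check these maps are isomorphisms. Under Conjecture~\ref{conj: PT/CJS bimod}, $\operatorname{Flow}^\Theta$ is triangulated and equals the thick subcategory generated by the free bimodule $R \otimes R^{op}$ and its shifts, which correspond to the one-object flow categories $*[j]$. On these generators the comparison maps are exactly the isomorphisms of Proposition~\ref{prop: bord pt}: indeed $\Sigma^j(R \otimes R^{op}) \otimes_{R \otimes R^{op}} R_\Delta \simeq \Sigma^j R_\Delta$ has $\pi_i \cong \pi_{i-j}(R) \cong \Omega_i^\Theta(*[j])$, with the analogous statement for the $\operatorname{Hom}$ side. It then remains to know that both sides are homological functors on $\operatorname{Flow}^\Theta \simeq \operatorname{Perf}(_R\operatorname{mod}_R)$ and that the comparison maps are compatible with the connecting morphisms; granting this, the objects on which the comparison is an isomorphism form a thick subcategory (by the five lemma applied to the long exact sequences of a cofibre sequence) containing the generators, hence everything. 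On the module side $N \mapsto \pi_*(N \otimes_{R \otimes R^{op}} R_\Delta)$ and $N \mapsto \operatorname{Hom}_{R \otimes R^{op}}(N, R_\Delta)$ are homological because derived tensor and Hom preserve cofibre sequences; on the bordism side one needs that a cofibre sequence of flow categories --- realised via a mapping-cone construction for flow categories and flow modules --- induces the expected long exact sequence of bordism groups, the standard mechanism making bordism-type invariants homological, here to be installed within the CJS-style formalism.

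Finally, with both identifications in hand and with $\Omega_*^{\Theta,\circ} \cong \pi_* THH(R)$ from Proposition~\ref{prop: capsule comp} (using Assumption~\ref{asmp} and~\cite{BCS}), I would match the concatenation map (\ref{eq: capsule concat}) with the asserted composite. Geometrically, concatenating a right and a left flow module over $\cF$ and then capping off produces a $\Theta$-oriented capsule whose total Thom spectrum realises the pairing of $M \otimes_{R \otimes R^{op}} R_\Delta$ against $\operatorname{Hom}_{R \otimes R^{op}}(M, R_\Delta)$, followed by evaluation into $R_\Delta \otimes_{R \otimes R^{op}} R_\Delta = THH(R)$, which is precisely the composite in the statement. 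One verifies the relevant square commutes for $\cF = *[j]$, where every term is explicit by Propositions~\ref{prop: bord pt} and~\ref{prop: capsule comp}, and propagates it by naturality over the thick subcategory.

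The main obstacle, beyond the conditional dependence on Conjecture~\ref{conj: PT/CJS bimod}, is twofold. First, the total-Thom-spectrum construction of the first step must be made functorial and homotopy-coherent enough to identify its output \emph{as a spectrum} --- not merely up to isomorphism of homotopy groups --- with $M \otimes_{R \otimes R^{op}} R_\Delta$; this is the same circle of coherence issues underlying Lemma~\ref{lem:upshot} and Assumption~\ref{asmp}. Second, one must prove that $\Omega_*^\Theta(-)$ and $\Omega^*_\Theta(-)$ are homological for the triangulated structure transported from $\operatorname{Perf}(_R\operatorname{mod}_R)$, i.e.\ that cofibre sequences of flow categories yield long exact sequences of bordism groups compatibly with the comparison maps; neither cones of flow categories nor such exact sequences are spelled out in the present formalism, so this requires genuinely new bookkeeping.
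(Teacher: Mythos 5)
The statement you are proving is stated in the paper as Conjecture~\ref{conj: flow mod PT}, not as a theorem: the authors offer no proof, only the motivation that Propositions~\ref{prop: bord pt} and~\ref{prop: capsule comp} verify the claim for one-object flow categories $*[j]$ (where $M$ is a shift of the free bimodule $R\otimes R^{op}$, so $M\otimes_{R\otimes R^{op}}R_\Delta\simeq \Sigma^j R_\Delta$ and both sides reduce to $\pi_{*-j}(R)$). So there is no paper proof to compare against. Your strategy --- build comparison maps by a Pontryagin--Thom/total-Thom-spectrum construction, check them on the one-object generators, and propagate over the thick subcategory they generate using that both sides are homological --- is the natural one and is consistent with the motivation the authors give. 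But as written it is a plan, not a proof, and it is conditional on Conjecture~\ref{conj: PT/CJS bimod} exactly as the statement itself is.

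The two obstacles you name at the end are in fact the entire mathematical content that is missing, and it is worth being clear that neither is a routine verification within the paper's formalism. First, identifying the glued Thom spectrum of a right flow module with $M\otimes_{R\otimes R^{op}}R_\Delta$ \emph{as a spectrum} requires a coherent multiplicative comparison of the flow-categorical gluing with the bar construction computing the derived tensor product; the paper only has Assumption~\ref{asmp} and Lemma~\ref{lem:upshot}-level coherence, which is why even Proposition~\ref{prop: flow pt pair case} is proved only ``assuming Assumption~\ref{asmp}''. Second, $\operatorname{Flow}^\Theta$ as constructed is only an ordinary (possibly non-unital) category of bordism classes: it has no cones, no distinguished triangles, and no long exact sequences for $\Omega_*^\Theta(-)$ --- the paper's only exact sequences are the Postnikov-type ones of Theorem~\ref{thm: ob lift}, which filter by truncation rather than by cells. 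So the ``five lemma over the thick subcategory'' step cannot currently be run; installing a stable $\infty$-categorical enhancement of $\operatorname{Flow}^\Theta$ is precisely what Conjecture~\ref{conj: flow thom} asks for. A smaller point: the conjecture's $\operatorname{Hom}_{R\otimes R^{op}}(M,R_\Delta)$ is $\pi_0$ of a mapping spectrum, so the grading on $\Omega^*_\Theta(\cF)$ must be matched by shifting $R_\Delta$ (compare $\Omega^i_\Theta(*[j])\cong\pi_{i+j}(R)$ in Proposition~\ref{prop: bord pt}); your check on generators should make that shift explicit rather than asserting ``the analogous statement''.
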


\subsection{Bilinear maps}\label{sec: bilin}
    
    Let $\cF$, $\cG$ and $\cH$ be $\Theta$-oriented flow categories. We recall a definition from \cite[Section 5.1]{PS}:
    \begin{defn}
        An (unoriented) bilinear map $\cB: \cF \times \cG \to \cH$ consists of manifolds $\cB_{xy;z}$ for each $x \in \cF, y \in \cG$ and $z \in \cH$, along with maps 
        \begin{equation}
            \xymatrix{
                \cB_{xx',y;z} \to \cB_{xy;z} 
                &
                \cB_{x,yy';z} \to \cB_{xy;z}
                &
                \cB_{x,y;z'z} \to \cB_{xy;z}
            }
        \end{equation}
        where we write $\cB_{xx',y;z}$ for $\cF_{xx'} \times \cB_{x'y;z}$, and similar for $\cB_{x,yy';z}$ and $\cB_{x,y;z'z}$. We require these maps to satisfy a suitable associativity relation, and that the collection of these maps together form a system of boundary faces for each $\cB_{xy;z}$. All of these boundary faces are defined to be broken, in the sense of Section \ref{sec: ind bun}.
    \end{defn}
    
    We incorporate $\Theta$-orientations in much the same way as before, with each $I^\cB_{xy;z}$ classified by a map to a space of $\Theta$-oriented abstract discs:
    \begin{defn}
        A \emph{$\Theta$-orientation} on a bilinear map $\cB: \cF \times \cG \to \cH$ consists of 
        \begin{itemize}
            \item Vector spaces with inner product $V_{xy;z}$.
            \item Isometric embeddings 
            \begin{equation}
                \xymatrix{
                    V_{xx'} \oplus V_{x'y;z} \to V_{xy;z}
                    &
                    V_{yy'} \oplus V_{xy';z} \to V_{xy;z}
                    &
                    V_{xy;z'} \oplus V_{z'z} \to V_{xyz}
                }
            \end{equation}
            \item Maps of spaces $\rho=\rho_{xy;z}: \cB_{xyz} \to \bU_{xy;z} := \bU_{21}^{V_{xy;z}}(\bE_x, \bE_y, \bE_z)$
            \item Isomorphisms of vector bundles over $\cB_{xy;z}$
            \begin{equation}
                st=st_{xy;z}: I^\cB_{xy;z} \oplus V_{xy;z} \to \bR \oplus \bV_{xy;z}
            \end{equation}
        \end{itemize}
        These are required to satisfy similar compatibility conditions to Definition \ref{def: Theta or mor}.
    \end{defn}
    Let $\cB: \cF \times \cG \to \cH$ be a $\Theta$-oriented bilinear map. Composition defines a bilinear map of graded abelian groups:
    \begin{equation}\label{eqn:composition in general}
        \cB_*: \Omega^\Theta_*(\cF) \otimes \Omega^\Theta_*(\cG) \to \Omega_*^\Theta(\cH)
    \end{equation}
    \begin{rmk}
        The category of $R-R$ bimodules is monoidal under $\otimes_R$. Under Conjecture \ref{conj: PT/CJS bimod}, we expect that the two notions of bilinar map coincide.
    \end{rmk}
    We next give a condition for associativity of bilinear maps. Let 
    \begin{equation}\label{eq: 6 flow cats}
        \cF,\cG,\cH,\cI,\cJ,\cK
    \end{equation}
    be $\Theta$-oriented flow categories, and 
    
    \begin{equation}\label{eq: 4 bilin}
        \cW: \cF \times \cG \to \cI,\,\, \cV: \cG \times \cH \to \cJ,\,\, \cX: \cI \times \cH \to \cK,\,\, \cY: \cF \times \cJ \to \cK
    \end{equation} 
    $\Theta$-oriented bilinear maps. We obtain two trilinear maps of graded groups:
    \begin{equation}\label{eq: 2 trilin}
        \cX_* \circ \cW_*, \cY_* \circ \cV_*: \Omega^\Theta_*(\cF) \otimes \Omega^\Theta_*(\cG) \otimes \Omega^\Theta_*(\cH) \to \Omega^\Theta_*(\cK)
    \end{equation}
    \begin{defn}
        An \emph{associator} $\cZ$ for the data (\ref{eq: 6 flow cats}) and (\ref{eq: 4 bilin}) consists of compact manifolds $\cZ_{xyz;w}$ with faces for $x \in \cF, y \in \cG, z \in \cH$ and $w \in \cK$, of dimension $|x|+|y|+|z|-|w|+1$. These are required to have a system of boundary faces given by 
        
        \begin{equation*}
            \cF_{xx'} \times \cZ_{x'yz; w}, \cG_{yy'} \times \cZ_{xy'z;w}, \cH_{zz'} \times \cZ_{xyz'; w}, \cW_{xy;u} \times \cX_{uz; w}, \cV_{yz; v} \times \cY_{xv; w}, \cZ_{xyz;w'} \times \cK_{w'w}
        \end{equation*}
            
        for $x,x' \in \cF, y,y' \in \cG, z,z' \in \cH, u \in \cI, v \in \cJ, w,w' \in \cK$. The inclusions of these faces are required to be suitably associative.

        A \emph{$\Theta$-orientation} on $\cZ$ consists of vector spaces with inner product $V_{xyz;w}$, maps of spaces $\rho=\rho_{xyz;w}: \cZ_{xyz;w} \to \bU_{xyz;w} := \bU_{31}^{V_{xyz;w}}(\bE_x,\bE_y,\bE_z;\bE_w)$ and isomorphisms of vector bundles $st=st_{xyz;w}: I^\cZ_{xyz;w} \oplus V_{xyz;w} \to \bR^2 \oplus \bV_{xyz;w}$ covering $\rho_{xyz;w}$. These are required to be compatible with each other in the natural way. 
    \end{defn}
    Similarly to \cite[Lemma 5.19]{PS} (and by the same proof, incorporating $\Theta$-orientations in the usual way), we have:
    \begin{lem}
        If there exists a $\Theta$-oriented associator $\cZ$ for the data (\ref{eq: 6 flow cats}) and (\ref{eq: 4 bilin}), then the two trilinear maps (\ref{eq: 2 trilin}) agree.
    \end{lem}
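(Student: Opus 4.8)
The plan is to follow the blueprint of \cite[Lemma 5.19]{PS} verbatim, upgrading each piece of data to carry a $\Theta$-orientation in exactly the way already done for composition in Section \ref{sec: comp} and for bilinear maps above. The statement being proved is the analogue of \cite[Lemma 5.19]{PS} for $\Theta$-oriented flow categories: the existence of a $\Theta$-oriented associator $\cZ$ for the data \eqref{eq: 6 flow cats} and \eqref{eq: 4 bilin} forces the two trilinear maps $\cX_* \circ \cW_*$ and $\cY_* \circ \cV_*$ of \eqref{eq: 2 trilin} to coincide on $\Omega^\Theta_*(\cF) \otimes \Omega^\Theta_*(\cG) \otimes \Omega^\Theta_*(\cH)$.

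First I would take a triple of $\Theta$-oriented right flow modules representing classes $\alpha \in \Omega^\Theta_*(\cF)$, $\beta \in \Omega^\Theta_*(\cG)$, $\gamma \in \Omega^\Theta_*(\cH)$, and form, using the associator $\cZ$ together with these modules and the bilinear maps $\cW,\cV,\cX,\cY$, a $\Theta$-oriented flow module over $\cK$ whose two ``ends'' are representatives of $\cX_*(\cW_*(\alpha,\beta),\gamma)$ and $\cY_*(\alpha,\cV_*(\beta,\gamma))$ respectively. Concretely, one iterates the colim-and-framed-function construction of Section \ref{sec:comp}: glue $\cZ_{xyz;w}$ to the module manifolds along the boundary faces $\cF_{xx'}$, $\cG_{yy'}$, $\cH_{zz'}$ (using the module structure maps), obtaining a non-compact manifold with faces whose remaining boundary is covered by $\cW_{xy;u}\times\cX_{uz;w}$-type and $\cV_{yz;v}\times\cY_{xv;w}$-type faces; choosing compatible framed functions and passing to the $1$-level set then produces a bordism of $\Theta$-oriented flow modules between the two composites. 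The $\Theta$-orientation on this bordism is assembled exactly as in Section \ref{sec: comp}: pick inner product spaces $V$ inductively, define the maps $\rho$ to $\bU$ by composing the $\rho$'s of the constituent pieces with the concatenation maps $\bU_{i,1}\times\bU_{j,0/1}\to\bU$ of Lemma \ref{lem:upshot}, and define the $st$ isomorphisms using the $st_k$ of Lemma \ref{lem:upshot} together with the index-bundle isomorphism $I^\cP\oplus\bR\cong I^{\tilde\cQ}$ supplied by the framed function; all compatibilities follow from the associativity of $\rho_k$ and $st_k$ in Lemma \ref{lem:upshot} and the contractibility of the gluing choices (Lemma \ref{lem:associative linear gluing}), exactly as in \cite[Lemmas 4.41, 4.42]{PS}.

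The main obstacle — though it is bookkeeping rather than a genuine difficulty — is checking that the face structure of the glued $\cZ$-module really does match up so that precisely the two desired composites appear as its incoming and outgoing unbroken faces, with everything else broken; this is where the specific combinatorics of the six flow categories and four bilinear maps in \eqref{eq: 6 flow cats}--\eqref{eq: 4 bilin} enters, and it is handled identically to the unoriented case in \cite[Lemma 5.19]{PS}. Once that is in place, the only genuinely new content is the verification that the $\Theta$-orientation data glues coherently, and that is a direct transcription of the arguments already given for composition in Section \ref{sec: comp}. Hence the proof is, as the paper says, ``by the same proof, incorporating $\Theta$-orientations in the usual way'', and I would keep the write-up at that level of detail, pointing to Section \ref{sec: comp} and \cite[Lemma 5.19]{PS} for the parts that carry over unchanged.
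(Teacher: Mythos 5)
Your proposal matches the paper's argument, which is precisely to run \cite[Lemma 5.19]{PS} verbatim and incorporate $\Theta$-orientations in the same way as for composition in Section \ref{sec: comp}; your elaboration of the gluing, framed-function, and orientation-assembly steps is a faithful expansion of that one-line proof. No changes needed.
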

        
\subsection{Truncation}

    In this section we incorporate $\Theta$-orientations into the theory of \emph{truncated} flow categories, introduced in \cite[Section 5.4]{PS}.

    \begin{defn}
        Let $i \geq 0$, and let 
        \[
        \mathpzc{Object} \in \left\{ \textrm{flow category, (left/right) module, capsule, morphism, bordism, bilinear map, associator} \right\}. 
        \]
        A \emph{pre-$\tau_{\leq i}$ $\mathpzc{Object}$} is defined the same as a $\mathpzc{Object}$, except we only require the manifolds in the definition to exist when their dimension would be $\leq i$ (and when they do exist, they are equipped with the same extra data and satisfy the same conditions).

        For $i \leq j$, to any pre-$\tau_{\leq j}\mathpzc{Object}$, $\cA$, there is a corresponding pre-$\tau_{\leq i}$ $\mathpzc{Object}$, $\tau_{\leq i} \cA$, obtained by discarding all manifolds of dimension $> i$.

        An \emph{extension} of a pre-$\tau_{\leq i}$ $\mathpzc{Object}$ $\cA$, is a pre-$\tau_{\leq i+1}$ $\mathpzc{Object}$ $\cA'$, such that $\tau_{\leq i} \cA' = \cA$.

        A \emph{$\tau_{\leq i}$ $\mathpzc{Object}$} is a pre-$\tau_{\leq i}$ $\mathpzc{Object}$, $\cA$, such that there exists an extension $\cA'$ of $\cA$. Note that $\cA'$ is not part of the data.

        A \emph{$\Theta$-orientation} on a pre-$\tau_{\leq i}$ $\mathpzc{Object}$ is the same data as a $\Theta$-orientation on a $\mathpzc{Object}$, except only equipped to the manifolds part of the data of the pre-$\tau_{\leq i}$ $\mathpzc{Object}$. For $i \leq j$ and $\cA$ a $\Theta$-oriented pre-$\tau_{\leq j}$ $\mathpzc{Object}$, we may define $\tau_{\leq i} \cA$ similarly to before. A \emph{$\Theta$-orientation} on a $\tau_{\leq i}$ $\mathpzc{Object}$ is a $\Theta$-oriented pre-$\tau_{\leq i}$ $\mathpzc{Object}$ that admits a $\Theta$-oriented extension.

        We add a superscript $\cdot^{\leq i}$ or subscript $\cdot_{\leq i}$ to indicate working with $\tau_{\leq i}$ $\mathpzc{Object}$s; for example we write $[\cF, \cG]^{\Theta, \leq i}$ for the set of bordism classes of $\Theta$-oriented $\tau_{\leq i}$ morphisms $\cF \to \cG$.
    \end{defn}
    Composition is compatible with truncation. In particular, we arrive at a (potentially non-unital) category $\operatorname{Flow}^{\Theta, \leq i}$ of $\Theta$-oriented $\tau_{\leq i}$ flow categories.
\subsection{Morse complex}
    
    Let $\cM$ be a $\Theta$-oriented flow category, where we now assume in addition that $\Theta$ is an oriented tangential structure.

    For each $x \in \cM$, we choose a vector space with inner product $V_x$, along with isometric embeddings $V_x \oplus V_{xx'}^\cM \to V_{x'}$ for each $x,x' \in \cM$, satisfying a suitable associativity condition. We can make this choice, by first choosing all $V_x$ to have sufficiently large dimension, and choosing the embeddings inductively in $|x|-|x'|$. 

    Let $\bU_x = \bU_{01}^{\Theta, V_x}(\bE_x)$ and let $\bV_x$ be the index bundle over $\bU_x$. Composing the maps $\cM_{xx'} \to \bU_{xx'}$ with gluing of abstract discs gives maps $c=c_{xx'}:\bU_x \times \cM_{xx'} \to \bU_{x'}$, along with isomorphisms of vector bundles $st=st_{xx'}: \bV_x \oplus I^\cM_{xx'} \to \bV_{x'}$ covering $c_{xx'}$, satisfying a suitable associativity condition.

    Recall that $\bU_x \simeq \Omega\Theta$ has the homotopy type of the based loop space. Since $\Theta$ maps trivially to $K(\bZ/2,2)$, the map on the loop space classifying the first Stiefel-Whitney class vanishes, so $\bU_x^{\Theta}$ is orientable \cite{Georgieva}, and the determinant line $\det(\bV_x)$ is trivial and so admits a set $F(x)$ of two trivialisations (using the fact that $\bU_x$ is connected). We define the Morse complex
    \[
        CM_i(\cM) = \left(\bigoplus\limits_{|x|=i, \phi \in F(x)} \bZ \phi \right) / A_i
    \]
    with $A_i$ generated by elements of the form $\sum_{\phi \in F(x)} \phi$.  
    
    The isomorphisms $st$ induce isomorphisms of determinant lines
    \begin{equation} \label{eqn:determinant}
        \det(st): \det(\bV_x) \otimes \det(I^\cM_{xx'}) \to \det(\bV_x').
    \end{equation}
    
    We define a linear map $\partial$:
    \[
        F(x) \ni \phi_x \mapsto \partial \phi_x := \sum_{|y| = |x|-1} \Gamma(\cM_{xy}) \psi_y 
    \]
    where $\Gamma(\cdot)$ is a signed count of points, relative to the orientation of $\cM_{xy}$ defined by $\phi_x$, $\psi_y$ and \eqref{eqn:determinant}. The linear map $\partial$ descends to a map $CM_i(\cM) \to CM_{i-1}(\cM)$. As in \cite[Lemma 5.20]{PS} one checks this is a differential.

    \begin{defn}
        Suppose $\cM$ is $\Theta$-oriented, for some oriented tangential structure $\Theta$. 
        The \emph{Morse homology} $HM_*(\cM;\bZ)$ is the homology of the chain complex $(CM_*(\cM);\partial)$.

        Given an abelian group $G$, we define $CM_*(\cM; G) := CM_*(\cM; \bZ) \otimes_\bZ G$.
    \end{defn}
    Suppose now $\cM_i$ are $\Theta$-oriented, for $i=0,1,2$. Again as in \cite{PS}, one checks that

    \begin{itemize}
        \item a $\Theta$-oriented morphism from $\cM_0$ to $\cM_1$ defines a chain map $CM_*(\cM_0) \to CM_*(\cM_1)$;
        \item a $\Theta$-oriented bilinear map $\cR: \cM_0 \times \cM_1 \to \cM_2$ induces a degree zero product $CM_*(\cM_0) \otimes CM_*(\cM_1) \to CM_*(\cM_2)$ which descends to homology;
        \item if we are given $\Theta$-oriented flow categories and bilinear maps as in (\ref{eq: 6 flow cats}) and (\ref{eq: 4 bilin}) and an associator $\cZ$ for this data, then the two induced trilinear maps of groups $HM_*(\cF) \otimes HM_*(\cG) \otimes HM_*(\cH) \to HM_*(\cK)$ agree.
    \end{itemize}
    These are all compatible with composition.

    \begin{lem}\label{lem: morse 0-trun}
        Let $\cF,\cG$ be $\Theta$-oriented flow categories. Then there is a natural isomorphism
        \[
        [\cF,\cG]^{\Theta,\leq 0} \to \Hom(CM_*(\cF),CM_*(\cG))
        \]   
    \end{lem}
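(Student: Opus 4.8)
The plan is to recognise the displayed map as the restriction to the $\tau_{\leq0}$ truncation of the Morse-complex construction discussed just above the lemma, and then to show that for $\tau_{\leq0}$ data this assignment is a bijection; the whole argument is a mild elaboration of the verification in \cite{PS} that $\partial$ is a differential (recall $\Theta$ is an oriented tangential structure throughout this subsection, which is exactly what makes $\bU_x$ orientable and $CM_*$ defined over $\bZ$). First I would unwind what a $\Theta$-oriented $\tau_{\leq0}$ morphism $\cW\colon\cF\to\cG$ is. Since $\dim\cW_{xy}=|x|-|y|$, its $\tau_{\leq0}$ truncation retains only the $\cW_{xy}$ with $|x|-|y|\leq0$: these are empty for $|x|<|y|$ and finite sets for $|x|=|y|$, each carrying the restriction of the $\Theta$-orientation data $(V_{xy},\rho_{xy},st_{xy})$ to a point. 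Reindexing the $V_{xy}$ against the auxiliary spaces $V_x,V_y$ used in the definitions of $CM_*(\cF)$ and $CM_*(\cG)$ --- canonically up to homotopy, using contractibility of the relevant spaces of Cauchy--Riemann and boundary data --- each point $p\in\cW_{xy}$ acquires via $st$ an isomorphism $\det(\bV_x)\otimes\det(I^{\cW}_{xy}|_p)\xrightarrow{\sim}\det(\bV_y)$, hence, exactly as in the definition of $\partial$ using \eqref{eqn:determinant}, a sign $\varepsilon(p)\in\{\pm1\}$ relating any $\phi_x\in F(x)$ to any $\psi_y\in F(y)$. Setting $\Gamma(\cW_{xy})=\sum_{p}\varepsilon(p)$ and $\phi_x\mapsto\sum_{|y|=|x|}\Gamma(\cW_{xy})\,\psi_y$ defines a degree-$0$ homomorphism $CM_*(\cF)\to CM_*(\cG)$; it annihilates the relations $A_i$ and is independent of the auxiliary choices because two trivialisations of a trivial line over a connected space differ by $\pm1$, and it coincides with the chain map attached to $\cW$ in the discussion preceding the lemma.

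The key step is to identify ``$\cW$ is a $\tau_{\leq0}$ morphism'' with ``the homomorphism above is a chain map''. A pre-$\tau_{\leq1}$ extension of $\cW$ supplies, for each pair with $|x|=|y|+1$, a compact $1$-manifold with faces $\cW_{xy}$ whose system of boundary faces is $\bigsqcup_{|x'|=|y|}\cF_{xx'}\times\cW_{x'y}\ \sqcup\ \bigsqcup_{|y'|=|y|+1}\cW_{xy'}\times\cG_{y'y}$, together with an extension of the $\Theta$-orientation; the orientation induced on $\cW_{xy}$ agrees with the signs $\varepsilon$ on its boundary $0$-manifold, so the signed boundary count vanishes, and this is precisely the chain-map identity $\partial_\cG\circ\cW=\cW\circ\partial_\cF$ at $(x,y)$ (with the same sign bookkeeping as in $\partial^2=0$). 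Conversely, given the chain-map identity, the required $\cW_{xy}$ can be built as a disjoint union of intervals pairing the prescribed boundary points by sign, and the $\Theta$-orientation data extends over these intervals --- the maps to $\bU_{xy}$ (which has the homotopy type of $\bU_{1,1}(\Theta)\simeq(\Omega\Theta)^{2}$, connected since $\Theta$ is $1$-connected) and the index-bundle isomorphisms over them --- by the same contractibility/connectedness arguments used to construct $\Theta$-oriented morphisms and bordisms in \cite{PS}. Surjectivity onto $\Hom(CM_*(\cF),CM_*(\cG))$ then follows: given a chain map, choose each $\cW_{xy}$ ($|x|=|y|$) to be a signed point set realising its matrix coefficients with any $\Theta$-orientation, and the chain-map identity furnishes a $\tau_{\leq1}$ extension, so this is a genuine $\tau_{\leq0}$ morphism inducing the given chain map.

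It remains to descend to bordism classes, prove injectivity, and note naturality. A $\Theta$-oriented $\tau_{\leq0}$ bordism $\cB$ between $\cW$ and $\cW'$ likewise extends to pre-$\tau_{\leq1}$, providing for $|x|=|y|$ a $1$-manifold $\cB_{xy}$ with oriented boundary $\cW_{xy}\sqcup\overline{\cW'_{xy}}$ (the lower-dimensional pieces of $\cB$ enter only through boundary faces that may be taken empty), whence $\Gamma(\cW_{xy})=\Gamma(\cW'_{xy})$; so the map factors through $[\cF,\cG]^{\Theta,\leq0}$. For injectivity, if $\cW$ induces the zero chain map then every $\Gamma(\cW_{xy})=0$, the points of each $\cW_{xy}$ pair up by sign, and the resulting intervals (with $\Theta$-orientations extended as before) realise a $\tau_{\leq1}$ extension of the empty pre-$\tau_{\leq0}$ bordism from $\cW$ to the empty morphism, so $[\cW]=0$. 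Naturality --- that the bijection intertwines composition in $\operatorname{Flow}^{\Theta,\leq0}$ with composition of chain maps --- is the compatibility with composition of the Morse-complex construction already recorded above.

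The genuine work, and hence the main obstacle, is entirely in the bookkeeping: matching the sign conventions built into $st_{xy}$, the identification \eqref{eqn:determinant}, and the boundary-orientation conventions so that ``signed boundary count zero'' reads off as the chain-map identity with the correct signs, and likewise checking that the $\Theta$-orientation data (maps to $\bU_{xy}$ and the covering index-bundle isomorphisms) can be extended over the newly constructed $1$-manifolds. Both of these are carried out in essentially the form needed in the proof of $\partial^{2}=0$ in \cite{PS}, of which the present lemma is a relative version, so no new difficulty is expected beyond adapting that argument verbatim.
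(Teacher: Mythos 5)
Your overall strategy is sound and, in fact, amounts to re-deriving the content of the result the paper simply quotes: the paper's own proof is a two-line reduction (since $\Theta$ is $1$-connected, $\Omega\Theta$ and hence the spaces $\bU^V_{i,1}(\bE)$ are connected, so $\Theta$-orientations on $0$-manifolds, and the bounding question for them, reduce to framings/orientations), after which it cites \cite[Lemma 5.41]{PS}. You instead prove the bijection directly; the key inputs you use (connectedness of $\bU_{xy}$ to extend $\Theta$-orientations over intervals, orientability of the determinant line to define signs) are exactly the ones the paper uses to justify its reduction, so the two arguments are close in substance even though yours is longer and self-contained.

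However, there is a genuine gap in your treatment of the bordism relation. A pre-$\tau_{\leq 0}$ bordism $\cB$ between $\cW$ and $\cW'$ consists of the manifolds $\cB_{xy}$ of dimension $|x|-|y|+1\leq 0$; in particular the $0$-dimensional pieces $\cB_{xy}$ with $|x|=|y|-1$ are part of the \emph{given} data and cannot be ``taken empty''. They contribute broken boundary faces $\cF_{xx'}\times\cB_{x'y}$ and $\cB_{xy'}\times\cG_{y'y}$ to the $1$-manifolds $\cB_{xy}$ ($|x|=|y|$) supplied by the pre-$\tau_{\leq 1}$ extension, and the vanishing of the signed boundary count then reads $\Gamma(\cW_{xy})-\Gamma(\cW'_{xy})=\pm(\partial_\cG h\pm h\,\partial_\cF)_{xy}$, where $h$ is the degree-one map determined by the $0$-dimensional pieces. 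So bordant $\tau_{\leq 0}$ morphisms induce chain \emph{homotopic}, not equal, chain maps: for instance if $\cF=\cG$ has two objects $a,b$ with $|a|=1$, $|b|=0$ and $\cF_{ab}$ a single point, taking $\cB_{ba}$ to be a point exhibits a bordism from the identity morphism to one inducing $2\cdot\mathrm{id}$ on $CM_*$. Consequently your claim ``whence $\Gamma(\cW_{xy})=\Gamma(\cW'_{xy})$'' fails, the target of the isomorphism must be homotopy classes of chain maps, and your injectivity step must be strengthened accordingly: given a morphism whose chain map is merely null-homotopic, you must first realise the null-homotopy geometrically by signed point sets populating the $0$-dimensional bordism pieces, and only then pair up the resulting boundary points of the $1$-dimensional pieces by sign. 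The fix is routine and uses exactly the connectedness and sign-bookkeeping arguments you already invoke, but as written the descent and injectivity steps prove a false statement about honest chain maps.
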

    
    \begin{proof}
        $\Theta$ is 1-connected so $\Omega \Theta$ is connected, and hence so is $\bU_{i,1}^V(\bE)$ for any $i$, $\bE$, and any $V$ of sufficiently high dimension.
        
        Therefore for a 0-dimensional manifold $\cM$, homotopy classes of $\Theta$-orientations on it (classified by a map $\cM \to \bU_{i,1}^V(\bE)$ for some $i$,$V$, $\bE$) biject with homotopy classes of framings/orientations on $\cM$. Furthermore $\cM$ bounds a $\Theta$-oriented manifold if and only if the $\cM$, equipped with the corresponding framing/orientation, bounds a framed/oriented manifold.

        Then the result follows from \cite[Lemma 5.41]{PS}.
    \end{proof}
    The same argument shows:
    \begin{lem}
        Let $\cG$ be a $\Theta$-oriented flow category. Then there is a natural isomorphism:
        \begin{equation*}
            \Omega_*^{\Theta, \leq 0}(\cF) \cong HM_*(\cF)
        \end{equation*}
    \end{lem}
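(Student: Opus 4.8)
The plan is to run the argument of Lemma~\ref{lem: morse 0-trun} (hence ultimately \cite[Lemma 5.41]{PS}) with the source flow category specialised to a single object: a $\Theta$-oriented $\tau_{\leq 0}$ right flow module over $\cF$ of degree $i$, once the higher-dimensional data is discarded, is a purely combinatorial object built from oriented $0$- and $1$-manifolds, and the asserted isomorphism is the identification of this object with a cycle in the Morse complex. Two points have to be pinned down: (i) that on the (necessarily low-dimensional) manifolds that survive truncation the $\Theta$-orientation data collapses to ordinary orientation data, so that no genuinely new $\Theta$-dependence appears; and (ii) the exact bookkeeping of which manifolds survive truncation in a module as opposed to in a bordism between modules.

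For (i): since $\Theta$ is an \emph{oriented} tangential structure it is $1$-connected, so $\Omega\Theta$ is connected and hence, for $V$ of sufficiently large dimension, each $\bU^{V}_{0,1}(\bE_x)\simeq\Omega\Theta$ is connected with index bundle $\bV_x$ of trivial determinant, exactly as in the construction of the Morse complex in Section~\ref{sec: Thet flow}. Consequently a $\Theta$-orientation of a point lying over $x$ is the same datum as an element of $F(x)$ modulo the relation $A$, i.e.\ a sign, and a $\Theta$-orientation of a compact $1$-manifold $\cR_{*x}$ is the same as an orientation inducing the prescribed signs on its boundary. This is the only use of the oriented hypothesis, and it reduces everything that follows to the oriented-bordism bookkeeping of $0$- and $1$-manifolds already carried out in \cite[Lemma 5.41]{PS}.

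For (ii): a $\tau_{\leq 0}$ right flow module of degree $i$ over $\cF$ retains only the $0$-dimensional manifolds $\cR_{*x}$ with $|x|=i$, whose signed point-counts (taken relative to $F(x)$ and the determinant isomorphisms \eqref{eqn:determinant}) assemble to an element $c(\cR)\in CM_i(\cF)$. Requiring this pre-$\tau_{\leq 0}$ datum to extend to a pre-$\tau_{\leq 1}$ one forces, for each $x$ with $|x|=i-1$, the existence of a compact oriented $1$-manifold $\cR_{*x}$ with boundary $\bigsqcup_{|z|=i}\cR_{*z}\times\cF_{zx}$ (together with the contractible choices of coherence data $V$, $\iota$, $\rho$, $st$), and such a $1$-manifold exists precisely when the signed count of this boundary vanishes, i.e.\ when $\partial c(\cR)=0$; conversely every cycle is so realised, so $\tau_{\leq 0}$ modules of degree $i$ correspond bijectively to $Z_i\bigl(CM_*(\cF)\bigr)$. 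Likewise a $\tau_{\leq 0}$ bordism between two such modules retains, beyond data contractibly determined by its two ends, the \emph{free} $0$-dimensional manifolds $\cB_{*z}$ with $|z|=i+1$, assembling to a chain $b(\cB)\in CM_{i+1}(\cF)$; extending the bordism to $\tau_{\leq 1}$ produces oriented $1$-manifolds $\cB_{*x}$ ($|x|=i$) with $\partial\cB_{*x}=\cR_{*x}\sqcup\cR'_{*x}\sqcup\bigsqcup_{|z|=i+1}\cB_{*z}\times\cF_{zx}$, and vanishing of their boundary signed counts reads $c(\cR)-c(\cR')=\pm\,\partial b(\cB)$. The construction is plainly reversible, so $\Omega^{\Theta,\leq 0}_i(\cF)\cong Z_i/B_i=HM_i(\cF)$; naturality in $\cF$ is the already-recorded fact that a $\Theta$-oriented morphism $\cF\to\cF'$ induces a chain map $CM_*(\cF)\to CM_*(\cF')$, compatibly with these identifications.

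The main obstacle is bookkeeping of signs: one must check that the boundary-orientation conventions built into the abstract index bundles — the isomorphisms \eqref{eqn:determinant} together with the incoming/outgoing rule for the two ends of a bordism, set up in Section~\ref{sec: ind bun} — are exactly those under which ``a $\tau_{\leq 0}$ bordism extends to $\tau_{\leq 1}$'' translates into ``$c(\cR)-c(\cR')$ is a boundary for the Morse differential'', rather than into some sign-degenerate relation. As the corresponding verification has been made in the proof of Lemma~\ref{lem: morse 0-trun} and in \cite[Lemma 5.41]{PS}, and step~(i) has stripped away any additional $\Theta$-dependence, it suffices here to invoke it.
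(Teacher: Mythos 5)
Your proposal is correct and follows the paper's own route: the paper proves this by the same argument as Lemma \ref{lem: morse 0-trun}, namely using $1$-connectedness of $\Theta$ to reduce $\Theta$-orientations on the $0$- and $1$-dimensional manifolds surviving truncation to ordinary framings/orientations, and then invoking the cycle/boundary bookkeeping of \cite[Lemma 5.41]{PS}. Your more detailed unpacking of step (ii) is consistent with what that reference establishes.
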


\subsection{Obstructions to lifting}\label{sec: ob lift}
    Assume that $\Theta$ is an oriented tangential structure. Let $\cR$ be a $\Theta$-oriented $\tau_{\leq n}$ right module over a $\Theta$-oriented flow category $\cG$. In this subsection we formulate a complete obstruction to $\cR$ being the truncation of a $\Theta$-oriented $\tau_{\leq n+1}$ right module. This closely follows \cite[Theorem 5.48]{PS}, with $\Theta$-orientations incorporated.\par 

    \begin{asmp}\label{asmp: punc dat}
        For simplicity, we choose framed puncture data $\bE_j$ of Maslov index $j$ for each $j \in \bZ$, and assume (without loss of generality, by Lemma \ref{lem:choice of puncture data}) that the puncture data for each $y \in \cG$ is given by $\bE_y = \bE_{|y|}$. Let $\cE_j$ be the $\Theta$-oriented flow category with one object which is equipped with the puncture data $\bE_j$.
    \end{asmp}  
    
    The obstruction $[\cO]=[\cO(\cR)]$ lies in the following Morse homology group of $\cG$ with coefficients:
    $$[\cO] \in HM_{i-n-2}\left(\cG; \Omega^\Theta_{i}(\cE_{i-n-1})\right)$$
    Recall the group $\Omega^\Theta_{i}(\cE_{i-n-1})$ was computed in Proposition \ref{prop: bord pt} and, in particular, only depends on $n+1$, up to isomorphism.

    We begin by building a chain-level representative for this class $[\cO]$. This is of the form
    $$\cO = \sum\limits_{\substack{y \in \cG\\
    i-|y|=n+2}} 
    [\cZ(y)] \cdot y$$
    where the $\cZ(y)$ are $\Theta$-oriented right modules over $\cE_{i-n-1}$ of degree $i$, which we now construct.

    Choose an extension $\cR'$ of $\cR$ (this exists by assumption). For $y \in \cG$ with $i-|y|=n+2$, define $\tilde \cY_{*y}$ to be the following coequaliser:
    $$\tilde \cY_{*y} := \mathrm{coeq} \left( \bigsqcup\limits_{y',y'' \in \cG} \cR'_{*y''} \times \cG_{y''y'} \times \cG_{y'y} \times [0, \eps)^2_{u,v} \rightrightarrows 
    \bigsqcup\limits_{y' \in \cG} \cR'_{*y'} \times \cG_{y'y} \times [0, \eps)_s\right)$$
    where the two maps send $(a,b,c,u,v)$ to $(a, \cC(b, c, u), v)$ and $(\cC(a, b, v), c, u)$ respectively.\par 
    Each $\tilde \cY_{*y}$ is a smooth manifold with faces, and furthermore all its faces are compact. Choose a framed function $(f_{*y}, s_{*y})$ on each $\tilde \cY_{*y}$, and let $\cZ_{*y} = f_{*y}^{-1}\{1\}$. This is a closed manifold of dimension $n+1$, and we consider it as a(n unoriented) right flow module over $\cE_{i-n-1}$, written as $\cZ(y)$, independent of the choice of framed function up to bordism. We next equip $\cZ(y)$ with a $\Theta$-orientation, similar to Section \ref{sec: comp}.\par 

    We first choose vector spaces with inner product $U_y$ for each $y \in \cG$, along with embeddings $V^{\cR'}_{*y'}\oplus V^\cG_{y'y} \to U_y$, agreeing on each $V^{\cR'}_{*y''} \oplus V^\cG_{y''y'y}$.

    Next we define maps $\rho=\rho_{*y}: \tilde \cY_{*y} \to \bU_y := \bU^{U_y}_{01}(\bE_y)$. Over each $\cR'_{*y'} \times \cG_{y'y} \times \{0\}$, we define $\rho_{*y}$ to be the composition of $\rho_{*y'} \times \rho_{y'y}$ with gluing of abstract discs and extension along the appropriate embedding of vector spaces. These glue on overlaps to define $\rho$ over the carapace of each $\tilde \cY_{*y}$; since the carapace is a deformation retract of the whole space, we may choose some extension to the rest of $\tilde \cY_{*y}$ (and this is unique up to homotopy).

    We next construct isomorphisms of vector bundles $st=st_{*y}: I^{\tilde \cY}_{*y} \to \bR \oplus \bV_y \oplus \bR^i$ covering each $\rho$. Over each $\cR'_{*y'} \times \cG_{y'y} \times \{0\}$, it is given by the composition
    \begin{equation}
        I^{\tilde \cY}_{*y} \xleftarrow{\psi} 
        I^{\cR'}_{*y'} \oplus I^\cG_{y'y}
        \xrightarrow{st \oplus st}
        (\bR \oplus \bV_{y'} \oplus \bR^i) \oplus \bV_{y'yy} \to \bR \oplus \bV_y \oplus \bR^i  
    \end{equation}
    where the last arrow is given by gluing of index bundles. Again, these glue together to define $st$ over the carapace; we choose an extension to the rest of $\tilde \cY_{*y}$.

    Using the isomorphism $I^{\tilde \cZ(y)}_{*y} \oplus \bR \cong I^{\tilde \cY}_{*y}$ coming from our choice of framed function, we obtain a $\Theta$-orientation on each $\cZ(y)$.

    \begin{lem}\label{lem: ob closed}
        The chain $\cO$ is closed, and therefore represents a class in $HM_{i-n-2}\left(\cG; \Omega^\Theta_{i}(\cE_{i-n-1})\right)$.
    \end{lem}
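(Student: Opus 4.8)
The plan is to follow the blueprint of \cite[Lemma 5.43 / Theorem 5.48]{PS}, carrying the $\Theta$-orientations along at each stage. The statement $\partial \cO = 0$ in $HM_{i-n-2}(\cG; \Omega^\Theta_i(\cE_{i-n-1}))$ is a statement about codimension-one faces, so the key geometric input is an analysis of the boundary faces of the manifolds $\cZ_{*y}$, or equivalently of $\tilde \cY_{*y}$, for $y$ with $i-|y| = n+2$. Unwinding the coequaliser defining $\tilde \cY_{*y}$, its compact boundary faces are of two kinds: those of the form $\cR'_{*y'} \times \cG_{y'y}$ (appearing for $y'$ with $i-|y'|=n+1$, so $\cR'_{*y'}$ is $(n+1)$-dimensional — the new, extension-dependent strata), and those coming from the boundary faces of the $\cR'_{*y'}$ themselves, which by the module structure of $\cR'$ break as $\cR_{*y''}\times \cG_{y''y'}$ together with (when $i-|y''|=n+1$) contributions whose compact parts again reassemble. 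The first point is therefore: identify, after passing to $f_{*y}^{-1}\{1\}$, the boundary faces of $\cZ_{*y}$, and observe that the extension-dependent pieces are precisely the zero-dimensional intersections counted by the differential on $\cG$, while the remaining pieces are governed entirely by the truncated module $\cR = \tau_{\leq n}\cR'$ and the structure maps of $\cG$.

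Concretely, I would run the standard ``$\delta^2=0$-type'' argument: consider the family $\tilde \cY_{*y}$ but now glued one level deeper, i.e. the analogue obtained from $\bigsqcup \cR'_{*y''}\times \cG_{y''y'}\times \cG_{y'y} \times [0,\eps)^2$, and examine its codimension-one faces. Each such face is either a gluing face of the form already seen in the $\cZ_{*y}$ (contributing $\partial_{\mathrm{Morse}}\cO$), or a face in which a broken face of some $\cG_{\bullet\bullet}$ appears, which is cancelled in pairs by the associativity of the composition maps $c^\cG$, or a face living over a boundary face of $\cR'$, which — because $\cR$ itself is a genuine $\tau_{\leq n}$ module, so its defining compatibilities hold in all dimensions $\leq n$ — assembles into a $\Theta$-oriented nullbordism of the corresponding $(n+1)$-dimensional piece. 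Summing over $y$ and taking $f^{-1}\{1\}$, the manifolds witnessing these cancellations exhibit $\partial \cO$ as a sum of boundaries of $\Theta$-oriented right modules over $\cE_{i-n-1}$, hence as zero in $\Omega^\Theta_i(\cE_{i-n-1})$, at each generator $y'$ of $CM_{i-n-3}(\cG)$. The $\Theta$-orientations on all these auxiliary manifolds are produced exactly as in the construction of the $\cZ(y)$ above — choosing the $U$-spaces, the maps $\rho$ over carapaces via gluing of abstract discs (Lemma \ref{lem:upshot}), extending over the (contractible) complement, and transporting through the framed-function isomorphism $I^{\tilde \cZ}\oplus\bR \cong I^{\tilde \cY}$ — and their compatibility is guaranteed by the associativity built into Lemma \ref{lem:upshot} and Lemma \ref{lem:associative linear gluing}, together with the coherence axioms of Definition \ref{def: Theta or mor}.

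I expect the main obstacle to be purely organisational rather than conceptual: bookkeeping the many strata in the codimension-$\leq 2$ faces of the iterated coequaliser, checking that the extension-dependent pieces contribute exactly $\partial_{\mathrm{Morse}}$ of the chain $\cO$ (with the correct signs, which here are encoded through the determinant-line identifications \eqref{eqn:determinant} and the chosen trivialisations $F(x)$), and verifying that all the $\Theta$-orientation data glues coherently across these faces. Since the unoriented skeleton of this argument is identical to \cite[Lemma 5.43]{PS} and the $\Theta$-orientation bookkeeping is the same as in Section \ref{sec: comp}, I would present the proof by invoking \textit{loc. cit.}\ and indicating only the new feature, namely that one works with $\Theta$-oriented right modules over $\cE_{i-n-1}$ and that the relevant coefficient group $\Omega^\Theta_i(\cE_{i-n-1})$ is the one computed in Proposition \ref{prop: bord pt}; closedness of $\cO$ then follows verbatim.
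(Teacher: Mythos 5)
Your proposal matches the paper's proof: the paper simply cites \cite[Theorem 5.48]{PS} for the unoriented ``$\delta^2=0$''-type boundary-face analysis and notes that $\Theta$-orientations are incorporated exactly as in the construction of $\tilde\cY_{*y}$ and $\cZ(y)$, which is precisely the reduction you describe. Your more detailed unpacking of the codimension-one strata is a correct elaboration of the same argument, not a different route.
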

    \begin{proof}
        The proof is the same as that of \cite[Theorem 5.48]{PS}, with $\Theta$-orientations incorporated in the same way as in the construction of $\Theta$-orientations on $\tilde{\cY}_{*y}$ and $\cZ(y)$ above. 
    \end{proof}

    While the chain $\cO$ may have depended on the choice of extension $\cR'$, its homology class $[\cO]$ does not. We write $\cO(\cR')$ for the obstruction chain $\cO$ defined using the extension $\cR'$ of $\cR$.
    \begin{lem}\label{lem: ext 0}
        If $\cR'$ is an extension of $\cR$ such that the obstruction chain $\cO(\cR')$ is 0, there is an extension of $\cR'$ to a $\Theta$-oriented pre-$\tau_{\leq n+2}$-right module.
    \end{lem}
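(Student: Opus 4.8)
The plan is to follow the extension step in the proof of \cite[Theorem 5.48]{PS}, carrying the $\Theta$-orientations along exactly as in the construction of the obstruction chain above. The only pieces that need to be added to $\cR'$ to obtain a pre-$\tau_{\leq n+2}$ right module are the $(n+2)$-dimensional manifolds $\cR''_{*y}$ for those $y\in\cG$ with $i-|y|=n+2$; I fix such a $y$ and build $\cR''_{*y}$ by capping off a compact piece of the glued-up space $\tilde\cY_{*y}$ with a nullbordism of $\cZ(y)$. Recall that $\tilde\cY_{*y}$ is a non-compact $\Theta$-oriented manifold with faces of dimension $n+2$ whose system of boundary faces is precisely $\{\cR'_{*y'}\times\cG_{y'y}\}_{y'}$ (all compact, hence equal to its carapace), and recall the chosen framed function $(f_{*y},s_{*y})$ with $\cZ_{*y}=f_{*y}^{-1}\{1\}$. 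Since $f_{*y}$ is negative on the carapace, a neighbourhood of the carapace lies in $\{f_{*y}<1\}$ and $f_{*y}^{-1}\{1\}$ is disjoint from all faces; thus $\cY_{*y}:=f_{*y}^{-1}(-\infty,1]$ is a \emph{compact} manifold with faces whose boundary faces are $\{\cR'_{*y'}\times\cG_{y'y}\}_{y'}$ together with one extra closed face $\cZ_{*y}$. Restricting the $\Theta$-orientation built on $\tilde\cY_{*y}$ (the maps $\rho_{*y}$ to $\bU_y$ and the isomorphisms $st_{*y}$) equips $\cY_{*y}$ with a $\Theta$-orientation, and via the framed-function isomorphism $I^{\tilde\cZ(y)}_{*y}\oplus\bR\cong I^{\tilde\cY}_{*y}$ the orientation it induces on the face $\cZ_{*y}$ is exactly the one defining $\cZ(y)$.

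Now I use the hypothesis: $\cO(\cR')=0$ means $[\cZ(y)]=0$ in $\Omega^\Theta_i(\cE_{i-n-1})$ for each such $y$, so each $\cZ(y)$ bounds a $\Theta$-oriented right module $\cW(y)$ over $\cE_{i-n-1}$ of degree $i$ — a compact $\Theta$-oriented $(n+2)$-manifold $\cW(y)_{*\star}$ with $\partial\cW(y)_{*\star}=\cZ_{*y}$ whose $\rho$ and $st$ restrict over the boundary to those of $\cZ(y)$. I then set
\[
\cR''_{*y} := \cY_{*y}\cup_{\cZ_{*y}}\cW(y)_{*\star},
\]
a compact manifold with faces of dimension $n+2$ whose system of boundary faces is now exactly $\{\cR'_{*y'}\times\cG_{y'y}\}_{y'}$ (the face $\cZ_{*y}$ has become interior), with the composition maps inherited from $\cR'$ and $\cG$. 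Since the $\Theta$-orientation data of $\cW(y)$ agree over $\cZ_{*y}$ with the restriction of those of $\cY_{*y}$, the two sets of data glue to a $\Theta$-orientation on $\cR''_{*y}$ (a map $\rho_{*y}\colon\cR''_{*y}\to\bU_y$ and an isomorphism $st_{*y}$ of the appropriate index bundles). Taking $\cR''_{*y'}:=\cR'_{*y'}$ with its existing $\Theta$-orientation for all $y'$ with $i-|y'|\le n+1$, and retaining the vector spaces $V$, embeddings $\iota$ and composition maps of $\cR'$ together with the newly chosen $U_y$'s, yields a pre-$\tau_{\leq n+2}$ right module; that $\tau_{\leq n+1}$ of it equals $\cR'$ is immediate since only $(n+2)$-dimensional pieces were added.

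It remains to check the coherence conditions (associativity of the $\iota$, $\rho$, $st$ and compatibility over higher corners). These hold because, over a neighbourhood of each face $\cR'_{*y'}\times\cG_{y'y}$, the data on $\cR''_{*y}$ is — by construction of the $\Theta$-orientation on $\tilde\cY_{*y}$ — obtained from $\rho^{\cR'},\rho^\cG,st^{\cR'},st^\cG$ by the same gluing/concatenation recipe as in Section \ref{sec: comp}, for which the required compatibilities were already established (cf.\ \cite[Lemmas 4.41 and 4.42]{PS}). There is no interaction between the different top pieces $\cR''_{*y}$, since their only common boundary strata are lower-dimensional and belong to $\cR'$, so the construction can be carried out independently for each $y$.

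The step requiring genuine care is the gluing of $\Theta$-orientation data across $\cY_{*y}\cup_{\cZ_{*y}}\cW(y)_{*\star}$: one must keep track of the formal $\bR$- and $\bR^i$-summands and of the framed-function isomorphism $I^{\tilde\cZ(y)}_{*y}\oplus\bR\cong I^{\tilde\cY}_{*y}$ so that the isomorphism $st_{*y}$ on $\cR''_{*y}$ has target $\bR\oplus\bV_y\oplus\bR^i$ as required by the definition of a $\Theta$-oriented right module and agrees with the isomorphism coming from $\cW(y)$ along $\cZ_{*y}$. This is the same bookkeeping already performed in the construction of the $\Theta$-orientation on $\cZ(y)$ above and in \cite[Section 5.4]{PS}, and introduces no new difficulty; with it in place the family $\{\cR''_{*y}\}$ assembled as above is the desired $\Theta$-oriented pre-$\tau_{\leq n+2}$ extension of $\cR'$.
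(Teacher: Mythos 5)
Your proposal is correct and follows essentially the same route as the paper: the paper likewise defines $\cR''_{*y}=f_{*y}^{-1}(-\infty,1]\cup_{\cZ_{*y}}\cS_{*y}(y)$ for a chosen $\Theta$-oriented nullbordism $\cS(y)$ of $\cZ(y)$ in top dimension, keeps $\cR'$ in lower dimensions, and equips the glued manifolds with the induced $\Theta$-orientation. Your additional remarks on the coherence bookkeeping are consistent with what the paper leaves implicit.
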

    If this holds, it follows that $\cR'$ is in fact a $\tau_{\leq n+1}$-morphism.
    
    \begin{proof}
        Suppose $\cO(\cR') = 0$. Then each $\cZ(y)$ is $\Theta$-oriented nullbordant; let $\cS(y)$ be a $\Theta$-oriented nullbordism for $\cZ(y)$. Then define a pre-$\tau_{\leq n+2}$-module $\cR''$ to have manifolds
        $$\cR''_{*y} = \begin{cases}
            \cR'_{*y} & \textrm{ if } i-|y| \leq n+1\\
            f_{*y}^{-1}(-\infty, 1] \bigcup\limits_{\cZ_{*y}(y)} \cS_{*y}(y)
            & \textrm{ if } i-|y| = n+2
        \end{cases}$$
        equipped with the natural $\Theta$-orientation on each of these manifolds. Here $f_{*y}$ is the function $\tilde \cY_{*y} \to \bR$ chosen earlier in the construction of the obstruction chain $\cO$, and noting that the gluing in the case $i-|y| = n+2$ is along a closed manifold (instead of just a boundary face).
    \end{proof}
    \begin{lem} \label{lem: ob exact}
        Let $\cR'$ be an extension of $\cW$, and $A = dB \in CM_{i-n-2}\left(\cG; \Omega^\Theta_{i}(\cE_{i-n-1})\right)$ an exact chain. Then there is another extension $\cR''$ of $\cR$ such that $\cO(\cR'') = \cO(\cR') + A$.
    \end{lem}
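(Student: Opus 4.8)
The argument mirrors the corresponding step in the proof of \cite[Theorem 5.48]{PS}, now keeping track of $\Theta$-orientations. The conceptual content is that the set of $\Theta$-oriented extensions of $\cR$ (to $\tau_{\leq n+1}$) is a torsor over the group of Morse $(i-n-1)$-chains $CM_{i-n-1}\big(\cG;\Omega^\Theta_i(\cE_{i-n-1})\big)$ — two extensions $\cR',\cR''$ agree in dimension $\leq n$ and differ at the top level only through the closed $(n{+}1)$-manifolds $D(y'):=\cR'_{*y'}\cup_{\partial}\overline{\cR''_{*y'}}$ for $|y'|=i-n-1$, and any prescribed such difference can be realised — and that the assignment $\cR'\mapsto\cO(\cR')$ is affine over the differential $d\colon CM_{i-n-1}\to CM_{i-n-2}$. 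Granting these, Lemma \ref{lem: ob exact} is immediate: given $A=dB$, take the extension $\cR''$ with difference chain $-B$ and then $\cO(\cR'')=\cO(\cR')-d(-B)$... wait, rather: choose representatives and signs so that the difference chain is $B$ and $\cO(\cR'')-\cO(\cR')=dB=A$. So there are two things to carry out: (i) given $B$ with $A=dB$, construct a new $\Theta$-oriented extension $\cR''$ differing from $\cR'$ by $B$; (ii) verify $\cO(\cR'')=\cO(\cR')+dB$.

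\textbf{Step (i): constructing $\cR''$.} Write $B=\sum_{|y'|=i-n-1}b_{y'}\cdot y'$ with $b_{y'}\in\Omega^\Theta_i(\cE_{i-n-1})\cong\pi_{n+1}(R)$ (Proposition \ref{prop: bord pt}), and for each such $y'$ choose a representative $\cC(y')$: a closed $(n{+}1)$-dimensional $\Theta$-oriented manifold, i.e.\ a degree-$i$ right module over the one-object flow category $\cE_{i-n-1}$, with $[\cC(y')]=-b_{y'}$ (the sign to be reconsidered at the end). Since $\cR'_{*y'}$ is an $(n{+}1)$-manifold with faces whose boundary faces and structure maps are determined by $\tau_{\leq n}\cR'=\cR$, I would set $\cR''_{*y'}:=\cR'_{*y'}\,\#\,\cC(y')$ (interior connected sum, performed away from all structure-map images $\cR'_{*x}\times\cG_{xy'}\hookrightarrow\cR'_{*y'}$, and, with foresight, along the locus used in (ii)), and keep $\cR''_{*x}:=\cR'_{*x}$ for $|x|\neq i-n-1$. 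As $\cC(y')$ is closed this changes no boundary faces, so $\cR''$ is a pre-$\tau_{\leq n+1}$ right module with $\tau_{\leq n}\cR''=\cR$. To $\Theta$-orient it, I would first homotope $\rho^{\cR'}_{*y'}$ and $st^{\cR'}_{*y'}$ rel the boundary faces — this only alters interior data, hence not $\cR$ — to be standard near the connect-sum ball, arrange the classifying data of $\cC(y')$ likewise, and glue; the resulting $\rho^{\cR''}_{*y'}\colon\cR''_{*y'}\to\bU_{y'}$ and index-bundle isomorphism $st^{\cR''}_{*y'}$ exist because $\bU_{y'}\simeq\Omega\Theta$ (Lemma \ref{lem:htpy type}) is highly connected and the space of index-bundle comparisons is contractible, exactly as in the stabilisation and gluing arguments of Sections \ref{sec: abstr dis} and \ref{sec: Thet flow}. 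Then $\cR''$ is an extension of $\cR$ and $D(y')=\cR'_{*y'}\cup_{\partial}\overline{\cR''_{*y'}}$ is $\Theta$-bordant to $\overline{\cC(y')}$, so $[D(y')]=b_{y'}$.

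\textbf{Step (ii): effect on the obstruction.} Using $\cR''$ as the chosen extension, for $y$ with $i-|y|=n+2$ the coequaliser $\tilde\cY''_{*y}$ is obtained from $\tilde\cY'_{*y}$ by performing these same interior connected sums inside the pieces $\cR'_{*y'}\times\cG_{y'y}\times[0,\eps)$, where now $\dim\cG_{y'y}=|y'|-|y|-1=0$. Choosing the framed function $f''_{*y}$ to restrict to $f'_{*y}$ away from the surgery region, a standard cut-and-paste exhibits $\cZ''(y)$ as $\cZ'(y)$ with, for each point $p$ of each $\cG_{y'y}$ (with $|y'|=i-n-1$), one copy of $\cC(y')\setminus D^{n+1}$ spliced in, carrying the $\Theta$-orientation twisted by the local sign of $p$. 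Reading this off as a $\Theta$-oriented right module over $\cE_{i-n-1}$ gives $[\cZ''(y)]-[\cZ'(y)]=\sum_{|y'|=i-n-1}\Gamma(\cG_{y'y})\cdot[\cC(y')]$, which by the definition of the Morse differential with coefficients (via \eqref{eqn:determinant}) equals $-(dB)_y=-A_y$; replacing each $\cC(y')$ by $\overline{\cC(y')}$ at the outset reverses this sign, so $\cO(\cR'')=\cO(\cR')+A$.

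\textbf{Main obstacle.} The real work is step (ii): it is the same orientation and sign bookkeeping that underpins the well-definedness of $[\cO]$ and Lemma \ref{lem: ob closed}, namely checking that $f''_{*y}$ can be chosen to leave $\cZ'(y)$ intact while producing exactly one spliced copy of $\cC(y')$ for each point of $\cG_{y'y}$, and that the inherited $\Theta$-orientation matches the weight $\Gamma(\cG_{y'y})$ entering $\partial$ on $CM_*(\cG;\bZ)$. One also needs the (routine) independence of the whole construction from the auxiliary choices — the representatives $\cC(y')$, the surgery loci, and the framed functions — which follows from the independence statements already established for $[\cO]$ alongside \cite[Theorem 5.48]{PS}, incorporating $\Theta$-orientations in the manner used throughout Section \ref{sec: Thet flow}.
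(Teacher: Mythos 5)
Your proof is correct and is essentially the paper's argument: modify the extension only in the top dimension $i-|y'|=n+1$ by closed $\Theta$-oriented representatives of the coefficients of $B$, and observe that the obstruction chain then shifts by exactly $dB=A$ via the signed counts $\Gamma(\cG_{y'y})$. The only differences are that the paper simply takes the disjoint union $\cR''_{*y'}=\cR'_{*y'}\sqcup\cB_{*y'}(y')$ rather than an interior connected sum --- which sidesteps both the gluing of classifying maps near the connect-sum ball and the degenerate case $\cR'_{*y'}=\emptyset$ --- and that your Step (ii) verification (together with the sign bookkeeping) is left implicit in the paper.
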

    \begin{proof}
        Let $\cB(y)$ be representatives for the coefficients of the chain $B$, for $y \in \cG$ with $i-|y| = n-1$. Let $\cR''$ be the extension of $\cR$ defined by:
        $$\begin{cases}
            \cR'_{*y} & \textrm{ if } i-|y| \leq n\\
            \cR'_{*y} \sqcup \cB_{*y}(y) & \textrm{ if } i-|y| = n+1
        \end{cases}$$
        which has a natural $\Theta$-orientation. Then $\cO(\cR'') = \cO(\cR') + A$. 
    \end{proof}
    
    \begin{lem}\label{lem: ob other way}
        Suppose $\cR'$ admits an extension $\cR''$. Then the obstruction chain  $\cO(\cR')$ vanishes: $\cO(\cR') = 0$.
    \end{lem}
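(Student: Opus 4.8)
The plan is to show that the chain $\cO(\cR')$ vanishes coefficient by coefficient. Recall $\cO(\cR') = \sum_y [\cZ(y)] \cdot y$, the sum over $y \in \cG$ with $i - |y| = n+2$ and coefficient the bordism class $[\cZ(y)] \in \Omega^\Theta_i(\cE_{i-n-1})$ of the $\Theta$-oriented right flow module $\cZ(y)$ over $\cE_{i-n-1}$, whose underlying manifold is $\cZ_{*y}(y) = f_{*y}^{-1}\{1\} \subseteq \tilde\cY_{*y}$. So it suffices, for each such $y$, to exhibit a $\Theta$-oriented nullbordism of $\cZ(y)$; the idea is to cap off the construction of $\cZ(y)$ using the given extension $\cR''$.

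First I would record what the extension provides. For $y$ with $i - |y| = n+2$, the manifold $\cR''_{*y}$ is a compact $\Theta$-oriented manifold with faces of dimension $n+2$ whose system of boundary faces is $\{\cR''_{*y'} \times \cG_{y'y}\}_{y'}$; since each such face has dimension $n+1$, in fact $\cR''_{*y'} = \cR'_{*y'}$, and the inclusions are the composition maps $c^{\cR''}$, restricting to $c^{\cR'}$ and $c^\cG$ on the two factors. These are exactly the pieces, glued along corners in exactly the same pattern, out of which the carapace of the noncompact manifold $\tilde\cY_{*y}$ is assembled: $\tilde\cY_{*y}$ is built from the collar neighbourhoods $\cR'_{*y'} \times \cG_{y'y} \times [0,\eps)_s$, and all of its faces are compact, so its entire boundary is $\bigsqcup_{y'} \cR'_{*y'} \times \cG_{y'y}$, glued via $c^{\cR'}, c^\cG$ at $s=0$. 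I would then check that the resulting canonical identification of $\partial \cR''_{*y}$ with the carapace of $\tilde\cY_{*y}$ is an isomorphism of manifolds with faces respecting the chosen collars (by their compatibility), and that it respects the $\Theta$-orientations: over $\cR'_{*y'}\times\cG_{y'y}\times\{0\}$ the maps $\rho^{\tilde\cY}_{*y}$ and $st^{\tilde\cY}_{*y}$ are, by construction, the composites of $\rho^{\cR'}_{*y'}\times\rho^\cG_{y'y}$, resp.\ $st^{\cR'}_{*y'} \oplus st^\cG_{y'y}$, with gluing of abstract discs, while the coherence conditions of Definition \ref{def: Theta or mor} force $\rho^{\cR''}_{*y}$ and $st^{\cR''}_{*y}$ restricted to the boundary face $\cR'_{*y'}\times\cG_{y'y}$ to be the same composites.

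Next I would glue and cut. Form the $\Theta$-oriented manifold $\hat\cY_{*y} := \tilde\cY_{*y} \cup_{\partial \cR''_{*y}} \cR''_{*y}$; it has no boundary, though it is still noncompact. Since $f_{*y}$ is negative on the carapace of $\tilde\cY_{*y}$ with $df_{*y}$ vanishing normally there, it extends smoothly across the gluing to a function $\hat f_{*y}$ on $\hat\cY_{*y}$, which we may take to be negative on all of $\cR''_{*y}$; extend the framing $s_{*y}$ arbitrarily. Then $\cN_{*y} := \hat f_{*y}^{-1}(-\infty,1] = f_{*y}^{-1}(-\infty,1] \cup \cR''_{*y}$ is compact (a union of the compacta $f_{*y}^{-1}(-\infty,2]$ and $\cR''_{*y}$), and because $\hat f_{*y} < 1$ throughout $\cR''_{*y}$ while $f_{*y} < 0$ on the carapace, the regular level set $\hat f_{*y}^{-1}\{1\} = f_{*y}^{-1}\{1\} = \cZ_{*y}(y)$ sits in the interior of the boundaryless $\hat\cY_{*y}$. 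Hence $\cN_{*y}$ is a compact manifold whose only boundary face is the closed manifold $\cZ_{*y}(y)$. It inherits a $\Theta$-orientation, combining the restriction from $\tilde\cY_{*y}$ over the codimension-zero submanifold $f_{*y}^{-1}(-\infty,1]$ with the given one on $\cR''_{*y}$ (they agree on the overlap by the previous step); the accounting of the structural $\bR$-summands — identifying $I^{\cN}_{*y}$ near its unbroken boundary face with $I^{\cZ(y)}_{*y} \oplus \bR$ through the framed-function isomorphism $I^{\cZ(y)}_{*y} \oplus \bR \cong I^{\tilde\cY}_{*y}$, just as in the construction that $\Theta$-orients $\cZ(y)$ — exhibits $\cN_{*y}$ as a $\Theta$-oriented nullbordism of $\cZ(y)$ over $\cE_{i-n-1}$. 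Therefore $[\cZ(y)] = 0$ for every relevant $y$, and $\cO(\cR') = 0$.

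I expect the second step to be the main obstacle: verifying that the carapace of $\tilde\cY_{*y}$ and $\partial\cR''_{*y}$ agree not just as manifolds with faces but as $\Theta$-oriented ones with compatible collar neighbourhoods, and that the $\Theta$-orientation on $\cN_{*y}$ so obtained really is the correct bordism datum for the right flow module $\cZ(y)$. All of this, however, is a direct transcription of the corresponding portion of the proof of \cite[Theorem 5.48]{PS} together with the composition construction of Section \ref{sec: comp}, with $\Theta$-orientations threaded through exactly as in the constructions of $\tilde\cY_{*y}$ and $\cZ(y)$ above, so no essentially new ingredient is needed.
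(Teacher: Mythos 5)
Your proof is correct and is essentially the paper's argument: the paper invokes \cite[Lemma 3.19]{PS} to embed $\tilde\cY_{*y}$ onto a collar neighbourhood of the carapace of $\cR''_{*y}$ and then deletes $f_{*y}^{-1}(-\infty,1)$, whereas you perform the equivalent external gluing $f_{*y}^{-1}(-\infty,1]\cup_{\partial\cR''_{*y}}\cR''_{*y}$; the two nullbordisms of $\cZ_{*y}$ are diffeomorphic and the $\Theta$-orientation bookkeeping is identical. No further changes needed.
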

    
    \begin{proof}
        By \cite[Lemma 3.19]{PS}, there are embeddings of each $\tilde \cY_{*y}$ onto a neighbourhood of the carapace of each $\cR''_{*y}$ when $i-|y| = n+2$. Removing the image of $f_{*y}^{-1}(-\infty, 1)$ provides a nullbordism of $\cZ_{*y}$. 
    \end{proof}

    Lemmas \ref{lem: ob closed}, \ref{lem: ext 0}, \ref{lem: ob exact} and \ref{lem: ob other way} combine to show that:
    \begin{thm}\label{thm: ob lift}
        Let $\cR$ be a $\Theta$-oriented $\tau_{\leq n}$-right module over $\cG$. Then the homology class
        $$[\cO(\cR)] \in HM_{i-n-2}\left(\cG; \Omega^\Theta_{i}(\cE_{i-n-1})\right)$$
        vanishes if and only if $\cR = \tau_{\leq n} \cR'$ for some $\Theta$-oriented $\tau_{\leq n+1}$-morphism $\cR'$.
    \end{thm}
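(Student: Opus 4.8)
The plan is to assemble the four lemmas into the stated equivalence exactly as the excerpt indicates, treating the proof as a bookkeeping exercise on the obstruction cocycle. First I would record that, by Lemma \ref{lem: ob closed}, any choice of extension $\cR'$ of $\cR$ produces a cycle $\cO(\cR')$ representing a well-defined class $[\cO(\cR')] \in HM_{i-n-2}(\cG;\Omega^\Theta_i(\cE_{i-n-1}))$; I would then argue that this class is independent of the choice of $\cR'$. The mechanism for independence is Lemma \ref{lem: ob exact}: given two extensions $\cR'_0$ and $\cR'_1$, one shows their obstruction chains differ by an exact chain, hence the classes agree. (A cleaner way to phrase this: the set of extensions, modulo the equivalence used in the definition of $[\cF,\cG]^\Theta$, is acted on transitively-up-to-homotopy by disjoint union with $\Theta$-oriented degree-$i$ right modules over $\cE_{i-n-1}$ in degree $n+1$, and Lemma \ref{lem: ob exact} says this action changes $\cO$ precisely by the corresponding coboundary.) So we may speak of $[\cO(\cR)]$ unambiguously.

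Then I would prove the two implications. For the ``only if'' direction: suppose $\cR = \tau_{\le n}\cR'$ for some $\Theta$-oriented $\tau_{\le n+1}$-morphism $\cR'$. By definition of being a $\tau_{\le n+1}$-object, $\cR'$ admits an extension $\cR''$ to a pre-$\tau_{\le n+2}$-object, and Lemma \ref{lem: ob other way} then gives $\cO(\cR') = 0$, hence $[\cO(\cR)] = 0$. For the ``if'' direction: suppose $[\cO(\cR)] = 0$. Pick any extension $\cR'$ of $\cR$ (possible since $\cR$ is a $\tau_{\le n}$-object). Then $\cO(\cR')$ is a cycle representing the zero class, so $\cO(\cR') = dB$ is exact. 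By Lemma \ref{lem: ob exact} there is another extension $\cR''$ of $\cR$ with $\cO(\cR'') = \cO(\cR') - dB = 0$. By Lemma \ref{lem: ext 0}, vanishing of $\cO(\cR'')$ yields an extension of $\cR''$ to a $\Theta$-oriented pre-$\tau_{\le n+2}$-right module, which by the remark following Lemma \ref{lem: ext 0} means precisely that $\cR''$ is a genuine $\Theta$-oriented $\tau_{\le n+1}$-morphism; and $\tau_{\le n}\cR'' = \cR$ by construction. This completes the equivalence.

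The routine-but-nontrivial point I would be careful about is sign and degree bookkeeping: checking that the chain $\cO = \sum_{i - |y| = n+2}[\cZ(y)]\cdot y$ really lands in the asserted group $HM_{i-n-2}(\cG;\Omega^\Theta_i(\cE_{i-n-1}))$ — that is, that $\dim\cZ(y) = n+1$, that $[\cZ(y)] \in \Omega^\Theta_i(\cE_{i-n-1})$ using the identification $\mu(\bE_{i-n-1}) = i - n - 1$ together with Assumption \ref{asmp: punc dat}, and that the differential in this coefficient system matches the one implicitly used when we assert $\cO$ is a cycle. All of this is inherited verbatim from \cite[Theorem 5.48]{PS} once $\Theta$-orientations are carried along, since the $\Theta$-orientation data on $\tilde\cY_{*y}$ and $\cZ(y)$ were constructed exactly in parallel with the composition construction of Section \ref{sec: comp}; so I would simply invoke that parallel rather than redo the argument.

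The main obstacle — or rather, the only place where genuine content beyond citation appears — is the independence statement and its compatibility with the chain-level formulas: one must verify that the homotopy-uniqueness of the extensions $\rho$ and $st$ of the maps-to-abstract-disc-spaces over the carapace (used in constructing the $\Theta$-orientation on $\cZ(y)$) does not leak into the bordism class $[\cZ(y)]$, and that this uniqueness is compatible with the coequaliser gluings so that changing the extension $\cR'$ only changes $\cO$ by a coboundary. This is precisely what Lemma \ref{lem: ob exact} packages, so once that lemma is in hand the theorem is immediate; the real work is already done in Lemmas \ref{lem: ob closed}--\ref{lem: ob other way}, and the proof of the theorem itself is the three-line deduction above.
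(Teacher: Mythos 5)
Your proposal is correct and follows exactly the paper's route: the paper's entire proof is the statement that Lemmas \ref{lem: ob closed}, \ref{lem: ext 0}, \ref{lem: ob exact} and \ref{lem: ob other way} combine to give the theorem, and your three-line deduction (well-definedness via Lemma \ref{lem: ob exact}, ``only if'' via Lemma \ref{lem: ob other way}, ``if'' via Lemmas \ref{lem: ob exact} and \ref{lem: ext 0}) is precisely that assembly, with slightly more care than the paper itself spells out on independence of the choice of extension.
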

    Theorem \ref{thm: ob lift} is a flow-categorical analogue of the following result in homotopical algebra, see \cite[Proposition 4.49]{PS}. Given $n \geq 0$ and a connective ring spectrum $R$ with $\pi_0 R \cong \bZ$ and a bounded below right $R$-module $M$, there is an exact sequence (see \cite[Proposition 4.49]{PS}):
    \begin{equation}
        \pi_i \left(M\otimes_R \tau_{\leq n+1} R\right) \to \pi_i \left(M\otimes_R \tau_{\leq n} R\right) \to \pi_{i-n-2} \left(M \otimes_R H\pi_{n+1}R\right)
    \end{equation}
    where $\tau_{\leq \cdot}$ denotes the standard $t$-structure on the category of spectra, $\otimes_R$ denotes derived tensor product over $R$, and $HA$ denotes the Eilenberg-Maclane spectrum associated to an abelian group $A$.

\subsection{A Whitehead-type theorem}
    Assume $\Theta$ is an oriented tangential structure, and let $n \geq 0$. Let $\cF$ and $\cG$ be $\Theta$-oriented flow categories, and without loss of generality, we again work under Assumption \ref{asmp: punc dat} for both $\cF$ and $\cG$.

\begin{prop}\label{prop: flow white head}
    Let $\cW: \cF \to \cG$ be a $\Theta$-oriented $\tau_{\leq n}$-morphism, such that the map given by composing with $\tau_{\leq 0} \cW$:
    \begin{equation*}
        \Omega_*^{\Theta, \leq 0}(\cF) \to \Omega_*^{\Theta, \leq 0}(\cG)
    \end{equation*}
    is an isomorphism. Then for $k \leq n$, the map induced by postcomposing with $\tau_{\leq k}\cW$:
    \begin{equation*}
        \Omega_*^{\Theta, \leq k}(\cF) \to \Omega_*^{\Theta, \leq k}(\cG)
    \end{equation*}
    is surjective.
\end{prop}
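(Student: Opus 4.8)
The plan is to induct on $k$; the case $k=0$ is the hypothesised isomorphism, so nothing is needed there. Two preliminary observations will be used throughout. First, under the natural identification $\Omega_*^{\Theta,\le 0}(\cH)\cong HM_*(\cH)$, the hypothesis says precisely that $\cW$ induces an isomorphism $HM_*(\cF)\to HM_*(\cG)$; since this is realised by a chain map between bounded-below complexes of free abelian groups, it is a chain homotopy equivalence, and hence induces isomorphisms $HM_*(\cF;G)\to HM_*(\cG;G)$ for every abelian group $G$. Second, by Proposition \ref{prop: bord pt} the coefficient groups $\Omega_i^{\Theta}(\cE_{i-k})$ appearing in the obstruction theory are isomorphic to $\pi_k(R)$, independently of $i$ and of the flow category.

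Now fix $1\le k\le n$ and assume the statement for $k-1$. Let $\cS$ be a $\Theta$-oriented $\tau_{\le k}$-right module over $\cG$ of degree $i$. Applying the inductive hypothesis to the class of $\tau_{\le k-1}\cS$ gives a $\Theta$-oriented $\tau_{\le k-1}$-module $\cR_0$ over $\cF$ whose composition $\cR_0\circ\tau_{\le k-1}\cW$ is $\Theta$-oriented bordant to $\tau_{\le k-1}\cS$. The first task is to extend $\cR_0$ to a $\tau_{\le k}$-module over $\cF$. By Theorem \ref{thm: ob lift} (with $n$ there equal to $k-1$) the obstruction is the class $[\cO(\cR_0)]\in HM_{i-k-1}(\cF;\pi_k R)$; its image under composition with $\tau_{\le k-1}\cW$ is $[\cO(\cR_0\circ\tau_{\le k-1}\cW)]=[\cO(\tau_{\le k-1}\cS)]$, which vanishes by Lemma \ref{lem: ob other way} because $\cS$ is an extension of $\tau_{\le k-1}\cS$. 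Composition with $\tau_{\le k-1}\cW$ induces the Morse pushforward $HM_{i-k-1}(\cF;\pi_k R)\to HM_{i-k-1}(\cG;\pi_k R)$, an isomorphism by the first observation, so $[\cO(\cR_0)]=0$ and Theorem \ref{thm: ob lift} produces a $\Theta$-oriented $\tau_{\le k}$-module $\cR_1$ over $\cF$ with $\tau_{\le k-1}\cR_1=\cR_0$.

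It remains to modify $\cR_1$ so that $\cR_1\circ\tau_{\le k}\cW$ becomes bordant to $\cS$; so far we only know this after applying $\tau_{\le k-1}$. For this I would upgrade Theorem \ref{thm: ob lift} together with Lemmas \ref{lem: ext 0}, \ref{lem: ob exact} and \ref{lem: ob other way} to an exact sequence, natural in the flow category $\cH$,
\[
    HM_{i-k}(\cH;\pi_k R)\xrightarrow{\ \iota_k\ }\Omega_i^{\Theta,\le k}(\cH)\xrightarrow{\ \tau_{\le k-1}\ }\Omega_i^{\Theta,\le k-1}(\cH)\xrightarrow{\ \delta_k\ }HM_{i-k-1}(\cH;\pi_k R),
\]
the flow-categorical Postnikov sequence whose right-hand portion is exactly Theorem \ref{thm: ob lift} and whose map $\iota_k$ realises a Morse cycle as a $\tau_{\le k}$-module supported in the top two degrees. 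Since $\tau_{\le k-1}$ of $\cR_1\circ\tau_{\le k}\cW$ and of $\cS$ agree in $\Omega_i^{\Theta,\le k-1}(\cG)$, the difference $[\cS]-[\cR_1\circ\tau_{\le k}\cW]$ lies in $\ker(\tau_{\le k-1})=\operatorname{image}(\iota_k)$; write it as $\iota_k(c)$. By the Morse isomorphism, $c$ is the image of some $c'\in HM_{i-k}(\cF;\pi_k R)$ under composition with $\tau_{\le k}\cW$, and naturality of the exact sequence gives $\bigl([\cR_1]+\iota_k(c')\bigr)\circ\tau_{\le k}\cW=[\cS]$. Hence $[\cS]$ is in the image, which closes the induction.

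I expect two points to require genuine care. The first is the naturality of the obstruction class $[\cO(-)]$ under composition with a $\Theta$-oriented morphism, which should follow by comparing the coequaliser-and-framed-function constructions of $\cO$ over $\cF$ and over $\cG$, just as the corresponding compatibilities are established in \cite{PS}. The second, and the real obstacle, is promoting Theorem \ref{thm: ob lift} to the full exact sequence above — that is, identifying $\ker(\tau_{\le k-1})$ and constructing $\iota_k$ — which is the companion of Lemma \ref{lem: ob exact}. An equivalent route that avoids isolating $\iota_k$ is to run a single relative obstruction argument for pairs $(\cR,\cB)$ with $\cR$ a $\tau_{\le k}$-module over $\cF$ and $\cB$ a $\tau_{\le k}$-bordism over $\cG$ from $\cR\circ\tau_{\le k}\cW$ to $\cS$; the obstructions then live in the Morse homology of the mapping cone of $\cW$, which vanishes since $\cW$ is a Morse homology isomorphism. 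Setting up that relative theory would be the main obstacle instead.
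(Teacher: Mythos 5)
Your closing remark — run a single relative obstruction argument for the pair consisting of a module over $\cF$ and a bordism over $\cG$, with obstructions in the Morse homology of the mapping cone of $CM_*(\cW)$ — is exactly the paper's proof. Concretely, the paper fixes an ``extension triple'' $(\cA',\cR',\cB)$, where $\cA'$ extends the $\tau_{\leq k}$-module $\cA$ over $\cF$ produced by the inductive hypothesis and $\cR'$ extends the bordism from $\tau_{\leq k}\cW\circ\cA$ to $\tau_{\leq k}\cB$, and builds a two-component obstruction $(\cO^1,\cO^2)\in CM_{i-k-2}(\cF;G)\oplus CM_{i-k-1}(\cG;G)$: the first component is the obstruction of Section \ref{sec: ob lift} to extending $\cA'$, and the second is cut out by a framed function from a coequaliser assembled from $\cB$, $\cR'$, $\cA'$ and $\cW$. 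It then shows this chain is closed for the cone differential, that changing the extension triple changes it by a cone boundary, and that its vanishing yields the desired double extension; acyclicity of the cone (your first observation) finishes the induction. So the substance of the proof is precisely ``setting up that relative theory,'' which you correctly identify as the remaining obstacle but do not carry out.

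Your primary route, by contrast, is not the paper's and as written has two genuine gaps. First, to conclude $[\cO(\cR_0\circ\tau_{\leq k-1}\cW)]=[\cO(\tau_{\leq k-1}\cS)]$ you need the obstruction class to be an invariant of the \emph{bordism class} of a $\tau_{\leq k-1}$-module, since $\cR_0\circ\tau_{\leq k-1}\cW$ is only bordant to $\tau_{\leq k-1}\cS$, not equal to it; Theorem \ref{thm: ob lift} and Lemmas \ref{lem: ob closed}--\ref{lem: ob other way} are stated for a fixed module and its extensions, so both this bordism invariance and the naturality under pushforward by $\cW$ (which you do flag) would need separate proofs. Second, the step correcting $\cR_1$ requires exactness of your sequence at $\Omega_i^{\Theta,\leq k}(\cH)$ — i.e.\ identifying $\ker(\tau_{\leq k-1})$ with the image of a map $\iota_k$ from Morse homology, together with naturality of $\iota_k$ — and nothing in the paper supplies this. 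The triple formalism is exactly how the paper avoids ever constructing $\iota_k$: the second component $\cO^2$ of the relative obstruction absorbs the correction that your $\iota_k(c')$ is meant to provide. In short: the strategy and the key input (that $CM_*(\cW)$ is a quasi-isomorphism, hence has acyclic cone) are right, but neither of your two routes is complete as stated, and the route the paper actually takes is the one you defer.
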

This is a version of \cite[Theorem 5.51]{PS} with $\Theta$-orientations incorporated- similarly to Theorem \ref{thm: ob lift}, the proof will involve incorporating $\Theta$-orientations into the proof of \cite[Theorem 5.51]{PS}. We will use this in the proof of Theorem \ref{thm: fuk ob}.\par 
The rest of this section is dedicated to the proof of Proposition \ref{prop: flow white head}.

\begin{proof}[Proof of Proposition \ref{prop: flow white head}]
    We proceed by induction on $k$. The proposition holds for $k=0$ by assumption. Now suppose that the proposition holds for some $k<n$; we will show it also holds for $k+1$. Let $\cB$ be a $\Theta$-oriented $\tau_{\leq k+1}$ right module over $\cG$, of degree $i$. Let $G = \Omega_i^{\Theta, \leq k+1}(\cE_{i-n-1})$.\par 

    We first choose an extension $\cB'$ of $\cB$; this is a $\Theta$-oriented pre-$\tau_{\leq k+2}$ right module over $\cG$. By assumption, the bordism class of $\tau_{\leq k} \cB$ is in the image of the map given by composition with $\tau_{\leq k}\cW$. This means there is some $\Theta$-oriented $\tau_{\leq k}$ right module $\cA$ over $\cF$ of degree $i$, such that the composition $\tau_{\leq k}\cW \circ \cA$ of $\tau_{\leq k}\cW$ and $\cA$ is bordant to $\cB$, via some bordism $\cR$. 

    We choose an extension $\cA'$ of $\cA$, as well as a $\tau_{\leq k}$ bordism between $\tau_{\leq k}\cW \circ \cA$ and $\tau_{\leq k}\cB$, and an extension $\cR'$ of $\cR$.

    More concretely, $\cA'$ and $\cR'$ consist of the following manifolds:
    \begin{itemize}
        \item $\cA'_{*x}$ for $x \in \cF$ with $i-|x| \leq k+1$, of dimension $i-|x|$.
        \item $\cR'_{*y}$ for $y \in \cG$ with $i-|y| \leq k$, of dimension $i-|y|+1$.
    \end{itemize}
    such that each $\cA'_{*x}$ has a system of boundary faces given by the $\cA'_{*;x'x} =\cA'_{*x'} \times \cF_{x'x}$, and each $\cR'_{*y}$ has a system of boundary faces given by the $\cB_{*y}$, $\cR'_{*;y'y}$ and $\cA'_{*x} \times \cW_{xy}$. Here $x, x' \in \cF$ and $y,y' \in \cG$. There are also compatible $\Theta$-orientations on all these manifolds.

    Suppose we can find extensions $\cA'',\cR'',\cB'$ of $\cA',\cR',\cB$ respectively, which are compatible in the same way (so these consist of all the same data as above but we also have manifolds $\cA''_{*x}$ when $i-|x|=k+2$ and $\cR''_{*y}$ when $i-|y|=k+1$). We call the triple $(\cA', \cR', \cB)$ an \emph{extension triple} for $(\cA, \cR, \cB)$, and we call the triple $(\cA'', \cR'', \cB')$ a \emph{double extension triple}. Note that no conditions on $\cA'$ and $\cR'$ depend on the extension $\cB'$, they only depend on $\cB$; this is why we write $\cB$ in the triple rather than $\cB'$. If such a double extension triple exists, then $\cA'$ is a $\tau_{\leq k+1}$-right module, whose composition with $\tau_{\leq k+1} \cW$ is bordant to $\cB$, and so we are done. 
    
    We proceed by finding an obstruction class which determines whether there exists some extension triple for $(\cA, \cR, \cB)$ which extends further to a double extension triple. This does not use the assumption that composition with $\tau_{\leq 0} \cW$ is an isomorphism; however, this assumption will imply that the group that this obstruction class lives in vanishes.
    
    We define the obstruction chain:
    $$(\cO^1, \cO^2) = \left(\cO^1(\cA',\cR',\cB), \cO^2(\cA',\cR',\cB)\right) \in CM_{i-k-2}(\cF; G) \oplus CM_{i-k-1}(\cG; G)$$
    as follows.\par 
    $\cO^1$ is defined to be the obstruction chain $\cO(\cA')$ to $\cA'$ having a $\Theta$-oriented extension, as defined in Section \ref{sec: ob lift}. In particular, by Lemma \ref{lem: ob closed}, it is closed with respect to the Morse differential.\par 
    For each $y \in \cG$ with $i-|y|=k+1$, we define $\tilde \cX_{*y}$ to be the following coequaliser (cf. \cite[Page 61]{PS}): 
    \begin{equation}
        \tilde \cX_{*y} := \operatorname{coeq}
        \left( \vcenter{
            \xymatrix{
                \bigsqcup\limits_{y' \in \cG} \cB_{*y'} \times \cG_{y'y} \times [0, \eps)\times [0, -\eps) \ar[r] \ar[dr] &
                \cB_{*y} \times [0, -\eps)\\
                \bigsqcup\limits_{y',y'' \in \cG} \cR'_{*;y''y'y} \times [0, \eps)^2 \ar@<+.5ex>[r] \ar@<-.5ex>[r] &
                \bigsqcup\limits_{y'\in \cG} \cR'_{*;y'y} \times [0, \eps) \\
                \bigsqcup\limits_{\substack{x \in \cF\\ y' \in \cG}} \cA'_{*x} \times \cW_{x;y'y} \times [0, \eps)^2 \ar[ur] \ar[r] &
                \bigsqcup\limits_{x \in \cF} \cA'_{*x} \times \cW_{xy} \times [0, \eps) \\
                \bigsqcup\limits_{x, x' \in \cF} \cA_{*;x'x} \times \cW_{xy} \times [0, \eps)^2 \ar@<+.5ex>[ur] \ar@<-.5ex>[ur]  
            }}
            \right)
    \end{equation}
    As shown in \cite{PS}, each $\tilde{\cX}_{*y}$ is a manifold with faces of dimension $k+2$, with compact boundary. It has a system of boundary faces given by the $\cB_{*y} \times \{0\}$, $\cR'_{*y'} \times \cG_{y'y} \times \{0\}$ for $y' \in \cG$, and $\cA'_{*x} \times \cW_{xy} \times \{0\}$ for $x \in \cF$. Choose framed functions $(f_{*y}, s_{*y})$ on each $\tilde \cX_{*y}$, and let $\cT_{*y} = f_{*y}^{-1}\{1\}$; this is a closed manifold of dimension $k+1$. $\cT_{*y}$ can be equipped with a $\Theta$-orientation in the usual way. We view $\cT_{*y}$ as a $\Theta$-oriented right flow module over $\cE_{i-n-1}$ of degree $i$, which we write as $\cT(y)$.
   
    Similarly to Section \ref{sec: ob lift}, the class $[\cT(y)]$ of $\cT(y)$ in $G$ is independent of choices (for fixed extension triple $(\cA',\cR',\cB)$). \par 
    $\cO^2 = \cO^2(\cA',\cR',\cB)$ is defined to be the sum
    $$\cO^2 = \sum\limits_{\substack{y\in \cG\\
    i-|y|=k+1}} [\cT(y)] \cdot y \in CM_{i-k-1}(\cG;G)$$

    \begin{lem}\label{lem: ob2 ob 0}
        If there is an extension triple $(\cA', \cR', \cB)$ such that the obstruction chain $(\cO^1(\cA', \cR',\cB),\cO^2(\cA',\cR',\cB))$ vanishes, then $[\cB]$ is in the image of the composition map with $\tau_{\leq k+1} \cW$.
    \end{lem}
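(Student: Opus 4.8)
The plan is to show that the vanishing of the obstruction chain $(\cO^1,\cO^2)$ for an extension triple $(\cA',\cR',\cB)$ means we can extend all three pieces one step further, which by the remark after the definition of a double extension triple immediately gives that $\cA'$ is a $\tau_{\leq k+1}$-right module over $\cF$ whose composition with $\tau_{\leq k+1}\cW$ is bordant to $\cB$. So the content is: $\cO^1 = 0$ and $\cO^2 = 0$ $\implies$ there exist compatible extensions $\cA''$, $\cR''$, $\cB'$.

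First I would handle $\cA''$: since $\cO^1(\cA') = \cO(\cA')$ is exactly the obstruction chain from Section \ref{sec: ob lift}, its vanishing as a \emph{chain} (not merely as a homology class) means each $\cZ(y)$ appearing in it is $\Theta$-oriented nullbordant, and Lemma \ref{lem: ext 0} produces the extension $\cA''$ of $\cA'$ to a $\Theta$-oriented pre-$\tau_{\leq k+2}$ right module over $\cF$; concretely $\cA''_{*x} = f_{*x}^{-1}(-\infty,1] \cup_{\cZ_{*x}} \cS_{*x}$ for a chosen nullbordism $\cS(x)$, with its natural $\Theta$-orientation. Next, given this $\cA''$, I would construct $\cR''$ and $\cB'$ simultaneously using $\cO^2 = 0$: for each $y \in \cG$ with $i - |y| = k+1$, the class $[\cT(y)] \in G$ vanishes, so choose a $\Theta$-oriented nullbordism $\cS(y)$ of $\cT(y)$, and set
\[
\cR''_{*y} = f_{*y}^{-1}(-\infty, 1] \bigcup_{\cT_{*y}(y)} \cS_{*y}(y),
\]
glued along the closed manifold $\cT_{*y}$, equipped with its natural $\Theta$-orientation; here $f_{*y} : \tilde\cX_{*y} \to \bR$ is the framed function chosen in the construction of $\cO^2$, and the boundary faces of $f_{*y}^{-1}(-\infty,1]$ are $\cB_{*y}$, $\cR'_{*y'} \times \cG_{y'y}$, $\cA'_{*x} \times \cW_{xy}$ (using $\cA''$ to make sense of the needed codimension-one data), so $\cR''_{*y}$ has the system of boundary faces required of a $\tau_{\leq k+1}$-bordism. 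One should also extend $\cB$ to $\cB'$, which is possible since $\cB$ is a $\tau_{\leq k+1}$-right module by hypothesis; compatibility of this chosen $\cB'$ with $\cR''$ is arranged exactly as in Section \ref{sec: comp}, since the constructions only use the codimension $\leq k+1$ data.

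The main obstacle is bookkeeping the compatibility: one must check that the $\Theta$-orientations on the glued manifolds $\cA''_{*x}$ and $\cR''_{*y}$ agree on their overlapping boundary faces, so that they genuinely assemble into a double extension triple rather than just a collection of manifolds. This is handled exactly as in the construction of the $\Theta$-orientations on $\tilde\cY_{*y}$ and $\cZ(y)$ in Section \ref{sec: ob lift} and on composites in Section \ref{sec: comp}: the maps $\rho$ and isomorphisms $st$ are defined over the carapaces by gluing abstract discs and index bundles, these glue on overlaps by associativity of the $\rho$'s and $st$'s, and then extend over the interiors (unique up to homotopy) since carapaces are deformation retracts. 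Since all the relevant manifolds here have dimension $\leq k+2 \leq n+2$, every piece of structure we need is already part of the pre-$\tau_{\leq n}$ data or can be chosen compatibly by induction on $|x|-|y|$ as in Lemma \ref{lem:upshot}. Thus no genuinely new technique is required beyond what was used in the $\cO(\cR)$ obstruction theory of Section \ref{sec: ob lift}; this lemma is the relative version of Lemma \ref{lem: ext 0} adapted to the morphism-bordism setup.
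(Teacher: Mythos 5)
Your proposal is correct and follows essentially the same route as the paper: $\cA''$ comes from Lemma \ref{lem: ext 0} applied to $\cO^1=0$, and $\cR''_{*y}$ is obtained by gluing $f_{*y}^{-1}(-\infty,1]$ to a chosen $\Theta$-oriented nullbordism of $\cT(y)$ along the closed manifold $\cT_{*y}$, with the $\Theta$-orientations assembled as in Sections \ref{sec: comp} and \ref{sec: ob lift}. Your indexing (new manifolds $\cR''_{*y}$ added when $i-|y|=k+1$) is the consistent one, matching where $\tilde\cX_{*y}$ and $\cT(y)$ are defined.
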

    \begin{proof}
        We construct an extension $(\cA'', \cR'', \cB')$ to the extension $(\cA', \cR', \cB)$ as follows. $\cA''$ is constructed as in Lemma \ref{lem: ext 0}. $\cR''$ is similar: we let $\cS(y)$ be a $\Theta$-oriented nullbordism of each $\cT(y)$; we then define 
        $$\cR''_{*y} := \begin{cases}
            \cR'_{*y} & 
            \textrm{ if } i-|y| \leq k+1\\
            f_{*y}^{-1}(-\infty,1] \cup_{\cT(y)_{*y}} \cS(y) &
            \textrm{ if } i-|y|=k+2
        \end{cases}$$
        where $f_{*y}: \tilde \cX_{*y} \to \bR$ was the function we chose when defining $\cT(y)$. $\cR''_{*y}$ admits a natural $\Theta$-orientation; $(\cA'', \cR'', \cB')$ together now form the required extension.
    \end{proof}
    \begin{lem}\label{lem: ob2 closed}
        The obstruction chain
        $$(\cO^1, \cO^2) = \left(\cO^1(\cA',\cR',\cB), \cO^2(\cA',\cR',\cB)\right) \in CM_{i-k-2}(\cF; G) \oplus CM_{i-k-1}(\cG; G)$$
        is closed with respect to the mapping cone differential, and therefore represents a class in the homology of the mapping cone of $CM_*(\cW)$.
    \end{lem}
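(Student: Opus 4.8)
The plan is to adapt the proof of \cite[Theorem 5.51]{PS}, incorporating $\Theta$-orientations in exactly the manner already used above in the constructions of $\tilde\cX_{*y}$ and $\cT(y)$ (and, for the $\cO^1$-part, of $\tilde\cY_{*y}$ and $\cZ(y)$ in Section~\ref{sec: ob lift}). Unwinding the mapping-cone differential on $\operatorname{Cone}(CM_*(\cW))$, the assertion ``$(\cO^1,\cO^2)$ is a cycle'' is equivalent to the conjunction of two identities: (i) $\partial\cO^1 = 0$ in $CM_{i-k-3}(\cF;G)$, and (ii) $\partial\cO^2 = \pm\,CM_*(\cW)(\cO^1)$ in $CM_{i-k-2}(\cG;G)$, with the sign dictated by the cone convention. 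Once these hold, the homology class of $(\cO^1,\cO^2)$ in $H_{i-k-1}(\operatorname{Cone}(CM_*(\cW)))$ is automatic, which is all the lemma asserts.

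Identity (i) is immediate: $\cO^1 = \cO(\cA')$ is precisely the obstruction chain of the pre-$\tau_{\leq k+1}$ right module $\cA'$ over $\cF$, so $\partial\cO^1 = 0$ by Lemma~\ref{lem: ob closed} applied to $\cA'$. Identity (ii) is the substantive point. Fix $y'\in\cG$ with $i-|y'| = k+2$; the $y'$-coefficient of $\partial\cO^2$ is $\sum_{y}\Gamma(\cG_{yy'})\,[\cT(y)]$ and that of $CM_*(\cW)(\cO^1)$ is $\sum_{x}\Gamma(\cW_{xy'})\,[\cZ(x)]$, where $\Gamma(\cdot)$ is the signed count of the $0$-dimensional spaces $\cG_{yy'}$, $\cW_{xy'}$, and both $[\cT(y)],[\cZ(x)]$ are classes in $G$ of $\Theta$-oriented closed $(k+1)$-manifolds regarded as right modules over $\cE_\bullet$ of degree $i$. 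To identify the two sides in $G$ one produces a $\Theta$-oriented bordism between the two collections, built exactly as $\cT(y)$ was: form the auxiliary coequalizer manifold attached to $y'$ from the data $(\cA',\cR',\cB)$ (a manifold with faces, all of whose faces are compact, as in the text's construction of $\tilde\cX_{*y}$), observe that its system of boundary faces is the disjoint union of the input face carrying $\cB_{*y'}$ (part of the given data, contributing no net class), the faces $\cR'_{*y}\times\cG_{yy'}$ (whose framed-function level sets telescope to the $\cT(y)$-terms counted by $\Gamma(\cG_{yy'})$), the faces $\cA'_{*x}\times\cW_{xy'}$ (whose level sets give the $\cZ(x)$-terms counted by $\Gamma(\cW_{xy'})$), and broken faces $\cF_{xx'}\times(\cdots)$, $\cG_{yy''}\times(\cdots)$, $\cW_{x;y''y'}\times(\cdots)$ which cancel in pairs; then choose a compatible framed function, carry along the induced $\Theta$-orientation (assembled from the maps $\rho$ and the stabilisation isomorphisms $st$ as in the construction of $\cT(y)$), and invoke the Pontrjagin--Thom identification of Proposition~\ref{prop: bord pt} to turn the bordism into the required equality of classes in $G$.

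The step I expect to be the main obstacle is not conceptual but the coherence bookkeeping: one must check that the $\Theta$-orientations induced on every boundary face of the auxiliary manifold from the chosen $\rho$'s and $st$'s agree on the nose with those used to define $\cZ(x)$, $\cT(y)$ and the chain map $CM_*(\cW)$, and one must track all the orientation signs $\Gamma(\cdot)$ through the face decomposition and through the framed-function level sets. This is the same kind of verification already carried out for $\tilde\cQ$, $\tilde\cY$ and $\tilde\cX$ and in \cite[Section~5]{PS}; it introduces no new idea, only care, which is why the argument runs ``the same as'' that of \cite[Theorem 5.51]{PS}.
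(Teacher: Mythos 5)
Your proposal is correct and takes essentially the same route as the paper: the paper's own proof simply observes that $\cO^1$ is closed by Lemma~\ref{lem: ob closed} and then defers the identity $CM_*(\cW;G)(\cO^1)+d\cO^2=0$ to the unoriented/framed argument of \cite[Section 5.5]{PS}, with $\Theta$-orientations incorporated in the usual way. Your sketch of the auxiliary coequalizer manifold attached to each $y'$ with $i-|y'|=k+2$, its system of boundary faces, and the resulting bordism in $G$ is a faithful unpacking of exactly that cited argument, and your identification of the coherence/sign bookkeeping as the only real labour matches the paper's (terser) assessment.
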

    While the obstruction chain may depend on the choice of extension triple, the homology class does not.
    \begin{proof}
        $\cO^1$ is closed by Lemma \ref{lem: ob closed}, so it remains to show that $CM_*(\cW; G)(\cO^1) + d\cO^2 = 0$. In the case of unoriented flow categories, this is proved in \cite[Section 5.5]{PS}; there a version incorporating framings was also shown. The same proof, with $\Theta$-orientations incorporated in the usual way, proves Lemma \ref{lem: ob2 closed} for any $\Theta$.
    \end{proof}
    
    \begin{lem}\label{lem: ob2 ext exact}
        Let $(\cA', \cR', \cB)$ be an extension triple. For any chain $(A, R) \in CM_{i-k-1}(\cF; G) \oplus CM_{i-k}(\cG; G)$
        there is another extension triple $(\cA'', \cR'', \cB)$ such that 
        \begin{equation}\label{eq: ext trip exact}  
            \left(\cO^1(\cA'',\cR'',\cB), \cO^2(\cA',\cR'',\cB)\right) = \left(\cO^1(\cA',\cR',\cB), \cO^2(\cA,\cR',\cB)\right) + d^{\operatorname{cone}}(A,R)
        \end{equation}
        where $d^{\operatorname{cone}}$ is the mapping cone differential.
    \end{lem}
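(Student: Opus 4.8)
The plan is to prove this as the relative analogue of Lemma \ref{lem: ob exact}: I would modify $\cA'$ and $\cR'$, each at its top level, by disjoint union with closed $\Theta$-oriented manifolds representing the coefficients of $A$ and $R$, while keeping $\cB$ fixed. The chain $A \in CM_{i-k-1}(\cF;G)$ is supported on $x \in \cF$ with $i-|x|=k+1$, which is precisely the top level of the pre-$\tau_{\leq k+1}$ module $\cA'$, and $R \in CM_{i-k}(\cG;G)$ is supported on $y \in \cG$ with $\dim \cR'_{*y}=i-|y|+1=k+1$, the top level of $\cR'$. Since a degree-$i$ $\Theta$-oriented $\tau_{\leq k+1}$ right module over $\cE_{i-n-1}$ is the same data as a closed $\Theta$-oriented $(k+1)$-manifold carrying the appropriate index-bundle and $\bR^i$ data (after the Maslov-index bookkeeping of Section \ref{sec: ob lift}), I would choose such representatives $\cA_A(x)$ of $A(x)$ and $\cR_R(y)$ of $R(y)$, and set $\cA''_{*x}:=\cA'_{*x}\sqcup\cA_A(x)$ and $\cR''_{*y}:=\cR'_{*y}\sqcup\cR_R(y)$ at those top levels, with the evident $\Theta$-orientations. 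First I would check that $(\cA'',\cR'',\cB)$ is again an extension triple: the added pieces are closed, so contribute nothing to any system of boundary faces or coherence condition, a top-level manifold of $\cA'$ (resp.\ $\cR'$) never occurs as a boundary face of another manifold of $\cA'$ (resp.\ $\cR'$), and $\tau_{\leq k}\cA''=\cA$, $\tau_{\leq k}\cR''=\cR$ are unchanged.

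Next I would compute the effect on the two obstruction chains. Since $\cO^1$ depends only on $\cA''$, the argument is verbatim that of Lemma \ref{lem: ob exact}: adding the cycle $A$ at the top level of $\cA'$ changes the obstruction chain $\cO(\cA')$, which lives one level up, by the Morse differential of $\cF$ applied to $A$. The $\cR'$-modification contributes to $\cO^2$ only through the faces $\cR'_{*y'}\times\cG_{y'y}$ of the coequaliser $\tilde\cX_{*y}$; for $|y'|=|y|+1$ the factor $\cR'_{*y'}$ appears at its top dimension $k+1$ and $\cG_{y'y}$ is $0$-dimensional, so on passing to the framed-function level set $\cT(y)$ this adds $\sum_{|y'|=|y|+1}\Gamma(\cG_{y'y})\,[\cR_R(y')]$, i.e.\ the Morse differential of $\cG$ applied to $R$, again by the mechanism of Lemma \ref{lem: ob exact}; and the $\cR'$-modification leaves $\cA''$, hence $\cO^1$, untouched.

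The genuinely new point, which I expect to be the main obstacle, is identifying the contribution of the $\cA'$-modification to $\cO^2$. Here I would argue that the only way a top-level manifold $\cA'_{*x}$ enters $\tilde\cX_{*y}$ is through the face $\cA'_{*x}\times\cW_{xy}\times\{0\}$, and that by the dimension count this reaches top dimension in $\tilde\cX_{*y}$ only when $|x|=|y|$ (so $\cW_{xy}$ is $0$-dimensional), the intermediate pieces $\cA'_{*x}\times\cW_{x;y'y}$ being degenerate; hence adding $\cA_A(x)$ attaches to $\tilde\cX_{*y}$ a collared copy of $\cA_A(x)\times\cW_{xy}$, which on passing to $\cT(y)$ contributes $\sum_{|x|=|y|}\Gamma(\cW_{xy})\,[\cA_A(x)]=(CM_*(\cW;G)A)(y)$. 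The two modifications occupy disjoint closed pieces in different slots of the coequaliser and so produce no cross terms, giving $\cO^2(\cA'',\cR'',\cB)=\cO^2(\cA',\cR',\cB)+CM_*(\cW;G)(A)+dR$; combined with the computation of $\cO^1$, and choosing orientations on the $\cA_A(x)$ and $\cR_R(y)$ to absorb signs, the total change in the obstruction chain is $d^{\operatorname{cone}}(A,R)$, as required. The care required is entirely in propagating the $\Theta$-orientations through the coequaliser $\tilde\cX_{*y}$ to the level set $\cT(y)$ so that the $\cA$-contribution is exactly $CM_*(\cW;G)(A)$ rather than some other chain; this should be handled just as the $\Theta$-orientations on $\tilde\cY_{*y}$ and $\cZ(y)$ were handled in Section \ref{sec: ob lift}, and as in the proof of \cite[Theorem 5.51]{PS}.
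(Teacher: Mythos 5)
Your construction is exactly the paper's: modify $\cA'$ and $\cR'$ at their top levels by disjoint union with $\Theta$-oriented representatives of the coefficients of $A$ and $R$, keeping $\cB$ fixed. The paper states this in two lines and leaves the verification of \eqref{eq: ext trip exact} implicit; your detailed bookkeeping of how each modification feeds into $\cO^1$ and into the faces of $\tilde\cX_{*y}$ (producing the $dA$, $CM_*(\cW;G)(A)$ and $dR$ terms) is a correct expansion of that omitted check, not a different argument.
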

    \begin{proof}
        For $x \in \cF$ with $|x| = i-k-1$, we let $A(x)$ be a $\Theta$-oriented right flow module representing the $x$-coefficeint of $A$, and we let $\cA''_{*x} = \cA'_{*x} \sqcup A(x)_{*x}$. \par 
        Similarly, for $y \in \cG$ with $|x|=i-k$, we let $R(y)$ be a representative of the $y$-coefficient of $R$, and we let $\cR''_{*y} = \cR'_{*y} \sqcup R(y)_{*y}$. Then $(\cA'', \cR'', \cB)$ is another extension triple satisfying (\ref{eq: ext trip exact}).
    \end{proof}
    Proposition \ref{prop: flow white head} then follows from Lemmas \ref{lem: ob2 ob 0}, \ref{lem: ob2 closed}, \ref{lem: ob2 ext exact} and the fact that the mapping cone of $CM_*(\cW)$ is acyclic (by assumption).
\end{proof}
    While we don't need it for the proof of Proposition \ref{prop: flow white head}, the following lemma provides some motivation for the proof of Proposition \ref{prop: flow white head}.
    \begin{prop}
        We work in the setting (and notation) of the proof of Proposition \ref{prop: flow white head}. Then if there is an extension $(\cA'', \cR'', \cB')$ of $(\cA',\cR', \cB)$, the obstruction $(\cO^1(\cA',\cR',\cB), \cO^2(\cA',\cR',\cB)$ vanish.
    \end{prop}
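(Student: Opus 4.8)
The plan is to show that the two components $\cO^1(\cA',\cR',\cB)$ and $\cO^2(\cA',\cR',\cB)$ of the obstruction vanish separately, in close analogy with Lemma \ref{lem: ob other way}.

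For the first component, by construction $\cO^1(\cA',\cR',\cB)$ is the obstruction chain $\cO(\cA')$ of Section \ref{sec: ob lift} to the $\Theta$-oriented pre-$\tau_{\leq k+1}$ right module $\cA'$ over $\cF$ admitting an extension. The hypothesis that a double extension triple $(\cA'',\cR'',\cB')$ exists supplies, in particular, a $\Theta$-oriented extension $\cA''$ of $\cA'$. Lemma \ref{lem: ob other way}, applied with $\cA'$ and $\cA''$ in place of $\cR'$ and $\cR''$, then gives $\cO(\cA') = 0$, hence $\cO^1(\cA',\cR',\cB) = 0$.

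For the second component, recall $\cO^2 = \sum_{y :\, i-|y| = k+1} [\cT(y)]\cdot y$, where $\cT(y)_{*y} = f_{*y}^{-1}\{1\}$ is cut out of the $(k+2)$-dimensional manifold with faces $\tilde\cX_{*y}$ by a chosen framed function $f_{*y}$. Fix such a $y$. The relevant new manifold in the double extension is $\cR''_{*y}$, also of dimension $k+2$, whose carapace consists of copies of $\cB_{*y}$, of $\cR'_{*y'}\times\cG_{y'y}$ (for $y'\in\cG$), and of $\cA'_{*x}\times\cW_{xy}$ (for $x\in\cF$) --- precisely the system of boundary faces of the coequaliser $\tilde\cX_{*y}$. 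By \cite[Lemma 3.19]{PS}, $\tilde\cX_{*y}$ embeds onto a neighbourhood of the carapace of $\cR''_{*y}$, compatibly with the chosen collars and with the $\Theta$-orientation data ($\rho$ and the stabilising isomorphisms $st$), the image of the embedding being taken to contain the compact set $f_{*y}^{-1}(-\infty,1]$, as in the proof of Lemma \ref{lem: ob other way}. Since $f_{*y}$ is negative on the carapace, $f_{*y}^{-1}(-\infty,1)$ is an open neighbourhood of the carapace of $\cR''_{*y}$, and therefore $\cR''_{*y}\setminus f_{*y}^{-1}(-\infty,1)$ is a compact manifold with boundary $\cT(y)_{*y}$. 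Carrying along the restricted $\Theta$-orientation and the $\cE_{i-n-1}$-module structure, this exhibits $\cT(y)$ as $\Theta$-oriented nullbordant, so $[\cT(y)] = 0$ in $G$ and hence $\cO^2(\cA',\cR',\cB) = 0$.

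The main thing to check --- and the place where one could slip --- is the last step: that cutting $\cR''_{*y}$ along the regular level set $f_{*y}^{-1}\{1\}$ produces not merely a nullbordism of the underlying manifold but a genuine $\Theta$-oriented nullbordism of the right flow module $\cT(y)$ over $\cE_{i-n-1}$, i.e. that the maps $\rho_{*y}$ and the isomorphisms $st_{*y}$, together with their compatibility with $\bE_{i-n-1}$ and with gluing of abstract discs, restrict coherently across the embedding $\tilde\cX_{*y}\hookrightarrow\cR''_{*y}$ and the removal of the collar. This is the same bookkeeping already performed in the proofs of Lemma \ref{lem: ob other way} and Lemma \ref{lem: ob2 ob 0}, and I expect it to carry over verbatim, with no new phenomena beyond the unoriented case of \cite{PS}.
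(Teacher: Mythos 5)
Your proof is correct and follows essentially the same route as the paper's (very brief) sketch: cite Lemma \ref{lem: ob other way} for $\cO^1$, and for $\cO^2$ embed $\tilde\cX_{*y}$ onto a collar neighbourhood of the carapace of the $(k+2)$-dimensional manifold supplied by the double extension and excise $f_{*y}^{-1}(-\infty,1)$ to exhibit a $\Theta$-oriented nullbordism of $\cT(y)$. You are in fact slightly more careful than the paper, which loosely attributes the nullbordisms to $\cA'_{*x}$ and $\cR'_{*y}$ (with shifted indices) where, as you correctly identify, they come from the new manifolds $\cA''_{*x}$ and $\cR''_{*y}$ of the double extension triple.
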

    \begin{proof}[Sketch proof]
        For $x \in \cF$ with $|x|=i-k-1$, removing an appropriate small open neighbourhood of the boundary of $\cA'_{*x}$ gives a nullbordism of $\cZ(x)_{*x}$, and so the $x$-coefficient of $\cO^1$ vanishes. Similarly for $y \in \cG$ with $|y|=i-k$, removing an appropriate small open neighbourhood of the boundary of $\cR'_{*y}$ provides a nullbordism of $\cT_{*y}$, so the $y$-coefficient of $\cO^2$ vanishes. The nullbordisms in both cases come equipped with $\Theta$-orientations.
    \end{proof}

\subsection{Adaptations for tangential pairs}\label{sec: flow adapt}
    In all of the constructions throughout Section \ref{sec: Thet flow}, we may replace the (possibly oriented) tangential structure with a (possibly oriented) tangential pair $(\Theta, \Phi)$, with no other changes. However, some of the computations differ, as we now explain.

    Let $F$ be the fibre of the map $\Theta \to \Phi$. As in Section \ref{sec: Lag Floer conj}, let $A$ be the ring spectrum which is the Thom spectrum of the index bundle over $\Omega F$ and $R$ the Thom spectrum of the index bundle over $\Omega^2 \Phi$.  Then in analogue with Proposition \ref{prop: bord pt bimod}, we have:
    \begin{prop}\label{prop: flow pt pair case}
        Let $\cF, \cG$ be $(\Theta,\Phi)$-oriented flow categories, each with one object, in degrees $i$ and $j$ respectively. Then the set of bordism classes of $(\Theta,\Phi)$-oriented maps $\cF \to \cG$ is isomorphic to
        $\pi_{i-j}(R \otimes_A R^{op})$.
    \end{prop}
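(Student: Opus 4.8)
The plan is to mimic the proof of Proposition \ref{prop: bord pt bimod}, tracking the tangential pair throughout. A $(\Theta,\Phi)$-oriented map $\cW: \cF \to \cG$ between flow categories each with one object (in degrees $i$ and $j$) is, on the level of underlying manifolds, a single closed manifold $\cW_{\ast +}$ of dimension $i-j$. The $(\Theta,\Phi)$-orientation data provides a classifying map $\rho: \cW_{\ast +} \to \bU_{11}^{V,\Theta,\Phi}(\bE_\ast, \bE_+)$ for some finite-dimensional $V$, together with a stable identification of the abstract index bundle $I^{\cW}_{\ast +}$ with $\bR \oplus \rho^* \bV_{11}^{V,\Theta,\Phi}$. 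Passing to the homotopy colimit over $V$, and applying the tangential-pair analogue of Lemma \ref{lem:htpy type} (namely Lemma \ref{lem: htpy type pairs} with $i+j = 2$ inputs and one output, so two boundary components in total meeting the output—more precisely the $\rho_k$-glued model relevant to $\bU_{1,1}$), we identify $\bU_{1,1}(\Theta,\Phi)$ with the homotopy fibre of $(\Omega\Theta)^2 \to \Omega\Phi$. This is exactly the two-sided bar-type space whose Thom spectrum of the index bundle is $R \otimes_A R^{op}$: the two $\Omega\Theta$ factors contribute the two copies of $R = \mathrm{Thom}(\Omega F \to BO\times\bZ)$ (using $F = \mathrm{hofib}(\Theta\to\Phi)$, $\Omega F \simeq \mathrm{hofib}((\Omega\Theta)\to\Omega\Phi)$ from the fibration sequence, together with Remark \ref{rmk:reverse bundle} to handle the $op$), while the homotopy-fibre condition over $\Omega\Phi$ is precisely the relative tensor product over $A = \mathrm{Thom}(\Omega^2\Phi \to BU\times\bZ)$.

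First I would make the identification of spaces precise: from the tangential pair \eqref{eq: strong} and the fibration sequences \eqref{eq: strong thom}, one has a commuting diagram whose rows are fibration sequences, giving $\Omega F \simeq \mathrm{hofib}(\Omega\Theta \to \Omega\Phi)$ and, applying loops once more, $\Omega^2\Phi \to \Omega F \to \Omega\Theta$ a fibration sequence of loop spaces; the index bundles over these are compatible by construction of the abstract-disc index bundles (Lemma \ref{lem:upshot} and its tangential-pair version in Section \ref{sec: 2tan}). Then the bar-space model $\mathrm{hofib}((\Omega\Theta)^2 \to \Omega\Phi)$ for $\bU_{1,1}(\Theta,\Phi)$ identifies, on Thom spectra, with the two-sided bar construction $B(R, A, R)$ computing $R\otimes_A R^{op}$; here the reversal of loops on the second factor, as in Remark \ref{rmk:reverse bundle}, is what introduces $R^{op}$ rather than $R$, and the $A$-algebra structures on the two copies of $R$ come from the two boundary-component inclusions $\Omega^2\Phi \to \Omega\Theta$.

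Next I would run the Pontryagin--Thom argument. Given a class $\alpha \in \pi_{i-j}(R\otimes_A R^{op}) = \pi_{i-j}(\mathrm{Thom}(\bV \to \bU_{1,1}(\Theta,\Phi)))$, represent it at a finite stage $V$ by a map $S^{i-j+\dim V} \to \mathrm{Thom}(\bV^{V,\Theta,\Phi}_{11})$; transversality produces a closed $(i-j)$-manifold $\cW_{\ast +}$ with a map to $\bU^{V,\Theta,\Phi}_{11}$ and an identification of its stable normal bundle, hence (Remark \ref{rmk:reverse bundle}) of its abstract index bundle $T\cW_{\ast+}\oplus\bR$, with the pulled-back index bundle, i.e.\ exactly the data of a $(\Theta,\Phi)$-oriented map $\cF\to\cG$. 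Conversely a $(\Theta,\Phi)$-oriented map gives, by the usual Thom collapse, such a homotopy class; $(\Theta,\Phi)$-oriented bordisms map to homotopies (using the parametrized version of the index bundle over $\bU_{11}$, exactly as in Section \ref{sec: glue}); and disjoint union corresponds to addition, so the assignment is a well-defined group isomorphism. The only new wrinkle relative to Proposition \ref{prop: bord pt bimod} is bookkeeping the $\Phi$-data, which contributes the $\otimes_A$ rather than $\otimes_\bS$.

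The main obstacle is the first step: verifying that the abstract-disc model $\bU_{1,1}(\Theta,\Phi)$, with its index bundle, really is the Thom-spectrum-level two-sided bar construction $B(R,A,R)$ — that is, that the $A$-bimodule structure on $R$ arising geometrically from gluing abstract discs (Lemma \ref{lem:upshot}, Section \ref{sec: 2tan}) agrees with the one coming from the ring-spectrum structures of Section \ref{sec: Thom}, and that the bar filtration matches. Strictly this is an instance of Assumption \ref{asmp} (comparison of Bott-periodicity models and their multiplicative structure); for the purposes of this proposition one needs only the relevant special case, which can be checked by the same homotopy-colimit and connectivity arguments as Lemmas \ref{lem: V conn est}, \ref{lem: conn est} and \ref{lem: htpy type pairs}. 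I would isolate this as the technical heart of the argument and otherwise proceed exactly as in the proof of Proposition \ref{prop: bord pt bimod}.
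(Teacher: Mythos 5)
Your proposal is correct and follows essentially the same route as the paper: identify $\bU_{1,1}(\Theta,\Phi)$ via Lemma \ref{lem: htpy type pairs} as the homotopy fibre of $(\Omega\Theta)^2 \to \Omega\Phi$, recognise this as the two-sided bar construction $\Omega F \otimes_{\Omega^2\Phi}\Omega F$, pass to Thom spectra (which commute with homotopy colimits) to get $R\otimes_A R^{op}$, and conclude by Pontrjagin--Thom together with Remark \ref{rmk:reverse bundle}. You also correctly isolate the comparison of multiplicative structures as the technical heart; the paper handles exactly this by stating its proof under Assumption \ref{asmp}.
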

    Motivated by this, we conjecture that the category of $(\Theta,\Phi)$-oriented flow categories agrees with the category of $A$-linear $R-R$ bimodules.
    \begin{proof}[Proof assuming Assumption \ref{asmp}]
        A standard Pontrjagin-Thom argument shows the set of bordism classes of $(i-j)$-manifold with tangent bundle stably classified by the index bundle over $\bU_{11}(\Theta,\Phi)$ agrees with $\pi_{i-j}$ of the Thom spectrum of the negative of the index bundle. Further using Remark \ref{rmk:reverse bundle}, we will identify this Thom spectrum with $R \otimes_A R^{op}$.

        Lemma \ref{lem: htpy type pairs} implies that $\bU_{11}(\Theta, \Phi)$ is the homotopy fibre of the map $\Omega \Theta \times \Omega \Theta \to \Omega \Phi$ which concatenates the loops and then applies the map $\Theta \to \Phi$. We may identify this homotopy fibre with the fibre product $\Omega \Theta \times_{\Omega \Phi} \Omega \Theta$, which we may identify with $\Omega F \otimes_{\Omega^2\Phi} \Omega F$. In this latter expression, $\cdot \otimes_{\Omega^2 \Phi} \cdot$ denotes the (derived) tensor product of a left and a right module over $\Omega^2\Phi$, noting that $\Omega \Phi$ acts on $F$ on both sides; one could model this for example with a bar construction (more standard notation for this tensor product would be $\cdot \times_{\Omega^2\Phi} \cdot$, but this conflicts with the notation for a fibre product in the former expression).
        
        The expression $\Omega F \otimes_{\Omega^2\Phi} \Omega F$ is a homotopy colimit; since the Thom spectrum functor is a homotopy colimit (here we use the model for Thom spectra from \cite{Antolin-Camarena-Barthel}) and (homotopy) colimits commute with each other, we may identify the Thom spectrum over $\bU_{11}(\Theta, \Phi)$ with $R \otimes_A R^{op}$.
    \end{proof}
   By the same proof as above, Proposition \ref{prop: bord pt} holds with the same statement. Similarly, there is an isomorphism between the capsule bordism group $\Omega_i^{\Theta,\Phi,\circ}$ and the $i^{th}$ homotopy group of the Thom spectrum of the index bundle over $\bU_{00}(\Theta,\Phi)$. Combining with Lemma \ref{lem: htpy type pairs}, we conclude: 
   \begin{cor}
       There is an isomorphism between $\Omega_i^{\Theta,\Phi,\circ}$ and $\pi_i \bT_{00}(\Theta,\Phi)$, the homotopy groups of the Thom spectrum of the index bundle over the homotopy limit of (\ref{eq: pair pull}).
   \end{cor}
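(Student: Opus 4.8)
The plan is to combine the computation of the homotopy type of the relevant space of abstract discs, Lemma \ref{lem: htpy type pairs}, with a Pontrjagin--Thom argument analogous to the one used in Propositions \ref{prop: bord pt bimod}, \ref{prop: flow pt pair case}, and the surrounding discussion. Concretely: by definition, a $(\Theta,\Phi)$-oriented capsule of degree $i$ is a closed $i$-manifold $\cM_{**}$ together with a map $\rho\colon \cM_{**}\to \bU_{00}^{\Theta,\Phi,V}$ for some finite-dimensional $V$ and an isomorphism of vector bundles $st\colon I^{\cM}_{**}\to \bR\oplus \bV_{**}\oplus\bR^i$ covering $\rho$. So the capsule bordism group $\Omega_i^{\Theta,\Phi,\circ}$ is, by the standard Pontrjagin--Thom construction, the $i$-th homotopy group of the Thom spectrum of (the negative of) the index bundle over $\bU_{00}(\Theta,\Phi) = \operatorname{hocolim}_V \bU_{00}^{\Theta,\Phi,V}$; passing between the index bundle and its negative is harmless by Remark \ref{rmk:reverse bundle 2}. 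This is exactly the spectrum denoted $\bT_{00}(\Theta,\Phi)$ in Section \ref{sec: Thom}.

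First I would spell out the Pontrjagin--Thom step: given a capsule, embed $\cM_{**}$ in a large Euclidean space $\bR^{i+m}$; the stable normal bundle is then identified, via $st$ and the clutching with the index bundle, with the pullback of $-\bV_{**}$ plus a trivial summand, so collapsing a tubular neighbourhood produces an element of $\pi_{i}$ of the Thom spectrum. Conversely an element of that homotopy group, represented by a map from a sphere, is made transverse to the zero section to produce a capsule, and bordisms on both sides correspond under the construction; this is identical in structure to the argument sketched for Proposition \ref{prop: bord pt bimod} and only uses that $\bV_{**}$ is a genuine (virtual) vector bundle over $\bU_{00}(\Theta,\Phi)$, which is provided by Lemma \ref{lem:upshot} (and its extension to tangential pairs in Section \ref{sec: 2tan}). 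This gives $\Omega_i^{\Theta,\Phi,\circ}\cong \pi_i\bT_{00}(\Theta,\Phi)$.

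Next I would invoke Lemma \ref{lem: htpy type pairs}, which identifies $\bU_{00}(\Theta,\Phi)$ up to homotopy with the homotopy pullback of the diagram \eqref{eq: pair pull}, i.e. $\Phi \to \cL\Phi \leftarrow \cL\Theta$. Since the index bundle $\bV_{00}$ is a map of spaces to $BO\times\bZ$ and the equivalence of Lemma \ref{lem: htpy type pairs} is natural (compatible with stabilisation in $V$ and in $N$, as in the proofs of Lemmas \ref{lem:htpy type}, \ref{lem: htpy type pairs}), the Thom spectrum $\bT_{00}(\Theta,\Phi)$ is identified with the Thom spectrum of the induced virtual bundle over that homotopy limit. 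Stringing the two identifications together yields the statement of the corollary.

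I expect the only genuine subtlety — the ``main obstacle'' — to be bookkeeping rather than anything deep: making sure that the finite-dimensional-$V$ level at which a given capsule lives is tracked correctly through the colimit, and that the transversality/compactness points in the Pontrjagin--Thom correspondence interact correctly with the fact that $\bU_{00}(\Theta,\Phi)$ is only a homotopy colimit of the finite-$V$ spaces (so one uses Lemma \ref{lem: V conn est} and the connectivity estimate following it, exactly as in the proof of Lemma \ref{lem:htpy type}, to see that for the dimensions in play one may work at a fixed large $V$). Since all of this machinery has already been set up and the corresponding $i+j>0$ statements (Propositions \ref{prop: bord pt}, \ref{prop: flow pt pair case}) are proved by the same route, the proof is essentially a citation: ``by the same proof as Proposition \ref{prop: bord pt}, using Lemma \ref{lem: htpy type pairs} in place of Lemma \ref{lem:htpy type} (and Remark \ref{rmk:reverse bundle 2} to pass to the negative index bundle)''.
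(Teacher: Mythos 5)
Your proposal is correct and follows exactly the route the paper takes: a Pontrjagin--Thom argument (as in Propositions \ref{prop: bord pt bimod} and \ref{prop: capsule comp}) identifying $\Omega_i^{\Theta,\Phi,\circ}$ with $\pi_i$ of the Thom spectrum of the index bundle over $\bU_{00}(\Theta,\Phi)$, combined with Lemma \ref{lem: htpy type pairs} to identify that space with the homotopy limit of (\ref{eq: pair pull}). The paper treats this as an immediate consequence of those two ingredients; your additional bookkeeping about the colimit over $V$ and the sign of the index bundle is consistent with the paper's Remark \ref{rmk:reverse bundle 2} and the connectivity estimates it relies on.
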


   \begin{cor}\label{cor: rel THH}
       Assume $\Phi$ is an infinite loop space.
       
       Under Assumption \ref{asmp}, there is an isomorphism between $\Omega^{\Theta,\Phi,\circ}_i$ and the relative topological Hochschild groups $\pi_i THH_A(R)$ (cf. \cite[Chapter IX]{EKMM}).
   \end{cor}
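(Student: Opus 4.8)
The plan is to reduce, via the preceding corollary, to producing an equivalence of spectra $\bT_{00}(\Theta,\Phi)\simeq THH_A(R)$, where $\bT_{00}(\Theta,\Phi)$ is the Thom spectrum of the index bundle over $\bU_{00}(\Theta,\Phi)$ and $A,R$ are as in \eqref{eq: FAR}. By Lemma \ref{lem: htpy type pairs}, $\bU_{00}(\Theta,\Phi)$ is the homotopy pullback of \eqref{eq: pair pull}, i.e. $\cL\Theta\times_{\cL\Phi}\Phi$. The hypothesis that $\Phi$ be an infinite loop space is used first to guarantee that $\Omega^2\Phi$ is an $\bE_\infty$-space, hence $A=\mathrm{Thom}(\Omega^2\Phi\to BU\times\bZ)$ an $\bE_\infty$ ring spectrum, which is the structure that makes the relative theory $THH_A(R)$ available in the sense of \cite[Chapter IX]{EKMM}.

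The heart of the argument is to identify $\cL\Theta\times_{\cL\Phi}\Phi$ with the geometric realization of the relative cyclic bar construction of $\Omega F$ over $\Omega^2\Phi$, whose space of $n$-simplices is the iterated fibre product $(\Omega F)^{\times_{\Omega^2\Phi}(n+1)}$; here $\Omega F$ carries the two commuting $\Omega^2\Phi$-actions used in the proof of Proposition \ref{prop: flow pt pair case}, coming from the looped fibration sequence $\Omega^2\Phi\to\Omega F\to\Omega\Theta$. This is a relative (``twisted'') form of Jones's identification $\cL BG\simeq B^{\mathrm{cyc}}(G)$. Concretely, writing $\cL\Theta\simeq B^{\mathrm{cyc}}(\Omega\Theta)$ and $\cL\Phi\simeq B^{\mathrm{cyc}}(\Omega\Phi)$ and pinning down the constant-loop map $\Phi\to\cL\Phi$ using the infinite-loop splitting of the free loop space of $\Phi$, one computes the homotopy pullback one simplicial degree at a time; each degree is the same fibre-product manipulation as the degree-one identification $\bU_{11}(\Theta,\Phi)\simeq\Omega F\otimes_{\Omega^2\Phi}\Omega F$ carried out in the proof of Proposition \ref{prop: flow pt pair case}. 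Alternatively, one may extract exactly this identification from a relative version of the Blumberg--Cohen--Schlichtkrull theorem \cite{BCS}.

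Having done so, the remaining step is to apply the Thom spectrum functor degreewise. As in the proof of Proposition \ref{prop: flow pt pair case}, we use the model of \cite{Antolin-Camarena-Barthel}, under which taking Thom spectra of spaces over $BO\times\bZ$ is lax symmetric monoidal and commutes with homotopy colimits; it sends $\Omega F\mapsto R$, $\Omega^2\Phi\mapsto A$, Cartesian products to smashes and $\times_{\Omega^2\Phi}$ to $\wedge_A$, hence carries the relative cyclic bar construction above to the relative cyclic bar object $[n]\mapsto R^{\wedge_A(n+1)}$, whose realization is $THH_A(R)=R\otimes_{R\otimes_A R^{op}}R$. The compatibility of the index-bundle classifying maps with the gluing and concatenation operations that assemble the bar construction is provided by the tangential-pair analogue of Lemma \ref{lem:upshot}, and the usual discrepancy between stable tangential and stable normal data (and between $R$ and $R^{op}$) is absorbed exactly as in loc.\ cit., using Remarks \ref{rmk:reverse bundle} and \ref{rmk:reverse bundle 2}; Assumption \ref{asmp} enters, as everywhere, to identify the ad hoc and homotopy-theoretic models of Bott periodicity. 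Combining with the preceding corollary yields $\Omega^{\Theta,\Phi,\circ}_i\cong\pi_i THH_A(R)$.

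I expect the main obstacle to be the relative Jones identification of the second paragraph: realizing $\cL\Theta\times_{\cL\Phi}\Phi$ as the realization of the relative cyclic bar construction with coherent index-bundle data that is preserved by the Thom functor. The delicate bookkeeping is of the two-sided $\Omega^2\Phi$-action together with the cyclic structure; the infinite-loop-space hypothesis on $\Phi$ is precisely what allows $\Omega^2\Phi$ to be treated as a strictly commutative parameter object, so that the relative smash products $\wedge_A$ and the bar construction are homotopically well behaved (and $A$ is $\bE_\infty$). Invoking a relative Blumberg--Cohen--Schlichtkrull theorem directly would package this step, but verifying its input hypotheses comes down to the same identification.
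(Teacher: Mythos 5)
Your proposal is correct in outline but takes a genuinely different route from the paper. The paper's sketch first uses the infinite-loop hypothesis to make $A$ commutative and then invokes the base-change formula $THH_A(R) \simeq THH(R) \otimes_{THH(A)} R$ of \cite[Theorem 1.8]{ABGHLM}; this reduces everything to \emph{absolute} topological Hochschild homology, where \cite{BCS} applies off the shelf to identify $THH(R)$ and $THH(A)$ as Thom spectra over $\cL F$ and $\cL \Omega\Phi$. Since the Thom functor commutes with homotopy colimits, $THH_A(R)$ becomes the Thom spectrum over $\cL F \otimes_{\cL\Omega\Phi}\Omega F$, which is then identified with the homotopy limit of (\ref{eq: pair pull}). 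You instead work directly with the relative cyclic bar construction of $\Omega F$ over $\Omega^2\Phi$ and aim to identify its realization with $\cL\Theta\times_{\cL\Phi}\Phi$ before applying the Thom functor degreewise --- in effect proving a relative Blumberg--Cohen--Schlichtkrull theorem from scratch, which you rightly flag as the main obstacle. The trade-off is clear: the paper's route outsources the hardest homotopy-theoretic content to \cite{ABGHLM} and \cite{BCS} and only needs a comparatively mild two-sided bar identification of the homotopy pullback (in the spirit of Proposition \ref{prop: flow pt pair case}); your route is more self-contained and makes the simplicial structure explicit, but the relative Jones/cyclic-bar identification you would need to verify is essentially equivalent in difficulty to the relative BCS statement you would otherwise cite, so nothing is gained in rigor. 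Both arguments use the infinite-loop hypothesis for the same purpose (commutativity of $A$, so that the relative constructions are well behaved), and both terminate by combining with the preceding corollary.
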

   We expect that the infinite loop space hypothesis is unnecessary.
   \begin{proof}[Sketch proof]
       Since $\Phi$ is an infinite loop space, $A$ is (up to weak equivalence) commutative. By \cite[Theorem 1.8]{ABGHLM} (cf. also \cite[Proposition 3.4]{BMY}), we may then identify: $THH_A(R) \simeq THH(R) \otimes_{THH(A)} R$ . Using \cite{BCS} to identify (nonrelative) $THH$ of Thom spectra as a Thom spectrum and the fact that the Thom spectrum functor commutes with (homotopy) colimits, we may then identify $THH_A(R)$ with the Thom spectrum of an appropriate bundle over $\cL F \otimes_{\cL \Omega \Phi} \Omega F$, which is equivalent to the homotopy limit of (\ref{eq: pair pull}).
   \end{proof}

\subsection{Flow modules over a space}\label{sec: rel flow mod}

    We may also consider left flow modules and capsules equipped with an evaluation map to an auxiliary space $X$ (cf. \cite[Section 7.6]{PS}), compatible with a fixed vector bundle over this space.

    Let $(\Theta, \Phi)$ be a tangential pair of rank $N<\infty$, with $F$ the homotopy fibre of the map $\Theta \to \Phi$. Let $\cF$ be a $(\Theta,\Phi)$-oriented flow category. Let $Z$ be a topological space, and let $E$ be a complex vector bundle over $Z$ of rank $N$, classified by a map $Z \to BU(N)$. Assume this classifying map is equipped with a lift to $\Phi$, $f: Z \to \Phi$.

    \begin{defn}\label{defn:over a space}
        A $(\Theta,\Phi)$-oriented left module over $\cF$ of rank $i$ \emph{living over $(Z, f)$} consists of:
        \begin{itemize}
            \item A $(\Theta,\Phi)$-oriented left module $\cL$ of rank $i$.
            \item ``Evaluation'' maps $ev_{x*}^\cL: \cL_{x*} \to Z$, which are compatible with breaking.
            
        \end{itemize}
        such that for each $x \in \cF$, the following diagram commutes:
        \begin{equation}\label{eq: ev com rel}
            \xymatrix{
                \cL_{x*}
                \ar_{ev_{x*}}[r]
                \ar_{\rho_{x*}}[d]
                &
                Z
                \ar_f[d]
                \\
                \bU_{x*}
                \ar_\lambda[r]
                &
                \Phi
            }
        \end{equation}
        where the map $\lambda$ is described as follows. Let $\bD_{j0}$ be a $(\Theta,\Phi)$-oriented abstract disc with $j$ inputs and no outputs, with underlying domain $D$ and homotopy lift to $\Phi$ given by a map $h: D \to \Phi$. Let $p$ be the interior marked point in $D$. Then we set $\lambda(\bD) = h(p) \in \Phi$. In words, (\ref{eq: ev com rel}) says that the $\Phi$-structure on $ev^*E$ and the $\Phi$-structure obtained by restricting the complex part of the Cauchy-Riemann data to the interior marked point agree.

        Similarly, a $\Theta$-oriented capsule $\cM$ of rank $i$ \emph{living over $(Z, f)$} consists of a $(\Theta,\Phi)$-oriented capsule $\cM$, along with an evaluation map $ev^\cM_{**}: \cM_{**} \to Z$ such that the analogue of (\ref{eq: ev com rel}) commutes.

        Bordisms of left flow modules and capsules over $(Z,f)$ are defined similarly. We write $\Omega^*_{\Theta,\Phi}(\cF; Z, f)$ and $\Omega^{\Theta,\Phi,\circ}_*(Z,f)$ for the corresponding left module and capsule bordism groups respectively.
    \end{defn}
    Note that the capsule bordism groups $\Omega^{\Theta,\Phi,\circ}_*$ do not necessarily agree with the capsule bordism groups living over a point: instead they compute $\Omega^{\Theta,\Phi,\circ}_*(\Phi)$, where $\Phi$ is equipped with the tautological $\Phi$-structure and corresponding vector bundle.

    However, there is a forgetful map which forgets the map to $Z$, from $\Omega_*^{\Theta,\Phi,\circ}(Z,f)$ to $\Omega_*^{\Theta,\Phi,\circ}$.

\begin{rmk}[Stabilisation]
    Assume $(\Theta(N), \Phi(N))$ and $(\Theta(N+1), \Phi(N+1))$ are tangential pairs of rank $N<\infty$ and $N+1$ respectively, with a map of pairs $(\Theta(N), \Phi(N)) \to (\Theta(N+1), \Phi(N+1))$ compatible with stabilisation in $B\operatorname{Pin}(N)$ and $BS_\pm U(N)$.

    Then given a $\Phi(N)$-structure on $E$, there is a canonical induced $\Phi(N+1)$-structure on $\bC \oplus E$, which we write as $\bC \oplus f$. Using this, we obtain stabilisation maps for left module and capsule bordism over a space:
    \begin{equation*}
        \Xi: \Omega^*_{\Theta(N), \Phi(N)}(\cF; Z, f) \to \Omega^*_{\Theta(N+1), \Phi(N+1)}\left(\Xi(\cF); Z, \bC \oplus f\right)
    \end{equation*}
    and
    \begin{equation*}
        \Xi: \Omega_*^{\Theta(N), \Phi(N),\circ}(Z,f) \to \Omega_*^{\Theta(N+1),\Phi(N+1), \circ}(Z,\bC \oplus f)
    \end{equation*}

    Similarly for tangential pairs of rank $N=\infty$, we may define left module and capsule bordism by taking the sequence of tangential pairs defined by pullback similarly to Definition \ref{rmk: N inf fin}. Then assuming $\cF$ has a $(\Theta(N_0), \Phi(N_0))$ structure, $E$ has rank $N_0$ and $E$ is equipped with a $\Phi(N_0)$-structure $f$ for some $N_0$, we define $\Omega^*_{\Theta,\Phi}(\cF; Z, f)$ and $\Omega_*^{\Theta, \Phi}(Z, f)$ to be the colimit as $N \to \infty$ of the maps $\Xi$ above.
\end{rmk}

    Now let $(\Theta, \Phi)$ be a tangential pair of rank $1 \leq N \leq \infty$. Generalising Proposition \ref{prop: capsule comp}, we have:
    \begin{prop}\label{prop: rel capsule}
        The capsule bordism groups $\Omega_*^{\Theta,\Phi,\circ}(Z,f)$ are isomorphic to the bordism groups of manifolds with tangent bundle stably classified by the index bundle over the homotopy limit of the diagram
        \begin{equation}\label{eq: hompul}
            \xymatrix{
                &
                Z
                \ar[d]
                \\
                \cL \Theta
                \ar[r]
                &
                \cL \Phi
            }
        \end{equation}

    \end{prop}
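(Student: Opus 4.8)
The plan is to follow the pattern of Proposition~\ref{prop: capsule comp}, unwinding Definition~\ref{defn:over a space} in the capsule case and then identifying the relevant classifying space. First I would recall that a $(\Theta,\Phi)$-oriented capsule $\cM$ of degree $i$ living over $(Z,f)$ consists of a closed manifold $M := \cM_{**}$, an inner product space $V$, a classifying map $\rho\colon M \to \bU^{\Theta,\Phi,V}_{00}$ for the index bundle together with an isomorphism $st\colon I^\cM_{**} \to \bR \oplus \bV_{**} \oplus \bR^i$, and an evaluation map $ev\colon M \to Z$ whose composite with $f$ agrees with $\lambda\circ\rho$ up to a prescribed homotopy, where $\lambda$ records the $\Phi$-structure at the interior marked point (the analogue of \eqref{eq: ev com rel}). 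The pair $(\rho,ev)$ together with that homotopy is exactly the data of a map $M \to P_V$, where $P_V$ is the homotopy pullback of
\[
    Z \xrightarrow{\ f\ } \Phi \xleftarrow{\ \lambda\ } \bU^{\Theta,\Phi,V}_{00},
\]
and $st$ exhibits the stable tangent bundle of $M$ as the pullback along $M\to P_V$ of the index bundle $\bV_{00}$ (the $\bR$ and $\bR^i$ summands only implementing a degree shift). Passing to the homotopy colimit over $V$, and then over $N$ as in Section~\ref{sec: Nstab}, replaces $P_V$ by a space $P$ carrying a well-defined virtual index bundle.

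Next I would observe that a $(\Theta,\Phi)$-oriented bordism of capsules over $(Z,f)$ is the same data with $M$ replaced by a compact manifold with boundary, so that bordism classes of capsules over $(Z,f)$ are in natural bijection — compatibly with the abelian group structures given by disjoint union — with bordism classes of closed manifolds equipped with a map to $P$ and a stable identification of their tangent bundle with the pulled-back index bundle. In other words, $\Omega^{\Theta,\Phi,\circ}_*(Z,f)$ is precisely the tangential bordism group of the structure determined by the index bundle over $P$; equivalently, by the same Pontrjagin--Thom argument used for Propositions~\ref{prop: bord pt} and \ref{prop: capsule comp} (together with Remark~\ref{rmk:reverse bundle} to pass between stable tangent and stable normal data) it is $\pi_*$ of the Thom spectrum of minus that bundle.

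It then remains to identify $P$ with the homotopy limit of \eqref{eq: hompul}. By Lemma~\ref{lem: htpy type pairs} there is a homotopy equivalence between $\bU_{00}(\Theta,\Phi)$ and the homotopy pullback of \eqref{eq: pair pull}; under it I expect the interior-marked-point evaluation $\lambda\colon \bU_{00}(\Theta,\Phi)\to\Phi$ to correspond to the projection onto the $\Phi$-factor, because contracting the boundary loop of a $(\Theta,\Phi)$-oriented disc through its contractible domain factors through the interior marked point, so the nullhomotopy carried by a point of $\bU_{00}$ is canonically based at $\lambda$ of that point. Granting this, the pasting lemma for homotopy pullbacks identifies $P$ with the homotopy pullback of the composite $Z\xrightarrow{f}\Phi\to\cL\Phi\xleftarrow{}\cL\Theta$, where $\Phi\to\cL\Phi$ is the constant-loop inclusion, i.e.\ with the homotopy limit of \eqref{eq: hompul} (note that the case $Z=\Phi$, $f=\mathrm{id}$ then recovers $\bU_{00}(\Theta,\Phi)$, as it must). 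All of this is compatible with the stabilisation maps in $V$ and $N$ (Proposition~\ref{prop: Nstab}), so the identification is independent of the auxiliary choices. The main obstacle is precisely the point about $\lambda$: checking carefully, and compatibly with stabilisation, that the interior-marked-point evaluation matches the projection in the homotopy-pullback model of Lemma~\ref{lem: htpy type pairs}. Once that is pinned down, the remainder is a formal manipulation of homotopy limits plus the Pontrjagin--Thom argument already in hand.
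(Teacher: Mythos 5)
Your argument is correct, but it takes a different route from the paper. The paper's proof is a two-line reduction: it pulls back the fibration $\Theta \to \Phi$ along $f$ to obtain a new tangential structure $f^*\Theta \to Z$, observes that capsules over $(Z,f)$ for the pair $(\Theta,\Phi)$ are naturally the same as capsules for the pair $(f^*\Theta, Z)$, and then quotes Proposition \ref{prop: capsule comp}. You instead unwind Definition \ref{defn:over a space} directly, package $(\rho, ev)$ as a map to the homotopy pullback of $Z \to \Phi \leftarrow \bU_{00}^{V,\Theta,\Phi}$, run Pontrjagin--Thom, and then identify that pullback with the homotopy limit of \eqref{eq: hompul} via Lemma \ref{lem: htpy type pairs} and pasting. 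The content is ultimately similar: the paper's reduction hides the same pasting of homotopy pullbacks (one needs $\cL(f^*\Theta) \simeq \cL\Theta \times^h_{\cL\Phi} \cL Z$ and then a composition of pullback squares to match the stated diagram), and it likewise relies implicitly on the compatibility of $\lambda$ with the identification in Lemma \ref{lem: htpy type pairs}. What the paper's route buys is brevity and reuse of an established computation; what yours buys is a self-contained argument that makes explicit the one genuinely nontrivial check, namely that the interior-marked-point evaluation $\lambda$ corresponds to the $\Phi$-projection in the homotopy-pullback model — you are right that this is the point to pin down, and your reasoning for it (the contraction of the boundary loop supplied by the disc terminates at the constant loop based at the value of the $\Phi$-lift at the marked point) is the correct one. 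The only cosmetic discrepancy is that \eqref{eq: ev com rel} is required to commute strictly rather than up to a specified homotopy, but since $\Theta \to \Phi$ is assumed to be a fibration this does not affect the bordism classes.
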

    \begin{proof}
        By pulling back the fibration $\Theta \to \Phi$ along $f$, we obtain a new tangential structure $f^*\Theta \to Z$. Then there is a natural isomorphism:
        \begin{equation*}
            \Omega_*^{\Theta,\Phi,\circ}(Z,f) \cong \Omega_*^{f^*\Theta, Z, \circ}
        \end{equation*}
        and so we deduce the result from Proposition \ref{prop: capsule comp}.
    \end{proof}
    Under Assumption \ref{asmp}, Corollary \ref{cor: rel THH} identifies these groups with $THH_{\operatorname{Thom}(\Omega^2 Z \to BU)}(R)$.
    \begin{cor}
        \begin{enumerate}
            \item If $F$ is contractible, $\Omega^{\Theta,\Phi,\circ}_*(Z,f) \cong \Omega^{fr}_*(Z, E)$, where the RHS is consists of bordism classes of manifolds mapping to $Z$ with a framing relative to $E$. For $N < \infty$, we may identify this with $\pi_{*-N}(\operatorname{Thom}(-E \to Z))$ (where $\operatorname{Thom}$ denotes the Thom spectrum rather than space).

            \item If $\Phi$ is contractible, $\Omega^{\Theta,\Phi,\circ}_*(Z,f)$ is isomorphic to the bordism groups of manifolds over $Z$ with tangent bundles classified by maps to $\bU_{00}^F \simeq \cL F$, equivalently $\pi_{*+N}(Z \wedge \bT_{00}(F))$.

            \item If $Z$ is contractible, the homotopy pullback of (\ref{eq: hompul}) is $\cL F$ and therefore $\Omega^{\Theta,\Phi,\circ}_*(Z,f)$ is isomorphic to $\Omega_*^{F, \circ}$, the capsule bordism groups with respect to the tangential structure $F$.
        \end{enumerate}
        
    \end{cor}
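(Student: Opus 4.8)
The plan is to reduce all three statements to Proposition~\ref{prop: rel capsule}, which identifies $\Omega_*^{\Theta,\Phi,\circ}(Z,f)$ with the bordism group of closed manifolds whose stable tangent bundle is classified by the index bundle over the homotopy pullback $P$ of (\ref{eq: hompul}); for parts (1) and (3) it is more convenient to use instead the identification $\Omega_*^{\Theta,\Phi,\circ}(Z,f)\cong\Omega_*^{f^*\Theta,Z,\circ}$ established in the proof of that proposition, where $f^*\Theta\to Z$ is the pullback of the fibration $\Theta\to\Phi$ along $f$. In each case one first simplifies $P$ (respectively $f^*\Theta$) using only formal properties of the free loop space functor $\cL=\operatorname{Map}(S^1,-)$ — it preserves weak equivalences and all homotopy limits, hence homotopy fibre sequences — and then identifies the relevant index bundle. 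For (1): since $F=\operatorname{hofib}(\Theta\to\Phi)$ is contractible, $\Theta\to\Phi$ is an equivalence, so $f^*\Theta\to Z$ is an equivalence and the tangential pair $(f^*\Theta,Z)$ is equivalent to $(\mathrm{id}_Z)$, whose branes (unwinding Definition~\ref{defn:over a space}) are precisely manifolds over $Z$ framed relative to the restriction of $E$; hence $\Omega_*^{\Theta,\Phi,\circ}(Z,f)\cong\Omega^{fr}_*(Z,E)$. For $N<\infty$, embedding $E$ into a trivial bundle and applying Pontrjagin--Thom rewrites $\Omega^{fr}_*(Z,E)$ as $\pi_{*-N}(\operatorname{Thom}(-E\to Z))$, the sign on $E$ and the shift by $N$ being exactly the passage between stable tangential and stable normal data of Remark~\ref{rmk:reverse bundle}.

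For (2): $\Phi$ contractible forces $\cL\Phi\simeq *$ and $F\simeq\Theta$, so $P$ is the homotopy pullback of $Z\to *\leftarrow\cL\Theta$, i.e.\ $P\simeq Z\times\cL\Theta\simeq Z\times\bU_{00}(F)$ by Lemma~\ref{lem: htpy type pairs}. Moreover $E$ carries a $\Phi$-structure with $\Phi$ contractible, hence is stably trivial, so the universal index bundle over $P$ is pulled back from the $\bU_{00}(F)$-factor. Pontrjagin--Thom then identifies $\Omega_*^{\Theta,\Phi,\circ}(Z,f)$ with $\pi_*$ of the smash of $Z$ with the Thom spectrum of (the negative of) that index bundle; by the loop-reversal argument of Remark~\ref{rmk:reverse bundle 2} the Thom spectra of the index bundle and of its negative over $\cL F$ agree, and this spectrum is $\bT_{00}(F)$ (Section~\ref{sec: Thom}), so after tracking ranks one obtains the displayed $\pi_{*+N}(Z\wedge\bT_{00}(F))$.

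For (3): if $Z$ is contractible the map $Z\to\cL\Phi$ is homotopic to the inclusion of the basepoint, so $P$ is the homotopy fibre of $\cL\Theta\to\cL\Phi$, which is $\cL\operatorname{hofib}(\Theta\to\Phi)=\cL F=\bU_{00}(F)$ since $\cL$ preserves homotopy fibre sequences; matching the index bundle with the one defining capsule bordism for the tangential structure $F$ and invoking Proposition~\ref{prop: capsule comp} gives $\Omega_*^{\Theta,\Phi,\circ}(Z,f)\cong\Omega_*^{F,\circ}$. Most cleanly, $f^*\Theta=Z\times_\Phi\Theta\simeq *\times_\Phi\Theta=\operatorname{hofib}(\Theta\to\Phi)=F$ when $Z$ is contractible, so $(f^*\Theta,Z)$ is literally the tangential pair $(F,*)$ and $\Omega_*^{f^*\Theta,Z,\circ}=\Omega_*^{F,*,\circ}=\Omega_*^{F,\circ}$, bypassing any index-bundle identification.

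The main obstacle is not the homotopy-theoretic reductions above, which are formal, but the index-bundle bookkeeping: one must check that under the equivalences $P\simeq Z$, $Z\times\bU_{00}(F)$ and $\bU_{00}(F)$ the universal index bundle really does restrict, up to stable equivalence, to $-E$, to the pullback of the $\bT_{00}(F)$-bundle along a stably trivial factor, and to the index bundle defining $\Omega_*^{F,\circ}$ respectively, and that the resulting degree conventions match those in the statement. Making this precise requires carrying out — rather than merely invoking — the tangential-versus-normal and loop-reversal manipulations of Remarks~\ref{rmk:reverse bundle} and~\ref{rmk:reverse bundle 2}, and in particular getting the indexing of the unstable bordism theories right (compare \cite{Galatius_etal,Nguyen}); part (3) should be stated first precisely because it is the one case where this can be sidestepped.
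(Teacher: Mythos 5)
Your proposal is correct and follows exactly the route the paper intends: the corollary is stated without proof as an immediate consequence of Proposition \ref{prop: rel capsule} (together with Proposition \ref{prop: capsule comp} and Lemma \ref{lem: htpy type pairs}), obtained by computing the homotopy pullback of (\ref{eq: hompul}) in each degenerate case exactly as you do. Your fleshing-out of the index-bundle and degree bookkeeping (via Remarks \ref{rmk:reverse bundle} and \ref{rmk:reverse bundle 2}) is a reasonable account of what the paper leaves implicit.
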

    \begin{cor}\label{cor: aaa}
        Assume Assumption \ref{asmp}. 

        \begin{enumerate}
        
            \item If $\Phi$ is contractible, $\Omega^{\Theta,\Phi,\circ}_*(Z,f)$ is isomorphic to $\pi_*(Z \wedge THH(R))$.

            \item If $Z$ is contractible, the homotopy pullback of (\ref{eq: hompul}) is $\cL F$ and therefore $\Omega^{\Theta,\Phi,\circ}_*(Z,f) \cong \pi_*(THH(R))$.
        \end{enumerate}
    \end{cor}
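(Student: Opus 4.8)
<br>

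The plan is to deduce Corollary \ref{cor: aaa} directly from Proposition \ref{prop: rel capsule} together with Corollary \ref{cor: rel THH} and the two preceding corollary items, by specialising the auxiliary space $Z$ and the fibre $\Phi$ in turn. Recall that Proposition \ref{prop: rel capsule} identifies $\Omega^{\Theta,\Phi,\circ}_*(Z,f)$ with the bordism groups of manifolds whose stable tangent bundle is classified by the index bundle over the homotopy limit of \eqref{eq: hompul}, and under Assumption \ref{asmp} a Pontrjagin--Thom argument (as in the proof of Proposition \ref{prop: flow pt pair case}, using Remark \ref{rmk:reverse bundle} to pass between stable tangential and stable normal data and the model for Thom spectra of \cite{Antolin-Camarena-Barthel}) identifies this with $\pi_*$ of the Thom spectrum of the appropriate virtual bundle over that homotopy limit.

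For part (1), we take $\Phi$ contractible. Then $\cL\Phi \simeq \mathrm{pt}$ and the map $\cL\Theta \to \cL\Phi$ is the constant map, so the homotopy limit of \eqref{eq: hompul} is simply $Z \times \cL\Theta = Z \times \cL F$ (since $F \simeq \Theta$ when $\Phi$ is contractible). The index bundle over this product splits as the external sum of a bundle over $Z$ (which is trivial, since the $\Phi$-structure records a contractible amount of data and the relevant factor of the index bundle is the one pulled back from the point $\Phi$) and the index bundle $\mathrm{Ind}_{\cL F}$ over $\cL F \simeq \bU_{0,0}(F)$; here one invokes the splitting of the universal index bundle described in the introductory remark after \eqref{eq: above}. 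Hence the Thom spectrum is $Z_+ \wedge \bT_{0,0}(F)$, and by Proposition \ref{prop: capsule comp} applied to the tangential structure $F$ we have $\bT_{0,0}(F) = \tilde R$, which under Assumption \ref{asmp} is $THH(R)$ by \cite{BCS}. This recovers the statement $\Omega^{\Theta,\Phi,\circ}_*(Z,f) \cong \pi_*(Z \wedge THH(R))$; indeed this is just item (1) of the previous (unnumbered) corollary combined with the identification of $\bT_{0,0}(F)$ with $THH(R)$ supplied under Assumption \ref{asmp}.

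For part (2), we take $Z$ contractible. Then the map $Z \to \cL\Phi$ factors through a point of $\cL\Phi$, namely the constant loop at the basepoint, and the homotopy limit of \eqref{eq: hompul} becomes the homotopy fibre of $\cL\Theta \to \cL\Phi$ over that basepoint, which is $\cL F$ (using that $\cL$ preserves fibration sequences of based spaces up to the usual care, or more directly that the homotopy pullback of $\mathrm{pt} \to \cL\Phi \leftarrow \cL\Theta$ is $\cL(\mathrm{hofib}(\Theta\to\Phi)) = \cL F$). So $\Omega^{\Theta,\Phi,\circ}_*(Z,f) \cong \Omega^{F,\circ}_*$, the capsule bordism groups of the tangential structure $F$, which by Proposition \ref{prop: capsule comp} is $\pi_*\tilde R$, and this is $\pi_*THH(R)$ under Assumption \ref{asmp} by \cite{BCS}. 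This is exactly item (3) of the previous corollary combined with the $THH$ identification.

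The only genuinely substantive input beyond bookkeeping is the identification $\bT_{0,0}(F) \simeq THH(R)$, i.e. that the Thom spectrum of the index bundle over the free loop space $\cL F \simeq \bU_{0,0}(F)$ computes topological Hochschild homology of $R = \bT_{0,1}(F)$; this is where Assumption \ref{asmp} and \cite{BCS} enter, matching the model for Bott periodicity used to define $R$ with the one for which the Blumberg--Cohen--Schlichtkrull theorem applies. The remaining work is routine: verifying that the homotopy limits degenerate as claimed when $Z$ or $\Phi$ is contractible (a standard manipulation of homotopy pullback squares and free loop spaces), and checking that the index bundle splits compatibly so that the smash product $Z_+ \wedge (-)$ appears on the Thom-spectrum level. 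I do not expect any of these to present a serious obstacle, so the corollary is essentially a formal consequence of Proposition \ref{prop: rel capsule} and Corollary \ref{cor: rel THH} once the specialisations are carried out.
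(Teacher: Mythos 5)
Your proposal is correct and follows essentially the same (implicit) route as the paper: Corollary \ref{cor: aaa} is obtained by specialising the preceding corollary's items for contractible $\Phi$ respectively contractible $Z$, and then identifying $\bT_{00}(F) \simeq \tilde R$ with $THH(R)$ via Proposition \ref{prop: capsule comp}, Assumption \ref{asmp} and \cite{BCS}. The degeneration of the homotopy pullback \eqref{eq: hompul} to $Z \times \cL F$ (resp.\ $\cL F$) and the triviality of the complex part of the index bundle when $\Phi$ is contractible are exactly the points the paper relies on.
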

    If $\Phi$ and $\Theta$ are contractible, we recover the framed bordism groups of $Z$, as in \cite{PS} (note $THH(\bS) \simeq \bS$).

    If $\Theta$ and $\Phi$ admit the structures of loop spaces (compatible with the diagram (\ref{eq: strong})), $R$ is then an $\bE_2$-ring spectrum, and there is a splitting \cite[Theorem 2]{BCS}
    \begin{equation*}
        THH(R) \cong R \wedge T
    \end{equation*}
    where $T$ is the Thom spectrum of a virtual bundle over $F$, and is a suspension spectrum if furthermore $F$ is a double loop space \cite[Theorem 3]{BCS}. 

    In particular, if $\Phi$ is contractible and (\ref{eq: strong}) is a diagram of double loop spaces, one expects a map $\Omega^{\Theta,\Phi,\circ}_*(Z, f) \to \Omega^{\Theta,\Phi}_*(Z)$. We produce this in the complex-oriented setting:

    \begin{lem} \label{lem:MU splitting}
        Let $1 \leq N \leq \infty$, and $(\Theta, \Phi)$ be (a fibrant replacement of) either:
        \begin{itemize}
            \item $(\operatorname{pt}, BS_\pm U(N))$
            \item $(B\operatorname{Pin}(N), B\operatorname{Pin}(N) \times BS_\pm (N))$, where the map $\Phi \to BS_\pm U(N)$ forgets the first factor and the map $\Theta \to \Phi$ is given by the identity on the first factor and complexification in the second.
        \end{itemize}
        
        Then there is a natural map to complex bordism relative to the bundle $E$:
        \begin{equation*}
            \Omega_*^{\Theta,\Phi,\circ}(Z,f) \to \Omega^U_*(Z, E) 
        \end{equation*}
        where $\Omega^U_*(Z, E)$ denotes bordism of manifolds over $Z$, say $h:M \to Z$ equipped with stable complex structures on $TZ-h^*E$. If $N < \infty$, the right hand side can be identified with $MU_{*-N}(\operatorname{Thom}(-E \to Z))$. 
    \end{lem}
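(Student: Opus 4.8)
The plan is to reduce the statement to the computation of the relevant capsule-bordism Thom spectrum and then exhibit a map of spectra out of it realising complex orientation. First I would recall, via Proposition \ref{prop: rel capsule}, that $\Omega^{\Theta,\Phi,\circ}_*(Z,f)$ is the bordism group of manifolds whose stable tangent bundle is classified by the index bundle over the homotopy limit $W$ of (\ref{eq: hompul}); equivalently, by the Pontrjagin--Thom argument used throughout Section \ref{sec: Thet flow} (and Remark \ref{rmk:reverse bundle}), it is $\pi_*$ of the Thom spectrum of the (negative of the) index bundle over $W$. So it suffices to produce, for each of the two listed pairs, a map of ring- (or module-) spectra from that Thom spectrum to the spectrum computing $\Omega^U_*(Z,E)$, which for $N<\infty$ is $\Sigma^{-N}MU\wedge\operatorname{Thom}(-E\to Z)$, i.e. $MU_{*-N}(\operatorname{Thom}(-E\to Z))$; one then takes the colimit over $N$ for $N=\infty$.

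Next I would identify $W$ in each case. In the first case $(\Theta,\Phi)=(\mathrm{pt},BS_\pm U(N))$ the diagram (\ref{eq: hompul}) is
\[
\xymatrix{
& Z \ar[d]^{f} \\
\cL\,\mathrm{pt} \ar[r] & \cL\,BS_\pm U(N)
}
\]
so $W$ is the homotopy fibre of the evaluation-at-basepoint composite $Z\to BS_\pm U(N)$ twisted against the constant loop; concretely $W$ sits over $Z\times\Omega^2 BS_\pm U(N)\simeq Z\times(BU\times\bZ)$ near the relevant component, and the index bundle restricted there is (stably) $-E$ plus the universal index bundle of the loop factor. In the second case $(\Theta,\Phi)=(B\operatorname{Pin}(N),B\operatorname{Pin}(N)\times BS_\pm U(N))$ with $\Theta\to\Phi$ the identity-times-complexification map, the fibre $F$ of $\Theta\to\Phi$ is (a fibrant model of) $U/O$-type; here I would instead use the splitting philosophy recalled just before the lemma: since in both cases $(\ref{eq: strong})$ can be promoted to a diagram of double loop spaces, $R$ is $\bE_2$ and $THH(R)\simeq R\wedge T$ with $T$ a suspension spectrum, and $R=MU$ (by the computation of Example in the Introduction identifying this $(\Theta,\Phi)$-bordism with $MU$). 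The splitting, combined with Corollary \ref{cor: aaa}(1) (applicable once $\Phi$ is replaced by its contractible part after dividing out the $B\operatorname{Pin}$ factor, i.e. using that the map forgets the first factor), gives a natural projection $THH_{\operatorname{Thom}(\Omega^2 Z\to BU)}(R)\to R\wedge(\text{something})\to R$-homology of $Z$ relative to $E$.

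Concretely, the map I want is: $\Omega^{\Theta,\Phi,\circ}_*(Z,f)\to\Omega^U_*(Z,E)$ sends a capsule $(\cM_{**},ev,\rho,st)$ to the class of $\cM_{**}\xrightarrow{ev}Z$ equipped with the stable complex structure on $T\cM_{**}$ obtained by composing $st$ with the complex structure that the $(\Theta,\Phi)$-orientation data already carries — the point being that in both listed cases the index bundle over $\bU_{00}(\Theta,\Phi)$ is \emph{canonically} a complex (virtual) bundle relative to $ev^*E$, because the $\Phi$-structure factors through $BS_\pm U(N)$ (so $E$ is complex and $2c_1(E)=0$) and the $\Theta$-structure either is trivial (first case) or is complexified from the $\operatorname{Pin}$ structure (second case). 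Thus the Cauchy--Riemann index bundle, being built from complex bundles with totally real boundary conditions that are themselves complexifications, acquires a stable complex structure relative to $E$, exactly as in \cite[Section 3]{P}. I would check this compatibility is respected by gluing and bordism (routine, following the coherence already set up in Lemma \ref{lem:upshot} and Definition \ref{def: Theta or}), so the assignment is a well-defined homomorphism; naturality in $(Z,f)$ and compatibility with the stabilisation maps $\Xi$ are immediate from the construction. Finally, for $N<\infty$, unwinding the Thom-spectrum model of $\Omega^U_*(Z,E)$ gives the identification with $MU_{*-N}(\operatorname{Thom}(-E\to Z))$, as in Remark \ref{rmk:reverse bundle} and \cite[Section 3]{Galatius_etal}.

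The main obstacle I expect is pinning down the \emph{canonical} stable complex structure on the Cauchy--Riemann index bundle relative to $E$ in a way that is manifestly natural and gluing-compatible — i.e. making precise that, over $\bU_{00}(\Theta,\Phi)$ for these two pairs, the universal index bundle is not merely abstractly complex-orientable but carries a preferred complex structure coming from the $(\Theta,\Phi)$-data, and that this choice is stable under the concatenation maps $\rho_k$ and the stabilisation maps $\Xi$. This is where Assumption \ref{asmp} and the $\bE_2$-multiplicativity of the Bott-periodicity models of Section \ref{sec: abstr dis} are really needed; everything else (Pontrjagin--Thom, the identification of the target with $MU$-homology of a Thom spectrum, passing to the colimit $N\to\infty$) is bookkeeping already carried out in the analogous framed case in \cite{PS} and in Proposition \ref{prop: rel capsule}.
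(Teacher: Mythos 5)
Your core construction is the right one and matches the paper's: a $(\Theta,\Phi)$-oriented capsule over $(Z,f)$ hands you a manifold whose stable tangent bundle is classified by a map to a space of Cauchy--Riemann operators on the disc whose totally real boundary condition is trivial (first case) or a complexification relative to $E$ (second case), and \cite[Proposition 3.15]{P} then endows that manifold with a natural stable complex structure (relative to $E$ in the second case); the same applies to bordisms, and that is the entire proof in the paper. Where you diverge is in the surrounding scaffolding: the detour through Pontrjagin--Thom spectra, the identification of the homotopy limit $W$, the splitting $THH(R)\simeq R\wedge T$, and Corollary \ref{cor: aaa} is not needed for this lemma, and your closing claim that Assumption \ref{asmp} and the $\bE_2$-multiplicativity of the Bott periodicity models are ``really needed'' is a misdiagnosis. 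The paper deliberately keeps this lemma unconditional --- it is used in the proof of Corollary \ref{cor:1}, and the authors state that Assumption \ref{asmp} is \emph{not} used in the topological applications --- so routing the argument through the conjectural $THH$ identifications would weaken the result. The ``main obstacle'' you identify (pinning down the canonical complex structure on the index bundle compatibly with gluing) is exactly what \cite[Proposition 3.15]{P} supplies at the level of the universal space of Cauchy--Riemann operators, so no appeal to Assumption \ref{asmp} is required; you should strip out the spectrum-level material and present the direct manifold-level argument.
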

    The existence of the `projection' of Lemma \ref{lem:MU splitting} reflects, in our setting, the existence of the natural splitting $THH(MU) \to MU$ \cite{Blumberg-THH}.

    \begin{proof}
        In the first case, a $(\Theta,\Phi)$-oriented capsule over $Z$ includes a manifold whose tangent bundle is classified by a map to the space of Cauchy-Riemann operators over a disc, with trivial real part and possibly non-trivial complex part. By \cite[Proposition 3.15]{P}, such a manifold admits a natural stable complex structure. A similar argument applies to bordisms.

        In the second case, the same argument applies by working with bordisms relative to $E$.
    \end{proof}

    Consider the rank $N$ complex Lagrangian Grassmannian $Sp/U(N)$, which we view as the space of totally real subspaces of $\bH^N$ with respect to $K$ ($I$,$J$,$K$ are the three complex structures). There is an inclusion map $\zeta: Sp/U(N) \to U/O(2N)$ by forgetting $I$ and $J$. Since $Sp/U(N)$ is simply connected, there is a canonical lift $Sp/U(N) \to \widetilde{U/O}(N)$. Recall that a version of Bott periodicity implies that $\Omega Sp/U \simeq U/O$.

    Let $\eta: U/O \to BO$ be the forgetful map, classifying the real part of a real subspace of $\bC^N$. Let $\Omega^\eta_*$ be the bordism groups of manifolds equipped with the tangential structure $\eta$.

    \begin{lem}\label{lem:Sp/U splitting}
        Let $\Phi$ be contractible, and $\Theta = Sp/U \to \widetilde{U/O}(N)$ be the infinite complex Lagrangian Grassmannian equipped with the map above. Then there is a natural map from $(\Theta,\Phi)$-capsule bordism to $\eta$-bordism over $Z$:
        \begin{equation*}
            \Omega^{Sp/U,\circ}_*(Z) \to \Omega^\eta_*(Z)
        \end{equation*}
    \end{lem}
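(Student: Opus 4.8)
The proof runs parallel to that of Lemma \ref{lem:MU splitting}. A $(Sp/U,\Phi)$-oriented capsule $\cM$ living over $(Z,f)$ consists of a closed manifold $\cM_{**}$, an evaluation map $ev\colon\cM_{**}\to Z$, and a classifying map $\rho\colon\cM_{**}\to\bU_{00}(Sp/U,\Phi)$ under which $T\cM_{**}$ is stably identified with the index bundle $\bV_{00}$ (Definition \ref{defn:over a space}); since $\Phi$ is contractible the compatibility constraint (\ref{eq: ev com rel}) is vacuous, and by Lemma \ref{lem: htpy type pairs} we have $\bU_{00}(Sp/U,\Phi)\simeq\bU_{00}(Sp/U)\simeq\cL(Sp/U)$. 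The plan is to construct a map $c\colon\cL(Sp/U)\to U/O$ together with an isomorphism of virtual real vector bundles over $\cL(Sp/U)$ between $\bV_{00}$ and the pullback along $c$ of the tautological totally real subbundle $\xi$ on $U/O$ (i.e.\ the bundle pulled back from $BO$ along $\eta$). Granting this, $c\circ\rho\colon\cM_{**}\to U/O$ is a lift through $\eta$ of the classifying map $\cM_{**}\to BO$ of the stable tangent bundle, so $(\cM_{**},ev,c\circ\rho)$ represents a class in $\Omega^\eta_*(Z)$; running the same construction on bordisms of capsules shows the assignment is well defined on bordism classes, and it is visibly additive under disjoint union and natural in $Z$.

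To build $c$: $Sp/U$ is an infinite loop space, so the evaluation fibration splits, $\cL(Sp/U)\simeq Sp/U\times\Omega(Sp/U)$, and real Bott periodicity gives $\Omega(Sp/U)\simeq U/O$, hence $\cL(Sp/U)\simeq Sp/U\times U/O$. Using the Whitney-sum $H$-space structure $\oplus\colon U/O\times U/O\to U/O$ and the inclusion $\zeta\colon Sp/U\to U/O$, set
\[
c\colon\cL(Sp/U)\;\simeq\;Sp/U\times U/O\xrightarrow{\ \zeta\times\mathrm{id}\ }U/O\times U/O\xrightarrow{\ \oplus\ }U/O .
\]
Because realification is additive ($\eta$ is an $H$-map), $c^*\xi$ splits over $Sp/U\times U/O$ as the external Whitney sum of the pullback of the universal bundle on $BO$ along $\eta\circ\zeta$ (on the first factor) and along $\eta$ (on the second factor).

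It remains to identify $\bV_{00}$ with $c^*\xi$. The heuristic description of the index bundle over a free loop space (the Remark following (\ref{eq: above}), together with Remark \ref{rmk:reverse bundle 2}) splits $\bV_{00}$ over $Sp/U\times\Omega(Sp/U)$ as $\eta(\zeta(x))\oplus\mathrm{ind}(\gamma)$, where $\eta(\zeta(x))$ is the underlying real bundle of the totally real subspace $\zeta(x)$ and $\mathrm{ind}\colon\Omega(Sp/U)\to BO$ is the Cauchy-Riemann index map, which — by pulling back Lemma \ref{lem: conn est} along the canonical lift $\widetilde\zeta\colon Sp/U\to\widetilde{U/O}$ — is the composite $\Omega(Sp/U)\xrightarrow{\Omega\widetilde\zeta}\Omega(\widetilde{U/O})\xrightarrow{\ \simeq\ }BO$. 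Comparing the two splittings, the desired identification reduces to the claim that, under the Bott equivalence $\Omega(Sp/U)\simeq U/O$, the index map $\mathrm{ind}$ agrees (stably) with $\eta$; equivalently, that $\widetilde\zeta$ is a delooping of $\eta$ relative to the Bott equivalences $\Omega(Sp/U)\simeq U/O$ and $\Omega(\widetilde{U/O})\simeq BO$. This is the main obstacle: an arbitrary map $U/O\to BO$ need not factor through $\eta$ (which is precisely why the analogous statement fails for $\widetilde{U/O}$-capsule bordism and is special to $Sp/U$), so one must genuinely pin down the index map, either by adapting the arguments of \cite{deSilva} and \cite[Section 3]{P} to $Sp/U$ or by checking the naturality of Bott periodicity on homotopy groups. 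The remaining bookkeeping — compatibility with gluing and with the stabilisation maps of Section \ref{sec: rel flow mod}, and the passage to rank $N=\infty$ — is routine and identical to what was already carried out for Proposition \ref{prop: rel capsule} and Lemma \ref{lem:MU splitting}.
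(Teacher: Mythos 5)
Your overall strategy coincides with the paper's: split $\cL(Sp/U)$ into a base factor and a loop factor, dispose of the base factor via realification of $\zeta$, and reduce everything to identifying the Cauchy--Riemann index map $\Omega(Sp/U)\to BO$ with the real-part map $\eta$ under a Bott equivalence $\Omega(Sp/U)\simeq U/O$. However, you explicitly leave that identification unproven, flagging it as ``the main obstacle,'' and this is precisely the mathematical content of the lemma --- without it the argument does not close, since (as you yourself note) an arbitrary map $U/O\to BO$ need not factor through $\eta$, and the whole point of restricting to $\Theta = Sp/U$ is that this particular index map does.

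The paper fills the gap as follows. Working at finite rank (and choosing complements of vector bundles rather than invoking the infinite-loop splitting of the evaluation fibration, a cosmetic difference), one reduces to a map $\alpha\colon M\to\Omega(Sp/U(N'))$. One then writes down an explicit model of the Bott map
\[
F\colon U/O(N')\to\Omega(Sp/U(N')),\qquad F(V)(t)=e^{\pi tK}\bigl(V\oplus JV\bigr)\subseteq\bH^{N'},
\]
whose connectivity tends to infinity with $N'$, so that $\alpha$ lifts to $g\colon M\to U/O(N')$. Regarding $Sp/U(N')$ as a subspace of $U/O(2N')$, the index bundle of the loop of totally real subspaces $F(V)$ is computed by \cite[Theorem A]{deSilva} (cf.\ \cite[Claim 3.17]{P}) to be stably the real part of $V$; this is exactly the statement that the index map agrees with $\eta$ on the image of $F$, which suffices after lifting. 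So your suggested fallback (``adapting the arguments of \cite{deSilva} and \cite[Section 3]{P}'') is indeed the route taken, but the concrete input --- the explicit formula for $F$ and the appeal to de Silva's computation of the index of such loops --- needs to be supplied rather than deferred. As written, your argument establishes the reduction but not the lemma.
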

    \begin{proof}
        Let $M$ be a manifold equipped with a map $h: M \to \cL (Sp/U)$ ($\cL$ denotes the free loop space), such that $TM$ is stably isomorphic to the index bundle of $\zeta \circ h$, as well as a map $M \to Z$. This is a representative of an element of $\Omega^{Sp/U,\circ}_*$

        We will produce a map $f: M \to U/O$, such that $TM$ is stably isomorphic to the real part of $f$, i.e. $TM$ is classified by $\eta \circ f$. This provides the required splitting, noting the same procedure can also be applied to bordisms.

        For some $N\gg 0$, $h$ factors through $\cL (Sp/U(N))$. Picking appropriate complements of vector bundles, we find that there exist maps
        \begin{align*}
            \alpha: M \to \Omega (Sp/U)(N')
            \\
            \beta: M \to Sp/U(N'')
        \end{align*}
        such that $N'+N''=N$, and there is a stable isomorphism between $TM$ and the real part of $\zeta \circ \beta$ direct summed with the index bundle of $(\Omega \zeta) \circ \alpha$.

        The real part of $\zeta \circ \beta$ is (by construction) the real part of a map $M \to U/O$, so it suffices to show the same for the index bundle of $(\Omega \zeta) \circ \alpha$.

        We define a map $F: U/O(N') \to \Omega (Sp/U)(N')$ as follows. For $V \subseteq \bC^{N'}$ a real subspace, we define $F(V)$ to be the loop sending $t \in [0,1]$ to $e^{\pi t K}(V \oplus JV) \subseteq \bH^{N'}$ (where we view $\bH$ as $\bC_I \oplus J \bC_I$). The connectivity of this map goes to $\infty$ as $N' \to \infty$; we may therefore lift $\alpha$ to a map $g: M \to U/O(N')$.

        By treating $Sp/U(N')$ as a subspace of $U/O(2N')$, \cite[Theorem A]{deSilva} (cf. also \cite[Claim 3.17]{P}) implies that the real part of $g$ is stable isomorphic to the index bundle of $(\Omega \zeta) \circ \alpha$. The result follows.
    \end{proof}

\subsection{Complex structures}
    Consider the tangential pair where $\Theta$ is contractible and $\Phi=BS_\pm U(N)$. $\operatorname{Flow}^{\Theta,\Phi}$ can be viewed as a model for complex-oriented flow categories, but it is not \textit{a priori} the same as Abouzaid-Blumberg's model. Here we briefly sketch how to go from our model to theirs.

    In \cite{AB}, a \emph{stable complex structure} on a manifold $\cM$ consists of a pair of vector spaces $V^\pm$ and a pair of complex vector bundles $E^\pm$, along with an isomorphism 
    \begin{equation*}
        T\cM \oplus V^- \oplus E^+ \cong V^+ \oplus E^-
    \end{equation*}

    Fix some (closed-string) puncture datum $\bF$ of Maslov index 0. Fix $i \geq 0,j \in \{0,1\}$, a tuple of framed puncture data $\bE$ with $i$ boundary input punctures and $j$ boundary output punctures. Fix also vector spaces $V,V'$. Let $\bU = \bU_{i+1,j}^{V,\Theta,\operatorname{pt}}(\bE \cup \bF)$, $\bU' = \bU_{01}^{V', \Theta,\Phi}(\bF)$, $V''=V\oplus V'$ and $\bU'' = \bU_{ij}^{V'',\Theta,\Phi}(\bE)$. Write $\bV,\bV',\bV''$ for the corresponding index bundles. 
    
    Gluing and functoriality in the tangential pair together define a map $\bU \times \bU' \to \bU''$, with an isomorphism $\bV \oplus \bV' \cong \bV''$ covering it. The connectivity of this map goes to $\infty$ as the dimensions of $V$ and $V'$ do.
    
    Let $\cM$ be a manifold with stable tangent bundle classified by a map to $\bU''$. Assuming the dimensions of $V$ and $V''$ are sufficiently high compared to $\operatorname{Dim}(\cM)$, we may lift this to $(f,g): \cM \to \bU' \times \bU''$. We obtain a decomposition 
    \begin{equation}\label{eq: split dec}
        T\cM \oplus V'' \cong f^*\bV \oplus g^*\bV'
    \end{equation}
    Since $\bU$ consists of framed discs, $f^*\bV$ is a trivial bundle. 

    Fix some $\bD \in \bU_{01}^{0,\Theta,\Phi}(\bF)$, a rational curve with one output puncture with puncture datum $\bF$ and perturbation data with domain 0. The kernel of the corresponding Cauchy-Riemann operator over $\bD$ is 0. Gluing with $\bD$ gives an isomorphism between $g^*\bV'$ and the index bundle of a family of Cauchy-Riemann operators over a family of closed rational curves; turning off the inhomogeneous term allows us to deform these operators to complex-linear ones, and hence equip $g^*\bV'$ with a complex structure. Thus the decomposition (\ref{eq: split dec}) endows with $\cM$ with a stable complex structure in the sense of \cite{AB}.

    By allowing arbitrary numbers of interior punctures (and dealing with some coherence issues involving how exactly one glues caps), these steps can be performed compatibly with gluing, allowing one to obtain a complex-oriented ($\bZ$-graded, finite) flow category in the sense of \cite{AB} from a $(\operatorname{pt},BS_\pm U(N))$-oriented flow category.

\section{Floer theory with tangential structures}\label{sec: Floer tang}

\subsection{Setting}

Consider a stably framed  Liouville manifold $X^{2d}$, with a fixed homotopy class of stable framing $TX \oplus \bC^{N-d} \cong \bC^N$. 

Recall the \emph{Floer flow category} $\cM^{LK}$ associated to a pair of graded closed exact Lagrangians $L$ and $K$ has objects given by Hamiltonian chords $x$ from $L$ to $K$ (for some fixed choice of Hamiltonian depending on $L$ and $K$), morphism spaces given by moduli spaces of Floer strips, composition given by concatenation and grading given by the Conley-Zehnder index. That this constitutes an (unoriented) flow category was proved by Large \cite{Large} and Fukaya-Oh-Ohta-Ono \cite{FO3:smoothness}.

Now fix a (graded) tangential structure $\Theta \to \widetilde{U/O}(N)$ of rank $N < \infty$ (we extend to the case $N=\infty$ in Remark \ref{rmk: Floer N inf}). The goal of this section is (i) to show that if the stable Gauss maps of Lagrangians $L$ and $K$  lift to $\Theta$, then the moduli spaces arising in the Floer flow category $\cM^{LK}$  admit $\Theta$-orientations; and (ii) to thence define a $\Theta$-oriented spectral Fukaya category with such Lagrangians as objects. 

\begin{rmk}\label{rmk:geometric bounded}
    An important point is that the discussion in this section uses only that $L$ and $K$ intersect in finitely many points; in particular, it applies to non-compact Lagrangians in a `partially wrapped' setting. More precisely, we can allow Lagrangians which are  geometrically bounded (for instance, cylindrical at infinity), where we remove intersections at infinity by `infinitesimal wrapping', i.e. by choosing regular Floer data which incorporates Hamiltonian perturbations which are very small time Reeb flows at infinity.   Such choice of wrapping ensures that the flow categories we consider still have finitely many objects, and geometric boundedness implies that monotonicity and Gromov compactness arguments apply.
 \end{rmk}

 The construction of the spectral Fukaya category uses the existence of  $\Theta$-orientations on all moduli spaces arising in \cite[Section 8]{PS} (so those arising from continuation maps, spaces of holomorphic triangles, etc).  For concreteness we will focus the exposition on the case of moduli spaces of strips, and leave the other cases to the reader.

The construction of the $\Theta$-orientation on $\cM^{LK}$ requires maps 
\begin{equation} \label{eqn: Floer to abstract}
\cM_{xy} \to \bU_{xy}^\Theta
\end{equation}
to the space of abstract strips with puncture data determined by $x,y$, and 
which are compatible with gluing (and covered by maps of index bundles).  These maps should be covered by maps of index bundles and satisfying various associativity compatibilities. 
  
The linearisation of a Floer solution gives rise to an operator $\overline{\partial} + Y$ on the disc for which $Y$ decays exponentially to $dt$ in the strip-like ends, but does not agree with it. The map \eqref{eqn: Floer to abstract} thus uses cut-offs, and then   those are made suitably associative by interpolating in collar neighbourhoods of boundary strata.  The arguments are variations on the case of stable framings discussed in \cite[Section 8]{Large}.

\subsection{Charts}

We consider a pair of transverse Lagrangian branes $L,K$ and a regular moduli space $\cM_{xy} = \cM^{LK}_{xy}$ of (possibly broken) Floer strips between Hamiltonian chords $x,y$.

We recall the construction of charts on $\cM_{xy}$ from \cite{Large}, and reviewed in more detail in \cite{PS}. We will write $\mathring{\cM}_{xy}$ for the interior of the moduli space, i.e. the open subset of unbroken Floer strips.

Suppose $\dim_{\bR}(\cM_{xy}) = k$. The chart will be centred on a particular point $u\in \mathring{\cM}_{xy}$; in order to construct the chart, we will pick a representative of $u$, i.e. a lift to an actual map, since $\cM_{xy}$ is a quotient by an $\bR$-translation action. By abuse of notation, we will also call this representative $u$. We fix a collection $\bf{H}$ of $(k+1)$ real hypersurfaces $H_i \subset X$ and points $z_i = (s_i,t_i) \in \bR\times [0,1]$ for which 
\begin{enumerate}
\item $-L(u) = s_0 < s_1 < \cdots < s_k = L(u)$, where $L_{\bf{H}}(u) = L(u) = \frac12(s_k-s_0)$;
    \item the $z_i$ are regular in the sense of \cite{FHS};
    \item $ u(z_i) \in H_i$.
\end{enumerate}
We call such a collection a \emph{centred system of hypersurfaces} (the centring refers to the equalities $s_0=-L(u)$ and $s_k=L(u)$ in (1) which fix the representative map in our equivalence class, which was \emph{a priori} defined only up to translation).

There is an open $u\in V= V_{\bf{H}} \subset \cM_{xy}$ such that for any $v\in V$, there are unique points $z_i(v) = (s_i(v),t_i)$ with $v(z_i(v)) \in H_i$. The map
\[
v \mapsto (s_1(v)-s_0(v),\ldots, s_k(v)-s_{k-1}(v))
\]
defines a local isomorphism $V_{\bf{H}} \to \bR^k$; note that since we take differences of the $s_i$-coordinates this is a chart in the moduli space of maps mod  the translation action.

We now consider gluing. We have a curve $(u_1,u_2,\ldots,u_l) \in \mathring{\cM}_{x_0x_1} \times \cdots \times \mathring{\cM}_{x_{l-1}x_l}$ and we have picked centred systems of hypersurfaces $\bf{H}^j$ for $j=1,2,\ldots, l$ defining $V_{\bf{H}_i} \subset \cM_{x_{i-1}x_i}$. Let $U=V_{\bf{H}_1} \times \cdots \times V_{\bf{H}_l}$. Gluing defines a map 
\[
U \times [T_0,\infty)^{l-1} \to \cM_{x_0x_l}
\]
covering a punctured neighbourhood of the boundary point $(u_1,\ldots,u_l) \in \cM_{xy}$ and extending continuously to allow gluing parameter $T=\infty$; see \cite[Proposition 8.4]{PS} and \cite[Corollary 6.8 and Proposition 6.9]{Large} for an explicit description of the gluing scheme.

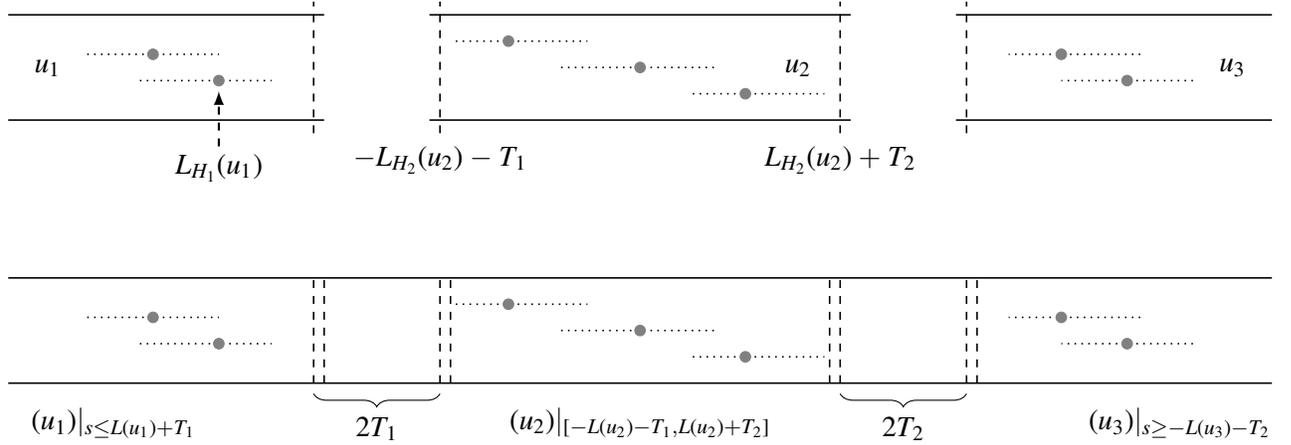
\begin{figure}[ht]
\begin{center}
\begin{tikzpicture}[scale=0.7] \label{Fig:gluing}

\draw[semithick] (-4,1) -- (4,1);
\draw[semithick] (-4,-1) -- (4,-1);
\draw[semithick,dotted] (-3.5,0.5) -- (-1,0.5);
\draw[semithick,dotted] (-1.5,0) -- (1.5,0);
\draw[semithick,dotted] (1,-0.5) -- (3.5,-0.5);
\draw[fill,gray] (-2.5,0.5) circle (0.1);
\draw[fill,gray] (0,0) circle (0.1);
\draw[fill,gray] (2,-0.5) circle (0.1);
\draw[semithick, dashed] (-3.8,-1.25) -- (-3.8,1.25);
\draw[semithick, dashed] (3.8,-1.25) -- (3.8,1.25);
\draw (-3.8,-1.75) node {$-L_{H_2}(u_2)-T_1$};
\draw (3.8,-1.75) node {$L_{H_2}(u_2) + T_2 $};

\draw[semithick] (-12,1) -- (-6,1);
\draw[semithick] (-12,-1) -- (-6,-1);
\draw[semithick,dotted] (-10.5,0.25) -- (-8,0.25);
\draw[semithick,dotted] (-9.5,-0.25) -- (-7,-0.25);
\draw[fill,gray] (-8,-0.25) circle (0.1);
\draw[fill,gray] (-9.25,0.25) circle (0.1);
\draw[thick,dashed,arrows=->] (-8, -1.5) -- (-8,-0.45);
\draw (-8,-1.9) node {$L_{H_1}(u_1)$};
\draw[semithick,dashed] (-6.2,-1.25) -- (-6.2,1.25);

\draw[semithick] (6,1) -- (12,1);
\draw[semithick] (6,-1) -- (12,-1);
\draw[semithick,dotted] (10.5,-0.25) -- (8,-0.25);
\draw[semithick,dotted] (9.5,0.25) -- (7,0.25);
\draw[fill,gray] (9.25,-0.25) circle (0.1);
\draw[fill,gray] (8,0.25) circle (0.1);
\draw[semithick,dashed] (6.2,-1.25) -- (6.2,1.25);

\draw[semithick] (-12,-4) -- (12,-4);
\draw[semithick] (-12,-6) -- (12,-6);
\draw[semithick,dotted] (-3.5,-4.5) -- (-1,-4.5);
\draw[semithick,dotted] (-1.5,-5) -- (1.5,-5);
\draw[semithick,dotted] (1,-5.5) -- (3.5,-5.5);
\draw[fill,gray] (-2.5,-4.5) circle (0.1);
\draw[fill,gray] (0,-5) circle (0.1);
\draw[fill,gray] (2,-5.5) circle (0.1);
\draw[semithick,dotted] (-10.5,-4.75) -- (-8,-4.75);
\draw[semithick,dotted] (-9.5,-5.25) -- (-7,-5.25);
\draw[fill,gray] (-9.25,-4.75) circle (0.1);
\draw[fill,gray] (-8,-5.25) circle (0.1);
\draw[semithick,dotted] (10.5,-5.25) -- (8,-5.25);
\draw[semithick,dotted] (9.5,-4.75) -- (7,-4.75);
\draw[fill,gray] (8,-4.75) circle (0.1);
\draw[fill,gray] (9.25,-5.25) circle (0.1);
\draw[semithick, dashed] (-3.8,-6) -- (-3.8,-4);
\draw[semithick, dashed] (3.8,-6) -- (3.8,-4);
\draw[semithick, dashed] (-3.6,-6) -- (-3.6,-4);
\draw[semithick, dashed] (3.6,-6) -- (3.6,-4);
\draw[semithick,dashed] (-6.2,-6) -- (-6.2,-4);
\draw[semithick,dashed] (-6,-6) -- (-6,-4);
\draw[semithick,dashed] (6.2,-6) -- (6.2,-4);
\draw[semithick,dashed] (6.4,-6) -- (6.4,-4);

\draw (-11.25,0) node {$u_1$};
\draw (3,0) node {$u_2$};
\draw (11.25,0) node {$u_3$};
\draw (-10,-6.75) node {$(u_1)|_{s\leq L(u_1)+T_1}$};
\draw (0,-6.75) node {$(u_2)|_{[-L(u_2)-T_1, L(u_2)+T_2]}$};
\draw (10.25,-6.75) node {$(u_3)|_{s \geq -L(u_3)-T_2}$};
\draw [decorate,decoration={brace,amplitude=5pt,mirror}]
  (-6.2,-6.2) -- (-3.8,-6.2) node[midway,yshift=-1.2em]{$2T_1$};
\draw [decorate,decoration={brace,amplitude=5pt,mirror}]
  (3.8,-6.2) -- (6.2,-6.2) node[midway,yshift=-1.2em]{$2T_2$};

\end{tikzpicture}
\end{center}
\caption{Gluing with lengths $T_1,\ldots, T_{n-1}$, case $n=3$}
\end{figure}

We view this as giving a continuous map
\[
U \times [T_0,\infty]^{l-1} \to \cM_{x_0x_l}, \qquad U \times [T_0,\infty)^{l-1} \hookrightarrow \mathring{\cM}_{x_0x_l}.
\]
   Restricted to the image in $\mathring{\cM}$, the transition map between two charts of the above form, associated to two different choices of centred systems of hypersurfaces $\bf{H}^j$, decays exponentially towards
\begin{equation} \label{eqn:topological_tangent}
(u,{\bf{T}}) \mapsto (u, {\bf{T}}+f(u))
\end{equation}
for some $f: U \to \bR^{l-1}$ and 
as all $T_i \to \infty$.

Taking gluing profile $\psi: [0,1/T_0) \to (T_0,\infty]$, $t\mapsto 1/t$,  and working in co-ordinates $(u,\bf{r} = \psi({\bf{T}}))$ gives rise to charts with smooth transition functions which decay exponentially to
\begin{equation}\label{eqn:smooth_tangent}
(u,{\bf{r}}) \mapsto (u, \psi^{-1}(\psi({\bf{r}}) + f(u)))
\end{equation}

\begin{cor}
    The Floer flow category $\cM^{LK}$ admits the structure of a smooth unoriented flow category.
\end{cor}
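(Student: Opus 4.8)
The plan is to assemble the smooth structure on each $\cM^{LK}_{xy}$ from the local models described in the preceding subsection, and then to check that the composition maps given by concatenation of Floer strips are smooth embeddings of boundary faces realising a system of boundary faces — so that the data fits the definition of a smooth unoriented flow category from Section \ref{sec: back}. Concretely, the first step is to observe that the charts $V_{\mathbf{H}} \to \bR^k$ built from centred systems of hypersurfaces cover the interior $\mathring{\cM}_{xy}$, and that (\ref{eqn:smooth_tangent}) exhibits their transition functions as smooth maps: the $f(u)$-term is smooth in $u$ by the usual implicit-function-theorem description of the hypersurface constraints (cf. \cite{FHS}), the gluing profile $\psi$ is a diffeomorphism onto its image, and exponential decay of the difference between the actual transition function and the model (\ref{eqn:smooth_tangent}) upgrades $C^0$-closeness to $C^\infty$-closeness after the profile change of coordinates, as in \cite[Section 8]{Large}. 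This gives each $\mathring{\cM}_{xy}$ the structure of a smooth manifold, and the gluing charts $U \times [T_0,\infty]^{l-1} \to \cM_{xy}$ simultaneously provide smooth charts with corners near the broken strata.

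Next I would check the boundary/corner structure. A codimension-$l-1$ broken stratum of $\cM_{xy}$ is the image of $\cM_{x_0 x_1} \times \cdots \times \cM_{x_{l-1} x_l}$ (with $x_0=x$, $x_l=y$) under iterated concatenation, and the gluing chart shows that near such a point $\cM_{xy}$ is locally modelled on (an open subset of each factor) $\times\, [T_0,\infty]^{l-1}$, i.e. on $\bR^{k} \times [0,\eps)^{l-1}$ after applying the profile $\psi$. Hence every point of $\partial\cM_{xy}$ of codimension $l-1$ lies on exactly $l-1$ boundary faces, one for each gluing parameter, which is precisely the manifold-with-faces condition. The composition maps $c \colon \cM_{xy}\times\cM_{yz}\to\cM_{xz}$ are, by construction of the gluing chart, smooth embeddings onto codimension-$1$ faces (the $T=\infty$ locus for the relevant gluing parameter), they satisfy the evident associativity coming from associativity of the gluing scheme (\cite[Proposition 8.4]{PS}, \cite[Corollary 6.8]{Large}), and varying $y$ exhausts $\partial\cM_{xz}$ with disjoint interiors. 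Compactness of each $\cM_{xy}$ is Gromov compactness (using geometric boundedness and the small-wrapping hypothesis, as in Remark \ref{rmk:geometric bounded}), and the dimension count $\dim\cM_{xy}=|x|-|y|-1$ is the index formula in terms of the Conley--Zehnder grading. Assembling these facts gives exactly Definition of a flow category from Section \ref{sec: unor flow cat}.

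The one genuine subtlety — and the step I expect to be the main obstacle — is the \emph{compatibility of the smooth structures on overlapping gluing charts of different combinatorial types}, i.e. near deeper corners where several gluings can be performed in different orders. Here one must verify that the associativity of the gluing construction holds not merely up to continuous reparametrisation but up to a \emph{smooth} reparametrisation after passing to the profile coordinates $\mathbf{r}=\psi(\mathbf{T})$, so that the charts genuinely glue into a single smooth manifold with faces rather than just a topological one. This is handled exactly as in \cite[Sections 8.3 \& 8.4]{Large}: one interpolates between the candidate smooth structures in collar neighbourhoods of the boundary strata, inducting on codimension and using that the relevant spaces of reparametrisations are convex (hence contractible) together with the exponential decay estimates that make the interpolation $C^\infty$-small. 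Since the abstract argument for this is already carried out in \cite{Large} for stable framings and recalled in \cite{PS}, the corollary follows by transporting that argument verbatim — the tangential structure plays no role in the construction of the underlying \emph{unoriented} smooth flow category, which is all that is asserted here.
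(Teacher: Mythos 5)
Your proposal is correct and follows essentially the same route as the paper: the corollary there is stated without a separate proof precisely because it is the upshot of the preceding chart discussion — hypersurface charts on the interior, gluing charts near broken strata, and the gluing profile $\psi(t)=1/t$ turning the exponentially decaying corrections to \eqref{eqn:topological_tangent} into smooth transition functions as in \eqref{eqn:smooth_tangent} — together with the references to \cite{Large} and \cite{FO3:smoothness} for the compatibility of smooth structures across strata of different combinatorial type. Your identification of that compatibility as the one genuine subtlety, and its resolution by interpolation in collars using convexity and exponential decay, matches the paper's treatment.
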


\subsection{Linearised operators}

We will fix a weight $\kappa \geq 2$ and work with the Sobolev spaces $W^{2,\kappa}$ of functions with $\kappa$ derivatives in $L^2$. (On a two-dimensional surface once $\kappa\geq 2$ these functions are continuous and one can define restriction to the boundary without using a trace.)

 If $E\to \Sigma$ is an almost complex bundle on a surface and $F \subseteq E|_{\partial \Sigma}$ is a totally real subbundle, then real Cauchy-Riemann operators in $E$ are operators
    \[
    D: \Omega^0_{W^{2,\kappa}}(E, F) \to \Omega^{0,1}_{W^{2,\kappa-1}}(E)
    \]
    satisfying 
    \[
    D(f\xi) = \overline{\partial}_J(f)\xi + f\cdot D\xi.
    \]
    The space of all such forms an affine space over the space of linear maps $E \to \overline{\mathrm{Hom}}_{\bC}(T\Sigma, E)$.
    
Fix a symplectic connection $\nabla$ on $TX$.   Let $u \in \mathring{\cM}_{xy}$ and pick a lift of $u$ to a map (not modulo translation), which has domain $Z=\bR\times [0,1]$; for instance, a complete system of hypersurfaces near $u$ determines a distinguished such lift, by taking the associated co-ordinates $s_i$ to be centred (meaning they satisfy the conditions (1)-(3) listed previously). We write $u^*TL \sqcup u^*TK$ for the totally real subbundle of $u^*TX|_{\partial Z}$ given by $u^*TL$ over $Z \times \{0\}$ and by $u^*TK$ over $Z \times \{1\}$. The linearised operator $D_u: W^{2,k}(u^*TX, u^*TL \sqcup u^*TK) \to W^{2,k-1}(\Omega^{0,1}_Z \otimes u^*TX)$ at $u$ is a real Cauchy-Riemann operator of the form
\[
D_u: \xi \mapsto \nabla^{0,1}(\xi) + Y
\]
 for a zero-th order operator $Y$.
    Note that if there is no Hamiltonian term in the Floer equation, the first term is independent of the choice of connection and is exactly $\overline{\partial}_J$, compare to \cite{OhBook}. If we furthermore use the connection to give a unitary trivialisation of $u^*TX \to Z$ for $u: Z \to X$ holomorphic, the linearised operator has the standard form
    \begin{equation} \label{eqn:linearised standard}
        \xi \mapsto \overline{\partial}_J(\xi) + Y, \quad Y \in \Omega^{0,1}_Z \otimes \mathrm{End}\,\bR^{2n}
    \end{equation}

\begin{rmk}\label{rmk:kernel defined}
The elements of $\ker(D_u)$ depend on the choice of map in the equivalence class of $u\in\cM_{xy}$ under the $\bR$-action only up to translation, so this vector space is defined up to canonical isomorphism (e.g. under changing the choice of complete system of hypersurfaces). 
\end{rmk}
\begin{rmk} \label{rmk:asymptotics}
    In general, for the perturbed Floer equation, $Y$ is asymptotic to an operator $Y_x$ arising from linearising the Floer equation at a translation-invariant solution $u(s,t) = x(t)$ with $x(t)$ a Hamiltonian chord.  When $H=0$ and the asymptotic is an isolated transverse intersection, $Y$ is asymptotically holomorphic to the constant one-form $dt$ in the strip-like end. 
\end{rmk}

Appealing to Remark \ref{rmk:kernel defined}, there is a bundle 
\[
\Ind_{xy} \to \mathring{\cM}_{xy}
\]
with fibre $\ker(D_u)$.  Since the stabilisation of the operator to a Cauchy-Riemann operator in an $N$-dimensional vector bundle over the disc doesn't change its kernel or cokernel, we can view this as associated to the stabilised operator.

At each intersection point of $L,K$ we have a Darboux chart in which the almost complex structure is standard, and in which the stabilised tangent spaces are standard.

Assume we have fixed lifts of the  Gauss maps of $L,K$ from $U/O(N)$ to $\Theta$. 

    \begin{lem} \label{lem linearised discs as abstract discs}
    A choice of sufficiently large non-negative function $(L_x,L_y): \cM_{xy} \to \bR_{\geq 0}^2$ 
        determines a map $a_{xy}$ from  $\mathring{\cM}_{xy}$ to the space $\bU^{\Theta}_{1,1}$ of abstract discs with Cauchy-Riemann data (with trivial perturbation domain $V=\{0\}$) and a $\Theta$-lift. Moreover, $a_{xy}^*\bV^{\Theta} = \Ind_{xy}$.
    \end{lem}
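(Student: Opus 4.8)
The plan is to build the map $a_{xy}$ chart-by-chart on $\mathring{\cM}_{xy}$ and then glue the local descriptions together using the exponential decay of transition functions. First, recall that over a chart $V_{\mathbf H} \subset \mathring{\cM}_{xy}$ built from a centred system of hypersurfaces, each point $v$ has a distinguished representative map $v: Z = \bR\times[0,1]\to X$, and hence a linearised operator $D_v$ on $Z$ of the form $\overline\partial_J + Y$ where (appealing to Remark \ref{rmk:asymptotics}) $Y$ is asymptotic in the strip-like ends to $dt$, decaying exponentially. To turn $D_v$ into genuine Cauchy–Riemann data for an \emph{abstract} disc we truncate: choosing the non-negative function $(L_x,L_y)$ sufficiently large (depending on $v$, hence requiring the `sufficiently large' hypothesis to be uniform over compact subsets, which we arrange by working chart-by-chart and then patching), we replace $Y$ by a one-form $\tilde Y$ which agrees with $Y$ away from the shrunken strip-like ends $\eps((\pm\infty,\pm L]\times[0,1])$ and agrees exactly with $dt$ on them, interpolating via a cutoff in the region between. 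Since $Y$ is already exponentially close to $dt$ there, for $L$ large this does not change whether the operator is surjective, and more importantly (by the standard fact that such a compactly supported perturbation does not change the index and, for $L$ large, can be taken not to change the kernel up to canonical isomorphism, exactly as in Remark \ref{rmk:kernel defined} and the discussion after Lemma \ref{lem: T big ok}) we get $\ker(D^{CR}+0) \cong \ker(D_v) = (\Ind_{xy})_v$. This, together with a choice of complex structure $J$ (standard near the boundary and strip-like ends, available since in the Darboux charts at the intersection points the almost complex structure is already standard), a metric $g$, and strip-like ends $\eps$, assembles into a point of $\bU^\Theta_{1,1}(\bE_x,\bE_y)$ once we supply the boundary conditions and $\Theta$-lift.

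The boundary data $(\eps,\tilde\Lambda)$ and the $\Theta$-orientation $(H,\Gamma)$ come from the hypothesis that the Gauss maps of $L$ and $K$ are lifted from $U/O(N)$ to $\Theta$: the totally real subbundle $v^*TL \sqcup v^*TK$ of $v^*TX|_{\partial Z} \cong \bC^N$ (using the symplectic connection to trivialise) defines the map $\Lambda: \partial D_{1,1}\to U/O(N)$, the grading of $L,K$ lifts this to $\widetilde{U/O}(N)$, and the chosen $\Theta$-lifts of the Gauss maps give $H$ on each boundary component; $\Gamma$ is obtained by contracting $\Lambda$ to its boundary value in the strip-like ends as in Definition \ref{def: hom lift}. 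The puncture data $\bE_x$, $\bE_y$ of Maslov index $|x|$, $|y|$ are read off from the asymptotic operators $Y_x$, $Y_y$ together with the $\widetilde{U/O}(N)$- and $\Theta$-data restricted near the punctures — this is exactly where the identification of objects of the Floer flow category with $\Theta$-oriented puncture data enters, and it matches the Conley–Zehnder/Maslov bookkeeping. By construction the index bundle of this abstract disc is $\ker(D^{CR})$, which we have identified with $(\Ind_{xy})_v$; doing this over the whole chart $V_{\mathbf H}$ gives a map $V_{\mathbf H}\to\bU^\Theta_{1,1}$ covered by an isomorphism with $\Ind_{xy}|_{V_{\mathbf H}}$, i.e. the asserted identification $a_{xy}^*\bV^\Theta = \Ind_{xy}$ locally.

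The final step is to check these local maps glue to a globally defined $a_{xy}$ on $\mathring{\cM}_{xy}$. On overlaps of two charts $V_{\mathbf H}$, $V_{\mathbf H'}$ the two distinguished lifts of $v$ differ by a translation of $Z$, and the associated Cauchy–Riemann data differ by precomposition with that translation together with a change of the cutoff region; both are homotopies through abstract discs of the same index, and by Remark \ref{rmk:kernel defined} the induced identifications of kernels are canonical. Hence the local $a_{\mathbf H}$ agree up to contractible choice on overlaps, and — using the contractibility of the space of auxiliary choices ($J$, $g$, $\eps$, the cutoff interpolation) exactly as in the proof of Lemma \ref{lem:upshot} — we may make the choices compatibly by inducting over a locally finite cover, obtaining a well-defined map $a_{xy}\colon \mathring{\cM}_{xy}\to\bU^\Theta_{1,1}$ with $a_{xy}^*\bV^\Theta = \Ind_{xy}$. (That the function $(L_x,L_y)$ can be chosen once and for all, large enough simultaneously for all the charts needed, is the role of the hypothesis; on a non-compact $\mathring{\cM}_{xy}$ one allows $(L_x,L_y)$ to grow towards the ends, compatibly with the gluing charts of \eqref{eqn:smooth_tangent}.)

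\textbf{Main obstacle.} The delicate point is the compatibility with gluing that is needed downstream (in Section \ref{sec: Floer tang} and for the $\Theta$-orientation of $\cM^{LK}$): the truncation cutoffs on $\mathring{\cM}_{xy}$ must be chosen so that under the gluing maps $U\times[T_0,\infty)^{l-1}\to\mathring{\cM}_{x_0x_l}$ of \eqref{eqn:smooth_tangent} the resulting abstract disc agrees, on a neighbourhood of the boundary stratum, with the concatenation of the abstract discs on the factors in the sense of Section \ref{sec: glue}. This forces the interpolation regions for the cutoffs near each puncture to be positioned consistently with the neck-lengths $T_i$, and making this precise (so that $a_{xy}$ is not merely defined but is a morphism of the relevant gluing structures) is the real work — it is the Floer-theoretic analogue of the associativity bookkeeping carried out for stable framings in \cite[Section 8]{Large}, and the present statement (Lemma \ref{lem linearised discs as abstract discs}) is just the first, pointwise, half of it.
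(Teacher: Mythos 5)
Your proposal is correct and follows essentially the same route as the paper: cut off the linearised operator's one-form $Y$ to equal $dt$ in the shrunken strip-like ends, use regularity of the moduli space to take trivial perturbation data, pull the boundary conditions and $\Theta$-lift back from the hypothesised Gauss map data, and identify $\ker(D_u)$ with the kernel of the cut-off operator via $L^2$-orthogonal projection (Lemma \ref{linear gluing}) once the lengths are large. Your closing remark is also accurate — the gluing compatibility is deliberately deferred to the later lemmas on collars and is not part of this statement.
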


\begin{proof}
The function $(L_x,L_y)$ determines the length parameters for strip-like ends.  We choose the strip-like ends so that any choice of positive $(L_x,L_y)$ ensures that  the  strip-like ends lie inside open neighbourhoods of the intersection points in which the almost complex structure is standard and integrable. Remark \ref{rmk:asymptotics} says that the operator $Y$ in the linearisation is asymptotic to $dt$, and we cut it off to equal $dt$ using a cut-off function supported in a strip of width one at the boundary of the shrunken strip-like ends.  This associates an abstract disc to each element of $\mathring{\cM}_{xy}$.
Since we are assuming that the moduli space is regular, the trivial perturbation data $(V,f) = 0$ suffices to make the linearised operators surjective. The required $\Theta$-lifts come from the hypothesised Gauss map data on $L$ and $K$. The final statement holds essentially by definition of $\bV_{xy}$ and the fact that, once the lengths are sufficiently large, $L^2$-orthogonal projection identifies the kernel of $D_u$ with the kernel of the operator obtained by cutting off $Y$ near the ends, by Lemma \ref{linear gluing}.
\end{proof}

Our next steps are to extend this to the whole of $\cM_{xy}$, compatibly with gluing.

\subsection{Tangent and index bundles}

We have constructed a map $a_{xy}: \mathring{\cM}_{xy} \to \bU^{\Theta}_{1,1}$. Even with gluing of abstract discs this is not quite sufficient to give the maps \eqref{eqn: Floer to abstract} because $a_{xy}$ is defined only on the interior (top open stratum of unbroken curves).  Essentially equivalently to extending this to the compactification, we will use systems of collars to give preferred embeddings of the boundary strata into the interior.  We will start by constructing a preferred system of normals, which yields a compatible system of collars by \cite[Lemma 3.19]{PS}.

The interior $\mathring{\cM}$ of a moduli space $\cM = \cM_{x_0x_l}$ is naturally smooth and has a tangent bundle, defined without any choice of gluing profile.  On the other hand, having chosen $\psi(t) = 1/t$ as gluing profile, we have equipped the compactification $\cM$ with the structure of a smooth manifold with faces, which has a globally defined tangent bundle.  The following is part of \cite[Proposition 7.1]{Large}.

\begin{lem} \label{lem:tangent}
The bundle $T\mathring{\cM}$ extends, canonically up to isomorphism, to define a topological vector bundle $T^{top}\cM$ over the compactification $\cM$.  \par
     
 Fix the gluing profile $\psi(t) = 1/t$. There are distinguished normal directions to boundary strata, and there are isomorphisms $T^{top}\cM \to T\cM$ which are compatible with passing to strata.
      \end{lem}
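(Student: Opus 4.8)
The plan is to follow the template of \cite[Proposition 7.1]{Large} and \cite[Section 8]{Large}, adapting it to the flow-categorical bookkeeping set up in Section \ref{sec: glue} and Lemma \ref{lem linearised discs as abstract discs}. The key point is that both the gluing construction of the compactified moduli space and the map $a_{xy}$ to abstract discs are governed by the same exponential-decay estimates, so the errors one must control are the same in both settings.

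First I would recall the explicit gluing charts: near a broken curve $(u_1,\ldots,u_l)$, a product of interior charts $U = V_{\mathbf{H}_1}\times\cdots\times V_{\mathbf{H}_l}$ together with gluing parameters $\mathbf{T}\in[T_0,\infty)^{l-1}$ maps into $\mathring{\cM}$, extending continuously to $\mathbf{T}=\infty$; after applying the gluing profile $\psi(t)=1/t$ one gets co-ordinates $(u,\mathbf{r})$ with smooth transition functions decaying exponentially to \eqref{eqn:smooth_tangent}. On the interior, $T\mathring{\cM}$ has transition functions given by the derivatives of these; since the leading term \eqref{eqn:smooth_tangent} is smooth up to $\mathbf{r}=0$ and the corrections decay exponentially, the transition cocycle for $T\mathring{\cM}$ extends \emph{continuously} over the boundary faces. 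This produces a topological vector bundle $T^{top}\cM$ on the compactification restricting to $T\mathring{\cM}$ on the interior, canonical up to isomorphism because any two choices of gluing charts differ by exponentially-decaying terms. This is the first displayed assertion.

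Next, the distinguished normal directions: at a codimension-one boundary face $\partial_y\cM_{x_0x_l} = \cM_{x_0\ldots x_i}\times\cM_{x_i\ldots x_l}$ the $i$-th gluing co-ordinate $r_i$ (the inverse gluing length) is a boundary-defining function, so $\partial/\partial r_i$ (or rather its image in $T^{top}\cM$, which is well-defined up to positive scaling by the exponential-decay uniqueness) gives a canonical inward normal line; iterating over which $x_i$'s are collapsed gives compatible normals at all faces, matching the conventions of Section \ref{sec: back}. The last assertion, an isomorphism $T^{top}\cM \to T\cM$ compatible with strata, comes from choosing — inductively on codimension, using \cite[Lemma 3.19]{PS} to extend choices already made on lower strata — a smoothing of $T^{top}\cM$: concretely one interpolates, in the collar neighbourhoods supplied by \cite[Lemma 3.19]{PS}, between the smooth structure on the interior coming from the gluing charts and the product smooth structures on the boundary faces, exactly as the transition functions \eqref{eqn:smooth_tangent} already prescribe; the exponential decay of the corrections guarantees the interpolation can be taken through isomorphisms. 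Compatibility with passage to strata is built in because the interpolation on a face is the restriction of the one used on a neighbourhood of that face.

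The main obstacle is the coherence/associativity bookkeeping rather than any single analytic estimate: one must make the choices of collars, of the smoothing isomorphisms $T^{top}\cM\to T\cM$, and (in the sequel, when this feeds into \eqref{eqn: Floer to abstract} and the $\Theta$-orientation) of the cut-off functions turning linearised operators into abstract Cauchy-Riemann data, all simultaneously over the whole tower of moduli spaces, so that they strictly commute with the composition maps $c:\cM_{xx'}\times\cM_{x'x''}\to\cM_{xx''}$. This is handled exactly as in \cite[Sections 8.3--8.4]{Large} and as in the proof of Lemma \ref{lem:upshot} above: one inducts on the dimension (equivalently the maximal number of breakings) of the moduli space, using at each stage the contractibility of the relevant spaces of choices — collars by \cite[Lemma 3.19]{PS}, cut-offs and Cauchy-Riemann data by the contractibility statements of Section \ref{sec: glue}, and the near-isometry/convexity argument of Lemma \ref{lem:associative linear gluing} to see that competing identifications of index bundles are canonically isotopic. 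Since all the competing choices differ by terms of size $O(e^{-cT})$ controlled by the gluing length, they lie in a contractible (convex) neighbourhood of one another, so the inductive extension never meets an obstruction.
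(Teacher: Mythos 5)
Your proposal is correct and follows essentially the same route as the paper: both arguments rest on the asymptotic form of the chart transition functions \eqref{eqn:topological_tangent}/\eqref{eqn:smooth_tangent} — derivative asymptotically lower-triangular unipotent in the $(u,\mathbf{T})$ coordinates (hence lying in a contractible space, giving the canonical topological extension) and asymptotically the identity in the $(u,\mathbf{r})$ coordinates (giving the intrinsic normals $\partial/\partial r_i$ and the strata-compatible isomorphisms via collars). The extra material in your last paragraph on coherence of cut-offs and index-bundle identifications belongs to the subsequent lemmas rather than to this one, but does no harm.
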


\begin{proof}
    The first statement follows from the fact that the transition function from \eqref{eqn:topological_tangent} has derivative which is asymptotically lower triangular $\left(\begin{array}{cc}
        1 & 0 \\ Df & 1
    \end{array} \right) $, in particular belongs to a contractible space of invertible matrices. The second statement follows because in the co-ordinates $(u,\bf{r})$ the derivative of the transition function is asymptotically $\left(\begin{array}{cc}
        1 & 0 \\ 0 & 1
    \end{array} \right) $. This shows that the normal directions given by the collar co-ordinates are asymptotically unique (independent of the choice of system of hypersurfaces) as the gluing parameters go to $\infty$, hence are intrinsic given the choice of gluing profile.  It follows that the Floer flow spaces have a distinguished system of normals, in the sense of \cite[Definition 3.13]{PS}.
\end{proof}

Such a system of normals yields preferred systems of collars which extend the system of normals i.e. have the given infinitesimal behaviour along boundary faces.

Fix a co-ordinate neighbourhood of $u$ coming from a centred system of hypersurfaces as before.
\begin{lem}
    The subspace $\{\xi \in u^*TX \, | \, \xi(z_i) \in TH_i \, \forall i\}$ is a complement to $\ker(D_u)$; in particular $D_u$ admits a unique bounded right inverse with image that subspace. Therefore the map
    \[
    \ker(D_u) \to \oplus_j (\nu_{H_j/X})|_{u(z_j)}
    \]
    given by composing evaluation at $z_j$ with the projection $TX \to TX/TH_j =: \nu_{H_j/X}$ is an isomorphism.
\end{lem}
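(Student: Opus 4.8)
The plan is to derive the three assertions in sequence, isolating the only genuinely geometric input into a single step; everything else is Fredholm theory and linear algebra.

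First I would set up the bookkeeping. Since $\cM_{xy}$ is regular, $D_u$ is a surjective Fredholm operator, so $\ker(D_u)$ is finite-dimensional of dimension $\operatorname{ind}(D_u) = |x|-|y| = k+1$, where $k = \dim_{\bR}\cM_{xy}$ (this refines Remark~\ref{rmk:kernel defined}). As $\kappa \geq 2$, Sobolev embedding on the two-dimensional domain makes evaluation at each $z_j$ a bounded linear functional on $W^{2,\kappa}$, so
\[
W := \left\{\, \xi \in W^{2,\kappa}(u^*TX, u^*TL \sqcup u^*TK) \ : \ \xi(z_j) \in T_{u(z_j)}H_j \ \text{ for all } j \,\right\}
\]
is a closed subspace of codimension at most $k+1$. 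Post-composing evaluation at $z_j$ with the quotient $q_j \colon T_{u(z_j)}X \twoheadrightarrow (\nu_{H_j/X})|_{u(z_j)}$ defines a linear map $\operatorname{ev}\colon \ker(D_u) \to \bigoplus_j (\nu_{H_j/X})|_{u(z_j)}$, $\operatorname{ev}(\xi) = (q_j(\xi(z_j)))_j$, whose kernel is precisely $W \cap \ker(D_u)$. Source and target are real vector spaces of the same dimension $k+1$, so $\operatorname{ev}$ is an isomorphism if and only if it is injective, i.e. if and only if $W \cap \ker(D_u) = 0$; this is what remains to prove, the rest being formal.

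The key step is thus $W \cap \ker(D_u) = 0$, and this is nothing but the infinitesimal shadow of the chart construction recalled above. I would argue as follows: the assignment sending a Floer solution $v$ near the chosen representative $u$ to the tuple $(s_0(v),\dots,s_k(v)) \in \bR^{k+1}$ of the (unique, by the implicit function theorem and the $\mathbf{H}$-regularity of the $z_j$) coordinates with $v(s_j(v),t_j) \in H_j$ is a local diffeomorphism from the space of nearby solutions onto an open subset of $\bR^{k+1}$ — this is exactly the content of the charts $V_{\mathbf{H}} \to \bR^k$ of \cite{Large} and \cite{PS}, retaining additionally the centring coordinate. Identifying the space of nearby solutions with $\ker(D_u)$ and differentiating the defining relations $v(s_j(v),t_j) \in H_j$ at $u$, one finds that the derivative of this map is $\operatorname{ev}$ followed by the invertible diagonal rescaling $(\nu_{H_j/X})|_{u(z_j)} \to \bR$, $w \mapsto w/q_j(\partial_s u(z_j))$; the denominators are nonzero precisely because the chart construction requires $u$ to meet each $H_j$ transversally at $z_j$, which is part of $\mathbf{H}$-regularity. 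Since a local diffeomorphism has invertible derivative, $\operatorname{ev}$ is injective, hence an isomorphism. (Equivalently, one may read $\mathbf{H}$-regularity of the $z_j$ in the sense of \cite{FHS} as surjectivity of $\operatorname{ev}$, which again forces bijectivity by the dimension count.)

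It remains to collect the consequences. From $W \cap \ker(D_u) = 0$ and $\operatorname{codim} W \leq k+1 = \dim \ker(D_u)$ we get $\operatorname{codim} W = k+1$ and $W \oplus \ker(D_u) = W^{2,\kappa}(u^*TX, u^*TL \sqcup u^*TK)$, so $W$ is a closed complement to $\ker(D_u)$. Since $D_u$ is surjective, $D_u|_W \colon W \to \Omega^{0,1}_{W^{2,\kappa-1}}(u^*TX)$ is a continuous bijection of Banach spaces, hence a topological isomorphism by the open mapping theorem; its inverse $Q := (D_u|_W)^{-1}$ is a bounded right inverse with image $W$, and it is the unique such, since any bounded right inverse $Q'$ with image in $W$ satisfies $Q\eta - Q'\eta \in \ker(D_u) \cap W = 0$ for every $\eta$. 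The final isomorphism statement for $\operatorname{ev}$ was established along the way. The only real obstacle is the middle step: extracting exactly the right transversality statement from the $\mathbf{H}$-regularity of \cite{FHS} and \cite{Large} and keeping careful track of the translation and centring coordinates.
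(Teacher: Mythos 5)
The paper states this lemma with no proof at all, presenting it as an immediate consequence of the chart construction recalled just before it; your write-up is a correct and complete fleshing-out of exactly that deduction, and all of the surrounding linear algebra (the index count $\dim\ker(D_u)=k+1$ matching the $k+1$ one-dimensional normal spaces, the passage from $W\cap\ker(D_u)=0$ to $\operatorname{codim}W=k+1$ and the splitting, the open mapping theorem, and the uniqueness of the bounded right inverse with image $W$) is correct. One caveat worth recording: the key injectivity step is not really an independent input but is \emph{equivalent} to the assertion, quoted from \cite{Large}, that $v\mapsto(s_1(v)-s_0(v),\ldots,s_k(v)-s_{k-1}(v))$ is a local isomorphism — your linearisation computation $\delta s_j=-q_j(\xi(z_j))/q_j(\partial_s u(z_j))$ shows the two statements differ only by the invertible rescaling and the quotient by the translation direction $\partial_s u\mapsto(-1,\ldots,-1)$. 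In the sources the logical order is the reverse of yours: one first proves that regular points and transverse hypersurfaces can be chosen so that the evaluation map on $\ker(D_u)$ is an isomorphism (this is where the genuine analytic content lives, via the \cite{FHS} structure theory of regular points and unique continuation for the linearised operator), and the chart property is then deduced from the inverse function theorem. So your derivation is valid within the paper's presentation, but a self-contained proof would need to unwind that transversality argument rather than cite the chart property; you do flag this in your closing sentence, which is the right place to locate the残 remaining work.
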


It follows that picking local trivialisations of the normal bundles $\nu_{H_j/X}$ gives a local trivialisation of the analytic index bundle $\Ind$, i.e. the bundle over $\mathring{\cM}$ with fibre $\ker(D_u)$.

Consider a broken solution $u = (u_1,\ldots,u_n)$ lying in a boundary stratum; as usual we have picked representatives in the $\bR$-translation classes of these maps. We define
\[
\ker(D_u) := \oplus_j \ker(D_{u_j})
\]
Now consider the gluing maps $G_{\bf{H}}(u,T)$ or $G_{\bf{H}'}$ from a neighbourhood $V:=\prod_j V_{\bf{H}^j}$ of $(u_1,\ldots,u_n) \in \cM_{x_0x_1}\ldots \cM_{x_{n-1}x_n}$, respectively a corresponding neighbourhood $V'$ for the systems $\bf{H}'$,  associated to systems of hypersurfaces $\bf{H} = \{\bf{H}_j\}$ respectively $\bf{H}' = \{\bf{H}'_j\}$ for the $u_i$.  Shrinking, we can take $V=V'$, so we obtain a transition map
\[
\phi_{\bf{H},\bf{H}'}: V \times (T_0,\infty)^{n-1} \to V \times (T_0,\infty)^{n-1}
\]
The systems of hypersurfaces $\bf H$ or $\bf H'$ induce systems for the glued curves $u_1 \# u_2 \# \cdots \# u_n$ for all sufficiently large gluing parameters $(T_1,\ldots, T_{n-1})$. 
Trivialising the normal bundles to all hypersurfaces thus gives explicit local trivialisations
\[
G_{\bf{H}}^*\Ind_{x_0x_n} \cong \bR^k \times V \times (T_0,\infty)^{n-1}
\]
where $d$ is the index, and hence  transition maps
\[
\chi_{\bf{H},\bf{H}'}: G_{\bf{H}}^*\Ind_{x_0x_n} \to G_{\bf{H}'}^*\Ind_{x_0x_n}
\]

\begin{rmk}\label{rmk:R factor extends}
 A glued curve $u_1 \# u_2 \# \cdots \# u_n$ has a translation vector field $\partial_s$ and these assemble to define a global section of the pullback $\Gamma(G_{\bf{H}}^*\Ind_{x_0x_n})$, a vector bundle over $V\times (T_0,\infty)^{n-1}$.  The section $\partial_s$ converges at infinity to the element 
    \[
    (\partial_{s(u_1)}, \ldots, \partial_{s(u_n)}) \in \oplus_j \ker(D_{u_j}) = \oplus_j \Ind_{x_{j-1}x_{j}}
    \]
    cf. the discussion after equation (12.21) in \cite{Seidel:book}.
\end{rmk}

\subsection{Collars} \label{Sec:collaring}

This section follows \cite[Sections 8.2-8.4]{Large}. We will construct truncations of open moduli spaces which are diffeomorphic to their compactifications; cf. \cite{Ekholm-Smith} where truncations were also used in discussing smooth structures and index bundles on compactified spaces of Floer discs. 

Following Lemma \ref{lem:tangent}, the flow category $\cM$ admits a  (preferred and unique up to contractible choice) system of collars, meaning that if $F$ is a face of some $\cM_{xy}$ which touches $D(F)$ codimension one faces, there are embeddings 
\[
F \times [0,\epsilon)^{D(F)} \hookrightarrow \cM_{xy}
\]
which are compatible with passing to deeper faces, and which restrict on product faces $\cM_{xy} \times \cM_{yz}$ to product collars in the sense of \cite[Lemma 3.18]{PS}.

Let $u = (u_1,\ldots, u_n) \in \cM_{x_0x_1} \times \cdots \times \cM_{x_{n-1}x_n}$.  There is a collar map
\[
\scrC: \cM_{x_0x_1} \times \cdots \times \cM_{x_{n-1}x_n} \times [0,\epsilon)^{n-1} \to \cM_{x_0x_n}
\]
In particular, for any $T \gg 0$ sufficiently large, there is an embedded copy
\[
\cM_{x_0x_1} \times \cdots \times \cM_{x_{n-1}x_n} \times \{T\}^{n-1} \hookrightarrow \cM_{x_0x_n}
\]
associated to a collar co-ordinate $1/T$.

Open subsets of collar neighbourhoods are in the images of gluing maps which, in our atlas of charts, have been defined locally rather than on whole neighbourhoods of strata. One can relate the two constructions asymptotically as follows.

A \emph{system of lengths} for the flow category $\cM^{LK} $ comprises for each $x,y \in \cM^{LK}$ with $|x|>|y|$, and each interior point $u \in \mathring{\cM}_{xy}$, a choice of map $u:Z \to X$ in the $\bR$-translation class of $u$ and a real constant $L(u) > 0$ (defining a continuous bounded below function on $\cM_{xy}$) such that $u([L(u),\infty) \times [0,1])$ respectively $u((-\infty,-L(u)] \times [0,1])$ lie in fixed small neighbourhoods of $x$ respectively $y$ where the almost complex structure is standard. 

In light of our choice of gluing profile, we will write $T_i = 1/\delta_i$ for gluing parameters when determined by associated collar co-ordinates. Following \cite[Section 8]{Large}, we have:

\begin{lem} 
    One can construct compatible systems of lengths and collars, meaning 
      \begin{itemize}
\item if $u=\scrC((u_1,\ldots,u_n), T_1,\ldots, T_{n-1}) $ then 
\[
L(u) = L(u_1) + 2T_1 + L(u_2) + \cdots + 2T_{n-1} + L(u_n); 
\]
\item if $L_i^- = L(\scrC(u_1,\ldots, u_{i}, T_1,\ldots, T_{i-1}))$ and $L_i^+ = L(\scrC(u_{i+1},\ldots, u_n, T_{i+1},\ldots, T_{n-1}))$ then we have uniform convergence, exponentially fast as $T_i \to \infty$, 
\[
u|_{[-L(u), -L(u)+2L_i^-] \times [0,1]} \ \to \ \scrC(u_1,\ldots, u_{i}, T_1,\ldots, T_{i-1}) |_{[-L_i^-,L_i^-]\times[0,1]}
\]
and
\[
u|_{[L(u)-2L_i^+, L(u)]} \to \scrC(u_{i+1},\ldots, u_n, T_{i+1},\ldots, T_{n-1}) |_{[-L_i^+, L_i^+]\times [0,1]}.
\]
\end{itemize}
\end{lem}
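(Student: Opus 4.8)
The plan is to build the length function $L(\cdot)$ and the collar system $\scrC$ simultaneously by induction on the codimension of the boundary stratum, exactly as in \cite[Sections 8.2--8.4]{Large}, and to carry the exponential decay estimates along with the inductive construction. First I would fix, once and for all, a choice of representative map $u\colon Z \to X$ in each $\bR$-translation class, together with a length constant $L(u)>0$, for every unbroken $u \in \mathring{\cM}_{xy}$; these can be chosen continuously in $u$ (and bounded below) since the condition that $u([L(u),\infty)\times[0,1])$ and $u((-\infty,-L(u)]\times[0,1])$ land in the fixed neighbourhoods of $x$ and $y$ where $J$ is standard is open and convex in $L(u)$, by Gromov compactness and the asymptotic convergence of Floer strips to their limiting chords. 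The preferred system of collars exists and is unique up to contractible choice by Lemma \ref{lem:tangent} and \cite[Lemma 3.19]{PS}; the content of the present lemma is that one may \emph{coordinate} this choice with the system of lengths so that the two displayed bullet points hold.

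Next I would perform the inductive step. Assume lengths and collars have been constructed, compatibly, over all strata of codimension $< c$, and consider a codimension-$c$ stratum corresponding to a broken configuration $(u_1,\dots,u_n)$ with, say, $n-1 = c$ breakings. On the collar neighbourhood $\scrC((u_1,\dots,u_n),T_1,\dots,T_{n-1})$ I would \emph{define} $L$ on the glued curve by the prescribed additive formula $L(u) = L(u_1) + 2T_1 + \dots + 2T_{n-1} + L(u_n)$, and check this is consistent with the already-chosen lengths on lower-codimension faces touching this stratum (it is, because on a face where only some of the $T_i$ are finite the formula restricts correctly, since the collars are product collars in the sense of \cite[Lemma 3.18]{PS}). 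The point is then to verify the second bullet: the restriction of the glued curve $u$ to the subinterval $[-L(u),-L(u)+2L_i^-]\times[0,1]$ converges, exponentially fast in $T_i$, to the partially glued curve $\scrC(u_1,\dots,u_i,T_1,\dots,T_{i-1})$ on $[-L_i^-,L_i^-]\times[0,1]$, and symmetrically on the other side. This is precisely the pregluing/Newton-iteration estimate underlying the gluing construction: the glued solution differs from the spliced approximate solution by an error that decays like $e^{-\delta \min_i T_i}$ in $W^{2,\kappa}$, where $\delta$ is a spectral gap coming from the transverse nondegeneracy of the Hamiltonian chords (Remark \ref{rmk:asymptotics}), so the restrictions to the long neck regions are exponentially close to the corresponding restrictions of the lower-level (partially glued) curves. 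I would cite \cite[Sections 6, 8]{Large} and \cite[Proposition 8.4]{PS} for the explicit gluing scheme and the decay rate, and note that the gluing profile $\psi(t)=1/t$ fixed earlier makes the neck length $2T_i$ a smooth function of the collar coordinate $\delta_i = 1/T_i$, so that the additive length formula is smooth across the boundary.

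The main obstacle is \emph{compatibility at deeper strata}: when one breaks further, $u_i$ itself may be a boundary configuration $u_i = \scrC((u_i', u_i''),S)$, and one must check that the length formula and the convergence statements are consistent under iterated breaking — i.e. that the additive formula for $L$ is associative and that the nested exponential-convergence estimates compose. This is handled by the same inductive bookkeeping as in \cite[Section 8.3--8.4]{Large}: the contractibility of the space of choices of collar at each stage (from the asymptotic uniqueness of normal directions, Lemma \ref{lem:tangent}) lets one extend a compatible choice from the codimension-$(<c)$ faces to codimension $c$, and the exponential estimates at each stage are uniform enough in the remaining gluing parameters that they survive composition. Once this is set up, the two bullet points follow essentially by construction, and the only genuine analysis input is the exponential gluing estimate, which is standard. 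I would keep the exposition brief, referring to \cite{Large} for the detailed estimates and emphasising only the points where the system of lengths needs to interact with the system of collars.
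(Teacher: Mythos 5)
Your proposal is correct and follows essentially the same route as the paper: the paper's proof simply picks a system of collars extending the canonical system of normals, invokes the gluing construction (choosing the parameters $T_i$ sufficiently large inductively over the dimension of the stratum), and lets the additive length formula and exponential convergence follow from the standard gluing estimates. Your write-up is a more detailed expansion of exactly this inductive argument, with the same key inputs (the preferred collars from Lemma \ref{lem:tangent}, the gluing scheme of Large, and exponential decay in the neck).
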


More prosaically, this means that the domain of the curve $\scrC((u_1,\ldots,u_n), T_1,\ldots, T_{n-1})$ is obtained from the collection of domains $u_i$ by gluing with lengths $T_i$, where gluing co-ordinates are measured with respect to the prescribed strip-like ends on all domain components. 

\begin{proof}
     Pick a system of collars which extends the canonical  system of normals, in the sense of \cite[Section 3]{PS}. The result then follows from the gluing construction, cf Figure \ref{Fig:gluing}. More precisely we will choose the gluing parameters $T_i$ to be sufficiently large, inductively in the dimension of the stratum.
\end{proof}

    Fix a  gluing parameter $T > T_0$ and consider the embedded submanifold
    \begin{equation}\label{eq: coll comp}
        \cM_{x_0x_1} \times \cdots \times \cM_{x_{n-1}x_n} \times \{T\}^{l-1} \hookrightarrow \cM_{x_0x_n}
    \end{equation}
    which is the image of a collar map $\scrC(u_1,\ldots,u_n; t,t,\ldots t)$ with $t=1/T$.
    
    At a point $u = u_1 \# u_2 \# \cdots \# u_n$ in the image, we can construct a map
    \begin{equation} \label{L2 projection}
    \oplus_j \ker(D_{u_j}) \to \ker(D_u)
    \end{equation}
    by cutting off vector fields translated to be supported in the constituent parts of the glued  strip (cf. Figure \ref{Fig:gluing}; the cut-offs take place in the regions of width $O(1)$ that figure in pregluing of strips) and then composing with 
    $L^2$-orthogonal projection.

We now fix a small $\lambda > 0$ so that any linear map in the $\lambda$-ball around \eqref{L2 projection} is an isomorphism. Again by the argument of Lemma \ref{linear gluing} we have:

   \begin{lem} \label{lem:orthogonal projections with collars}
    If $T \gg T_0$ is sufficiently large, the map of \eqref{L2 projection} is close to an isometry; in particular there is $\lambda>0$ so that any map in the $\lambda$-ball around this map is itself a linear isomorphism.
\end{lem}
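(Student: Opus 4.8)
The statement is a standard consequence of linear gluing, and the plan is to reduce it to the estimates already proved in Lemmas \ref{lem: T big ok}, \ref{linear gluing} and \ref{lem:associative linear gluing}, together with the description of the cut-off/splice maps used there. First I would set up the explicit model for the map \eqref{L2 projection}: at a glued strip $u = u_1 \# \cdots \# u_n$ with gluing length $T$, given $(\xi_1,\dots,\xi_n) \in \bigoplus_j \ker(D_{u_j})$, one translates each $\xi_j$ into the corresponding piece of the glued domain, multiplies by a cut-off $\beta$ supported in a neck region of width $O(1)$ with $|d\beta| = O(1)$, sums the results to get an element $\widehat{\xi} \in W^{2,\kappa}(u^*TX, u^*TL \sqcup u^*TK)$, and then applies $L^2$-orthogonal projection onto $\ker(D_u)$. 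This is exactly the construction appearing in the proof of Lemma \ref{linear gluing} (specialised to the linearised Floer operators rather than the abstract discs), so the content to be checked is that it is close to an isometry for $T$ large.

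The key step is the estimate $\big\| \mathrm{pr}_{\ker(D_u)}(\widehat{\xi}) - \widehat{\xi} \big\|_{W^{2,\kappa}} \to 0$ and $\big\| \widehat{\xi} \big\|_{W^{2,\kappa}} = \| (\xi_1,\dots,\xi_n)\| (1 + o(1))$ as $T \to \infty$, uniformly in $(\xi_1,\dots,\xi_n)$ on the unit sphere. The first of these follows because $D_u \widehat{\xi}$ is supported in the neck and controlled by the decay of the $\xi_j$ there (exponential decay of kernel elements of the asymptotically translation-invariant operators $D_{u_j}$, cf. Remark \ref{rmk:asymptotics} and the discussion after \cite[(12.21)]{Seidel:book}), so $\widehat{\xi}$ is an approximate kernel element and the surjectivity/uniform-bounded-right-inverse statement of Lemma \ref{lem:orthogonal projections with collars}'s ancestors (Lemma \ref{lem: T big ok}) shows $L^2$-projection moves it only by $O(e^{-cT})$. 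The second follows because the supports of the translated $\xi_j$ are essentially disjoint (they overlap only in the $O(1)$-width neck regions, where both are exponentially small), so the pieces are $L^2$-almost-orthogonal. Combining, $(\xi_1,\dots,\xi_n) \mapsto \mathrm{pr}_{\ker(D_u)}(\widehat{\xi})$ is within $O(e^{-cT})$ of an isometric embedding; since both sides have the same (finite) dimension by additivity of the Fredholm index together with surjectivity of the glued operator, this forces it to be within $O(e^{-cT})$ of an isometric isomorphism. Hence for $T \gg T_0$ it lies in the $\lambda$-ball around a fixed isometry, and every linear map in that ball is invertible.

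The main obstacle, such as it is, is purely bookkeeping: one must carry the collar co-ordinates through, i.e. check that the gluing maps $\scrC(u_1,\dots,u_n;t,\dots,t)$ with $t = 1/T$ produce glued curves whose domains are obtained by gluing with length exactly $T$ measured in the prescribed strip-like ends (this is precisely the content of the compatible-systems-of-lengths-and-collars lemma just proved), so that the neck geometry to which the cut-off/splice estimates apply is the standard one. Given that, all analytic estimates are verbatim those of \cite[Lemma A.17]{ES2} and \cite[Appendix A]{Barraud-Cornea}; no new analysis is required. I would phrase the proof as: "By the argument of Lemma \ref{linear gluing} applied to the linearised operators $D_{u_j}$, the pre-glued element $\widehat{\xi}$ satisfies $\|D_u\widehat\xi\| = O(e^{-cT})\|\xi\|$ and $\|\widehat\xi\| = (1+O(e^{-cT}))\|\xi\|$; $L^2$-projection therefore differs from an isometry by $O(e^{-cT})$, and index additivity forces it to be an isomorphism for $T$ large, proving the claim with $\lambda$ any fixed positive number less than the operator-norm distance from the set of isometries to the set of non-invertible maps."
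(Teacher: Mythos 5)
Your proposal is correct and follows essentially the same route as the paper, which proves this lemma simply by invoking "the argument of Lemma \ref{linear gluing}" — i.e. cut-off/splice of kernel elements, exponential decay making the pre-glued field an approximate kernel element with almost-orthogonal pieces, $L^2$-projection moving it only by $O(e^{-cT})$, and index additivity plus surjectivity of the glued operator forcing the map to be an isomorphism. Your write-up just makes explicit the estimates the paper leaves implicit.
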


Concatenating two collaring maps
\begin{align}
(u_1,\ldots,u_n) \mapsto (u_1,\ldots, u_i, \scrC(u_{i+1},\ldots,u_{i+j},t,t,\ldots,t), u_{i+j+1},\ldots, u_n) \mapsto \\ 
\scrC(u_1,\ldots, u_i, \scrC(u_{i+1},\ldots,u_{i+j};t,t,\ldots,t), u_{i+j+1},\ldots, u_n; t,t,\ldots,t)
\end{align}
yields a pair of isomorphisms
\begin{equation} \label{collaring isomorphisms} 
\oplus_j \ker(D_{u_j}) \to \oplus_{k<i} \ker(D_{u_k}) \oplus \ker(D_{\scrC(u_{i+1},\ldots,u_{i+j})} \oplus_{k>{i+j+1}} \ker(D_{u_k}) \, \to \, \ker(D_u).
\end{equation}

\begin{lem} \label{lem:orthogonal projections are isotopic}
    For $T \gg 0$ the composite of the two maps  of \eqref{collaring isomorphisms} is isotopic to the isomorphism of \eqref{L2 projection} up to contractible choice.
\end{lem}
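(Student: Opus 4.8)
The plan is to reduce Lemma~\ref{lem:orthogonal projections are isotopic} to the associativity statement already established for the abstract-disc setting, namely Lemma~\ref{lem:associative linear gluing}, by showing that both composites in \eqref{collaring isomorphisms} and the single map \eqref{L2 projection} are the \emph{restrictions} along the map $a_{xy}$ (or rather its multi-input analogue built from the gluing maps $\rho_k$ of Lemma~\ref{lem:upshot}) of the corresponding maps on spaces of abstract discs. First I would observe that, by the compatible system of lengths and collars constructed in the preceding lemma, the glued Floer strip $\scrC((u_1,\ldots,u_n),T,\ldots,T)$ has underlying domain (with strip-like ends) literally equal to the gluing of the domains of the $u_j$ with length parameters $T$ in the sense of Section~\ref{sec: glue}; and the linearised operator, after the cut-off of $Y$ to $dt$ near the ends performed in Lemma~\ref{lem linearised discs as abstract discs}, is exactly the glued stabilised Cauchy-Riemann operator $D^{CR}+f''$ of Lemma~\ref{lem: T big ok} with $f''=0$. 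Hence the $L^2$-orthogonal-projection map \eqref{L2 projection} agrees, under the identification $a^*\bV^\Theta = \Ind$, with the linear-gluing isomorphism \eqref{eqn:L2} of Lemma~\ref{linear gluing}, and the two-step composite \eqref{collaring isomorphisms} agrees with the two-step composite $\rho(\mathrm{pre}(\mathrm{pre}(1,2),3))$-type map of Lemma~\ref{lem:associative linear gluing} (with the obvious relabelling to allow an arbitrary partition $i \mid j \mid n-i-j$ rather than just $1\mid1\mid1$).

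The key steps, in order, would be: (i) spell out that the cut-off procedure of Lemma~\ref{lem linearised discs as abstract discs} intertwines the Floer pre-gluing with abstract-disc gluing — this is where one invokes the exponential convergence in the compatible-lengths lemma, so that the cut regions of width $O(1)$ in Figure~\ref{Fig:gluing} sit inside the shrunken strip-like ends where $Y$ has already been made standard, making the Floer cut-off and the abstract cut-off literally the same operation for $T$ large; (ii) deduce that the map \eqref{L2 projection} equals $a^*$ of the isomorphism \eqref{eqn:L2}, and that each factor of \eqref{collaring isomorphisms} equals $a^*$ of the corresponding factor of the abstract two-step composite, using naturality of $L^2$-orthogonal projection with respect to the isometric inclusions of cut-off kernel subspaces; (iii) apply Lemma~\ref{lem:associative linear gluing} to conclude that the abstract single-step and two-step maps are isotopic through isomorphisms canonically up to contractible choice, and pull this isotopy back along $a$. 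A small amount of bookkeeping is needed to handle that the associativity lemma is stated for three discs while here we have a nested pair of collarings inside a string of $n$ components; but this is exactly the reduction already indicated in the paragraph before Lemma~\ref{lem:associative linear gluing}, that ``the essential case'' is the three-fold one, the general case following by the same convexity argument since all the relevant maps lie in a small ball around the common map $\mathrm{pre}$.

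The main obstacle I expect is step (i): verifying that the \emph{Floer} pre-gluing and the \emph{abstract} linear gluing can be arranged to be literally identical (not merely homotopic) once $T$ is large. This requires that the strip-like ends chosen on the Floer domains, the cut-off function used to replace $Y$ by $dt$, and the gluing profile $\psi(t)=1/t$ are all chosen compatibly with the corresponding choices on abstract discs, and that the collar maps $\scrC$ are exactly the ones induced by the canonical system of normals of Lemma~\ref{lem:tangent} — in particular that the ``distinguished normal directions'' match the abstract gluing neck coordinate. One must also check that the $L^2$-projection in \eqref{L2 projection} uses the same cut-off function $\beta$ (width $1$, $|d\beta|=O(1)$) as in the proof of Lemma~\ref{lem: T big ok}. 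Once these choices are pinned down — and the preceding lemmas essentially do pin them down — the statement is formal; but making the dictionary precise, rather than merely ``asymptotically the same'', is the crux, and is where one leans hardest on the exponential-decay estimates of Large~\cite{Large} recalled in this section.
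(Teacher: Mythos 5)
Your proposal lands on the right mechanism but routes it through an unnecessary and, as stated, unachievable intermediate step. The paper's proof does not reduce to Lemma \ref{lem:associative linear gluing} via the maps $a_{xy}$ at all: it works directly in the Floer setting, introducing the map $\mathrm{splice}:\oplus_j\ker(D_{u_j}) \to L^{2,1}(Z,u^*TX,u^*TL\sqcup u^*TK)$ given by cutting off translated kernel elements, and then observing that the map \eqref{L2 projection} and \emph{all} maps of the shape \eqref{collaring isomorphisms} lie, as injections into that Sobolev space, in an $\varepsilon$-ball around $\mathrm{splice}$ in operator norm for $T\gg 0$ — hence in a convex, contractible set of injections, all of which happen to be isomorphisms onto $\ker(D_u)$. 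That is the entire proof; it is the same convexity trick as in Lemma \ref{lem:associative linear gluing}, transplanted rather than pulled back. Your step (i), which asks for the Floer pre-gluing and the abstract linear gluing to be \emph{literally identical}, cannot be arranged: the linearised operator at the actual glued Floer solution $u_1\#\cdots\#u_n$ differs from the glued cut-off operators of the $u_j$ by terms that are only exponentially small (the zeroth-order term $Y$ at the glued curve is not the concatenation of the cut-off $Y$'s, and the glued curve itself is only exponentially close to the preglued one). You correctly identify this as the crux, but the resolution is not to pin the choices down harder — it is to notice that literal equality is never needed, because the convexity argument tolerates anything within a small ball of the common splice map. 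If you replace your steps (i)–(ii) with the direct observation that both composites and the one-step map are $\varepsilon$-close to $\mathrm{splice}$, your step (iii) becomes the paper's proof and the argument closes.
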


\begin{proof}
    Consider the map  (with $Z=\bR\times[0,1]$ as usual)
    \[
    \mathrm{splice}: \oplus_j \ker(D_{u_j}) \to L^{2,1}(Z, u^*TX, u^*TL \sqcup u^*TK)
    \]
    given by cutting off translated versions of the vector fields, so \eqref{L2 projection} is given by the composition of `splice' with orthogonal $L^2$-projection.  All of  the maps of the shape \eqref{collaring isomorphisms},  and  the original \eqref{L2 projection}, are isomorphisms $\oplus \ker(D_{u_j}) \to \ker(D_u)$ which, in the space of injections $\oplus \ker(D_{u_j}) \to L^{2,1}(Z,u^*TX, u^*TL \sqcup u^*TK)$, lie in a ball of operator norm radius $<\varepsilon$ about $\mathrm{splice}$, for some small $\varepsilon \ll 1$.  This implies that all these isomorphisms lie in a convex set.
\end{proof}

The exact sequences
\begin{equation} \label{eqn:extended sequence}
0 \to \bR^{l-1} \to \oplus_j\Ind_{x_{j-1}x_j} \to T(\cM_{x_0x_1}\times\cdots\times \cM_{x_{l-1}x_l}) \to 0
\end{equation}
which exist canonically over the interiors of moduli spaces extend as exact sequences of topological vector bundles, compatibly over the various strata (with diagonal inclusions on the first terms, cf. Remark \ref{rmk:R factor extends}).  By construction, the index bundle, and the map from the index bundle to the tangent bundle of the moduli space, is not canonical on the boundary, but defined up to contractible choice: we are using a diffeomorphism from the compactified moduli space to a subspace of the interior, arising from the collars, to \emph{define} the extension of the index bundle on the interior to a bundle on the boundary strata (which circumvents issues arising from the degeneration of the domain and the change in the automorphism group at the boundary). The resulting extensions \eqref{eqn:extended sequence} are constructed inductively over strata, where Lemma \ref{lem:orthogonal projections are isotopic} ensures that one can interpolate previously made choices. In a collar neighbourhood of a boundary stratum the map $\Ind_{xy} \to T\cM_{xy}$  will in general only be homotopic to the canonical analytically defined map on the interior.  

\begin{lem}\label{lem:index breaking}
    There are isomorphism $\Ind_{xy} \cong I_{xy}^{\cM}$ compatible with breaking.
\end{lem}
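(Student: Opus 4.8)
## Proof Proposal for Lemma~\ref{lem:index breaking}

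The plan is to recall that the \emph{abstract index bundle} $I^{\cM}_{xy} = T\cM_{xy} \oplus \bR\tau_y^{\cM}$ is the tangent bundle of $\cM_{xy}$ augmented by one trivial line, with a prescribed behaviour over each boundary face as recorded in Section~\ref{sec: ind bun}: over a broken face $F = \cA_{xy} \times \cB_{yz}$ one has the isomorphism $\psi\colon I^\cA_{xy} \oplus I^\cB_{yz} \to I^\cM_{xz}$ sending $\tau_y$ to the inward normal $\nu^F$. On the other hand, the extended exact sequence \eqref{eqn:extended sequence} exhibits, over the interior, a canonical identification $\Ind_{xy} \cong T\mathring\cM_{xy} \oplus \bR$, where the $\bR$-factor is the translation vector field $\partial_s$; by Remark~\ref{rmk:R factor extends} this $\partial_s$-section converges at a broken stratum to the tuple $(\partial_{s(u_1)},\dots,\partial_{s(u_n)})$, i.e. to the diagonal copy of $\bR$ in $\oplus_j \Ind_{x_{j-1}x_j}$. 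First I would match up these two pieces of structure: the trivial line $\bR\tau_y^{\cM}$ in $I^\cM_{xy}$ corresponds to the $\partial_s$-line in $\Ind_{xy}$, and under breaking $\tau_y^{\cM}$ is sent to $\nu^F$ while $\partial_s$ converges to the translation field normal to the gluing stratum; by Lemma~\ref{lem:tangent} the distinguished normal direction to a boundary face (for the gluing profile $\psi(t)=1/t$) is exactly the direction of this translation field, so these two conventions agree.

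The construction itself proceeds by induction on $\dim \cM_{xy} = |x|-|y|-1$, exactly as in the construction of the extension \eqref{eqn:extended sequence}. Over the interior $\mathring\cM_{xy}$ the analytically defined identification $\Ind_{xy} \cong T\mathring\cM_{xy}\oplus\bR = I^\cM_{xy}|_{\mathring\cM_{xy}}$ is canonical. Suppose an isomorphism $\Ind_{xy'} \cong I^\cM_{xy'}$ has been constructed, compatibly with breaking, for all lower-dimensional moduli spaces. Over each broken boundary face $\cM_{xy} \times \cM_{yz} \subseteq \cM_{xz}$ the inductive hypothesis, together with the gluing/collaring structure, provides an isomorphism $\Ind_{xz}|_F \cong \Ind_{xy}\boxtimes\Ind_{yz} \to I^\cM_{xy}\boxtimes I^\cM_{yz} \overset{\psi}{\to} I^\cM_{xz}|_F$; by Lemma~\ref{lem:orthogonal projections are isotopic} (and the collaring isomorphisms \eqref{collaring isomorphisms}) these agree up to contractible choice on overlaps of faces, so they glue to a well-defined isomorphism over the carapace of $\cM_{xz}$. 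Since the carapace is a deformation retract of $\cM_{xz}$ and the target $I^\cM_{xz}$ and source $\Ind_{xz}$ are both globally defined topological vector bundles over the compactification (the former by definition, the latter by the extension discussed in Section~\ref{Sec:collaring}, using the collars from Lemma~\ref{lem:tangent}), the isomorphism over the carapace extends to all of $\cM_{xz}$, uniquely up to homotopy. This completes the inductive step, and compatibility with breaking holds by construction.

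The main obstacle is the coherence bookkeeping at the boundary, rather than any genuinely new analytic input. Concretely: one must check that the various identifications $\Ind_{xz}|_F \cong I^\cM_{xz}|_F$ produced from different factorisations of a deeper (higher-codimension) face into broken pieces agree up to contractible choice, so that the inductive gluing is unobstructed. This is precisely where Lemma~\ref{lem:orthogonal projections are isotopic} is used: the two ways of building the $L^2$-projection isomorphism $\oplus_j\ker(D_{u_j}) \to \ker(D_u)$ through an intermediate collaring are isotopic up to contractible choice, which feeds into the commutativity of the coherence diagrams for $I^\cM$ recorded in \cite[Proposition 4.27]{PS}. Once one has set up the dictionary between the $\tau_y$-lines and the translation fields, and between the distinguished normals of Lemma~\ref{lem:tangent} and the $\nu^F$ appearing in the definition of $I^\cM$, the rest is a routine induction over strata of the sort carried out repeatedly in \cite[Section~8]{Large} and \cite{PS}; I would simply indicate that it follows the same pattern and refer to those sources for the diagram chases.
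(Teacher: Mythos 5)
Your proposal is correct and follows essentially the same route as the paper, whose proof is a one-line reference to an induction over $|x|-|y|$ using the previously assembled ingredients (the extended exact sequence \eqref{eqn:extended sequence}, the matching of $\tau_y$ with the translation field $\partial_s$ from Remark \ref{rmk:R factor extends}, and the contractibility statement of Lemma \ref{lem:orthogonal projections are isotopic}), as in \cite[Lemma 8.12]{PS}. You have simply spelled out the details of that induction, and the dictionary you set up between the $\tau_y$-lines, the translation fields, and the distinguished normals of Lemma \ref{lem:tangent} is exactly the intended one.
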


\begin{proof}
    Using the previously listed ingredients this follows by an induction over $|x|-|y|$ as in \cite[Lemma 8.12]{PS}.
\end{proof}

\subsection{$\Theta$-orientations}\label{Subsec:theta-orientations in Floer theory}

Assume we are given lifts of the stable Gauss map of the Lagrangians $L$ and $K$ to $\Theta$.

\begin{lem} \label{lem:abstract discs on complements of collars}
    There are homotopy equivalent subspaces $\cM_{xy}' \subset \cM_{xy}$ which are compact and contained in $\mathring{\cM}_{xy}$ with the following property:  there are maps $\cM'_{xy} \to \bU^{\Theta}_{xy}$ which are compatible with composition in the flow category, i.e. such that there are commuting diagrams
     \[
    \xymatrix{
    \cM'_{xy} \times \cM'_{yz} \ar[r] \ar[d] & \bU^{\Theta}_{xy} \times \bU^{\Theta}_{yz} \ar[d]_{\rho} \\
     \cM'_{xz} \ar[r]&  \bU^{\Theta}_{xz}
    }
    \]
    where $\rho$ is a gluing map of abstract discs from Lemma \ref{lem:upshot}, and the left vertical map is obtained from (\ref{eq: coll comp}).
    
\end{lem}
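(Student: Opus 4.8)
The plan is to take the subspaces $\cM'_{xy}$ to be the truncations of the moduli spaces obtained by cutting off a small collar neighbourhood of the carapace: concretely, using the compatible system of collars and the system of lengths constructed in the preceding lemmas, one declares $\cM'_{xy}$ to be the complement of the union of the images of all collar embeddings $\scrC((u_1,\ldots,u_n),T,\ldots,T)$ for gluing parameters $T$ larger than some fixed large $T_\ast$. Since each compactified $\cM_{xy}$ deformation retracts onto this truncation (push in along the collar coordinates), the inclusion $\cM'_{xy}\hookrightarrow\cM_{xy}$ is a homotopy equivalence; and by construction $\cM'_{xy}\subseteq\mathring{\cM}_{xy}$, so $a_{xy}$ of Lemma \ref{lem linearised discs as abstract discs} restricts to a map $\cM'_{xy}\to\bU^{\Theta}_{1,1}=\bU^{\Theta}_{xy}$ which is covered by an isomorphism of index bundles onto $\bV^\Theta$.

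The content is then the commuting-square compatibility with composition. First I would observe that, by the compatible systems of lengths and collars, the embedding (\ref{eq: coll comp}) sends a product $\cM'_{xy}\times\cM'_{yz}$ into $\cM'_{xz}$ once $T_\ast$ is chosen large enough (this uses the additivity formula $L(u)=L(u_1)+2T+L(u_2)$ and the prescription that gluing coordinates are measured with respect to the fixed strip-like ends). On such a glued curve, the domain of $a_{xz}$ is literally obtained from the domains of $a_{xy}$ and $a_{yz}$ by the gluing operation of Section \ref{sec: glue} with length parameter $T$: the cut-off one-forms $Y$ agree exactly with $dt$ on the shrunken strip-like ends by the choice in Lemma \ref{lem linearised discs as abstract discs}, the $\Theta$-lifts concatenate (the Gauss map lifts of $L$ and $K$ being globally fixed), and the Cauchy-Riemann data glues by the standardness on the necks. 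Hence, up to the identification of this glued domain with the canonical abstract disc in $\bU^\Theta_{xz}$, the composite $\cM'_{xy}\times\cM'_{yz}\to\bU^\Theta_{xy}\times\bU^\Theta_{yz}\xrightarrow{\rho}\bU^\Theta_{xz}$ agrees with $a_{xz}$ restricted to the image of (\ref{eq: coll comp}). Compatibility of this identification with the index bundles is exactly Lemma \ref{linear gluing}: $L^2$-orthogonal projection intertwines $\Ind_{xy}\oplus\Ind_{yz}$ with $\Ind_{xz}$ and with $\bV^\Theta_{xy}\oplus\bV^\Theta_{yz}\to\bV^\Theta_{xz}$.

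The main obstacle is coherence: the identification of the glued abstract disc with the canonical representative in $\bU^\Theta_{xz}$, and the choice of $a_{xz}$ off the top stratum, involve contractible choices, and one must make these choices compatibly over all strata so that the squares commute on the nose (or up to the prescribed contractible ambiguity) rather than merely up to homotopy. This is handled by an induction on $|x|-|z|$, exactly parallel to the inductive extension of index bundles in Lemma \ref{lem:index breaking} and the construction of $st$-data in Lemma \ref{lem:upshot}: at each stage the space of choices is contractible by Lemma \ref{lem:orthogonal projections are isotopic} and Lemma \ref{lem:associative linear gluing}, so a previously fixed choice on the boundary extends over the interior. One should also record that shrinking $T_\ast$ further only changes everything up to isotopy through the same structures, so the resulting $\Theta$-orientation data on $\cM^{LK}$ is well-defined up to the appropriate equivalence; this is routine given the uniform estimates of Lemma \ref{lem:orthogonal projections with collars}.
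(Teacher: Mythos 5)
Your proposal is correct and follows essentially the same route as the paper's proof: truncate each $\cM_{xy}$ by removing an inner collar (equivalently, the locus of gluing parameters above a threshold), restrict the maps $a_{xy}$ of Lemma \ref{lem linearised discs as abstract discs} to these compact deformation retracts, and obtain strict compatibility with composition by an induction on $|x|-|z|$ in which the glued map and the restricted map are interpolated in the outer collar region using the contractible spaces of choices from Lemmas \ref{linear gluing}, \ref{lem:orthogonal projections are isotopic} and \ref{lem:associative linear gluing}. The only cosmetic difference is that you fold the index-bundle compatibility into the argument, whereas the paper defers it to Lemma \ref{lem:abstract ind on complements of collars}.
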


\begin{proof}[Sketch]
We fix a compatible system of lengths and collars on $\cM$ in the sense of Section 
\ref{Sec:collaring}, with collar maps 
\[
\scrC: \cM_{x_0x_1} \times \cM_{x_1x_2} \times \cdots \times \cM_{x_{k-1}x_k} \times [0,2\delta)^{k-1} \to \cM_{x_0x_k}.
\]
We will refer to the subcollar on $[0,\delta)^k$ as an `inner collar'.

Define a `restricted interior' $ \cM'_{x_0x_k}:= \mathring{\cM}^{\scrC}_{x_0x_k}$ to be given by removing all collar neighbourhoods of all smaller boundary strata with collar parameters $[0,\delta) \subset [0,2\delta)$. We get a compact subset 
\begin{equation} \label{inner collar boundary}
\cM_{x_0x_1} \times \cM_{x_1x_2} \times \cdots \times \cM_{x_{k-1}x_k} \times \{\delta\}^{k-1} \subset \mathring{\cM}_{x_0x_k}
\end{equation}
of the interior, which is the boundary of the complement of the inner collar.  This inherits the structure of a manifold with faces. There are retractions $\cM_{x_0x_k} \to \mathring{\cM}^{\scrC}_{x_0x_k}$, compatible with gluing so
\[
\xymatrix{
\cM_{x_0x_1} \times \cM_{x_1x_2} \ar[r] \ar[d] & \cM_{x_0x_2} \ar[d] \\ 
\mathring{\cM}^{\scrC}_{x_0x_1} \times \mathring{\cM}^{\scrC}_{x_1x_2} \ar[r]  & \mathring{\cM}^{\scrC}_{x_0x_2}
}
\]
commutes, where the bottom arrow is \eqref{inner collar boundary}.

We take the lengths of shrunken strip-like ends to be determined by a constant function $L = 1/\delta$ on the moduli spaces $\cM_{x_ix_j}$ for $\delta$ sufficiently small that the conclusions of Lemma \ref{lem linearised discs as abstract discs} hold. These then admit maps $a_{x_ix_j}$ to the space of abstract discs with $\Theta$-lifts and (trivial) perturbation data.

We then apply the gluing of abstract discs. To obtain compatibility with composition in the flow category, we work inductively over the length $|x|-|y|$ of a broken trajectory between $x$ and $y$.  For an unbroken face, this is the end step of the construction.

Choices for $\cM'_{xy}$ and $\cM'_{yz}$ induce a choice on $\cM'_{xy} \times \cM'_{yz}$. We view this as a subspace of $\cM'_{xy}$ via the $\delta$-level set of the collaring map.  The resulting map, when the length parameters are all large enough, is homotopic to the choice already induced on this subspace by the choice made on (the interior of) $\cM_{xy}$, by a contractible choice. We interpolate between these in the collars of parameters $[\delta,2\delta]$.
\end{proof}

\begin{lem}\label{lem:abstract ind on complements of collars}
    The maps constructed in Lemma \ref{lem:abstract discs on complements of collars} are covered by associative isomorphisms $a_{xy}^*\bV^{\Theta} \cong I_{xy}$.
\end{lem}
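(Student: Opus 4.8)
The plan is to piggyback on the constructions of Lemma \ref{lem:abstract discs on complements of collars} and Lemma \ref{lem:index breaking}, promoting the abstract-disc maps $a_{xy}\colon \cM'_{xy} \to \bU^{\Theta}_{xy}$ to maps that are \emph{covered} by isomorphisms of index bundles, compatibly with gluing. First I would recall that by Lemma \ref{lem linearised discs as abstract discs}, for a single interior moduli space (with sufficiently large length parameters) $L^2$-orthogonal projection identifies $\Ind_{xy}$ with the pullback $a_{xy}^*\bV^{\Theta}$; this is the base case. The content of the lemma is to extend this identification over the compact subspaces $\cM'_{xy}$ of Lemma \ref{lem:abstract discs on complements of collars}, and to make the extension compatible with the gluing/concatenation maps $\rho$ (from Lemma \ref{lem:upshot}) and $st_k$ (the covering isomorphisms of index bundles from that same lemma).

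The key steps, in order: (i) On the restricted interior $\mathring{\cM}^{\scrC}_{xy}$ away from any collar, use the $L^2$-projection isomorphism of Lemma \ref{lem linearised discs as abstract discs} directly — this is canonical up to contractible choice once the lengths $L = 1/\delta$ are large. (ii) On the inner-collar boundary \eqref{inner collar boundary}, i.e. the subspace $\cM_{x_0x_1}\times\cdots\times\cM_{x_{k-1}x_k}\times\{\delta\}^{k-1}$, one has two descriptions of $\Ind_{xy}$: the analytic one (via Remark \ref{rmk:R factor extends} and the extended exact sequences \eqref{eqn:extended sequence}, whose isomorphism with $I^{\cM}_{xy}$ is Lemma \ref{lem:index breaking}), and the one pulled back from $\bU^{\Theta}$ via the linear-gluing isomorphism $st_k$ of Lemma \ref{lem:upshot}. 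By Lemma \ref{linear gluing} and Lemma \ref{lem:orthogonal projections with collars}, both are realised by $L^2$-orthogonal projection of spliced cut-off kernel elements, so they lie in a common $\lambda$-ball (hence a convex, contractible set) of linear isomorphisms; thus they are canonically isotopic. (iii) Induct on $|x|-|y|$: having chosen the covering isomorphism $a_{xy}^*\bV^{\Theta} \cong I_{xy}$ on all deeper strata compatibly with $st_k$, extend over the collars of parameter $[\delta, 2\delta]$ by interpolating between the boundary choice and the analytic choice on the interior, exactly as in the proof of Lemma \ref{lem:abstract discs on complements of collars}; the associativity of $st_k$ from Lemma \ref{lem:upshot} together with the associativity of linear gluing (Lemma \ref{lem:associative linear gluing}) guarantees the interpolated isomorphisms can be made mutually compatible. (iv) For unbroken faces, invoke the corresponding clause of Lemma \ref{lem:upshot} (the $\bR$-summand / translation direction) as the final step, matching the $\bR$-factor in the exact sequence \eqref{eqn:extended sequence} with the $\bR$ appearing in the target $\bR \oplus \bV_{xy}$ of the $st$ maps; this reuses Remark \ref{rmk:R factor extends} to see that the translation vector field $\partial_s$ gives the correct section at infinity.

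The main obstacle will be step (iii): checking that the inductively interpolated covering isomorphisms are \emph{simultaneously} compatible with all gluing maps $\rho_k$ and all the stabilisation maps, i.e. that the diagram in the statement of Lemma \ref{lem:abstract discs on complements of collars} lifts to a diagram of index bundles without introducing incoherence at higher-codimension corners. This is the analogue of \cite[Lemma 8.12]{PS} and \cite[Lemma 4.42]{PS}: the point is that all the competing isomorphisms at a given stratum differ by elements of a contractible space (convexity of small balls around the `splice' map, as in Lemma \ref{lem:orthogonal projections are isotopic}, combined with the contractibility from Lemma \ref{lem:associative linear gluing}), so the obstruction to coherent extension vanishes at each stage of the induction. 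I would write this up by reducing, stratum by stratum, to the assertion that a certain space of choices is nonempty and connected, citing Lemma \ref{lem:associative linear gluing} and Lemma \ref{lem:orthogonal projections are isotopic} for the relevant contractibility, and noting that everything is functorial under morphisms of the tangential structure by Remark \ref{rmk: func}, so it suffices to treat $\Theta = \widetilde{U/O}(N)$ and pull back. The extension to $N = \infty$ follows as in Remark \ref{rmk: Floer N inf} (cf. Section \ref{sec: Nstab}), since all the constructions are compatible with the stabilisation maps $\Xi$ of Proposition \ref{prop: Nstab}.
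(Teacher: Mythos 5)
Your proposal is correct and takes essentially the same route as the paper, whose proof is a one-line citation of the construction in Lemma \ref{lem:abstract discs on complements of collars} together with Lemma \ref{lem:orthogonal projections are isotopic} and Lemma \ref{lem:index breaking} --- exactly the ingredients (contractibility of the space of splicing/projection isomorphisms, compatibility with breaking, and stratum-by-stratum interpolation in the collars) that you spell out in detail.
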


\begin{proof}
    This follows from the construction together with Lemma \ref{lem:orthogonal projections are isotopic} and resulting Lemma \ref{lem:index breaking}. 
\end{proof}

\begin{cor}\label{cor: Theta or Floer}
    $\cM$ admits a natural $\Theta$-orientation.
\end{cor}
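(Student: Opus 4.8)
The plan is to assemble the $\Theta$-orientation on the Floer flow category $\cM=\cM^{LK}$ by unwinding Definition \ref{def: Theta or} and checking that every piece has been supplied by the preceding lemmas. First I would record the puncture data: to each Hamiltonian chord $x$ from $L$ to $K$ we associate the $\Theta$-oriented puncture datum $\bE_x$ determined (via Remark \ref{rmk: pnct}) by the stable Gauss maps of $L$ and $K$ near the intersection point, together with their chosen lifts to $\Theta$; by construction of the Conley--Zehnder index this has Maslov index $\mu(\bE_x)=|x|$, as required. Next, the vector spaces $V_{xx'}$ are trivial, $V_{xx'}=\{0\}$ (we are in the regular case, so Lemma \ref{lem linearised discs as abstract discs} lets us take trivial perturbation data), and the isometric embeddings $\iota_{xx'x''}$ are the evident identifications $\{0\}\oplus\{0\}\hookrightarrow\{0\}$, which are trivially associative.

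The substantive input is the pair of maps $\rho_{xx'}\colon\cM_{xx'}\to\bU_{xx'}$ and the bundle isomorphisms $st_{xx'}\colon I^\cM_{xx'}\to\rho_{xx'}^*\bV_{xx'}$. Here I would invoke Lemma \ref{lem:abstract discs on complements of collars} and Lemma \ref{lem:abstract ind on complements of collars}: these produce, on the homotopy-equivalent compact subspaces $\cM'_{xy}\subset\mathring\cM_{xy}$, maps to $\bU^\Theta_{xy}$ compatible with gluing, covered by associative isomorphisms $a_{xy}^*\bV^\Theta\cong I_{xy}$. Since $\cM'_{xy}\hookrightarrow\cM_{xy}$ is a homotopy equivalence (indeed a deformation retract via the collaring retractions $\cM_{xy}\to\mathring\cM^{\scrC}_{xy}$ of Lemma \ref{lem:abstract discs on complements of collars}), I would extend $a_{xy}$ and the covering isomorphism over all of $\cM_{xy}$ by composing with the retraction; this is where one uses that the retractions are compatible with the product structure on boundary faces, so the associativity square for $\rho$ in Definition \ref{def: Theta or} commutes on the nose (or up to the contractible interpolation already built into the construction). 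The identification $I^\cM_{xx'}\cong\rho^*\bV_{xx'}$ is then exactly Lemma \ref{lem:index breaking} transported along the retraction, and its compatibility with breaking gives the associativity of $st$.

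The main obstacle — and the reason the statement is phrased as a corollary rather than being immediate — is the coherence: Definition \ref{def: Theta or} demands that $\rho$ and $st$ be \emph{strictly} associative with respect to the boundary decompositions $\cM_{xx''}\supset\cM_{xx'}\times\cM_{x'x''}$, whereas the gluing of abstract discs and the linear gluing isomorphisms are only associative up to contractible choice (Lemma \ref{lem:associative linear gluing}, Lemma \ref{lem:orthogonal projections are isotopic}). The resolution, already carried out in the proofs of Lemmas \ref{lem:abstract discs on complements of collars} and \ref{lem:abstract ind on complements of collars}, is the inductive interpolation over strata in the collar region $[\delta,2\delta]$: one builds $\rho_{xy}$ and $st_{xy}$ by induction on $|x|-|y|$, at each stage choosing a homotopy between the map already induced on the boundary face $\cM_{xx'}\times\cM_{x'x''}\times\{\delta\}$ by gluing and the restriction of the interior map $a_{xy}$, and extending it through the collar; contractibility of the relevant spaces of choices (Lemma \ref{lem:associative linear gluing}, Lemma \ref{lem:orthogonal projections are isotopic}, and contractibility of spaces of Cauchy--Riemann data) guarantees the induction goes through and produces genuinely associative data. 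Finally I would remark, as in Remark \ref{rmk:geometric bounded}, that the argument uses only that $L$ and $K$ meet in finitely many points, so the same conclusion holds for geometrically bounded Lagrangians with infinitesimal wrapping; and, as flagged, the identical construction applies verbatim to the other moduli spaces of \cite[Section 8]{PS} (continuation maps, holomorphic triangles, associators), which is what will be needed to promote $\cM\mapsto\cM^{LK}$ to a functor into $\operatorname{Flow}^\Theta$ in Section \ref{sec: Floer tang}.
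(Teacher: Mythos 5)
Your proposal is correct and follows essentially the same route as the paper: the paper's proof likewise proceeds inductively in $|x|-|y|$, chooses deformation retractions $p_{xy}\colon \cM_{xy}\to\cM'_{xy}$ and isomorphisms $p_{xy}^*I^{\cM'}_{xy}\cong I^{\cM}_{xy}$ compatible with gluing, and pulls back the $\Theta$-orientations supplied by Lemmas \ref{lem:abstract discs on complements of collars} and \ref{lem:abstract ind on complements of collars}. Your additional discussion of the strict-versus-contractible-choice associativity issue is accurate but is already absorbed into the proofs of those two lemmas rather than into the corollary itself.
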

\begin{proof}

    We proceed inductively in $|x|-|y|-1$. Starting when $|x|-|y|=0$, for each $x,y$, we choose deformation retractions $p_{xy}: \cM_{xy} \to \cM'_{xy}$ and isomorphisms $p_{xy}^* I^{\cM'}_{xy} \cong I^\cM_{xy}$, both compatible with gluing. Then pull back the $\Theta$-orientations from Lemmas \ref{lem:abstract discs on complements of collars} and \ref{lem:abstract ind on complements of collars}.
\end{proof}

The construction of $\Theta$-orientations for morphisms and bilinear maps follows the same template. The smooth structures on moduli spaces are now associated to charts in which local co-ordinates come from a mixture of `intersection co-ordinates with hypersurfaces' and `neck lengths'. The index bundles are defined using extended linearised operators incorporating the tangent space to the moduli space of domains where necessary (e.g. for constructing associators from moduli spaces of discs with 3 inputs and 1 output).  This does not represent a substantial difference, but means that spaces of abstract discs $\bU_{3,1}^{V,\Theta}$ with non-trivial stabilising vector space $(V,f)$ now appear, since some stabilisation is needed to make the Cauchy-Riemann operators simultaneously surjective at all points of the moduli space. 

\begin{rmk}[Stabilisation] \label{rmk: Floer N inf}
    In the above, we assumed we were working with a tangential structure of rank $N < \infty$. Given a tangential structure $\Theta$ of rank $N=\infty$, we may take the tangential structure $\Theta(N)$ of rank $N$ obtained by pullback along the inclusion $\widetilde{U/O}(N) \to \widetilde{U/O}$. For $N$ sufficiently large, any Lagrangian admitting a $\Theta$-structure also admits a $\Theta(N)$-structure, canonically up to homotopy. Given another such Lagrangian $K$, we may then use the procedure above to construct a $\Theta(N)$-orientation on $\cM^{LK}$, which induces a $\Theta$-orientation on $\cM^{LK}$ by stabilising.
\end{rmk}

\subsection{Adaptations for tangential pairs}
    In this section, we extend the construction of $\Theta$-orientations from Section \ref{Subsec:theta-orientations in Floer theory} for a tangential structure $\Theta$ to the setting of tangential pairs. Let $(\Theta, \Phi)$ be a tangential pair of rank $N<\infty$. Instead of assuming $X$ is stably framed, we choose a classifying map $X \to BU(N)$ for $TX \oplus \bC^{N-d}$ for some $N \gg 0$, and for each Lagrangian $L \subseteq X$, choose a classifying map $L \to BO(N)$ for $TL \oplus \bR^{N-n}$, compatible with that of $X$.

    Assume we are given lifts of the classifying map for $TX$ to $\Phi$, and compatible lifts of the classifying maps for $TL$ and $TK$ to $\Theta$, meaning the following diagram commutes:
    \begin{equation}
        \xymatrix{
            L
            \ar[r]
            \ar[d]
            &
            X
            \ar[d]
            \\
            \Theta 
            \ar[r]
            &
            \Phi
        }
    \end{equation}
    and similarly for $K$. Then in the same way we used the evaluation along the boundary of $Z:=\bR \times [0,1]$ to pull back a totally real subspace of $\bC^N$ to one along $\partial Z$ along with a lift to $\Theta$, we may use the evaluation along the whole of $Z$ to pull back the complex vector bundle $E$ to $Z$ along with the lift to $\Theta$. Combined with the same proof as that of Lemma \ref{lem:abstract discs on complements of collars} and Corollary \ref{cor: Theta or Floer}, we obtain a $(\Theta, \Phi)$-orientation on the flow category $\cM^{L,K}$. 

    We may incorporate tangential pairs of rank $N=\infty$ exactly as in Remark \ref{rmk: Floer N inf}.

\section{The $(\Theta,\Phi)$-oriented spectral Fukaya category}

Fix an oriented tangential pair $(\Theta,\Phi)$ of rank $N\leq\infty$.  When $X$ admits a $\Phi$-structure, our goal is to define
a spectral Donaldson-Fukaya category $\scrF(X;(\Theta,\Phi))$ whose objects are $\Theta$-oriented exact Lagrangian submanifolds which are either compact or cylindrical at infinity.  Throughout this section, we work with this tangential pair.

This section follows \cite[Section 7]{PS} with appropriate adaptations to reflect the preceding constructions of $\Theta$-orientations in place of framings. To minimise repetition, we will focus exposition on the places where the arguments are substantively  different.

\subsection{Bordism of proper maps} \label{Sec:proper bordism}

In this section we discuss some aspects of bordism of non-compact manifolds, which is required for dealing with non-compact manifolds.

For the purposes of this section, we are unspecific about the particular flavour of bordism we are considering, since the argument applies \emph{mutatis mutandis} quite generally.

The bordism groups $\Omega_*(X)$  of a topological space $X$ are usually defined by considering bordism classes of maps $M \to X$ where $M$ is a compact manifold. Relative bordism groups $\Omega_*(X,Y)$ consider maps of compact manifolds with boundary $(M, \partial M) \to (X,Y)$. One can also define groups $\Omega_*^{nc}(X)$ by considering proper maps to $X$ of not necessarily compact manifolds.

The one-point compactification $X^+$ of a space $X$ has a distinguished point $\infty \in X^+$ at infinity. It is typically poorly behaved unless the point $\infty \in X^+$ admits a cone neighbourhood, and hence a contractible neighbourhood base.  This is certainly true if $X$ is the interior of a manifold with boundary, which is the only case we shall require.

\begin{lem}
    Suppose $X$ is a non-compact manifold of finite type. There is an isomorphism $\Omega_*^{nc}(X) \simeq \Omega_*(X^+,\infty)$.
\end{lem}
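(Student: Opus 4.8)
The plan is to construct mutually inverse homomorphisms between $\Omega_*^{nc}(X)$ and $\Omega_*(X^+,\infty)$ by a ``compactify/restrict'' dictionary, using the fact that $\infty \in X^+$ has a cone neighbourhood because $X$ is the interior of a compact manifold with boundary $\bar X$, so $X^+ = \bar X / \partial \bar X$. First I would fix such a compact model $\bar X$ with collar $\partial \bar X \times [0,1) \hookrightarrow \bar X$, and write $X = \bar X \setminus \partial \bar X$; the cone neighbourhood of $\infty$ is the image of $\partial \bar X \times [0,1)$ collapsed at the $\{0\}$ end, which has a contractible neighbourhood base $\{U_t\}$ given by $\partial \bar X \times [0,t)$ collapsed.

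The map $\Omega_*^{nc}(X) \to \Omega_*(X^+,\infty)$ is defined as follows: given a proper map $f\colon M \to X$ with $M$ a (suitably structured) non-compact manifold of finite type, properness lets one choose an exhaustion and, after an isotopy, assume $f^{-1}$ of the collar $\partial\bar X \times [0,\varepsilon)$ is a collar $\partial M \times [0,\varepsilon)$ of a compactification $\bar M$, with $f$ a product map there; then $\bar f\colon (\bar M, \partial \bar M)\to (\bar X, \partial \bar X)$ descends to $(\bar M/\partial\bar M, \ast) \to (X^+, \infty)$, and $\bar M/\partial\bar M$ is a compact manifold relative its boundary-collapse — this is exactly a relative bordism cycle for $(X^+,\infty)$. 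One checks this is well-defined on bordism classes: a proper bordism $W \to X\times[0,1]$ compactifies the same way (using properness to control behaviour near $\partial\bar X$) to give a relative bordism. Conversely, given a relative cycle $g\colon (N,\partial N)\to (X^+,\infty)$, by transversality/the cone-neighbourhood base one homotopes $g$ so that $g^{-1}(U_t) = g^{-1}(\infty)$'s neighbourhood is a collar on which $g$ factors through the cone coordinate; removing $g^{-1}(\infty)$ (which after the homotopy is a collar $\partial N \times [0,1)$-type end being crushed) yields a proper map $N^\circ \to X$ from the non-compact manifold $N^\circ = N \setminus g^{-1}(\infty)$. The two constructions are visibly inverse up to the chosen collars, and each sends disjoint union to sum, so they are inverse group isomorphisms.

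The main obstacle — and the step I would spend the most care on — is the \emph{transversality/normal-form argument near infinity}: making precise that a proper map (resp. a map of a relative cycle into $X^+$) can be homotoped, through proper maps (resp. through maps of pairs), to one that is a product in a collar/cone neighbourhood, and that this normal form is unique up to the contractible choices so that the induced map on bordism classes is well-defined and independent of choices. This is where ``finite type'' is used: it guarantees the existence of a compact manifold-with-boundary model $\bar X$ so that $X^+$ is well-behaved and the cone neighbourhoods exist, and it lets one run the collaring argument with a single stratum rather than an infinite exhaustion. Everything else — additivity, compatibility with the bordism relation, functoriality of the construction in $X$ — is routine bookkeeping once the normal form near infinity is in place, and works \emph{mutatis mutandis} for whatever tangential structure decorates the manifolds (the structure is pulled back along the collar inclusions in both directions).
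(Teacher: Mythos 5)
Your proposal is correct and follows essentially the same compactify/restrict dictionary as the paper's proof. The only difference is that the paper's forward map is more economical: rather than isotoping $f$ into a product normal form over a collar of $\partial \bar X$, it truncates $M$ at $f^{-1}(\{h \leq C\})$ for a proper function $h\colon X \to \bR_+$ and a large generic (regular) value $C$, then homotopes the resulting compact boundary --- which already lies in a cone neighbourhood of $\infty$ --- onto $\infty$, which sidesteps the normal-form step you flag as the main obstacle.
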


\begin{proof}
    Let $f: M \to X$ be a proper map with domain a non-compact manifold $M$. 
    Let $h: X \to \bR_+$ be a proper map. Let $M' = f^{-1}(\{ h \leq C\})$ where $C$ is large and generic, so a regular value for $h$. Then $M'$ is a compact manifold with boundary. Its boundary is sent under $f$ to a neighbourhood of infinity in $X$, so it's sent to near $\infty$ in the one-point  compactification $X^+$. After homotoping so the boundary maps entirely to infinity, this gives an element of $\Omega_*(X^+,pt)$. In the other direction, if $f: M \to X^+$ is a map from a compact manifold whose boundary is sent to $\infty$, then setting $M' = M \setminus f^{-1}(\infty)$, the induced map $M' \to X$ is a proper map.
\end{proof}

Most of the constructions in this paper are the same for compact or for non-compact geometrically bounded Lagrangians. One delicate point in constructing open-closed maps is addressed in Lemma \ref{lem:proper bordism}.

One can similarly define proper left modules/capsules over a non-compact $X$, in the same way as in Section \ref{sec: rel flow mod} but now requiring the manifolds to be proper over $X$ instead of compact.

\subsection{Objects and morphisms}

Let $X$ be a Liouville manifold of real dimension $2d$.   Let $(\Theta,\Phi)$ be an oriented tangential pair of rank $N\leq\infty$, where $N > d+2$,  and fix a homotopy class of $\Phi$-structure on $X$, meaning a lift 
\[
f: X \to \Phi \to BS_\pm U(N) \to BU(N)
\]
to $\Phi$ of the classifying map of $\bC^{N-d} \oplus TX$ as a complex vector bundle. 
 Let $(\Theta,\Phi)$ be an oriented tangential pair of rank $N$. Recall that a $\Theta$-structure on a Lagrangian $L \subset X$, with respect to the given $\Phi$-structure on $X$, is a lift of the classifying map for  its tangent bundle in the diagram
 \begin{equation}
            \xymatrix{
                L 
                \ar[r]^{\eta}
                \ar[d]
                &
                \Theta
                \ar[r]
                \ar[d]
                &
                B\operatorname{Pin}(N)
                \ar[d]
                \\
                X
                \ar[r]
                &
                \Phi
                \ar[r]
                &
                BS_\pm U(N)
            }
        \end{equation}

The objects of the category $\scrF(X;(\Theta,\Phi))$ are pairs  $(L,\eta)$ where $L$ is a geometrically bounded exact Lagrangian submanifold, and $\eta$ is such a $\Theta$-lift of the Gauss map. As usual we will usually drop $\eta$ from the notation. Recall that for a given Lagrangian $L$, the set of $\Theta$-structures on $L$ was described in Proposition \ref{prop: class brane str}.

Given two objects $(L,\eta_L)$ and $(K,\eta_K)$, we construct a $(\Theta,\Phi)$-oriented flow category $\cM^{LK}$ as follows. First, we pick regular Floer data $(H_t,J_t)$, so in particular $\phi_{H_t}^1(L) \pitchfork K$. We always assume that the flow of $H_t$ is a small-time Reeb flow at infinity, so we are doing `infinitesimal wrapping'. In particular, even though  $L$ and $K$ may both be non-compact, the transverse intersection $\phi_{H_t}^1 L\pitchfork K$ comprises a finite set. Then
\begin{enumerate}
    \item The objects of $\cM^{LK}$ are the time-1 Hamiltonian chords $x(t)$ from $L$ to $K$ with respect to $H_t$.
    \item The Gauss lifts $\eta_L$ and $\eta_K$ define both gradings and Pin structures on $L,K$ via the lifts of $\eta_L$ and $\eta_K$ to the homotopy fibre $\widetilde{U/O}^{or}(N)$ of $B\operatorname{Pin}(N) \to BS_\pm U(N)$. With respect to these preferred lifts any object $x$ has a grading $|x|$. The further lift of the Gauss map to $\Theta$ then defines $(\Theta,\Phi)$-oriented puncture data $\bE_x$ at $x$ of grading $|x|$.
    \item Since the Floer data is regular, we can take the stabilising vector spaces $V_{xy} = 0$ for all $x,y$.
    \item The moduli spaces $\cM^{LK}_{xy}$ are the usual Floer moduli spaces, which come with maps $\rho_{xy}: \cM_{xy} \to \bU_{xy}^{(\Theta,\Phi)}$ as in Section \ref{Subsec:theta-orientations in Floer theory}.
\end{enumerate} 

\begin{defn}
   We define the morphism space 
\[
\scrF_i(L,K;(\Theta,\Phi)) := \Omega_i^{(\Theta,\Phi)}(\cM^{LK})
\]
to be the set of bordism classes of $(\Theta,\Phi)$-oriented right flow modules of degree $i$.   
\end{defn}

Recall from Remark \ref{rmk:not a morphism group} that these are not  morphism groups in $\mathrm{Flow}^{(\Theta,\Phi)}$.

Given three branes $L, K, N$ with $\Theta$-lifts, and objects $x\in \cM^{LK}$, $y \in \cM^{KN}$ and $z \in \cM^{LN}$, we consider the moduli space $\cM_{xy;z}^{LKN}$ of holomorphic triangles, as in \cite[Section 7.3]{PS}. Assuming genericity and hence regularity, so all operators are stabilised by the trivial vector space, this has a forgetful map $\cM_{xy;z} \to \bU_{2,1}^{0,(\Theta,\Phi)}$, and admits a $(\Theta,\Phi)$-orientation.  The usual description of boundary strata shows that $\cM^{LKN}$ defines a bilinear map 
\[
\cM^{LK} \times \cM^{KN} \to \cM^{LN}
\]
and the composition law in $\scrF(X;(\Theta,\Phi))$ then comes from the induced map
\begin{equation} \label{eqn:composition}
\Omega^{(\Theta,\Phi)}_i(\cM^{LK}) \otimes \Omega^{(\Theta,\Phi)}_j(\cM^{KN}) \to \Omega^{(\Theta,\Phi)}_{i+j}(\cM^{LN})
\end{equation}
described in \eqref{eqn:composition in general} (and its direct analogue for tangential pairs).

Exactly as in \cite{PS}, but using $(\Theta,\Phi)$-orientations rather than framings, the moduli spaces of four-marked discs (which will now parametrise Cauchy-Riemann operators stabilised by non-trivial vector spaces $V$) define associators which show that the composition law \eqref{eqn:composition} is associative.

\begin{rmk}\label{rmk:endomorphisms}
    Generalising arguments from \cite[Section 7.10]{PS}, we expect that endomorphisms $\scrF_*(L,L)$ of $L$ recovers $\pi_{*+d}(L \wedge \bT_{01}(\Theta,\Phi))$, equivalently the $F$-oriented bordism groups of $L$.

    In the case $\Theta$ and $\Phi$ are contractible (corresponding to the spectral Fukaya category considered in \cite{PS}), this has been proved by Blakey \cite[Propositions 4.9 \& 6.8]{Blakey}.
\end{rmk}

\subsection{Units and open-closed maps}\label{sec: unit OC}

For a single $L$ we have a flow-category $\cM^{LL}$. The moduli spaces of perturbed holomorphic half-planes $\cM_{*x}$ asymptotic to a chord $x$, together with the linearisation maps $\cM_{*x}\to \bU_{0,1}^{(\Theta,\Phi)}$ and associated $(\Theta,\Phi)$-orientations, define a distinguished right module
\[
e_L \in \Omega_0^{(\Theta,\Phi)}(\cM^{LL}) = \scrF_0(L,L;(\Theta,\Phi)).
\]

\begin{lem}
    $e_L$ is idempotent.
\end{lem}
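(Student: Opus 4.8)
The statement $e_L$ is idempotent means $\cM^{LLL} \cdot (e_L \otimes e_L) = e_L$ in $\scrF_0(L,L;(\Theta,\Phi)) = \Omega_0^{(\Theta,\Phi)}(\cM^{LL})$, where the product is the composition law \eqref{eqn:composition}. The plan is to exhibit a $(\Theta,\Phi)$-oriented bordism realising this equality, obtained from a standard one-parameter degeneration of holomorphic curves. First I would recall that $e_L$ is defined by the moduli spaces $\cM_{*x}$ of perturbed half-planes asymptotic to $x \in \cM^{LL}$, and the composite $e_L \star e_L$ (applying the bilinear map $\cM^{LLL}$ of holomorphic triangles to the right module $e_L$ for $\cM^{LL}$ and then precomposing with $e_L$ again) is computed by gluing two half-planes onto the two inputs of a triangle, i.e. by moduli spaces of discs with two output-like punctures capped off and one remaining output chord. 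The key geometric input is that this capped-off configuration is itself, up to a parametrised family, the moduli space of half-planes defining $e_L$: shrinking the two input marked points towards the boundary (or equivalently stretching the neck) produces a cobordism between the glued moduli space and $\cM_{*x}$ itself.

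The key steps, in order, would be: (i) set up the relevant moduli space $\cN_{*x}$ of perturbed discs with an interior marked point (or no output, as appropriate to the half-plane convention) carrying Floer data interpolating between the ``triangle-with-two-caps'' picture and the ``single half-plane'' picture, exactly parallel to the proof that the Fukaya category is strictly unital on cohomology / that the open-closed unit is idempotent in the chain-level setting of \cite[Section 7]{PS}; (ii) identify its boundary strata: on one end of the interpolation parameter the curves break into $\cM^{LLL}_{xy;z} \times (\text{caps})$ contributing $\cM^{LLL}\cdot(e_L \otimes e_L)$, together with the standard codimension-one breakings $\cM^{LL}_{xx'} \times \cN_{*x'}$ and $\cN_{*x'} \times \cM^{LL}_{x'x}$ that make $\cN$ a genuine bordism of right modules; on the other end the curve is rigidly the half-plane defining $e_L$; (iii) equip every moduli space in sight with its $(\Theta,\Phi)$-orientation via the maps to spaces of abstract discs $\bU^{(\Theta,\Phi)}_{\bullet}$ and the index-bundle isomorphisms, using associativity and stabilisation compatibility from Lemma \ref{lem:upshot} and Section \ref{sec: glue} so that the orientations restrict correctly on all boundary faces; (iv) conclude that $\cN$ defines a $(\Theta,\Phi)$-oriented bordism of right flow modules between a representative of $\cM^{LLL}\cdot(e_L \otimes e_L)$ and a representative of $e_L$, whence $e_L \star e_L = e_L$ in $\Omega_0^{(\Theta,\Phi)}(\cM^{LL})$.

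The main obstacle is step (iii): carrying the $(\Theta,\Phi)$-orientation coherently through the interpolation. The moduli spaces appearing in the bordism mix ``half-plane'' punctures, interior marked points, and the triangle's boundary punctures, so one must check that the linearised operators assemble into a map to an appropriate space of abstract discs (with possibly nontrivial stabilising vector space $V$, since regularity along the whole one-parameter family may require stabilisation as in the associator construction), and that the gluing/capping maps are covered by the index-bundle isomorphisms $st_k$ compatibly with the collar structure — i.e. the same bookkeeping as in Section \ref{sec: comp} and \ref{Sec:collaring}, now for a slightly more elaborate moduli space. Once one grants, as in \cite{PS}, that the underlying unoriented bordism exists with the claimed boundary, promoting it to a $(\Theta,\Phi)$-oriented bordism is formal: it is the same inductive interpolation over strata using contractibility of the spaces of choices (Lemma \ref{lem:associative linear gluing}) that underlies Corollary \ref{cor: Theta or Floer}. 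I would therefore present the argument by reducing to the unoriented statement of \cite[Section 7]{PS} and remarking that $(\Theta,\Phi)$-orientations are inherited ``in the usual way,'' spelling out only the identification of boundary strata of $\cN$.
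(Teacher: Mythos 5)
Your proposal is correct and follows essentially the same route as the paper, which simply invokes the gluing argument of \cite[Lemma 7.10]{PS}: one produces a one-parameter family of perturbed discs interpolating between the glued configuration (triangle with two caps) and the single half-plane defining $e_L$, and promotes the resulting bordism of right modules to a $(\Theta,\Phi)$-oriented one by the standard inductive interpolation over strata. The extra detail you supply about boundary strata and the index-bundle bookkeeping is consistent with how the paper handles orientations elsewhere (Sections \ref{sec: comp} and \ref{Subsec:theta-orientations in Floer theory}).
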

\begin{proof}
    This follows from gluing, directly analogous to \cite[Lemma 7.10]{PS}.
\end{proof}

\begin{cor}
    $\scrF(X;(\Theta,\Phi))$ is unital, with $e_L$ being the unit for $L$.
\end{cor}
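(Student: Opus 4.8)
The plan is to deduce unitality directly from the preceding lemma, the established associativity of the composition law, and the analogous argument in \cite[Section 7]{PS}. First I would recall the structure: for each pair of objects $(L,K)$ we have the morphism space $\scrF_0(L,K;(\Theta,\Phi)) = \Omega_0^{(\Theta,\Phi)}(\cM^{LK})$, and composition is the bilinear map induced by the moduli spaces of holomorphic triangles $\cM^{LKN}$, which has been shown to be associative via the four-marked disc associators. The distinguished element $e_L \in \scrF_0(L,L;(\Theta,\Phi))$ is defined by the moduli spaces $\cM_{*x}$ of perturbed holomorphic half-planes together with their $(\Theta,\Phi)$-orientations. The content of the corollary is that for every object $K$, composition with $e_L$ acts as the identity on $\scrF_*(L,K;(\Theta,\Phi))$ on the appropriate side, and symmetrically for $\scrF_*(K,L;(\Theta,\Phi))$.

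The key step is a gluing argument identifying the composite bilinear map $\cM^{LL} \times \cM^{LK} \to \cM^{LK}$ applied to $e_L$, with (a representative bordant to) the identity morphism. Concretely, one considers the moduli spaces obtained by gluing a perturbed half-plane from $\cM_{*x}$ onto a Floer strip in $\cM^{LK}_{xy}$; the resulting parametrised moduli space provides a $(\Theta,\Phi)$-oriented bordism between the composition $e_L \circ \cR$ and $\cR$ itself, for any right flow module $\cR$ over $\cM^{LK}$. This is exactly the argument carried out in \cite[Section 7]{PS} for the framed case; since all the relevant moduli spaces carry $(\Theta,\Phi)$-orientations by the constructions of Section \ref{sec: Floer tang}, and the gluing of abstract discs (Lemma \ref{lem:upshot}) is compatible with these orientations and associative, the same argument goes through verbatim with framings replaced by $(\Theta,\Phi)$-orientations. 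One invokes the idempotence lemma just proved to handle the $\scrF(L,L)$ entry and to guarantee consistency of the two-sided identities. Finally, one checks that these identity relations are compatible with the associative composition law, so that $e_L$ genuinely serves as a two-sided unit for $L$ in the category $\scrF(X;(\Theta,\Phi))$, yielding a (possibly non-unital a priori, now unital) category.

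The main obstacle is bookkeeping rather than conceptual: one must verify that the $(\Theta,\Phi)$-orientation on the gluing bordism restricts correctly to the $(\Theta,\Phi)$-orientations on $e_L \circ \cR$ and on $\cR$ at the two ends, i.e. that the coherence data (the vector spaces $V_{xy}$, the maps $\rho$ to spaces of abstract discs, and the index bundle isomorphisms $st$) assemble compatibly across the glued region exactly as in the composition construction of Section \ref{sec: comp}. Since $e_L$ was constructed with trivial stabilising vector spaces and the Floer data is regular, no new stabilisation is forced, and this verification is identical in form to \cite[Lemma 7.10]{PS} and the surrounding discussion. I would therefore present the proof as: ``This follows from gluing, directly analogous to \cite[Section 7]{PS}, incorporating $(\Theta,\Phi)$-orientations in the usual way,'' with a pointer to the idempotence lemma and to the associativity established above.
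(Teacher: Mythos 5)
Your proposal takes a genuinely different route from the paper, and I think it has a real gap. The paper's proof is not a gluing argument: it invokes Proposition \ref{prop: flow white head} (the Whitehead-type theorem). One first observes that multiplication by $e_L$ is the identity (hence an isomorphism) on the zeroth truncation, where by Corollary \ref{cor: 0-trun fuk} everything reduces to classical Floer theory over $\bZ$; Proposition \ref{prop: flow white head} then shows multiplication by $e_L$ is surjective on each truncation $\Omega_*^{\leq k}$, from which one deduces (via Lemma \ref{lem: inv det tau0}) that it is an isomorphism on the full bordism groups of flow modules; finally, idempotence $e_L\cdot e_L = e_L$ forces this isomorphism to be the identity. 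Your role for the idempotence lemma ("to handle the $\scrF(L,L)$ entry") misses its actual function, which is this last step.

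The gap in your approach is the claim that gluing a perturbed half-plane onto the moduli spaces produces a $(\Theta,\Phi)$-oriented bordism between $e_L\circ\cR$ and $\cR$ \emph{for an arbitrary right flow module} $\cR$ over $\cM^{LK}$. Morphisms in $\scrF(X;(\Theta,\Phi))$ are bordism classes of abstract right flow modules (cf. Remark \ref{rmk:not a morphism group}); a general $\cR$ is a collection of abstract manifolds with faces, not a moduli space of holomorphic curves, so there is no curve-degeneration argument that terminates in "$\cR$ itself." The composition $e_L\circ\cR$ is formed by the coequaliser construction of Section \ref{sec:comp} applied to $e_L$, the triangle spaces $\cM^{LLK}$, and $\cR$; comparing this to $\cR$ would require a geometrically constructed identity morphism of $\cM^{LK}$ in $\mathrm{Flow}^{(\Theta,\Phi)}$, which the framework does not supply (the category of flow categories is a priori non-unital). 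The gluing argument you describe is exactly what proves the \emph{idempotence} lemma — where both factors are geometric — but it does not extend to unitality against arbitrary flow modules; that extension is precisely what the truncation/obstruction machinery is for.
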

\begin{proof}
    This follows from Proposition \ref{prop: flow white head} as in the arguments around \cite[Corollary 7.12]{PS}. Namely, one shows that product with $e_L$ is surjective; that implies that right or left multiplication by $e_L$ is an isomorphism over any field; and one then lifts that quasi-isomorphism statement through the truncations to infer that multiplication by $e_L$ is an isomorphism on bordism of flow-modules.
\end{proof}

Analogously, the moduli spaces $\cM_{x*}$ of discs with a single input at a chord $x$ for $L$, and their $\Theta$-orientations, define a canonical (possibly noncompact) left module  $\Upsilon_L \in  \Omega^d_{(\Theta,\Phi)}(\cM^{LL})$, with $d = \dim_{\bR}(L)$. 

\begin{rmk}
Throughout the rest of this section, we work with compact left modules (i.e. ones for which the moduli spaces $\cM_{\ast x}$ are compact manifolds with corners) if $L$ is compact, and with proper left modules (i.e. ones whose underlying moduli spaces are proper over $X$) if $L$ is allowed to be noncompact and geometrically bounded. 
    \end{rmk}

In \cite[Section 7.6]{PS} and Definition  \ref{defn:over a space} above, we introduced the  notion of a (left or right)  module over a flow category `living over' a pair $(Z,E)$ comprising a target topological space $Z$ equipped with a complex vector bundle $E \to Z$ and a $\Phi$-lift $f: Z \to \Phi \to BU$ of the classifying map of $E$. 

\begin{lem} \label{lem:proper bordism}
    Fix an interior marked point in the disc with one incoming puncture. 
    
    The evaluation map $\cM_{x \ast} \to X$ is proper, and the module defining $\Upsilon_L$ lives over $(X,f)$.
\end{lem}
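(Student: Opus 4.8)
The plan is to split the statement into two independent assertions: (i) the evaluation map $\mathrm{ev}\colon \cM_{x*} \to X$ is proper, and (ii) the $(\Theta,\Phi)$-orientation data on the moduli spaces $\cM_{x*}$ is compatible with the $\Phi$-structure $f$ on $X$ pulled back along $\mathrm{ev}$, in the precise sense of Definition \ref{defn:over a space} (that is, the square \eqref{eq: ev com rel} commutes). For (i), recall that $\cM_{x*}$ parametrises perturbed holomorphic discs with one incoming puncture asymptotic to the chord $x$, with the interior marked point providing the evaluation to $X$. Since $L$ is geometrically bounded, the `infinitesimal wrapping' choice of Floer data ensures monotonicity and Gromov compactness apply: any sequence of such discs whose interior marked points exit every compact subset of $X$ would, by the a priori energy bound coming from exactness together with geometric boundedness, subconverge (after passing to a broken configuration) to a limit disc, which is absurd if the marked points escape to infinity. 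Equivalently, the image of $\cM_{x*}$ under $\mathrm{ev}$ is contained in a fixed compact subset of $X$ determined by the (finite) action of $x$ and the bounded geometry, so $\mathrm{ev}$ is automatically proper — indeed its image is precompact. I would phrase this using the finite-type structure of $X$ and cite the compactness discussion already invoked in Remark \ref{rmk:geometric bounded}.

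For (ii), the key point is purely formal given the constructions of Section \ref{Subsec:theta-orientations in Floer theory} and its tangential-pair adaptation: the $(\Theta,\Phi)$-orientation on $\cM^{LL}$ is built by pulling back $TX$ (as a complex bundle with its $\Phi$-lift) along the evaluation map over the whole domain $Z$, and restricting to the boundary recovers the totally real subbundle with its $\Theta$-lift. The complex part of the Cauchy--Riemann data at the interior marked point $p$ is, by construction, exactly $\mathrm{ev}^*TX|_p$ together with the $\Phi$-lift $f\circ \mathrm{ev}|_p$; this is precisely the statement that $\lambda \circ \rho_{x*} = f \circ \mathrm{ev}_{x*}$, i.e. that \eqref{eq: ev com rel} commutes. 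I would spell this out by tracing through how the map $\rho_{x*}\colon \cM_{x*}\to \bU_{x*}$ is defined (evaluating linearised operators and cutting off the inhomogeneous term in the strip-like end, as in Lemma \ref{lem linearised discs as abstract discs}) and observing that the map $\lambda$ of Definition \ref{defn:over a space}, which reads off $h(p)\in\Phi$ from an abstract disc, is by design compatible with this: both sides evaluate the chosen $\Phi$-lift of the domain's complex bundle at the interior marked point. Compatibility with breaking (needed for $\Upsilon_L$ to be a genuine left module over a space) follows from the fact that the evaluation maps and the $\Phi$-structures glue, which is already part of the inductive construction of $\Theta$-orientations on the Floer flow category.

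I would then assemble these: properness of $\mathrm{ev}$ and commutativity of \eqref{eq: ev com rel} together say exactly that $\Upsilon_L$, with its evaluation map, is a proper $(\Theta,\Phi)$-oriented left module over $\cM^{LL}$ living over $(X,f)$ in the sense of Definition \ref{defn:over a space} and Section \ref{Sec:proper bordism}. The main obstacle — or rather the only place requiring genuine care rather than bookkeeping — is the properness statement (i): one must be sure that the `infinitesimal wrapping' at infinity does not allow the half-plane solutions to drift off to infinity, which is where geometric boundedness and the resulting monotonicity/Gromov compactness (the same package that makes $\cM^{LK}$ have finitely many objects) is essential. Everything in (ii) is a matter of unwinding definitions and citing the coherence already established in Section \ref{Subsec:theta-orientations in Floer theory} and its tangential-pair adaptation, so I would keep that part brief.
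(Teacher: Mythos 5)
Your decomposition into (i) properness of the evaluation map and (ii) commutativity of \eqref{eq: ev com rel} matches the paper's, and your treatment of (ii) is essentially what the paper does (it dispatches that half in one sentence, as a direct consequence of how the $(\Theta,\Phi)$-orientation on $\cM_{x\ast}$ is constructed). The gap is in (i), in the case of a non-compact geometrically bounded $L$ --- which is exactly the case the lemma exists to address. You argue that the a priori energy bound from exactness together with geometric boundedness forces sequences of discs to subconverge, so the interior marked points cannot escape to infinity. But Gromov compactness in a geometrically bounded manifold only applies after one has a $C^0$ bound on the curves, so this reasoning is circular. The genuine difficulty is that a disc with boundary on a non-compact Lagrangian can a priori have small area but large diameter, by sending a long thin tentacle along $L$ into the cylindrical end; interior monotonicity and the energy bound alone do not exclude this, because monotonicity for curves with Lagrangian boundary requires control of the boundary.

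The paper supplies exactly this missing control: Stokes' theorem bounds the area of discs in $\cM_{x\ast}$ by the action of the chord $x$, and the reverse isoperimetric inequality of \cite{Groman-Solomon} then bounds the \emph{length} of the Lagrangian boundary in terms of the area. With both area and boundary length uniformly controlled, a disc whose interior marked point lands in a fixed compact subset of $X$ (hence whose boundary meets a fixed compact subset of $L$) must stay in a bounded region of the cylindrical end, and properness follows. Note also that the paper proves properness in the correct form --- preimages of compacta are compact --- whereas you claim the stronger statement that the image of the evaluation map is precompact and deduce properness "automatically" from that; precompact image does not by itself imply properness (one still needs compactness of the preimages, i.e.\ the same $C^0$ control), and your argument does not establish the precompactness claim either.
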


\begin{proof}
    If $L$ is compact, properness is immediate. If $L$ is cylindrical an additional argument is required since the moduli spaces are defined with respect to a Hamiltonian of small negative slope. Fix an auxiliary Riemannian metric on $L$, for instance that induced by the metric associated to $\omega$ and $J$. Stokes' theorem bounds the area of discs in $\cM_{\ast x}$ in terms of the action of the chord $x$. The reverse isoperimetric inequality \cite{Groman-Solomon} then shows that the boundary length of the disc is uniformly controlled. If we restrict to a compact subset of $X$ and hence a compact subset of $L$, this means that discs stay within a bounded region of the cylindrical end, which implies the properness statement. 
    
    That the module then lives over $(X,f)$ is then directly implied by the construction of the $\Theta$-orientation on the spaces $\cM_{\ast x}$.
\end{proof}

The pair $(X,f)$ define a capsule bordism theory
\[
\Omega^{\Theta,\Phi,\circ}_*(X,f).
\]
(even if $\Phi = \{pt\}$ as in the stably framed case, this theory depends on the particular homotopy class of stable framing of $TX$).

A $\Theta$-oriented Lagrangian $L\subset X$ comes with a canonical map $\eta: L \to \Theta$ which lifts the classifying map of $TL$. Composing with the inclusion of constant loops $i: \Theta \to \mathcal{L}\Theta$, and taking $V_{**} = \{0\}$, this then defines a capsule with data
\[
\cM_{**} = L,\,  i\circ \eta: L \to \mathcal{L}\Theta \simeq \bU_{**}, \, V_{**} = 0.
\]
In particular $L$ (equipped with a specific $\Theta$-orientation $\eta$) has an associated fundamental class 
\[
[L,\eta] \in \Omega^{(\Theta,\Phi),\circ}_d(X,f)
\]
which we may abbreviate to $[L]$ when the choice of $\eta$ is clear or implicit. 
\begin{lem}\label{lem:OC}
    The composition of the unit and counit for $L$ define a $\Theta$-oriented capsule $\cC_{**}$ living over $X$, which is bordant to $[L,\eta]$. 
\end{lem}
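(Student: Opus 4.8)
The plan is to build an explicit $(\Theta,\Phi)$-oriented bordism of capsules interpolating between the capsule $\cC_{\ast\ast}$ obtained by gluing the unit module $e_L$ (with moduli spaces $\cM_{\ast x}$) to the counit module $\Upsilon_L$ (with moduli spaces $\cM_{x\ast}$), and the tautological capsule $[L,\eta]$ whose underlying manifold is $L$ itself with $V_{\ast\ast}=0$ and structure map the constant-loop inclusion $i\circ\eta:L\to\cL\Theta\simeq\bU_{\ast\ast}$. First I would recall, following the recipe of Section \ref{sec: comp} (composition of flow modules) and Section \ref{sec: rel flow mod}, that the composite capsule $\cC_{\ast\ast}$ is constructed as a framed-function level set $f_{\ast\ast}^{-1}\{1\}$ of the glued manifold
\[
\tilde\cC_{\ast\ast} := \operatorname{coeq}\left(\bigsqcup_{x,x'\in\cM^{LL}}\cM_{\ast x}\times\cM^{LL}_{xx'}\times\cM_{x'\ast}\times[0,\eps)^2 \rightrightarrows \bigsqcup_{x\in\cM^{LL}}\cM_{\ast x}\times\cM_{x\ast}\times[0,\eps)\right),
\]
with its induced $(\Theta,\Phi)$-orientation and evaluation map to $X$ (using Lemma \ref{lem:proper bordism} for the properness in the noncompact case). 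The key geometric input is that the parametrised moduli space of \emph{half-planes glued to half-planes} — i.e. discs with one interior marked point, no boundary punctures, and a long neck — degenerates at one end of the neck parameter to the broken configuration recorded above, and at the other end (neck length zero, asymptotic chord closed up) to the moduli space of constant discs at points of $L$, which is canonically $L$ itself. This is the standard `neck-stretching = gluing the unit to the counit' cobordism familiar from the proof that the open-closed map sends $[L]$ to its fundamental class, as in \cite[Section 7.6--7.9]{PS}; here one simply runs it with $(\Theta,\Phi)$-orientations in place of framings.

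The steps, in order, would be: (1) define the bordism manifold $\cB_{\ast\ast}$ as the parametrised moduli space of $\Phi$-perturbed holomorphic discs with one interior marked point and a single auxiliary gluing/neck parameter $\tau\in[0,1]$, interpolating between the constant discs at $\tau=0$ and the fully-stretched broken disc at $\tau=1$; check it is a compact (resp. proper over $X$) manifold with faces of dimension $d+1$, whose two ends are $L$ and $\tilde\cC_{\ast\ast}$ restricted to their respective framed-function level sets; (2) equip $\cB_{\ast\ast}$ with a $(\Theta,\Phi)$-orientation, i.e. a compatible system of maps $\cB_{\ast\ast}\to\bU_{00}^{(\Theta,\Phi)}$ and index-bundle isomorphisms $I^{\cB}_{\ast\ast}\to\bR\oplus\bV_{\ast\ast}$, together with an evaluation map $\cB_{\ast\ast}\to X$ covering $f$, using exactly the cut-off/collar machinery of Section \ref{Subsec:theta-orientations in Floer theory} and Lemma \ref{lem:abstract discs on complements of collars}, extended over the extra $[0,1]$-parameter; (3) verify that at $\tau=0$ the induced $(\Theta,\Phi)$-orientation on $L$ is the tautological one $i\circ\eta$ — this is where one uses that the linearised operator of a constant disc has trivial kernel beyond the $TL$-directions, so that its index bundle over the space of constant discs is just $TL$ classified through $L\xrightarrow{\eta}\Theta\hookrightarrow\cL\Theta$ — and that at $\tau=1$ it is the orientation on $\cC_{\ast\ast}$ coming from the composition recipe; (4) invoke the framed-function formalism (\cite[Proposition 3.29]{PS}) to pass from $\tilde\cC_{\ast\ast}$ and $\cB_{\ast\ast}$ to their level sets compatibly, concluding that $[\cC_{\ast\ast}]=[L,\eta]$ in $\Omega^{(\Theta,\Phi),\circ}_d(X,f)$.

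The main obstacle, as usual in this circle of ideas, is step (2)--(3): producing the $(\Theta,\Phi)$-orientation on the interpolating family \emph{coherently at both ends}, so that it genuinely restricts to the tautological capsule structure at $\tau=0$ and to the glued structure at $\tau=1$, rather than merely to something homotopic to them. Concretely one must check that the cut-off function used to turn the analytic linearised operator into an abstract Cauchy–Riemann operator can be chosen to degenerate correctly as the neck closes up — when $\tau\to 0$ there is no neck and no strip-like end, and the abstract disc must limit onto the `disc with no punctures and one interior marked point' whose index bundle is literally $u^\ast TL$ at a constant map $u$. This is exactly the kind of compatibility handled by the collar-interpolation argument of Lemma \ref{lem:orthogonal projections are isotopic} and Corollary \ref{cor: Theta or Floer}, and I expect that, as in \cite{PS}, the contractibility of all the relevant spaces of choices lets one push the construction through inductively over the strata of $\cB_{\ast\ast}$; but this is the step that requires genuine care rather than a routine citation.
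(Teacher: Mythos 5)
Your overall strategy (a gluing cobordism from the broken unit-glued-to-counit configuration, followed by an identification of the other end with $L$ itself) is in the same spirit as the paper's argument, but there is a genuine gap at the heart of step (1): you claim that your single neck parameter $\tau$ interpolates between the broken configuration at $\tau=1$ and ``the moduli space of constant discs at points of $L$'' at $\tau=0$. That is not what happens. The unit and counit modules are built from \emph{perturbed} holomorphic half-planes (one needs a Hamiltonian term to make $L$ transverse to itself), so closing up the neck produces the moduli space of unpunctured discs with one interior marked point solving the Floer equation with a non-trivial Hamiltonian term $Y$ --- not the space of constant discs. A second, separate deformation is required: one scales the Hamiltonian term to $t\cdot Y$ for $t\in[0,1]$ and uses \emph{exactness} of $L$ (via Stokes) to conclude that at $t=0$ the only solutions are constant discs, whose moduli space is canonically $L$ with index bundle $TL$. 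This is exactly the paper's second step, and exactness never appears anywhere in your proposal; without it the identification of the degenerate end with $[L,\eta]$ fails. Your step (3) worry about the cut-off degenerating ``as the neck closes up'' is correspondingly misdirected --- the identification with the tautological capsule happens only after the Hamiltonian is turned off, where the linearised operator at a constant disc has kernel $T_pL$ essentially tautologically.

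Once you insert that second deformation (and note that the whole two-stage bordism still evaluates properly to $X$ by Lemma \ref{lem:proper bordism}), the rest of your outline --- the coequaliser/framed-function description of the composite capsule, the transfer of $(\Theta,\Phi)$-orientations across the gluing via the collar machinery, and the contractibility-of-choices induction --- matches the intended argument.
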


\begin{proof}
The usual gluing argument, incorporating $\Theta$-orientations, implies that the composition is bordant to a capsule living over $X$, associated to a moduli space of maps from an unpunctured disc with one interior marked point and a non-trivial Hamiltonian term $Y$.  One then considers a moduli space of pairs $(u,t)$ where $u$ is a mapping of a holomorphic disc satisfying the  perturbed Floer equation with Hamiltonian term $t\cdot Y$ for $t\in [0,1]$. This defines a bordism  from the given capsule to a space of constant discs (by exactness). The entire bordism lives over $X$, with properness of evaluation following from Lemma \ref{lem:proper bordism}.
\end{proof}

\begin{lem}
    If $L$ and $K$ are quasi-isomorphic in $\scrF(X;(\Theta,\Phi))$ then $[L,\eta_L] = [K,\eta_K] \in \Omega^{(\Theta,\Phi),\circ}_d(X,f)$.
\end{lem}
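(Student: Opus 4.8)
The strategy is to track the fundamental class of each brane through the open-closed machinery and to use the fact that a quasi-isomorphism in $\scrF(X;(\Theta,\Phi))$ transports the unit of one object to that of the other, hence transports the open-closed images of those units. Concretely, suppose $\varphi \in \scrF_0(L,K;(\Theta,\Phi))$ and $\psi \in \scrF_0(K,L;(\Theta,\Phi))$ are inverse quasi-isomorphisms, so $\psi \circ \varphi = e_L$ and $\varphi \circ \psi = e_K$ in the (unital) Donaldson-Fukaya category, using that $\scrF(X;(\Theta,\Phi))$ is unital with unit $e_L$ (Corollary following Lemma on idempotency of $e_L$).

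\textbf{Step 1: the open-closed map on objects.} First I would record that the composition of the counit $\Upsilon_L \in \Omega^d_{(\Theta,\Phi)}(\cM^{LL})$ with the unit $e_L \in \Omega_0^{(\Theta,\Phi)}(\cM^{LL})$, via the capsule-concatenation map $\Omega_i^{(\Theta,\Phi)}(\cM^{LL}) \otimes \Omega^j_{(\Theta,\Phi)}(\cM^{LL}) \to \Omega^{(\Theta,\Phi),\circ}_{i+j}(X,f)$ of Section \ref{sec: rel flow mod}, produces a capsule living over $(X,f)$ which by Lemma \ref{lem:OC} is bordant to the fundamental class $[L,\eta_L]$. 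The same holds for $K$. So it suffices to show that this concatenation $\Upsilon_L \cdot e_L$ equals, after applying a quasi-isomorphism, the corresponding concatenation $\Upsilon_K \cdot e_K$.

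\textbf{Step 2: naturality of counit and unit under composition.} The key algebraic input is that the counit $\Upsilon_L$ is natural: precomposing $\Upsilon_L$ with the composition map $\cM^{KL}\times\cM^{LL}\to\cM^{KL}$ (using $\varphi\colon L\to K$ appropriately, or rather the module $\varphi$) identifies $\Upsilon_L \circ (\text{something involving }\varphi)$ with $\Upsilon_K$ up to bordism, and dually $(\text{something involving }\psi)\circ e_L = e_K$ up to bordism — this is the module-over-a-space analogue of the standard fact that open-closed maps intertwine with Floer-theoretic continuation, and is proved by the usual gluing/associativity arguments (as in \cite[Section 7]{PS}), incorporating $(\Theta,\Phi)$-orientations in the by-now-standard way. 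More precisely, one uses that $\Upsilon_L$ and $e_L$ behave functorially under the bilinear maps $\cM^{LK}\times\cM^{KL}\to\cM^{LL}$ etc., together with $\psi\circ\varphi = e_L$, to compute
\[
[L,\eta_L] = \Upsilon_L \cdot e_L = \Upsilon_L \cdot (\psi \circ \varphi) = (\Upsilon_L \circ \psi) \cdot \varphi = \Upsilon_K \cdot \varphi \cdot (\text{unit reabsorbed}) = \Upsilon_K \cdot (\varphi\circ\psi) = \Upsilon_K \cdot e_K = [K,\eta_K],
\]
where all equalities are in $\Omega^{(\Theta,\Phi),\circ}_d(X,f)$ and each manipulation is justified by an associator (from moduli of discs with several inputs) and by Lemma \ref{lem:OC}; the bordisms all live over $(X,f)$ with proper evaluation by Lemma \ref{lem:proper bordism} in the noncompact case.

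\textbf{Main obstacle.} The genuine work is Step 2: making precise the sense in which the open-closed capsule is a \emph{trace}, i.e. that $\Upsilon_L \cdot e_L$ depends only on the isomorphism class of the object (or more precisely is invariant under conjugation by quasi-isomorphisms). At the level of flow categories this requires constructing the relevant moduli spaces of annuli/discs-with-two-boundary-inputs-and-an-interior-marked-point realising the cyclic invariance, equipping them with $(\Theta,\Phi)$-orientations and evaluation maps to $X$ that are compatible on all boundary strata, and checking properness of these evaluations via the reverse isoperimetric inequality argument of Lemma \ref{lem:proper bordism}. This is entirely parallel to the framed case treated in \cite[Section 7]{PS} — the only new feature is carrying the $(\Theta,\Phi)$-orientation data along, which has been set up for exactly this purpose in Sections \ref{sec: Thet flow} and \ref{sec: Floer tang} — so I would present it by reduction to \textit{loc. cit.} rather than re-deriving the gluing analysis.
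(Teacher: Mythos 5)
Your proposal is correct and takes essentially the same route as the paper: the paper's proof is a one-line reduction, invoking Lemma \ref{lem:OC} to identify $[L,\eta_L]$ with the unit--counit pairing and then constructing the bordism over $(X,f)$ from the quasi-isomorphism exactly as in \cite[Corollary 7.17]{PS}, with $(\Theta,\Phi)$-orientations carried along. Your Step 2 (the trace/cyclic-invariance manipulation via associators and the properness input from Lemma \ref{lem:proper bordism}) is precisely the content being delegated to \textit{loc.\ cit.}, so there is nothing missing.
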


\begin{proof}
    Given Lemma \ref{lem:OC}, we construct a bordism living over $X$ from the choice of quasi-isomorphism, as in \cite[Corollary 7.17]{PS}.
\end{proof}

\subsection{Comparison to classical Floer theory}

Recall that since our tangential pair is both graded and oriented, we have: 

\begin{lem}
    A choice of $\Theta$-structure for $L$ induces a Pin structure on $L$, canonical up to homotopy.
\end{lem}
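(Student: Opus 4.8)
The plan is to unwind the definitions of the spaces involved and observe that a $\Theta$-structure is, tautologically, an extra lift along $\Theta \to B\operatorname{Pin}(N)$ of exactly the kind of data that a Pin structure consists of. First I would recall the setup: since $(\Theta, \Phi)$ is an oriented graded tangential pair, by definition $\Theta$ lives over $B\operatorname{Pin}(N)$ and $\Phi$ over $BS_\pm U(N)$, fitting into the commutative square from Section~\ref{sec: 2tan}. A $\Theta$-structure on $L$ (compatible with the fixed $\Phi$-structure on $X$) is a lift of the classifying map $L \to B\operatorname{Pin}(N)$ for $TL \oplus \bR^{N-n}$ to $\Theta$, filling in the diagram displayed just before the statement. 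Composing such a lift with the map $\Theta \to B\operatorname{Pin}(N)$ produces a map $L \to B\operatorname{Pin}(N)$ lifting the classifying map of $TL$ through $BO(N) \to B\operatorname{Pin}(N)$ — which is precisely the data of a Pin structure on $L$ in the stable range (using $N > d+2 \geq n+2$, so that the stable and unstable notions agree and the relevant maps are highly connected).

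The one point requiring care is \emph{canonicity up to homotopy}: a priori a $\Theta$-structure could map to many different homotopy classes of Pin structure, or none in a well-defined way. Here I would invoke the compatibility with the ambient $\Phi$-structure. The key observation is that the homotopy fibre of $B\operatorname{Pin}(N) \to BS_\pm U(N)$ is $\widetilde{U/O}^{or}(N)$ (stated as a remark in Section~\ref{sec: 2tan}), and similarly the homotopy fibre of $\Theta \to \Phi$ is an oriented tangential structure in the sense of Definition~\ref{def: tan}, which is $1$-connected. Thus the square
\begin{equation*}
    \xymatrix{
        \Theta \ar[r] \ar[d] & \Phi \ar[d] \\
        B\operatorname{Pin}(N) \ar[r] & BS_\pm U(N)
    }
\end{equation*}
induces a map on vertical homotopy fibres $F = \operatorname{hofib}(\Theta\to\Phi) \to \widetilde{U/O}^{or}(N)$, and since $F$ is $1$-connected this map is $1$-connected; in particular it is an isomorphism on $\pi_0$ and $\pi_1$. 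Consequently, the space of lifts of the fixed data $(L \to X \to \Phi$, together with $L \to BO(N))$ to $\Theta$ maps to the space of lifts to $B\operatorname{Pin}(N)$ (i.e. Pin structures) via a map whose behaviour on $\pi_0$ is controlled by this fibre comparison. More precisely: restricting the $\Phi$-structure on $X$ along $L$ and pulling back, a $\Theta$-structure on $L$ is the same as a section of a bundle over $L$ with fibre $F$, and the induced Pin structure is the corresponding section of the bundle with fibre $\widetilde{U/O}^{or}(N)$; the map $F \to \widetilde{U/O}^{or}(N)$ being an iso on $\pi_0$ shows the homotopy class of the resulting Pin structure is well-defined (independent of the chosen homotopy within a given $\Theta$-structure).

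The main obstacle, modest as it is, is making the last sentence precise: one must check that two homotopic $\Theta$-structures induce homotopic Pin structures, i.e. that the assignment descends to $\pi_0$ of the relevant section spaces. This is exactly where $1$-connectivity of $F$ (equivalently, that $\Theta \to \Phi$ is $1$-connected in the oriented case, as demanded in Definition in Section~\ref{sec: 2tan}) enters: it guarantees the map on section spaces $\operatorname{Sect}(L; F\text{-bundle}) \to \operatorname{Sect}(L; \widetilde{U/O}^{or}\text{-bundle})$ is surjective on $\pi_0$ with the homotopy class well-defined, so composing with the further projection $\widetilde{U/O}^{or}(N) \to B\operatorname{Pin}(N)/BS_\pm U(N)$-data yields a canonical homotopy class of Pin structure. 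I would then conclude by noting this is natural in $L$, completing the proof. The whole argument is essentially a diagram chase plus one connectivity input, so no computation is needed beyond identifying the relevant homotopy fibres.
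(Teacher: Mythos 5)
Your first paragraph is exactly the paper's (one-line) proof: an oriented tangential pair comes with a map $\Theta \to B\operatorname{Pin}(N)$ by definition, so post-composing the lift $\eta: L \to \Theta$ with it yields a Pin structure, and homotopic lifts give homotopic composites. The subsequent two paragraphs about comparing homotopy fibres and invoking $1$-connectivity of $F$ are unnecessary (well-definedness up to homotopy is automatic for composition with a fixed map) and contain a non sequitur ("since $F$ is $1$-connected this map is $1$-connected"), but they do not affect the correctness of the argument already completed in the first paragraph.
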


\begin{proof}
    This follows from the definition of $\Theta$ being oriented. 
\end{proof}

\begin{lem}
    For a pair $L,K$ of $\Theta$-oriented branes, the classical Floer complex $CF(L,K)$ is canonically $\bZ$-graded.
\end{lem}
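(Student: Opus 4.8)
The plan is to unwind what a $\Theta$-structure provides and check that it supplies precisely the data needed to fix the $\bZ$-grading ambiguity in the classical Floer complex. Recall that $CF(L,K)$ is generated by time-$1$ Hamiltonian chords $x$ from $L$ to $K$, and that \emph{a priori} each such generator carries only a $\bZ/2$-grading (or a $\bZ$-grading defined up to a global shift); the obstruction to an absolute $\bZ$-grading is the usual Maslov-type obstruction, i.e. a lift of the relevant Lagrangian loops through $U/O(N) \to \widetilde{U/O}(N)$, equivalently a lift of the classifying map of $TX$ through $BS_\pm U(N) \to BU(N)$ together with compatible lifts for $TL$ and $TK$ through $BO(N) \to BS_\pm U(N)$ (cf. \cite{Seidel:graded}).

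First I would observe that by hypothesis $(\Theta,\Phi)$ is a \emph{graded} tangential pair, so $\Phi$ lives over $BS_\pm U(N)$ and $\Theta$ over $B\operatorname{Pin}(N)$, and that the homotopy fibre of $B\operatorname{Pin}(N) \to BS_\pm U(N)$ is $\widetilde{U/O}^{or}(N)$ while that of $BO(N) \to BS_\pm U(N)$ is $\widetilde{U/O}(N)$ (the Remark after the definition of oriented tangential pairs). Thus a $\Phi$-structure on $X$ furnishes a lift of $TX\colon X \to BU(N)$ through $BS_\pm U(N)$, which is exactly a grading of $X$ in the sense of \cite{Seidel:graded}; and a $\Theta$-structure on $L$ (resp. $K$), being a lift of $TL$ (resp. $TK$) through $\Theta \to B\operatorname{Pin}(N)$ compatible with the $\Phi$-structure, in particular gives a lift of the Lagrangian Gauss map $L \to U/O(N)$ to $\widetilde{U/O}(N)$ compatible with the chosen $\Phi$-structure on $X$. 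This is precisely a grading of $L$ (and of $K$) relative to the grading of $X$.

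Next I would invoke the standard construction: given gradings on $X$, $L$, $K$ in this sense, each generator $x$ of $CF(L,K)$ acquires an absolute index $|x| \in \bZ$ — the graded Maslov (Conley--Zehnder) index computed from the preferred lifts of the linearised Lagrangian paths to $\widetilde{U/O}(N)$, as in \cite[Section 11h, 12]{Seidel:book} and \cite{Robbin-Salamon:Maslov}. Indeed this is already how the grading $|x|$ was specified in the construction of $\cM^{LK}$ in Section \ref{sec: Floer tang} (item (2) of the list defining $\cM^{LK}$): the lifts $\eta_L,\eta_K$ to $\widetilde{U/O}^{or}(N)$ determine both Pin structures and gradings, and the further lift to $\Theta$ determines the $(\Theta,\Phi)$-oriented puncture data $\bE_x$ of Maslov index $|x|$. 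One then checks that the Floer differential drops $|x|$ by one: this is the index-additivity property of the Maslov index under gluing/breaking of strips, which holds because the lifts to $\widetilde{U/O}(N)$ are compatible with concatenation of the relevant paths — exactly the compatibility built into the puncture data and spaces of abstract discs in Section \ref{sec: abstr dis} (e.g. $\mu(\bE) = \sum_i \mu(\bE_i) - \mu(\bE_{out})$ and the gluing maps $\bE \#_k \bE'$). Hence $\partial$ has degree $-1$ and $CF(L,K)$ is a $\bZ$-graded complex, with the grading canonical (independent of auxiliary choices up to the usual canonical isomorphism) because the $\Theta$-structures are fixed as part of the data of the objects $(L,\eta_L)$, $(K,\eta_K)$.

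The only real content — and the step I expect to require the most care — is verifying that the grading read off from the $\widetilde{U/O}(N)$-lifts coming from the tangential-pair data agrees with, and has the same naturality properties as, Seidel's absolute grading, and in particular that the shift ambiguity is genuinely killed rather than merely reduced. This is essentially bookkeeping comparing two models of the Maslov index (the one packaged via abstract discs and index bundles here, and the classical Conley--Zehnder one), and is morally subsumed by the matching of conventions already used in Section \ref{sec: Floer tang}; I would handle it by citing \cite[Section 12]{Seidel:book} and \cite{Robbin-Salamon:Maslov} for the classical side and Lemmas \ref{lem linearised discs as abstract discs}--\ref{lem:index breaking} for the identification of index bundles, noting that everything is compatible with the stabilisation in $N$ of Remark \ref{rmk: Floer N inf}. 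The remaining verifications (degree of $\partial$, independence of Floer data) are then routine.
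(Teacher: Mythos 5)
Your proposal is correct and follows essentially the same route as the paper: the $\Theta$-orientations furnish lifts of the Gauss maps to $\widetilde{U/O}(N)$, so at each intersection point the two lifted boundary conditions are joined by a unique homotopy class of path in the (simply connected) $\widetilde{U/O}(N)$, whose integer Maslov index gives the absolute grading. The paper's proof is a two-line version of exactly this observation; your extra discussion of the differential's degree and the comparison with Seidel's conventions is consistent with, though more detailed than, what the paper records.
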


\begin{proof}
    At any intersection point $x$ of $L,K$ we have a unique homotopy class of path between the elements of $\widetilde{U/O}(N)$ associated to the $\Theta$-orientations, and this path has a well-defined integer-valued Maslov index.  
\end{proof}

\begin{lem}
    The Morse complex $CM_*(\cM^{LK})$ is canonically identified with the classical Floer complex $CF(L,K)$ up to a global sign. Given $\Theta$-oriented $L,K,N$, under this identification holomorphic discs with 3 punctures define a bilinear map $\cR: \cM^{LK} \otimes \cM^{NL} \to \cM^{NK}$ which induces the usual Floer triangle product (up to global sign).
\end{lem}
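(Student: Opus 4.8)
The plan is to leverage the already-established identification (Lemma~\ref{lem: morse 0-trun} and its analogues) between the Morse theory of a $\Theta$-oriented flow category and the truncated morphism groups in $\mathrm{Flow}^{(\Theta,\Phi),\leq 0}$, together with the fact that $\cM^{LK}$ was built so that its underlying unoriented flow category is the classical Floer flow category with Conley--Zehnder gradings. The statement has three parts: (i) $CM_*(\cM^{LK})$ is the classical Floer complex up to a global sign; (ii) the Floer triangle product is recovered from $\cM^{LKN}$ (or rather $\cM^{NLK}$ as indexed in the statement); (iii) the identifications are compatible, again up to a global sign. Since a $\Theta$-structure is, by definition of \emph{oriented} tangential pair, a lift factoring through $B\operatorname{Pin}(N)$, the preceding lemma provides a Pin structure on $L$ and $K$ canonical up to homotopy, and I will fix such representatives throughout.

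\textbf{Key steps.} First I would recall that, by construction in Section~\ref{sec: Floer tang} (see points (1)--(4) before Definition, and Corollary~\ref{cor: Theta or Floer}), the generators of $CM_*(\cM^{LK})$ are the Hamiltonian chords $x$, graded by $|x|$, which the preceding two lemmas identify with the classical integer Maslov/Conley--Zehnder grading; thus the underlying graded groups agree. Second, for the differential: $CM_*$ is defined (Section~\ref{sec: Thet flow}, Morse complex subsection) by a signed count of points in the $0$-dimensional moduli spaces $\cM_{xy}$, where the sign comes from comparing the $\Theta$-orientation-induced orientation with the chosen trivialisations of $\det(\bV_x)$; one checks that since $\bV_x \simeq \Omega\Theta$ and the Pin structure trivialises the relevant determinant lines, this sign rule agrees with the standard Floer-theoretic sign rule (as set up via Pin structures in \cite[Section~11]{Seidel:book}) on the nose, up to the global choice of the two trivialisations in $F(x)$ — this is exactly the ``up to a global sign'' ambiguity. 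Here I would cite the analogous verification in \cite{PS} (the unoriented/framed case) and note that the orientation bookkeeping is identical once the Pin structure is fixed, the only new input being that $\Theta$ maps trivially to $K(\bZ/2,2)$ so that $\bU_x$ is orientable (invoking \cite{Georgieva}, already used in the Morse complex construction). Third, for the product: the bilinear map $\cR: \cM^{LK}\times\cM^{NL}\to\cM^{NK}$ coming from moduli of holomorphic triangles induces, on $\tau_{\leq 0}$-truncations and hence on Morse homology (bullet points after the definition of $HM_*$), a degree-zero product; I would identify the induced chain-level map with the usual triangle product by the same boundary-strata analysis that shows $\cM^{NLK}$ is a bilinear map, matching the two-dimensional moduli spaces whose signed boundary counts define the Floer product structure constants, again up to the global sign coming from the $F(\cdot)$ ambiguity and compatibly across the three flow categories.

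\textbf{Main obstacle.} The substantive point — everything else being formal transport along Lemma~\ref{lem: morse 0-trun} and its bilinear-map analogue — is checking that the sign conventions match, i.e.\ that the orientation of a rigid Floer strip or triangle prescribed by the $\Theta$-orientation data (the isomorphism $st$, the determinant-line trivialisations, and \eqref{eqn:determinant}) coincides with the classical orientation prescribed by the Pin structures and the gluing/index conventions of \cite{Seidel:book} or \cite{FO3:smoothness}. The cleanest route is to reduce to the framed case treated in \cite{PS}: a $\Theta$-orientation on $\cM^{LK}$ restricts (forgetting the lift to $\Theta$, remembering only the induced Pin/orientation data on $L,K$) to the framed-type orientation data used there, the Morse complex and product are manifestly insensitive to the further $\Theta$-lift, and \cite{PS} already matched those with classical Floer theory for the case $\Theta = \Phi = \{pt\}$ via precisely this Pin-structure comparison; the bilinear/associativity compatibility is similarly inherited. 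I would therefore phrase the proof as: ``the identifications of \cite{PS} apply verbatim after fixing the Pin structures induced by the $\Theta$-orientations, since the Morse complex, its differential, and the triangle product depend only on the underlying unoriented flow category together with orientation data on the determinant lines, which is unchanged; the global sign is the usual ambiguity in the choice of trivialisations in $F(x)$.'' The risk is that some sign subtlety specific to the graded/oriented pair setting (e.g.\ interaction of the grading shift with the Pin structure) does not literally reduce to \cite{PS}; addressing that would require spelling out the comparison path in $\widetilde{U/O}^{or}(N)$ between the two $\Theta$-orientations at an intersection point, as in the proof of the preceding grading lemma, and confirming its Maslov index matches the Conley--Zehnder index with the same sign — a routine but bookkeeping-heavy check that I would flag rather than carry out in detail.
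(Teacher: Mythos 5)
Your proposal is correct and takes essentially the same route as the paper: the paper's entire proof is a two-sentence remark that, as in \cite[Section 7.8]{PS}, the identification is ``largely tautological,'' which is exactly the reduction-to-the-framed-case argument you outline. The one place your account diverges is the source of the global sign: you attribute it to the ambiguity in the choice of trivialisations in $F(x)$ (which is merely a change of basis present on both sides), whereas the paper pinpoints it as arising because the Morse complex of the flow category uses orientation lines of the index bundles $I_{xy} = \bR \oplus T\cM_{xy}$ rather than of the tangent bundles $T\cM_{xy}$ themselves; this extra $\bR$-stabilisation is the discrepancy with the classical Floer orientation conventions. That is a minor bookkeeping point rather than a gap, but it is the one concrete thing the paper's proof actually says.
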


\begin{proof}
    As in \cite[Section 7.8]{PS} this is largely tautological. A sign arises because the Morse complex of the flow category involves orientation lines of index bundles $I_{xy} = \bR \oplus T\cM_{xy}$ rather than orientation lines of the tangent bundles to moduli spaces of flow lines themselves.
\end{proof}

Since the Morse complex is built precisely from information held in the zero-th truncation, one finds:

\begin{cor}\label{cor: 0-trun fuk}
    There is a fully faithful functor $\tau_{\leq 0}\scrF(X;(\Theta,\Phi)) \to \scrF(X;\bZ)$ whose image is the subcategory of graded Lagrangians which admit a $\Theta$-orientation with respect to the given $\Phi$-structure on $X$.
\end{cor}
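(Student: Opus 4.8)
The plan is to build the functor $\tau_{\leq 0}\scrF(X;(\Theta,\Phi)) \to \scrF(X;\bZ)$ out of the Morse-complex machinery developed in the previous sections, and then identify its essential image with the subcategory of graded, $\Theta$-orientable Lagrangians. First I would recall that for any $\Theta$-oriented flow category $\cM$, the Morse complex $CM_*(\cM)$ is defined and, by the comparison lemmas just proved, $CM_*(\cM^{LK})$ is canonically identified (up to a global sign) with the classical Floer complex $CF(L,K)$, with the triangle product matching the Floer product. The key input on the flow-categorical side is Lemma \ref{lem: morse 0-trun}, which gives a natural isomorphism $[\cF,\cG]^{\Theta,\leq 0} \simeq \Hom(CM_*(\cF),CM_*(\cG))$, together with its analogue $\Omega_*^{\Theta,\leq 0}(\cF) \cong HM_*(\cF)$; the proof carries over verbatim to tangential pairs since the argument only uses that $\Omega\Theta$ (resp.\ $\bU_{i,1}^V(\bE)$) is connected, which holds under the oriented-graded assumption.

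Next I would define the functor on objects by sending a $\Theta$-brane $(L,\eta_L)$ to the underlying graded Lagrangian; the grading is the canonical $\bZ$-grading produced by the lemma stating that $CF(L,K)$ is canonically $\bZ$-graded for $\Theta$-oriented branes, which in turn rests on the fact that the $\Theta$-structure refines a $\operatorname{Pin}$ structure and a grading (here one uses that $\Phi$ lives over $BS_\pm U(N)$ and $\Theta$ over $B\operatorname{Pin}(N)$). On morphisms, by definition $\tau_{\leq 0}\scrF_*(L,K;(\Theta,\Phi)) = \Omega_*^{(\Theta,\Phi),\leq 0}(\cM^{LK}) \cong HM_*(\cM^{LK}) = HF_*(L,K)$, and this identification is compatible with composition because the truncated composition/bilinear-map structure corresponds under Lemma \ref{lem: morse 0-trun} to composition of chain maps, which descends to the Floer product by the comparison lemma. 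Functoriality (respect for units and associativity) then follows from the corresponding statements for the Morse complex established in Section \ref{sec: Thet flow}, together with the fact that the unit $e_L$ restricts in the zeroth truncation to the classical unit (its image in $HM_0(\cM^{LL})$ is the Floer-cohomological unit, since $e_L$ is idempotent and product with it is an isomorphism).

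For full faithfulness: the functor is the identity on morphism groups after the above identifications, so it is faithful automatically, and full onto the Hom-spaces between objects in its image. It remains to pin down the essential image. An object $(L)$ of $\scrF(X;\bZ)$ is a graded Lagrangian; it lies in the image iff the classifying map of $TL$ admits a $\Theta$-lift compatible with the fixed $\Phi$-structure on $X$ --- this is exactly the definition of a $\Theta$-brane, and conversely any such lift $\eta_L$ produces an object of $\tau_{\leq 0}\scrF(X;(\Theta,\Phi))$ mapping to $(L)$ with its induced grading; one checks the induced grading agrees with the given one since both are read off from the same homotopy class of path in $\widetilde{U/O}^{or}(N)$ determined by the $\operatorname{Pin}$-and-grading data underlying $\eta_L$. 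I expect the main obstacle to be the bookkeeping needed to check that the functor is genuinely functorial at the level of the (non-unital a priori) category $\tau_{\leq 0}\scrF(X;(\Theta,\Phi))$ --- in particular that the triangle product and its higher coherences truncate correctly to the classical ones with consistent global signs --- rather than any conceptual difficulty; as in \cite{PS} this is ``largely tautological'' once the comparison lemmas and Lemma \ref{lem: morse 0-trun} are in hand, so I would simply cite the corresponding arguments of \cite[Section 7.8]{PS} with $\Theta$-orientations substituted for framings throughout.
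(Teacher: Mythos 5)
Your proposal is correct and follows the same route the paper takes: the paper deduces the corollary directly from the preceding comparison lemmas (identifying $CM_*(\cM^{LK})$ with $CF(L,K)$ and the triangle product with the Floer product) together with the observation that the Morse complex only sees zero-th truncation data, i.e. the isomorphism $\Omega_*^{\Theta,\leq 0}(\cF)\cong HM_*(\cF)$. Your write-up is simply a more explicit unpacking of that argument, with the correct identification of morphism groups as bordism classes of right flow modules rather than $[\cF,\cG]^{\Theta}$.
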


\begin{rmk}
In our conventions, $HF_*(L,K)$ is nonpositively graded, has a unit in degree zero, and has a degree-preserving product, so $HF_*(L,L) \cong H^{-*}(L)$; this is consistent 
with \cite[Proposition 7.27]{PS}. 
\end{rmk}

\subsection{Choices and naturality}

For any oriented tangential pair $(\Theta,\Phi)$ there is a subgroup of the symplectic mapping class group of $X$ which preserves the $\Phi$-structure on $X$ up to homotopy, and hence can potentially act on $\scrF(X;(\Theta,\Phi))$. In general, this subgroup will depend on the particular $\Phi$-structure on $X$.  Here we spell this out in the key case of a stable framing or polarisation, where there is a connection with classical $K$-theoretic invariants of $X$.

Suppose $X$ admits a (stable) polarisation, so $TX \oplus (E \otimes_\bR \bC) = \bC^N$ for some vector bundle $E \to X$ and some $N$. A choice of polarisation is the same as a choice of $\Phi$-structure on $X$, where $\Phi = BO$ and the map $\Phi \to BU$ is the natural complexification map.

\begin{lem}
    The set of choices of polarisation  on $X$ form a torsor for $[X,U/O] = KO^1(X)$. If $X$ admits a stable framing, the set of choices of stable framing form a torsor for $[X,U] = KU^1(X)$.
\end{lem}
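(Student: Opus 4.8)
The statement is a standard consequence of the homotopy-fibre sequences that define polarisations and framings as lifts of classifying maps, so the plan is to unwind the definitions and apply obstruction theory for lifting through a principal fibration. First I would recall that a stable polarisation of $X$ is by definition a lift of the classifying map $TX\colon X\to BU$ (more precisely of $\bC^{N-d}\oplus TX$ for $N\gg d$) through the complexification map $c\colon BO\to BU$; equivalently, writing $U/O=\mathrm{hofib}(c)$, it is a choice of nullhomotopy-up-to-the-fibration-data, i.e. a section of the pullback to $X$ of the fibration $U/O \to \ast$ sitting over the data. More usefully: the space of polarisations of $X$ is the space of lifts of a fixed map $X\to BU$ along $BO\to BU$, which is (when nonempty) a torsor under the group of homotopy classes of maps $[X,\Omega(\mathrm{hofib}(BO\to BU))]=[X,\Omega(U/O)\cdot{\rm shift}]$; I would instead run this directly using the fibre sequence $U/O\to BO\xrightarrow{c} BU$, which upon looping once gives $\Omega(U/O)\to \Omega BO\to \Omega BU$, i.e. identifies the ``difference'' of two lifts over a fixed base map with an element of $[X, U/O]$ after the standard re-indexing. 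Since $U/O$ is an infinite loop space representing $KO^1$, we get $[X,U/O]\cong KO^{1}(X)$. The torsor structure is exactly the action of the fibre of the relevant map on the set of lifts: two polarisations differ by a well-defined class in $KO^1(X)$, any class can be realised (act on a fixed polarisation), and the action is free because a difference class is trivial iff the two lifts are vertically homotopic.

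Concretely I would argue as follows. Fix a polarisation, giving a map $g\colon X\to BO$ with $c\circ g\simeq TX$ together with a chosen homotopy $h$. Given any other polarisation $(g',h')$, the two composites $c\circ g$ and $c\circ g'$ are both identified with $TX$, so combining $h,h'$ produces a homotopy $c\circ g\simeq c\circ g'$; by the fibration $c\colon BO\to BU$ with fibre $U/O$, the pair $(g,g')$ together with this homotopy is classified by a map $X\to U/O$, well-defined up to homotopy, hence a class in $[X,U/O]$. Conversely, given $\alpha\colon X\to U/O$ and a polarisation $(g,h)$, composing with the fibre inclusion $U/O\hookrightarrow$ (path space) and translating $g$ along $\alpha$ yields a new polarisation; these two constructions are mutually inverse up to homotopy, which gives the free transitive action. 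Identifying $[X,U/O]$ with $KO^{1}(X)$ is then the statement that $U/O\simeq \Omega^{-1}(\bZ\times BO)$ in the real Bott periodicity tower, i.e. $U/O$ is the degree $-1$ (equivalently degree $1$, by $8$-periodicity) term of the $KO$-spectrum, so $[X,U/O]=\widetilde{KO}^{1}(X)$; for the unreduced/non-basepointed bookkeeping one uses that we are taking unbased homotopy classes, matching the convention $KO^1(X)=[X,U/O]$ already in force in the paper (cf. the map $\mathrm{image}(KO(X)\xrightarrow{c}KU(X))$ in the introduction and Example \ref{ex: polar}).

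For the framed case the argument is identical with $BO$ replaced by the point and $U/O$ replaced by $U$: a stable framing is a lift of $TX\colon X\to BU$ through $\ast\to BU$, i.e. a nullhomotopy of $TX$ together with its data, equivalently (using $U=\mathrm{hofib}(\ast\to BU)=\Omega BU$) a choice trivialising $\bC^{N-d}\oplus TX$. Two framings differ by a loop in $BU$ based appropriately, i.e. a class in $[X,\Omega BU]=[X,U]$, and this set acts freely transitively on the (nonempty, by hypothesis) set of framings by the same fibre-action argument. Bott periodicity identifies $U\simeq \Omega^{-1}(\bZ\times BU)$, so $[X,U]=KU^{1}(X)$, using again the paper's convention for the odd $K$-group. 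I would remark that the torsor is in general not canonically trivial — there is no preferred framing — which is why one only gets a torsor and not a group isomorphism.

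\textbf{Main obstacle.} There is no deep difficulty here; the only thing requiring care is the indexing/stabilisation bookkeeping. Two points deserve attention: (i) one must work with $\bC^{N-d}\oplus TX$ for $N>d+2$ (as fixed earlier in the section) and check that the resulting torsor is independent of $N$ under the stabilisation maps $U/O(N)\to U/O(N+1)$, $U(N)\to U(N+1)$ — this is standard, using that these maps are highly connected and that the stabilisation is compatible with the Bott maps, so one passes to the colimit $U/O=U/O(\infty)$, $U=U(\infty)$ as in Section \ref{sec: tan}; and (ii) one must be consistent about whether $[X,U/O]$ means based or unbased homotopy classes and hence whether one lands in reduced or unreduced $KO^1$. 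Since $X$ is connected (a Liouville manifold) and $U/O$, $U$ are connected infinite loop spaces, $[X,U/O]_{\ast}=[X,U/O]$ and likewise for $U$, so this ambiguity is harmless and the identifications with $KO^1(X)$, $KU^1(X)$ hold on the nose in the paper's conventions.
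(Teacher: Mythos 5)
Your argument is correct and is essentially the paper's own: both identify the difference of two polarisations (resp.\ framings) with a class in $[X,U/O]$ (resp.\ $[X,U]$) via the fibre sequence $U/O\to BO\to BU$ (resp.\ $U\to \ast\to BU$) — the paper phrases this as sections of the pulled-back principal $U/O$-bundle rather than lifts along the fibration, but these are the same torsor argument — and then both conclude by real (resp.\ complex) Bott periodicity. Your extra care about stabilisation in $N$ and based versus unbased classes is harmless bookkeeping the paper leaves implicit.
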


\begin{proof}
    A choice of polarisation is equivalent to a choice of section of the stable Lagrangian Grassmannian over $X$, which is the pullback of the universal principal $U/O$ bundle over $B(U/O)$ under the natural map $X \to B(U/O)$.  Two polarisations therefore differ by a map $X \to U/O$; the final statement comes from real Bott periodicity. The case of framings is analogous, considering lifts in the sequence $U \to \ast \to BU$.
\end{proof}

If $\phi: X \to X$ is a symplectic diffeomorphism, then one can compare a polarisation (or framing) with its pullback by $\phi$.  There is a space $\Symp^{\Phi}(X)$ of symplectic diffeomorphisms equipped with a homotopy between these two stable polarisations / framings.  Analogous to \cite{Seidel:graded} we obtain:
\begin{lem}
If $X$ is stably polarised by $\Psi$, there is an exact sequence
\[
1 \to KO^0(X)  \to \pi_0\Symp^{\Psi}(X) \to \pi_0\Symp(X) \to KO^1(X)
\]
where the final map is not a homomorphism (but its kernel is a subgroup). If $X$ is stably framed via $\Psi$, there is an analogous exact sequence
\[
1 \to KU^0(X)  \to \pi_0\Symp^{\Psi}(X) \to \pi_0\Symp(X) \to KU^1(X)
\]
(with the same caveat on the final map).
\end{lem}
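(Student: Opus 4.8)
The plan is to mimic Seidel's construction of the graded symplectic mapping class group \cite{Seidel:graded}, replacing the role of the cohomological quantity $2c_1$ (or gradings) with the $K$-theoretic datum of a stable polarisation or framing. Fix a stable polarisation $\Psi$ of $X$, i.e.\ a section of the stable Lagrangian Grassmannian bundle $\mathrm{LGr}(X) \to X$, equivalently a $\Phi$-structure for $\Phi = BO$. For $\phi \in \Symp(X)$, the pullback $\phi^*\Psi$ is another section of $\mathrm{LGr}(X)$ (using that $\phi$ is symplectic, so preserves the bundle $\mathrm{LGr}(X)$ up to the canonical identification of $\mathrm{LGr}(X)$ with $\phi^*\mathrm{LGr}(X)$). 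By the previous lemma, the difference class $[\phi^*\Psi] - [\Psi] \in [X, U/O] = KO^1(X)$ is a well-defined invariant of the homotopy class of $\phi$, giving a map of sets $\delta \colon \pi_0\Symp(X) \to KO^1(X)$. Define $\Symp^\Psi(X)$ to be the space of pairs $(\phi, \gamma)$ where $\phi \in \Symp(X)$ and $\gamma$ is a homotopy from $\Psi$ to $\phi^*\Psi$ through sections of $\mathrm{LGr}(X)$; this is the homotopy fibre of $\phi \mapsto \phi^*\Psi$ over $\Psi$.

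First I would establish exactness at $\pi_0\Symp(X)$: a class $[\phi]$ lies in the image of $\pi_0\Symp^\Psi(X) \to \pi_0\Symp(X)$ precisely when the homotopy from $\Psi$ to $\phi^*\Psi$ exists, i.e.\ when $\phi^*\Psi \simeq \Psi$ as sections, i.e.\ when $\delta([\phi]) = 0$. The statement that $\ker(\delta)$ is a subgroup (even though $\delta$ is not a homomorphism) follows from the cocycle-type identity $\delta(\phi\psi) = \delta(\psi) + \psi^*\delta(\phi)$ arising from the chain rule $(\phi\psi)^*\Psi = \psi^*(\phi^*\Psi)$: if $\delta(\phi) = \delta(\psi) = 0$ then $\delta(\phi\psi) = 0$, and similarly for inverses. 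Next I would analyse the fibre: the map $\pi_0\Symp^\Psi(X) \to \pi_0\Symp(X)$ has "kernel" (preimage of the identity component) computed from the long exact sequence of the fibration $\Symp^\Psi(X) \to \Symp(X)$, and the relevant term is $\pi_1$ of the base of sections over a point of $\Symp(X)$, i.e.\ $\pi_1$ of the space of sections of $\mathrm{LGr}(X)$ near $\Psi$; since that space of sections is (up to homotopy) $\mathrm{Map}(X, U/O)$ (the section being a reference point), its $\pi_1$ is $[X, \Omega(U/O)] = [X, BO] = KO^0(X)$ by real Bott periodicity. This yields the injection $KO^0(X) \hookrightarrow \pi_0\Symp^\Psi(X)$ and exactness in the middle.

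The framed case is formally identical: replace $\mathrm{LGr}(X)$ by the bundle of stable trivialisations, i.e.\ work with lifts in the sequence $U \to \ast \to BU$, so a stable framing is a nullhomotopy of the classifying map $X \to BU$ of $TX$, the difference of two framings lives in $[X, U] = KU^1(X)$, and the fibre contributes $[X, \Omega U] = [X, BU \times \bZ]$; restricting to the reduced theory (or being careful about the component count, which only shifts things by a $\bZ$ that can be absorbed) gives $KU^0(X)$. I would simply remark that one substitutes complex Bott periodicity $\Omega U \simeq BU \times \bZ$ for real Bott periodicity throughout.

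The main obstacle is purely bookkeeping rather than conceptual: one must be careful that the "space of sections of $\mathrm{LGr}(X)$" is genuinely the homotopy-theoretic object whose $\pi_0$ is a $KO^1(X)$-torsor and whose $\pi_1$ based at $\Psi$ is $KO^0(X)$ — this requires knowing that $\mathrm{LGr}(X) \to X$ is the pullback of the universal $U/O$-bundle and invoking that $\Symp(X)$ acts on this bundle through the classifying map $X \to B(U/O)$ only up to the contractible ambiguity of choosing the homotopy, exactly as in \cite{Seidel:graded}. The only genuinely delicate point is checking that the connecting map $\pi_0\Symp(X) \to KO^1(X)$ is well-defined on $\pi_0$ (not just on $\Symp(X)$), which follows because a path of symplectomorphisms induces a homotopy of the pulled-back polarisations; and that the failure of $\delta$ to be a homomorphism is governed precisely by the non-triviality of the $\pi_0\Symp(X)$-action on $KO^1(X)$, so the kernel is nonetheless a subgroup. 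I would keep this parallel with \cite{Seidel:graded} explicit and not belabour the homotopy-theoretic identifications.
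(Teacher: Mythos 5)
Your argument is the one the paper intends: the paper offers no proof of this lemma beyond the preceding torsor lemma and the phrase ``analogous to Seidel'', and your fibration/long-exact-sequence write-up — with the cocycle identity $\delta(\phi\psi)=\delta(\psi)+\psi^*\delta(\phi)$ explaining why $\ker\delta$ is a subgroup even though $\delta$ is not a homomorphism — is exactly that analogy made explicit. Two small points to tighten: real Bott periodicity gives $\Omega(U/O)\simeq BO\times\bZ$, so the fibre term is $[X,\Omega(U/O)]=[X,BO\times\bZ]=KO^0(X)$ rather than $[X,BO]$ (the same $\bZ$-factor bookkeeping you already flag in the complex case); and the long exact sequence of the fibration only identifies the kernel of $KO^0(X)\to\pi_0\Symp^{\Psi}(X)$ with the image of the connecting map $\pi_1\Symp(X)\to KO^0(X)$, so the claimed injectivity (which the paper likewise asserts without comment) needs either a verification that this connecting map vanishes or a reading of the initial ``$1$'' as implicitly quotienting by its image.
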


\begin{cor}
    Suppose $H^1(X;\bR) = 0$. For $\Phi \to BU$ the complexification $BO \to BU$, the spectral Fukaya category $\scrF(X;(\Theta,\Phi))$ is natural under $\pi_0\Symp^{\Psi}(X)$.
    \end{cor}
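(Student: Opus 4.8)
The plan is to package the choices going into the definition of $\scrF(X;(\Theta,\Phi))$ — Floer data, strip-like ends, cut-offs, systems of hypersurfaces, collars, abstract-disc lifts, etc. — into a contractible (or at least connected) space of auxiliary data, and then to observe that $\Symp^{\Psi}(X)$ acts on this package compatibly with a symplectomorphism's action on objects, so that conjugating by $\phi \in \Symp^{\Psi}(X)$ carries one choice of category to an equivalent one. Concretely, if $\phi$ preserves the polarisation $\Psi$ up to a chosen homotopy, then for a $\Theta$-brane $(L,\eta)$ the pushforward $\phi(L)$ inherits a $\Theta$-structure $\phi_*\eta$: the homotopy datum in $\Symp^{\Psi}(X)$ is exactly what is needed to transport the lift of the Gauss map through the diagram defining a $\Theta$-structure (cf. the diagram in Section 6.2). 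So $\phi$ gives a bijection on objects $(L,\eta) \mapsto (\phi(L),\phi_*\eta)$.

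Next I would show $\phi$ induces an equivalence on the flow-categorical level. The Floer flow category $\cM^{LK}$ is built from Floer data $(H_t,J_t)$; pushing forward gives Floer data $(\phi^*H_t, \phi^*J_t)$ and a diffeomorphism of moduli spaces $\cM^{LK}_{(H,J)} \simeq \cM^{\phi(L)\phi(K)}_{\phi^*(H,J)}$, which is an isomorphism of unoriented flow categories (it matches chords, Floer strips, breakings, collars). One must check it intertwines the $(\Theta,\Phi)$-orientations constructed in Section \ref{Subsec:theta-orientations in Floer theory}: this is where the homotopy of polarisations enters, since the maps $\rho_{xy}\colon \cM_{xy}\to\bU^{(\Theta,\Phi)}_{xy}$ and the index-bundle identifications are built from the Gauss map data, and $\phi_*$ of that data is identified with the original data up to the contractible choices already present in the construction. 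Hence $\phi$ yields an equivalence $\scrF(X;(\Theta,\Phi)) \xrightarrow{\sim} \scrF(X;(\Theta,\Phi))$ (with the second category built from $\phi$-transported auxiliary data), and independence of auxiliary data — continuation maps and their $(\Theta,\Phi)$-oriented bordisms, exactly as in \cite[Section 7]{PS} — identifies the target with the original category. The hypothesis $H^1(X;\bR)=0$ is used precisely to guarantee that exact Lagrangians stay exact under Hamiltonian-isotopy-type moves and, more importantly, that the space of (exact) Floer data and continuation data is connected, so that the comparison equivalences are canonical up to contractible choice; this is what upgrades a mere action on objects to a well-defined action on the category, and makes it factor through $\pi_0$.

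Finally I would assemble the homotopy-coherence: the previous lemma gives $\pi_0\Symp^{\Psi}(X)$ as the relevant group (an element of $\Symp^{\Psi}(X)$ is a symplectomorphism together with the homotopy of polarisations needed above, and $KO^0(X)$ — resp.\ $KU^0(X)$ in the framed case — acting trivially on the category is absorbed), and one checks that composition of symplectomorphisms corresponds to composition of the induced equivalences up to natural isomorphism, using associativity of gluing and of the $(\Theta,\Phi)$-oriented composition maps \eqref{eqn:composition}. This yields a (weak) action of $\pi_0\Symp^{\Psi}(X)$ on the homotopy category of $\scrF(X;(\Theta,\Phi))$, i.e.\ naturality in the stated sense.

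I expect the main obstacle to be the bookkeeping of \textbf{compatibility of $\phi_*$ with the $(\Theta,\Phi)$-orientations}: the $\Theta$-orientation on $\cM^{LK}$ was not canonical but defined up to contractible choice (Lemmas \ref{lem:abstract discs on complements of collars}, \ref{lem:abstract ind on complements of collars}, Corollary \ref{cor: Theta or Floer}), so one must verify that the pushed-forward choice and the original choice lie in the same component of the (contractible) space of choices, uniformly over the flow category and compatibly with breaking and with the bilinear maps defining composition. This is essentially an inductive argument over $|x|-|y|$ of the same flavour as the constructions in Section \ref{sec: Floer tang}, but run $\Symp^{\Psi}(X)$-equivariantly; the role of the datum in $\Symp^{\Psi}(X)$ (rather than a bare symplectomorphism) is exactly to make the base case — comparison of the transported and original Gauss-map lifts — go through.
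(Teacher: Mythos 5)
The paper states this corollary without any written proof beyond the parenthetical remark that the hypothesis $H^1(X;\bR)=0$ ``circumvents the distinction between symplectic and Hamiltonian isotopy,'' so your proposal is being compared against the argument the authors evidently intend rather than one they spell out. Your overall structure is the right one and matches that intent: transport of the $\Theta$-lift of the Gauss map via the homotopy datum carried by an element of $\Symp^{\Psi}(X)$, an induced isomorphism of unoriented Floer flow categories, compatibility with the $(\Theta,\Phi)$-orientations up to the contractible choices already built into Section \ref{Subsec:theta-orientations in Floer theory}, and identification with the original category via continuation bordisms.

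The one point where your justification drifts is the role of $H^1(X;\bR)=0$. You attribute it to connectivity of the space of Floer/continuation data, but that space is contractible unconditionally; nothing there depends on the topology of $X$. The hypothesis is doing two other jobs. First, for a symplectomorphism $\phi$ of an exact symplectic manifold, $\phi^*\lambda-\lambda$ is closed; when $H^1(X;\bR)=0$ it is exact, so $\phi$ is an exact symplectomorphism and carries exact Lagrangians to exact Lagrangians --- without this the map on objects is not even defined. Second, and this is what makes the action factor through $\pi_0$: a path in $\Symp^{\Psi}(X)$ from $\phi_0$ to $\phi_1$ is a symplectic isotopy, and Floer theory is only invariant under \emph{Hamiltonian} isotopy; when $H^1(X;\bR)=0$ every symplectic isotopy is Hamiltonian, so $\phi_0(L)$ and $\phi_1(L)$ are quasi-isomorphic objects and the induced functors are naturally isomorphic. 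Without the hypothesis, symplectically isotopic symplectomorphisms could act differently on the category. With this correction your argument is complete; the rest (the inductive bookkeeping of orientations over $|x|-|y|$, and coherence of composition via associators) is exactly the machinery of Sections \ref{sec: Floer tang} and \ref{sec: ob lift} run equivariantly, as you say.
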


    (The hypothesis $H^1(X;\bR) = 0$ circumvents the distinction between symplectic and Hamiltonian isotopy.) 
In principal there may be symplectic mapping classes which do not lift to this subgroup for any choice of framing or polarisation data $\Psi$.

Given a stable polarisation, for a Lagrangian $L\subset X$ there is a stable Gauss map classifying the bundle $TL+E$ which now takes values in $U/O(N)$. Given a graded tangential structure $\Theta \to \widetilde{U/O}(N)$,  the set of Lagrangians which admit a $\Theta$-orientation  will thus depend on both the polarisation $E$ and the choice of graded tangential structure.

\begin{rmk}
    There are analogous results describing the choice of $\Theta$-structure on a given connected Lagrangian submanifold $L$; for instance, taking $\Theta=\{\ast\}$, two different nullhomotopies of $L \to U/O$ differ by a map $[\Sigma L, U/O] = KO^0(L)$ (for $N$ sufficiently large). 

    Similarly for a tangential pair $(\Theta, \Phi)$, if both spaces are infinite loop spaces and all four maps in (\ref{eq: strong intro}) are maps of infinite loop spaces, a similar argument shows that the set of $\Theta$-structures on a given $L$ is a torsor over $KO^0(L)$.
\end{rmk}

\subsection{Proofs of applications}

The theory developed in Section \ref{sec: ob lift} applies in this setting to give conditions on which quasi-isomorphism over $\Theta$ is detected by quasi-isomorphism over $\bZ$.

Suppose $L$ and $K$ are $\Theta$-branes whose underlying $\bZ$-branes are quasi-isomorphic. 
We begin by choosing an isomorphism $\alpha^0 \in HF_0(L,K;\bZ)$ in $H_*(\scrF(X;\bZ))$. 
By Corollary \ref{cor: 0-trun fuk}, we can view this as an isomorphism in $\tau_{\leq 0}\scrF_0(X; (\Theta,\Phi))$. 
\begin{lem}\label{lem: inv det tau0}
    For $n \geq 0$, an element $\beta^n \in \scrF_n(L,L; (\Theta,\Phi))$ is invertible if and only if $\tau_{\leq 0} \beta^n$ is.
\end{lem}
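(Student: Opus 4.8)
The statement is a version of the standard fact that a map of connective modules over a connective ring with $\pi_0 \cong \bZ$ is an equivalence once it is so after applying $\pi_0$ (equivalently, after tensoring down to $\bZ$); here we phrase it flow-categorically. The plan is to reduce invertibility of $\beta^n$ to invertibility of a composition of truncated morphisms and then feed the $\tau_{\leq 0}$-hypothesis into the Whitehead-type statement of Proposition \ref{prop: flow white head}.

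First I would set up the framework: by definition $\scrF_n(L,L;(\Theta,\Phi)) = \Omega_n^{(\Theta,\Phi)}(\cM^{LL})$, and composition with $\beta^n$ induces a map of graded abelian groups $\Omega_*^{(\Theta,\Phi)}(\cM^{LL}) \to \Omega_*^{(\Theta,\Phi)}(\cM^{LL})$ (using the bilinear map $\cM^{LLL}$ and the unit $e_L$ to interpret this as the relevant composition, exactly as in the argument for unitality around Corollary \ref{cor: 0-trun fuk}). Invertibility of $\beta^n$ means this composition map, together with composition on the other side, is an isomorphism; since $e_L$ is an idempotent unit, this is equivalent to $\beta^n$ being invertible in the (unital) Donaldson-Fukaya category. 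The easy direction is that if $\beta^n$ is invertible then so is $\tau_{\leq 0}\beta^n$, since $\tau_{\leq 0}$ is functorial and compatible with composition (Section \ref{sec: ob lift}, \ref{sec: Thet flow}): applying $\tau_{\leq 0}$ to $\beta^n \beta^{-1} = e_L = \beta^{-1}\beta^n$ produces an inverse to $\tau_{\leq 0}\beta^n$ in $\tau_{\leq 0}\scrF(X;(\Theta,\Phi))$, which by Corollary \ref{cor: 0-trun fuk} is just invertibility in $\scrF(X;\bZ)$.

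For the converse — the substantive direction — suppose $\tau_{\leq 0}\beta^n$ is invertible in $\tau_{\leq 0}\scrF_0(X;(\Theta,\Phi)) = HF_0(L,L;\bZ)$. View $\beta^n$ as represented by a $(\Theta,\Phi)$-oriented morphism (or right flow module) $\cW$; its truncation $\tau_{\leq n}\cW$ is a $\tau_{\leq n}$-morphism whose composition with $\tau_{\leq 0}$ is the invertible class $\tau_{\leq 0}\beta^n$. The hypothesis that $\tau_{\leq 0}\beta^n$ is an isomorphism of Morse homology (equivalently, $\Omega_*^{(\Theta,\Phi),\leq 0}(\cM^{LL}) \to \Omega_*^{(\Theta,\Phi),\leq 0}(\cM^{LL})$ is an isomorphism) is exactly the input needed to invoke Proposition \ref{prop: flow white head}: postcomposition with $\tau_{\leq k}\cW$ is surjective on $\Omega_*^{(\Theta,\Phi),\leq k}(\cM^{LL})$ for all $k \leq n$. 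Running this argument both with $\cW$ and with a representative of a one-sided inverse (constructed truncation-by-truncation using the surjectivity just obtained, starting from the $\bZ$-level inverse), one lifts the inverse through the tower of truncations up to level $n$, producing $\beta^{-n}$ with $\beta^n\beta^{-n} = e_L$ and $\beta^{-n}\beta^n = e_L$ in $\scrF_0(L,L;(\Theta,\Phi))$. A standard two-sided-inverse argument (a left inverse and a right inverse in a unital structure coincide) then shows $\beta^n$ is genuinely invertible.

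The main obstacle is the bookkeeping in the inductive lift: Proposition \ref{prop: flow white head} gives surjectivity of the composition map at each truncation level, not a distinguished inverse, so one must choose lifts compatibly, check that the obstruction classes of Theorem \ref{thm: ob lift} (living in $HM_{i-k-2}(\cdot; \Omega^{(\Theta,\Phi)}_i(\cE_{\cdot}))$ and the mapping-cone groups of Lemma \ref{lem: ob2 closed}) vanish because the relevant Morse complexes are acyclic (the $\bZ$-level isomorphism hypothesis forces the mapping cone of $CM_*(\cW)$ to be acyclic), and verify that the lifted element is a genuine two-sided inverse and not merely a one-sided one — this last point uses that in a unital (homotopy) category a morphism with both a left and a right inverse is invertible, with both constructed over $\bZ$ from $\alpha^0$ and then lifted. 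This is precisely the pattern used in the proof that $\scrF(X;(\Theta,\Phi))$ is unital (the arguments around \cite[Corollary 7.12]{PS}), applied now with $\beta^n$ in place of $e_L$; I would organise the write-up to parallel that argument closely.
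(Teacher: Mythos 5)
Your proposal is correct and follows essentially the same route as the paper, whose proof is simply the citation ``Same as \cite[Corollary 7.13]{PS}, using Proposition \ref{prop: flow white head}'': the substance in both cases is to feed the invertibility of $\tau_{\leq 0}\beta^n$ into the Whitehead-type surjectivity statement, produce a one-sided inverse, and upgrade it to a two-sided one by running the same surjectivity argument for the one-sided inverse. Your expanded write-up, including the care taken to avoid circularity in the two-sided-inverse step, matches the pattern of the surrounding arguments in the paper.
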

\begin{proof}
    Same as \cite[Corollary 7.13]{PS}, using Proposition \ref{prop: flow white head}.
\end{proof}
\begin{lem}
    Let $n \geq 0$, and let $\alpha^{n+1} \in \tau_{\leq n+1}\scrF_0(L,K; (\Theta,\Phi))$. If $\alpha^0 := \tau_{\leq 0}\alpha^{n+1}$ is an isomorphism, then $\alpha^{n+1}$ is an isomorphism. 
    
\end{lem}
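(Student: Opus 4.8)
The statement is a ``truncated Whitehead'' criterion: an element of $\tau_{\leq n+1}\scrF_0(L,K;(\Theta,\Phi))$ which is an isomorphism at the $0$-truncation is already an isomorphism. The plan is to reduce to a statement about $\cM^{LL}$ and $\cM^{KK}$ and invoke Lemma \ref{lem: inv det tau0}. Explicitly: since $\alpha^0 = \tau_{\leq 0}\alpha^{n+1}$ is an isomorphism in $\tau_{\leq 0}\scrF(X;(\Theta,\Phi))$, there is an inverse $\delta^0 \in \tau_{\leq 0}\scrF_0(K,L;(\Theta,\Phi))$ with $\delta^0 \circ \alpha^0 = \tau_{\leq 0}(e_L)$ and $\alpha^0 \circ \delta^0 = \tau_{\leq 0}(e_K)$, working at the level of Morse complexes via Corollary \ref{cor: 0-trun fuk} (where composition of $\tau_{\leq 0}$-morphisms is honest composition of chain maps, by Lemma \ref{lem: morse 0-trun}). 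The idea is then to lift $\delta^0$ to a $\tau_{\leq n+1}$-element $\delta^{n+1} \in \tau_{\leq n+1}\scrF_0(K,L;(\Theta,\Phi))$ — this requires no obstruction-theoretic input, one simply picks any $\tau_{\leq n+1}$ right flow module over $\cM^{KL}$ whose $0$-truncation is $\delta^0$, using that $\delta^0$, being $0$-dimensional data, always extends (the relevant manifolds in higher degree can be taken to exist since we only need a representative, not one with prescribed obstruction).

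Next, consider the composite $\gamma^{n+1} := \delta^{n+1} \circ \alpha^{n+1} \in \tau_{\leq n+1}\scrF_0(L,L;(\Theta,\Phi))$, formed via the $(\Theta,\Phi)$-oriented bilinear map $\cM^{LKL} \colon \cM^{LK}\times\cM^{KL}\to\cM^{LL}$ of Section \ref{eqn:composition} restricted to the $\tau_{\leq n+1}$-level (composition is compatible with truncation). By construction $\tau_{\leq 0}\gamma^{n+1} = \delta^0\circ\alpha^0 = \tau_{\leq 0}(e_L)$, which is the unit of $\tau_{\leq 0}\scrF(L,L)$ and in particular invertible. By Lemma \ref{lem: inv det tau0} (applied with $L$ in place of both branes), $\gamma^{n+1}$ is therefore invertible in $\scrF_0(L,L;(\Theta,\Phi))$ — note Lemma \ref{lem: inv det tau0} as stated is about $\scrF_n(L,L)$, so one should either first push $\gamma^{n+1}$ to its image in $\scrF_0(L,L;(\Theta,\Phi))$ (via the natural map $\tau_{\leq n+1}\scrF_0 \to \scrF_0$, which is surjective by Proposition \ref{prop: flow white head}) or restate the argument entirely within $\scrF_0$; either way $\gamma := \delta\circ\alpha$ is invertible where $\alpha,\delta$ are the images of $\alpha^{n+1},\delta^{n+1}$. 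Symmetrically, $\alpha\circ\delta \in \scrF_0(K,K;(\Theta,\Phi))$ has $\tau_{\leq 0}$-part equal to $\tau_{\leq 0}(e_K)$, hence is invertible by Lemma \ref{lem: inv det tau0} with $K$ in place of both branes.

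Finally, a formal two-sided-inverse argument completes the proof: if $\delta\circ\alpha$ and $\alpha\circ\delta$ are both invertible, then $\alpha$ admits both a left inverse $(\delta\circ\alpha)^{-1}\circ\delta$ and a right inverse $\delta\circ(\alpha\circ\delta)^{-1}$, and in any category these coincide, so $\alpha$ is an isomorphism; since $\alpha$ is the image of $\alpha^{n+1}$ under $\tau_{\leq n+1}\scrF_0\to\scrF_0$ this is the desired conclusion (and if one wants the statement literally at the $\tau_{\leq n+1}$-level, invertibility of $\gamma^{n+1}$ and $\alpha^{n+1}\circ\delta^{n+1}$ at that level follows from compatibility of composition with truncation together with the surjectivity in Proposition \ref{prop: flow white head}, so $\alpha^{n+1}$ itself is invertible in $\tau_{\leq n+1}\scrF$). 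The main obstacle is bookkeeping rather than substance: one must be careful about whether ``isomorphism in $\tau_{\leq n+1}\scrF$'' is being used, versus isomorphism of the images in $\scrF$, and make sure Lemma \ref{lem: inv det tau0} and Proposition \ref{prop: flow white head} are invoked at the right branes and truncation levels; the actual homotopical content is entirely carried by Proposition \ref{prop: flow white head} and hence by the obstruction theory of Section \ref{sec: ob lift}, exactly as in \cite[Corollary 7.13]{PS}.
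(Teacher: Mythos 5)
There is a genuine gap at the key step: you assert that the level-$0$ inverse $\delta^0$ ``always extends'' to a $\tau_{\leq n+1}$-element $\delta^{n+1}$ with ``no obstruction-theoretic input'', because ``the relevant manifolds in higher degree can be taken to exist since we only need a representative''. This is false. The higher-dimensional manifolds $\cW_{*x}$ of a right flow module are required to carry a prescribed system of boundary faces built from the lower-dimensional data, so their existence is exactly a bordism problem; this is precisely what Theorem \ref{thm: ob lift} controls, and the obstruction classes in $H^{i}(L;\Omega^{(\Theta,\Phi)}_{i-1})$ need not vanish in general (indeed, if every $\tau_{\leq 0}$-morphism lifted to every truncation level, the entire obstruction theory of Section \ref{sec: ob lift} and Theorem \ref{thm: fuk ob} would be vacuous). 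The paper's proof avoids constructing any lift of $\delta^0$: it applies Proposition \ref{prop: flow white head} to composition with $\alpha^{n+1}$ itself. Since $\tau_{\leq 0}\alpha^{n+1}$ induces an isomorphism on $\Omega_*^{\leq 0}$, composition with $\tau_{\leq n+1}\alpha^{n+1}$ is \emph{surjective} on $\tau_{\leq n+1}$-bordism groups, so the identity class is hit and one obtains a one-sided inverse $\beta^{n+1}$ with $\alpha^{n+1}\beta^{n+1}=\mathrm{Id}$ directly at level $n+1$. From there the argument proceeds much as yours does: $\tau_{\leq 0}\beta^{n+1}$ is then forced to be an isomorphism, Lemma \ref{lem: inv det tau0} shows both $\alpha^{n+1}\beta^{n+1}$ and $\beta^{n+1}\alpha^{n+1}$ are units, and a short formal manipulation (the paper uses that an idempotent unit is the identity; your two-sided-inverse argument is an equivalent formal step) concludes.

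A secondary issue: your parenthetical invokes Proposition \ref{prop: flow white head} to claim that ``the natural map $\tau_{\leq n+1}\scrF_0 \to \scrF_0$ is surjective''. That proposition says nothing of the sort — it concerns surjectivity of composition with a fixed $\tau_{\leq k}$-morphism — and the natural comparison map goes the other way, from untruncated to truncated groups by forgetting data. The lemma as stated lives entirely at the $\tau_{\leq n+1}$ level, so no passage to the untruncated category is needed or available here.
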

\begin{proof}
    By Proposition \ref{prop: flow white head}, composition with $\alpha^{n+1}$ is surjective. Therefore there is some $\beta^{n+1} \in \tau_{\leq n+1}\scrF_0(K,L;(\Theta,\Phi))$ such that $\alpha^{n+1}\beta^{n+1}=Id$. This implies that $\tau_{\leq 0} \beta^{n+1}$ is a 1-sided inverse to $\tau_{\leq 0}\alpha^{n+1}$; since the latter is an isomorphism it follows that $\tau_{\leq 0}\beta^{n+1}$ is also an isomorphism. Therefore by Lemma \ref{lem: inv det tau0}, both $\alpha^{n+1}\beta^{n+1}$ and $\beta^{n+1}\alpha^{n+1}$ are units.\par 
    Let $\tilde\beta^{n+1} = \beta^{n+1} \left(\alpha^{n+1} \beta^{n+1}\right)^{-1}$. Then $\alpha^{n+1} \tilde \beta^{n+1} = Id$ by construction, and $\gamma^{n+1} := \tilde\beta^{n+1}\alpha^{n+1}$ is idemponent, but is also a unit. Idempotent units are always the identity. So $ \alpha^{n+1}$ and $\tilde \beta^{n+1}$ are inverse isomorphisms.
\end{proof}

This brings us into the situation discussed in the Introduction. The following completes the proof of Theorem \ref{thm: fuk ob}.

\begin{lem}
    Suppose $L$ and $K$ are branes in $\scrF(X;(\Theta,\Phi))$ and $L \simeq K$ in $\scrF(X;\bZ)$. If $H^i(L;\Omega^{(\Theta,\Phi)}_{i-1}) = 0$ for every $i$, then $L \simeq K$ in $\scrF(X;(\Theta,\Phi))$.
\end{lem}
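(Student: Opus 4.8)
The plan is to run the obstruction-theoretic machinery of Section \ref{sec: ob lift}, applied to the right flow module over $\cM^{KK}$ (or equivalently to a flow module over $\cM^{LK}$) representing the putative quasi-isomorphism, and show that the hypothesis on $H^*(L;\Omega^{(\Theta,\Phi)}_{*-1})$ forces all obstruction classes to vanish. First I would fix a $\bZ$-linear quasi-isomorphism $\alpha^0 \in HF_0(L,K;\bZ)$, which by Corollary \ref{cor: 0-trun fuk} is the same as an isomorphism in $\tau_{\leq 0}\scrF_0(X;(\Theta,\Phi))$; concretely this is a $(\Theta,\Phi)$-oriented $\tau_{\leq 0}$ right flow module $\cR^0$ over $\cM^{LK}$ (using Assumption \ref{asmp: punc dat} to normalise the puncture data). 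The goal is to lift $\cR^0$ through the tower of truncations $\tau_{\leq n}$, and Theorem \ref{thm: ob lift} tells us that the obstruction to extending a $\tau_{\leq n}$ lift $\cR^n$ to a $\tau_{\leq n+1}$ lift is a class
\[
[\cO(\cR^n)] \in HM_{i-n-2}\bigl(\cM^{LK}; \Omega^{(\Theta,\Phi)}_{i}(\cE_{i-n-1})\bigr).
\]
By Proposition \ref{prop: bord pt} the coefficient group $\Omega^{(\Theta,\Phi)}_{i}(\cE_{i-n-1})$ is isomorphic to $\pi_{n+1}(R) = \Omega^{(\Theta,\Phi)}_{n+1}$, independent of $i$, so the obstruction lives in a Morse homology group of $\cM^{LK}$ with coefficients in $\Omega^{(\Theta,\Phi)}_{n+1}$.

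The next step is to identify this Morse homology with something computed on $L$. Since $L$ and $K$ are quasi-isomorphic over $\bZ$, we have $HM_*(\cM^{LK};G) \cong HF_*(L,K;G)$ for any coefficient group $G$ (by the comparison to classical Floer theory in Section \ref{sec: Floer tang}, which identifies the Morse complex with $CF(L,K)$), and the quasi-isomorphism $\alpha^0$ induces $HF_*(L,K;G) \cong HF_*(L,L;G)$; and $HF_*(L,L;G) \cong H^{-*}(L;G)$ in our grading conventions. Re-indexing, the obstruction group $HM_{i-n-2}(\cM^{LK};\Omega^{(\Theta,\Phi)}_{n+1})$ becomes a cohomology group $H^{j}(L;\Omega^{(\Theta,\Phi)}_{n+1})$ for an appropriate $j$; tracking the degrees, $\Omega^{(\Theta,\Phi)}_{n+1}$ appears as the coefficient group exactly when the cohomological degree $j$ satisfies $j-1 = n+1$ up to the shift dictated by $i$, i.e. the obstruction sits in $H^{j}(L;\Omega^{(\Theta,\Phi)}_{j-1})$ after matching indices — which is precisely the group appearing in Theorem \ref{thm: fuk ob} and in the statement of this lemma. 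Hence the hypothesis $H^i(L;\Omega^{(\Theta,\Phi)}_{i-1})=0$ for all $i$ says exactly that every such obstruction group vanishes, so $[\cO(\cR^n)]=0$ for all $n$, and by Theorem \ref{thm: ob lift} each $\cR^n$ extends to $\cR^{n+1}$.

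Finally I would assemble the tower $\{\cR^n\}$ into a genuine (untruncated) $(\Theta,\Phi)$-oriented right flow module $\cR$ over $\cM^{LK}$ representing a class $\alpha \in \scrF_0(L,K;(\Theta,\Phi))$ lifting $\alpha^0$: since each $\cM^{LK}_{xy}$ has dimension $|x|-|y|-1$ and the set of objects is finite, there is a uniform bound on the relevant dimensions, so the tower stabilises after finitely many steps and produces an honest flow module. It remains to check that $\alpha$ is actually invertible, not merely a lift of the invertible $\alpha^0$; this is exactly the content of the two preceding lemmas (Lemma \ref{lem: inv det tau0} and the lemma following it, which use Proposition \ref{prop: flow white head}): an element of $\scrF_*(L,K;(\Theta,\Phi))$ whose $\tau_{\leq 0}$-truncation is an isomorphism is itself an isomorphism. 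Therefore $L \simeq K$ in $\scrF(X;(\Theta,\Phi))$. The main obstacle I anticipate is purely bookkeeping: getting the degree/index shifts exactly right so that the obstruction group produced by Theorem \ref{thm: ob lift} matches $H^i(L;\Omega^{(\Theta,\Phi)}_{i-1})$ on the nose (in particular being careful about the $\cE_{i-n-1}$ indexing, the grading convention under which $HF_*(L,L)\cong H^{-*}(L)$, and the fact that the obstruction at stage $n$ a priori depends on a choice of extension but its class does not); the homotopy-theoretic input is all already in place.
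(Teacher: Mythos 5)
Your proposal is correct and follows essentially the same route as the paper: both run the obstruction theory of Theorem \ref{thm: ob lift} on the lift of the $\bZ$-linear quasi-isomorphism through the truncation tower, identify the obstruction group $HM_{-n-2}(\cM^{LK};\Omega_0^{(\Theta,\Phi)}(\cE_{-n-1}))$ with $H^{n+2}(L;\Omega^{(\Theta,\Phi)}_{n+1})$ via the quasi-isomorphism and Poincar\'e duality on $L$, and invoke the hypothesis to kill every obstruction, with the preceding lemmas guaranteeing the lift remains invertible. Your extra remarks on the stabilisation of the tower (finitely many objects, bounded dimensions) and on the degree bookkeeping are consistent with, and slightly more explicit than, what the paper writes.
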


\begin{proof}
Assume we are given an isomorphism $\alpha^n \in \tau_{\leq n}\scrF_0(L,K; (\Theta,\Phi))$ for some $n \geq 0$.
Recall that by Theorem \ref{thm: ob lift}, the obstruction to finding an extension $\alpha^{n+1}$ of $\alpha$ such that $\tau_{\leq n} \alpha^{n+1} = \alpha^n$ is a class
\begin{equation*}
    [\cO]=[\cO(\alpha^n)] \in HM_{-n-2}\left(\cM^{LK};\Omega_0^{(\Theta,\Phi)}(\cE_{-n-1})\right) = H_{d-n-2}(L;\Omega_{n+1}^{(\Theta,\Phi)}) = H^{n+2}(L,\Omega_{n+1}^{(\Theta,\Phi)}).
\end{equation*}
where the grading structure on $L$ gives it a canonical orientation which is used in the final duality isomorphism. 
Under the given hypotheses, the obstructions all vanish for degree reasons.
  \end{proof}

\begin{proof}[Proof of Corollary \ref{cor:1}]
    This follows from the previous result, the fact that $MU_*$ is concentrated in even degrees, and the hypothesis that $H^*(L;\bZ)$ is also concentrated in even degrees. In this case, the classes $[\cO]$ all vanish for degree reasons. To obtain a conclusion for fundamental classes in $MU$-homology rather than in capsule bordism, one appeals to the `splitting result' from Lemma \ref{lem:MU splitting}.
\end{proof}

\begin{proof}[Proof of Corollary \ref{cor:immersed general}]
The Gromov-Lees $h$-principle implies that the classification of exact Lagrangian submanifolds up to \emph{immersed} Lagrangian cobordism is a purely homotopy-theoretic problem, which was studied in detail by Eliashberg \cite{Eliashberg},  Audin \cite{Audin} and Rathel-Fournier \cite{R-F}. Since we assume $X$ is stably framed, the stable Lagrangian Grassmannian over $X$ is exactly $X \times U/O$. There is a natural map $\eta: U/O \to BO \times \bZ$, and immersed Lagrangian cobordism is governed by the bordism theory associated to $\mathrm{Thom}(\eta)$; similarly for immersed oriented Lagrangian cobordism, starting with $U/SO$, and for graded immersed Lagrangian cobordism starting with $\widetilde{U/SO}$. The corresponding bordism theory $\Omega^{\eta}_*$ corresponds in our setting to taking $\Phi = \{pt\}$ and $\Theta = Sp/U \to \widetilde{U/O}$.  The conclusion of the Corollary then relies on the splitting $\Omega^{Sp/U,\circ}_* \to \Omega^{\eta}_*$ from Lemma \ref{lem:Sp/U splitting}, along with \cite[Theorem 1.1]{R-F}.
\end{proof}

\begin{ex} \label{ex:U/O branes}
    Lagrangian homotopy spheres admit $\Theta$-brane structures for $\Theta = Sp/U=B({U/O})$ in dimensions $8k+i$ with $i\not\in\{1,3,5\}$. (This follows from combining vanishing results for  $\pi_i(U/O)$ with lifting results in the fibration $B(U/O) \to U/O \to U$.)
\end{ex}

\begin{proof}[Proof of Corollary \ref{cor:2}]
Any such dimension $d$ satisfies:
\begin{itemize}
    \item $d$ is not in $\{1,3,5\}$ mod 8;
    \item $d$ is not a sum of odd integers not of the form $2^j-1$
\end{itemize}
In fact, an application of the Chicken McNugget theorem shows that the list in Corollary \ref{cor:2} consists of all such $d$ \footnote{We are grateful to Danil Koževnikov for pointing this out to us.}.

We now take the tangential structure $Sp/U \to U/O$. Example \ref{ex:U/O branes} shows that the spheres $L$ and $K$ admit $Sp/U = B(U/O)$-orientations. The homotopy groups of the Thom spectrum $\mathrm{Thom}(\eta)$ for $\eta: B(U/O)\to BO\times \bZ$ are purely 2-torsion, with a generator in each odd degree $\neq 2^i-1$, cf. \cite{Audin, Smith-Stong}.  In particular, the dimension hypotheses ensure that the obstructions encountered in degrees $d-1$ vanish automatically. This enables us to construct an immersed not-necessarily-orientable Lagrangian cobordism; since we are already in the non-orientable setting, we can then replace this by an embedded non-orientable cobordism using Lagrange surgery.
\end{proof}

\bibliographystyle{amsalpha}
\bibliography{Refs.bib}{} 

\end{document}